\documentclass[11pt]{amsart}
\usepackage{amscd,amssymb}
\usepackage{amsthm,amsmath,amssymb}
\usepackage[matrix,arrow]{xy}

\sloppy\pagestyle{plain}

\textwidth=16cm \textheight=23cm

\addtolength{\topmargin}{-40pt} \addtolength{\oddsidemargin}{-2cm}
\addtolength{\evensidemargin}{-2cm}

\theoremstyle{definition}
\newtheorem{example}[equation]{Example}

\newtheorem{theorem}[equation]{Theorem}
\newtheorem{lemma}[equation]{Lemma}
\newtheorem{corollary}[equation]{Corollary}
\newtheorem{conjecture}[equation]{Conjecture}

\newtheorem*{question*}{Question}
\newtheorem{problem}[equation]{Problem}
\newtheorem*{problem*}{Problem}

\theoremstyle{remark}
\newtheorem{remark}[equation]{Remark}

\makeatletter\@addtoreset{equation}{section} \makeatother

\author{Ivan Cheltsov and Dimitra Kosta}

\title{Computing $\alpha$-invariants of singular del Pezzo surfaces}

\pagestyle{headings}

\address{\begin{tabbing}
\hspace*{28 em}\=\kill School of Mathematics, University of Edinburgh, Edinburgh EH9 3JZ, UK\\
\medskip
\texttt{I.Cheltsov@ed.ac.uk}\\
\medskip
\texttt{dimitrakosta@hotmail.com}
\end{tabbing}}



\thanks{The authors thank G.\,Brown, N.\,Budur, J.\,Koll\'ar,
 M.\,Mustata, J.\,Park, Y.\,Prokhorov for valuable
comments.}

\thanks{The authors would like to thank an anonymous referee for many
useful remarks.}

\thanks{This paper was completed under financial support provided by
IKY (Greek State Scholarship Foundation).}

\begin{document}

\begin{abstract}
We prove new local inequality for divisors on surfaces and utilize
it to compute $\alpha$-invariants of singular del Pezzo surfaces,
which implies that del Pezzo surfaces of degree one whose singular
points are of type $\mathbb{A}_{1}$, $\mathbb{A}_{2}$,
$\mathbb{A}_{3}$, $\mathbb{A}_{4}$, $\mathbb{A}_{5}$ or
$\mathbb{A}_{6}$ are K\"ahler-Einstein.
\end{abstract}

\maketitle

We assume that all varieties are projective, normal, and defined
over $\mathbb{C}$.

\section{Introduction}
\label{section:intro}

Let $X$ be a~Fano variety with at most quotient singularities (a
Fano orbifold).

\begin{theorem}[{\cite{Tian90}}]
\label{theorem:Tian} If $\mathrm{dim}(X)=2$ and $X$ is smooth,
then
$$
\text{the~surface}\ X\ \text{is K\"ahler--Einstein}\ \iff\ \text{the~group}\ \mathrm{Aut}\big(X\big)\ \text{is reductive}.%
$$
\end{theorem}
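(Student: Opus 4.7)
The plan is to treat the two directions separately. The forward implication is Matsushima's theorem: on any compact K\"ahler--Einstein manifold with positive Einstein constant, the Lie algebra of holomorphic vector fields is the complexification of the Killing algebra of the metric, hence $\mathrm{Aut}(X)^{\circ}$ is reductive; since $\mathrm{Aut}(X)/\mathrm{Aut}(X)^{\circ}$ is finite for a smooth del Pezzo surface, the full group $\mathrm{Aut}(X)$ is reductive.

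For the converse I would combine Aubin's continuity method with Tian's $\alpha$-invariant criterion. Deforming along the family of Monge--Amp\`ere equations
\[
\bigl(\omega_{0} + i\partial\bar\partial\varphi_{t}\bigr)^{2} = \omega_{0}^{2}\exp(h_{0} - t\varphi_{t}), \qquad t \in [0,1],
\]
with $\omega_{0}$ a fixed K\"ahler form in $c_{1}(X)$, openness is standard and closedness reduces to a uniform $C^{0}$ estimate on $\varphi_{t}$ as $t \to 1$. Tian's criterion supplies such an estimate as soon as the $G$-equivariant $\alpha$-invariant satisfies $\alpha_{G}(X) > \tfrac{2}{3}$ for some compact subgroup $G \subseteq \mathrm{Aut}(X)$, provided one restricts the continuity method to $G$-invariant potentials. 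It therefore suffices to produce, for each smooth del Pezzo surface $X$ with $\mathrm{Aut}(X)$ reductive, a compact $G$ inside the maximal compact of $\mathrm{Aut}(X)$ with $\alpha_{G}(X) > \tfrac{2}{3}$.

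I would then proceed degree by degree through the classification. The surfaces $\mathbb{P}^{2}$ and $\mathbb{P}^{1} \times \mathbb{P}^{1}$ carry explicit homogeneous K\"ahler--Einstein metrics; the degree-$6$ del Pezzo is toric and covered by the Wang--Zhu existence result; the degree-$5$ del Pezzo admits a symmetry group containing $S_{5}$, which suffices to force $\alpha_{G}>\tfrac{2}{3}$ by a direct log canonical threshold argument. The remaining cases in degrees $4,3,2,1$ require case-by-case bounds on log canonical thresholds of $G$-invariant $\mathbb{Q}$-divisors in $|-K_{X}|$. The main obstacle is the low-degree regime, particularly degree $1$: cuspidal members of $|-K_{X}|$ and the abundance of $(-1)$-curves push the required bound towards the critical value $\tfrac{2}{3}$, so the analysis hinges on sharp local inequalities for log canonical thresholds at plane singularities. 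The extension of these local inequalities to singular surfaces is precisely the technical machinery the present paper will develop.
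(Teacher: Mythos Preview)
The paper does not prove this statement at all: Theorem~\ref{theorem:Tian} is quoted from \cite{Tian90} as a known result and serves only as background motivation. There is no ``paper's own proof'' to compare against; the paper merely remarks that $\alpha$-invariants play an important role in Tian's argument and then moves on to its own subject, which is \emph{singular} del Pezzo surfaces of degree~$1$.

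Your sketch is a fair high-level summary of how Tian's theorem is actually proved in \cite{Tian90}: Matsushima's obstruction \cite{Mat57} gives the forward implication, and the converse runs through the continuity method together with $G$-equivariant $\alpha$-invariant bounds $\alpha_{G}(X)>2/3$ obtained case by case over the classification of smooth del Pezzo surfaces. That outline is broadly correct, though the detailed case analysis in degrees $1$ and $2$ in Tian's paper is substantially more intricate than you indicate.

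One point of confusion: your final sentence suggests that the local inequalities developed in the present paper (Theorem~\ref{theorem:I}) feed into the proof of Theorem~\ref{theorem:Tian}. They do not. Theorem~\ref{theorem:Tian} concerns \emph{smooth} del Pezzo surfaces and was settled in 1990; the new local inequality here is aimed at the \emph{singular} case (Theorems~\ref{theorem:auxiliary} and~\ref{theorem:main}), where Tian's result does not apply. You should decouple these two things.
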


An important role in the~proof of Theorem~\ref{theorem:Tian} is
played by several holomorphic invariants, which are now known as
$\alpha$-invariants. Let us describe their algebraic counterparts.

Let $D$ be an~effective $\mathbb{Q}$-divisor on the~variety $X$.
Then the~number
$$
\mathrm{c}\big(X,D\big)=\mathrm{sup}\Big\{\epsilon\in\mathbb{Q}\ \Big|\text{the~log pair}\ \big(X, \epsilon D\big)\ \text{is log canonical}\Big\}\in\mathbb{Q}\cup\big\{+\infty\big\}.%
$$
is called the~log canonical threshold of the~divisor $D$ (see
\cite[Definition~8.1]{Ko97}). Put
$$
\mathrm{lct}_{n}\big(X\big)=\mathrm{inf}\Bigg\{\mathrm{c}\Bigg(X,\frac{1}{n}B\Bigg)\ \Bigg|\ B\ \text{is a~divisor in $\big|-nK_{X}\big|$}\Bigg\}%
$$
for every $n\in\mathbb{N}$. For small $n$, the~number
$\mathrm{lct}_{n}(X)$ is usually not very hard to compute.

\begin{example}[{\cite{Pa01}}]
\label{example:del-pezzo-degree-3} If $X$ is a~smooth surface in
$\mathbb{P}^3$ of degree $3$, then
$$
\mathrm{lct}_{1}\big(X\big)=\left\{%
\aligned
&2/3\ \text{if $X$ has an~Eckardt point},\\%
&3/4\ \text{if $X$ has no Eckardt points}.\\%
\endaligned\right.%
$$
\end{example}

The number $\mathrm{lct}_{n}(X)$ is denoted by $\alpha_n(X)$ in
\cite{Ti90b}.

\begin{remark}
\label{remark:semi-continuity} It follows from
\cite[Lemma~4.8]{Mustata02} that the~set
$$
\Bigg\{\mathrm{c}\Bigg(X,\frac{1}{n}B\Bigg)\ \Bigg|\ B\ \text{is a~divisor in $\big|-nK_{X}\big|$}\Bigg\}%
$$
is finite (cf. \cite{Lyubeznik}). Thus, there exists a~divisor
$B\in |-nK_{X}|$ such that
$\mathrm{lct}_{n}(X)=\mathrm{c}(X,B/n)\in\mathbb{Q}$.
\end{remark}

If the~variety $X$ is smooth, then it is proved by Demailly (see
\cite[Theorem~A.3]{ChSh08c}) that
$$
\mathrm{inf}\Big\{\mathrm{lct}_{n}\big(X\big)\ \Big|\ n\in\mathbb{N} \Big\}=\alpha\big(X\big),%
$$
where $\alpha(X)$ is the~$\alpha$-invariant introduced by Tian in
\cite{Ti87}. Put
$\mathrm{lct}(X)=\mathrm{inf}\{\mathrm{lct}_{n}(X)\ |\
n\in\mathbb{N}\}$.

\begin{conjecture}[{\cite[Question~1]{Ti90b}}]
\label{conjecture:stabilization} There is an~$n\in\mathbb{N}$ such
that $\mathrm{lct}(X)=\mathrm{lct}_{n}(X)$.
\end{conjecture}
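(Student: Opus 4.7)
The statement is a well-known conjecture of Tian and remains open in general; what follows is a strategic outline rather than a complete argument. The goal is to produce an effective divisor $B\in |-nK_{X}|$, for some $n\in\mathbb{N}$, with $\mathrm{c}(X,B/n)=\mathrm{lct}(X)$.

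The first step is to reformulate the infimum in a~more flexible class. Writing $D=B/n$ for $B\in |-nK_{X}|$ shows that $\mathrm{lct}(X)$ coincides with
$$
\mathrm{inf}\Big\{\mathrm{c}\big(X,D\big)\ \Big|\ D\ \text{is an effective}\ \mathbb{Q}\text{-divisor with}\ D\sim_{\mathbb{Q}} -K_{X}\Big\},
$$
because any such $D$ arises from some $B\in |-nK_{X}|$ after clearing denominators. It therefore suffices to show that this infimum is attained by some $D_{0}$; clearing denominators of $D_{0}$ then produces an integer $n$ and a~divisor $B\in |-nK_{X}|$ with $\mathrm{lct}(X)=\mathrm{lct}_{n}(X)$ via Remark~\ref{remark:semi-continuity}.

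The second step is to establish a~descending chain condition for the resulting set
$$
T\big(X\big)=\Big\{\mathrm{c}\big(X,D\big)\ \Big|\ D\geq 0,\ D\sim_{\mathbb{Q}} -K_{X}\Big\}\subset\mathbb{Q}_{>0}.
$$
This is a~relative version of Shokurov's ACC conjecture for log canonical thresholds. In dimension two it follows from work of Alexeev, and in the Fano orbifold setting one may attempt to reduce to the Hacon--McKernan--Xu theorem applied to the family of pairs $\{(X,D)\}_{D\in T(X)}$. Once DCC is known for $T(X)$ the infimum is attained and the argument is complete.

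The main obstacle is step two: ACC/DCC theorems for log canonical thresholds are typically proved for bounded families of log pairs, whereas the divisors $D\sim_{\mathbb{Q}} -K_{X}$ vary in an~unbounded system and may acquire arbitrarily complicated singularities as the denominator $n$ grows. Taming this unboundedness---perhaps through boundedness of $N$-complements in the spirit of Birkar, or by extracting a~finite list of divisorial valuations over $X$ that can compute $\mathrm{lct}(X)$---is the essential difficulty, and is why the conjecture is not known in the stated generality.
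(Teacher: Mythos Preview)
The paper does not prove this statement: it is explicitly labeled a \emph{Conjecture} and attributed to Tian, with no argument offered beyond the citation. There is therefore no ``paper's own proof'' to compare against, and you correctly recognize at the outset that the statement is open and that what you are writing is a strategic sketch rather than a proof.

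As a heuristic discussion your outline is reasonable, but note one slip in step two: the known results you invoke (Alexeev in dimension two, Hacon--McKernan--Xu in general) establish the \emph{ascending} chain condition for sets of log canonical thresholds, whereas what you need for the infimum defining $\mathrm{lct}(X)$ to be attained is the \emph{descending} chain condition on $T(X)$, or some other attainment statement. ACC for thresholds does not by itself yield DCC for $T(X)$, so the reduction you sketch is not as direct as the phrasing suggests. You partly acknowledge this in your final paragraph, but the earlier sentence ``In dimension two it follows from work of Alexeev'' overstates what is actually available.
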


The proof of Theorem~\ref{theorem:Tian} uses (at least implicitly)
the~following result.

\begin{theorem}[{\cite{Ti87}, \cite{DeKo01}}]
\label{theorem:KE} The Fano orbifold $X$ is K\"ahler--Einstein if
$$
\mathrm{lct}\big(X\big)>\frac{\mathrm{dim}(X)}{\mathrm{dim}(X)+1}.
$$
\end{theorem}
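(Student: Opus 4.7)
The plan is to run the Aubin--Yau continuity method and reduce the existence of a Kähler--Einstein metric to a uniform $C^{0}$ estimate, which the hypothesis $\mathrm{lct}(X)>\mathrm{dim}(X)/(\mathrm{dim}(X)+1)$ is tailor-made to provide. Fix an orbifold Kähler form $\omega\in c_{1}(X)$ with Ricci potential $h_{\omega}$, and for $t\in[0,1]$ consider the family of complex Monge--Ampère equations
$$
(\omega+i\partial\bar\partial\varphi_{t})^{n}=e^{h_{\omega}-t\varphi_{t}}\omega^{n}
$$
on $X$, where $n=\mathrm{dim}(X)$. Solvability at $t=0$ is Yau's theorem, and openness in $t$ is a routine application of the implicit function theorem for $t<1$. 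Higher-order a priori estimates, together with the orbifold versions of the Yau--Calabi calculations used in \cite{DeKo01}, reduce closedness of the solvable set to a uniform bound on $\|\varphi_{t}\|_{C^{0}}$, so the task is to produce such a bound under the given algebraic hypothesis.

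The analytic ingredient that yields this bound is Tian's $\alpha$-invariant inequality: if there exists $\alpha>n/(n+1)$ such that
$$
\sup\Bigl\{\int_{X}e^{-\alpha(\psi-\sup_{X}\psi)}\omega^{n}\ \Big|\ \psi\in\mathrm{PSH}(X,\omega)\Bigr\}<+\infty,
$$
then combining this with Green's function control of the oscillation of $\varphi_{t}$ and standard $I$-functional monotonicity along the continuity path delivers a uniform $C^{0}$ bound for all $t\in[0,1]$. This is the original argument of \cite{Ti87}, and it passes verbatim to the orbifold category because $\omega$-psh techniques are insensitive to codimension $\ge 2$ quotient singularities.

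The main and content-rich step is to identify the analytic $\alpha$-invariant encoded on the left with the algebraic quantity $\mathrm{lct}(X)$ appearing in the hypothesis; this is the theorem of Demailly--Kollár \cite{DeKo01}, and it is where I expect the real work to lie. Their proof regularises an arbitrary $\omega$-psh function $\psi$ by currents of the form $\frac{1}{n}\log|s_{n}|^{2}$ with $s_{n}\in H^{0}(X,-nK_{X})$ via Demailly's approximation, converts integrability of $e^{-\alpha\psi}$ into log canonical thresholds $\mathrm{c}(X,\frac{1}{n}B_{n})$ of divisors $B_{n}\in|-nK_{X}|$ (so that the algebraic definition of $\mathrm{lct}_{n}(X)$ enters directly), and controls the discrepancy uniformly in $n$ via effective Nadel vanishing. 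Granted this equivalence, the assumption $\mathrm{lct}(X)>n/(n+1)$ furnishes some $\alpha>n/(n+1)$ satisfying the integrability supremum; the continuity method then closes up at $t=1$, and $\omega+i\partial\bar\partial\varphi_{1}$ is the desired Kähler--Einstein metric.
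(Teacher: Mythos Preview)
The paper does not prove this theorem at all: it is stated with attributions to \cite{Ti87} and \cite{DeKo01} and then used as a black box, so there is no ``paper's own proof'' against which to compare your attempt. Your sketch is a faithful outline of the argument in those references --- continuity method, reduction to a $C^{0}$ estimate, Tian's $\alpha$-invariant criterion, and the Demailly--Koll\'ar identification of the analytic $\alpha$-invariant with the algebraic $\mathrm{lct}$ --- and is correct at the level of a summary; note only that the equality $\alpha(X)=\mathrm{lct}(X)$ is already recorded in the paper (just before the definition of $\mathrm{lct}(X)$, with a citation to \cite[Theorem~A.3]{ChSh08c} for the smooth case), so in the paper's internal logic this identification is also taken as input rather than reproved.
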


Note that there are many well-known obstructions to the~existence
of K\"ahler--Einstein~metrics on~smooth Fano manifolds and Fano
orbifolds (see \cite{Mat57}, \cite{Fu83}, \cite{GaMaSpaYau06},
\cite{RossThomas}).

\begin{example}
\label{example:P123} If $X\cong\mathbb{P}(1,2,3)$, then $X$ is not
K\"ahler--Einstein (see \cite{GaMaSpaYau06}, \cite{RossThomas}).
\end{example}

Let us describe one more $\alpha$-invariant that took its origin
in \cite{Tian90}.

Let $\mathcal{M}$ be a~linear system on the~variety $X$. Then
the~number
$$
\mathrm{c}\big(X,\mathcal{M}\big)=\mathrm{sup}\Big\{\epsilon\in\mathbb{Q}\ \Big|\text{the~log pair}\ \big(X, \epsilon \mathcal{M}\big)\ \text{is log canonical}\Big\}\in\mathbb{Q}\cup\big\{+\infty\big\}.%
$$
is called the~log canonical threshold of the~linear system
$\mathcal{M}$ (cf. \cite[Theorem~4.8]{Ko97}). Put
$$
\mathrm{lct}_{n,2}\big(X\big)=\mathrm{inf}\Bigg\{\mathrm{c}\Bigg(X,\frac{1}{n}\mathcal{B}\Bigg)\ \Bigg|\ \mathcal{B}\ \text{is a~pencil in $\big|-nK_{X}\big|$}\Bigg\}%
$$
for every $n\in\mathbb{N}$. The number  $\mathrm{lct}_{n,2}(X)$ is
denoted by $\alpha_{n,2}(X)$ in \cite{ChenWang09B} and
\cite{Wang10}. Note that
\begin{equation}
\label{equation:lct-inf-lct}
\mathrm{lct}\big(X\big)=\mathrm{inf}\Big\{\mathrm{lct}_{n,2}\big(X\big)\ \Big|\ n\in\mathbb{N}\Big\},%
\end{equation}
and it follows from \cite[Theorem~4.8]{Ko97} that
$\mathrm{lct}_{n}(X)\leqslant\mathrm{lct}_{n,2}(X)$ for every $n\in\mathbb{N}$.

\begin{remark}
\label{remark:lct-lct-n-2} It follows from
\cite[Lemma~4.8]{Mustata02} and \cite[Theorem~4.8]{Ko97} that
the~set
$$
\Bigg\{\mathrm{c}\Bigg(X,\frac{1}{n}\mathcal{B}\Bigg)\ \Bigg|\ \mathcal{B}\ \text{is a~pencil in $\big|-nK_{X}\big|$}\Bigg\}%
$$
is finite. Thus, there is a~pencil $\mathcal{B}$ in $|-nK_{X}|$
such that the~equality
$\mathrm{lct}_{n,2}(X)=\mathrm{c}(X,\mathcal{B}/n)$.~Then
$$
\mathrm{lct}_{n,2}\big(X\big)>\mathrm{lct}\big(X\big)
$$
if there exists at most finitely many effective $\mathbb{Q}$-divisors
$D_{1},D_{2},\ldots,D_{r}$ on the~variety $X$ such~that
$$
\mathrm{c}\big(X,D_{1}\big)=\mathrm{c}\big(X,D_{2}\big)=\cdots=\mathrm{c}\big(X,D_{r}\big)=\mathrm{lct}\big(X\big)
$$
and $D_{1}\sim_{\mathbb{Q}} D_{2}\sim_{\mathbb{Q}}\ldots\sim_{\mathbb{Q}}
D_{r}\sim_{\mathbb{Q}} -K_{X}$.
\end{remark}

The importance of the~number $\mathrm{lct}_{n,2}(X)$ is due to
the~following conjecture.

\begin{conjecture}[{cf. \cite[Theorem~2]{ChenWang09B}, \cite[Theorem~1]{Wang10}}]
\label{conjecture:KE-conjecture} Suppose that
$$
\mathrm{lct}_{n,2}\big(X\big)>\frac{\mathrm{dim}(X)}{\mathrm{dim}(X)+1}.
$$
for every $n\in\mathbb{N}$. Then $X$ is K\"ahler--Einstein.
\end{conjecture}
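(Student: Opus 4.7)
The plan is to run the continuity method for the K\"ahler--Einstein equation and to refine Tian's $\alpha$--invariant estimate so as to exploit the strictly weaker \emph{pencil} hypothesis. Concretely, one considers the family of Monge--Amp\`ere equations $(\omega+\sqrt{-1}\partial\bar\partial\varphi_t)^{\mathrm{dim}(X)} = e^{h_\omega - t\varphi_t}\omega^{\mathrm{dim}(X)}$ for $t\in[0,1]$, where $\omega\in c_1(X)$ is a reference K\"ahler form and $h_\omega$ its Ricci potential. Existence at $t=1$ is equivalent to $X$ being K\"ahler--Einstein. The set of $t$ for which the equation is solvable is nonempty (at $t=0$) and open (Aubin--Yau), so the task reduces to establishing closedness, which amounts to uniform $C^{0}$--estimates on the potentials $\varphi_t$.

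I would then argue by contradiction. Suppose the continuity path breaks down at some $t_{0}\leqslant\mathrm{dim}(X)/(\mathrm{dim}(X)+1)$, since otherwise Theorem~\ref{theorem:KE} applies directly. Passing to a subsequence $t_{k}\nearrow t_{0}$, the normalised potentials $\psi_{k}=\varphi_{t_k}-\sup_{X}\varphi_{t_k}$ converge in $L^{1}(X)$ to an $\omega$--plurisubharmonic function $\psi_{\infty}$ for which $e^{-t_{0}\psi_{\infty}}$ fails to be locally integrable somewhere on $X$.

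Next, I would algebraise via Demailly's regularisation combined with Nadel vanishing (or, equivalently, Berman's thermodynamic formalism) to produce, for some $n\gg 0$, an effective divisor $D\in|-nK_{X}|$ with $\mathrm{c}(X,D/n)\leqslant t_{0}$. At this stage one only recovers $\mathrm{lct}_{n}(X)\leqslant\mathrm{dim}(X)/(\mathrm{dim}(X)+1)$, which is compatible with the hypothesis of Conjecture~\ref{conjecture:KE-conjecture} and therefore insufficient. The essential upgrade is to recognise that the breakdown is \emph{coercive} rather than pointwise: the family $\{\psi_{k}\}$ escapes to $-\infty$ along a one--parameter orbit which, at the algebraic level, corresponds to a one--parameter deformation of $D$ inside $|-nK_{X}|$, each member $D_{s}$ of which again satisfies $\mathrm{c}(X,D_{s}/n)\leqslant t_{0}$. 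The pencil $\mathcal{B}\subset|-nK_{X}|$ they span then yields $\mathrm{c}(X,\mathcal{B}/n)\leqslant t_{0}\leqslant\mathrm{dim}(X)/(\mathrm{dim}(X)+1)$, contradicting the hypothesis and closing the continuity method.

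The main obstacle is precisely this last step: genuinely turning the one--parameter family of weak destabilising potentials into an honest algebraic pencil while uniformly controlling log canonical thresholds. This would require an Ohsawa--Takegoshi $L^{2}$--extension argument applied along the family, together with a quantitative convergence statement for multiplier ideal sheaves under small perturbations of quasi--plurisubharmonic weights, along the lines of \cite{ChenWang09B} and \cite{Wang10}. In the spirit of Remark~\ref{remark:lct-lct-n-2}, the heuristic is that whenever $\mathrm{lct}(X)=\mathrm{lct}_{n}(X)<\mathrm{lct}_{n,2}(X)$, the minimising divisors are algebraically ``isolated'' and therefore cannot arise as limits of the coercive failure mode of the continuity method.
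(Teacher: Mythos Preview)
The statement you are attempting to prove is labelled in the paper as a \emph{conjecture}, not a theorem. The paper offers no proof of it; the only remark is that the assertion follows from the cited results of Chen--Wang and Wang under the additional hypothesis that the K\"ahler--Ricci flow on $X$ is tamed, a hypothesis which is only verified in dimension two (Theorem~\ref{theorem:Chen-Wang}). There is therefore nothing to compare against: the paper does not claim to prove Conjecture~\ref{conjecture:KE-conjecture}, and uses it only in its two-dimensional corollary.

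As for your proposal itself, you have correctly identified the crux and then candidly admitted you cannot carry it out. The passage from a single destabilising divisor $D\in|-nK_X|$ with small log canonical threshold to a \emph{pencil} $\mathcal{B}\subset|-nK_X|$ with $\mathrm{c}(X,\mathcal{B}/n)\leqslant t_0$ is precisely the content of the conjecture, and your sketch does not bridge it. The claim that the breakdown of the continuity method is ``coercive'' and therefore produces a one-parameter algebraic family of divisors, each with uniformly bounded threshold, is unsupported: Demailly regularisation and Nadel vanishing produce one divisor at a time, and nothing in the standard $L^2$ machinery forces the resulting divisors for nearby $t_k$ to lie in a common pencil or to have comparable singularities. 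Your final paragraph concedes this (``This would require\ldots''), so what you have written is a heuristic outline of why the conjecture is plausible, not a proof.
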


Note that Conjecture~\ref{conjecture:KE-conjecture} is not much
stronger than Theorem~\ref{theorem:KE} by
$(\ref{equation:lct-inf-lct})$.

\begin{example}
\label{example:Cheltsov-Park} Suppose that $X$ is a~smooth
hypersurface in $\mathbb{P}^{m}$ of degree $m\geqslant 3$. Then
$$
\mathrm{lct}_{n}\big(X\big)\geqslant 1-\frac{1}{m}=\frac{\mathrm{dim}(X)}{\mathrm{dim}(X)+1}%
$$
for every $n\in\mathbb{N}$ by \cite{Ch01}. The~equality
$\mathrm{lct}_{n}(X)=1-1/m$ holds $\iff$ the~hypersurface $X$
contains a~cone of dimension $m-2$ (see \cite[Theorem~1.3]{Ch01},
\cite[Theorem~4.1]{Ch01}, \cite[Theorem~0.2]{FEM}). Then
$$
\mathrm{lct}_{n,2}\big(X\big)>\frac{\mathrm{dim}(X)}{\mathrm{dim}(X)+1}
$$
by Remark~\ref{remark:lct-lct-n-2}, \cite[Remark~1.6]{Ch01},
\cite[Theorem~4.1]{Ch01}, \cite[Theorem~5.2]{Ch01} and \cite[Theorem~0.2]{FEM},
because $X$ contains at most finitely many cones by
\cite[Theorem~4.2]{CoolsCoppens}. If $X$ is general, then
$$
1=\mathrm{lct}_{1}\big(X\big)\geqslant\mathrm{lct}\big(X\big)\geqslant\left\{%
\aligned
&3/4\ \text{if $m=3$},\\%
&7/9\ \text{if $m=4$},\\%
&5/6\ \text{if $m=5$},\\%
&1\ \text{if $m\geqslant 5$},\\%
\endaligned\right.%
$$
by \cite{Pu04d}, \cite{Ch07b}, \cite{CheltsovParkWon}. Thus, if
$X$ is general, then it is K\"ahler--Eisntein by
Theorem~\ref{theorem:KE}.
\end{example}

The assertion of Conjecture~\ref{conjecture:KE-conjecture} follows
from \cite[Theorem~2]{ChenWang09B} and \cite[Theorem~1]{Wang10}
under an~additional assumption that the~K\"ahler-Ricci flow on $X$
is tamed (see \cite{ChenWang09B} and \cite{Wang10}).

\begin{theorem}[{\cite{ChenWang09B}, \cite{Wang10}}]
\label{theorem:Chen-Wang} If $\mathrm{dim}(X)=2$, then
the~K\"ahler-Ricci~flow~on~$X$~is~tamed.
\end{theorem}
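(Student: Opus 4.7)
The plan is to adapt to the orbifold setting the convergence theory for the K\"ahler--Ricci flow on Fano surfaces developed in \cite{ChenWang09B} and \cite{Wang10}. Running the normalized flow
\[
\frac{\partial\omega(t)}{\partial t}=-\mathrm{Ric}\bigl(\omega(t)\bigr)+\omega(t),\qquad \omega(0)\in 2\pi c_1(X),
\]
yields a long--time solution since $X$ is a~Fano orbifold. The taming condition to be verified is, in essence, a~uniform $L^\infty$ bound along the flow for the Ricci potential $h_{\omega(t)}$ (normalized to have zero mean), or, equivalently, a~uniform $C^0$ control on the evolving K\"ahler potentials modulo the Futaki character.

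First I would establish the orbifold analogues of Perelman's fundamental estimates along the flow: uniform bounds on scalar curvature, diameter, and the $\kappa$--non-collapsing constant. Since the quotient singularities of $X$ are locally modeled on $\mathbb{C}^2/\Gamma$ for a~finite $\Gamma\subset U(2)$, Perelman's maximum-principle and monotonicity arguments (for the $\mathcal{W}$-functional and the reduced distance) carry over verbatim once performed on uniformizing covers, yielding constants depending only on $X$ and the initial class.

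Second I would exploit the low-dimensional miracle available on surfaces. In complex dimension two, the Gauss--Bonnet--Chern formula expresses $\int_X|\mathrm{Rm}|^2\,dV$ in terms of the orbifold Euler number plus a~bounded integral of $|\mathrm{Ric}|^2$ and scalar curvature; together with Perelman's scalar bound, this gives uniform $L^2$--control on the whole Riemann tensor. Feeding this and the non-collapsing estimate into a~Moser iteration for the evolution equation of $h_{\omega(t)}$, and combining with a~Ko\l{}odziej-type $L^\infty$ estimate (applied on a~crepant or uniformizing resolution so that one works with smooth $(n,n)$-forms), produces the sought uniform bound on $h_{\omega(t)}$.

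The hard part will be the orbifold versions of Perelman's non-collapsing theorem and of Ko\l{}odziej's pluripotential $C^0$-estimate: both are by now considered standard, but one needs uniform constants near the quotient singular points that are independent of the initial K\"ahler metric, which is precisely what allows the argument of \cite{ChenWang09B} and \cite{Wang10} to be transplanted from the smooth to the orbifold category.
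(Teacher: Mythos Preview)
The paper does not prove this theorem at all: it is stated with attribution to \cite{ChenWang09B} and \cite{Wang10} and used as a black box (to deduce Corollary~\ref{corollary:Chen-Wang}). There is therefore no ``paper's own proof'' against which to compare your proposal.

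Your outline is a reasonable high-level sketch of the strategy in those references---Perelman-type estimates, the surface-specific $L^{2}$ bound on curvature via Gauss--Bonnet--Chern, and a Moser/Ko\l{}odziej-type argument to control the Ricci potential---but as written it is a plan, not a proof. If this were being submitted as an actual proof rather than a summary of the cited literature, the gaps you yourself flag (uniform orbifold non-collapsing and the orbifold $C^{0}$-estimate with constants independent of the singular locus) would each require substantial work; simply asserting that they ``carry over verbatim'' or are ``by now considered standard'' is not sufficient. For the purposes of the present paper, however, no proof is expected: the correct move is to cite the result, exactly as the authors do.
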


\begin{corollary}
\label{corollary:Chen-Wang} Suppose that $\mathrm{dim}(X)=2$ and
$$
\mathrm{lct}_{n,2}\big(X\big)>\frac{2}{3}
$$
for every $n\in\mathbb{N}$. Then $X$ is K\"ahler--Einstein.
\end{corollary}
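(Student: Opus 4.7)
The plan is to combine the two cited inputs that immediately precede the corollary, with essentially no additional work beyond noting that the numerical threshold $2/3$ is precisely $\mathrm{dim}(X)/(\mathrm{dim}(X)+1)$ when $\mathrm{dim}(X)=2$. Thus the hypothesis of the corollary is nothing other than the hypothesis of Conjecture~\ref{conjecture:KE-conjecture} specialized to surfaces.

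First I would invoke the conditional form of Conjecture~\ref{conjecture:KE-conjecture} established in \cite{ChenWang09B} and \cite{Wang10}: if $\mathrm{lct}_{n,2}(X) > \mathrm{dim}(X)/(\mathrm{dim}(X)+1)$ for every $n \in \mathbb{N}$ \emph{and} the K\"ahler--Ricci flow on the Fano orbifold $X$ is tamed, then $X$ is K\"ahler--Einstein. The tamed hypothesis is the only ingredient preventing Conjecture~\ref{conjecture:KE-conjecture} from being unconditional. Next, under the standing assumption $\mathrm{dim}(X)=2$, Theorem~\ref{theorem:Chen-Wang} supplies exactly this missing ingredient: the K\"ahler--Ricci flow on $X$ is automatically tamed. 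Combining the two and substituting the numerical value $\mathrm{dim}(X)/(\mathrm{dim}(X)+1) = 2/3$ yields the conclusion.

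There is no genuine obstacle in this corollary; it is a packaging of Theorem~\ref{theorem:Chen-Wang} with the conditional statements of Chen--Wang and Wang. The only point requiring a moment's care is that the hypothesis assumes the strict inequality $\mathrm{lct}_{n,2}(X) > 2/3$ for \emph{every} $n \in \mathbb{N}$, which, as indicated by (\ref{equation:lct-inf-lct}) and Remark~\ref{remark:lct-lct-n-2}, is genuinely stronger than the inequality $\mathrm{lct}(X) > 2/3$ one would need for Theorem~\ref{theorem:KE}; this stronger hypothesis is precisely what is needed so that the conditional theorem of Chen--Wang and Wang applies without any stabilization assumption like Conjecture~\ref{conjecture:stabilization}.
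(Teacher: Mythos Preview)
Your proof is correct and follows precisely the route the paper intends: the corollary is simply the conditional form of Conjecture~\ref{conjecture:KE-conjecture} (established in \cite{ChenWang09B}, \cite{Wang10} under the tamed-flow hypothesis) combined with Theorem~\ref{theorem:Chen-Wang}, specialized to $\mathrm{dim}(X)=2$. One small slip in your final paragraph: the hypothesis $\mathrm{lct}_{n,2}(X)>2/3$ for all $n$ is \emph{weaker} than $\mathrm{lct}(X)>2/3$, not stronger, since by $(\ref{equation:lct-inf-lct})$ the latter implies the former but the infimum could equal $2/3$ even when each term exceeds it---this is exactly why Corollary~\ref{corollary:Chen-Wang} says more than Theorem~\ref{theorem:KE}.
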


Two-dimensional Fano orbifolds are called del Pezzo surfaces.

\begin{remark}
\label{remark:del-Pezzo} Del Pezzo surfaces with quotient
singularities are not classified (cf. \cite{KeelMcKernan}). But
\begin{itemize}
\item del Pezzo surfaces with canonical singularities are classified (see \cite{HidakaWatanabe}),%

\item del Pezzo surfaces with $2$-Gorenstein quotient singularities are classified (see \cite{AlexeevNikulin}),%

\item del Pezzo surfaces of Picard rank $1$ with $T$-singularities are classified  (see \cite{HackingProkhorov}).%
\end{itemize}
\end{remark}

Del Pezzo surfaces with canonical singularities form a~very
natural class of del Pezzo surfaces.

\begin{problem}
\label{problem:del-Pezzo} Describe all K\"ahler--Einstein del
Pezzo surface with canonical singularities.
\end{problem}

Recall that if $X$ is a~del Pezzo surface with canonical
singularities, then
\begin{itemize}
\item either the~inequality $K_{X}^{2}\geqslant 5$ holds,%
\item or one of the~following possible cases occurs:
\begin{itemize}
\item the~equality $K_{X}^{2}=1$ holds and $X$ is a~sextic surface in $\mathbb{P}(1,1,2,3)$,%
\item the~equality $K_{X}^{2}=2$ holds and $X$ is a~quartic surface in $\mathbb{P}(1,1,1,2)$,%
\item the~equality $K_{X}^{2}=3$ holds and $X$ is a~cubic surface in $\mathbb{P}^{3}$,%
\item the~equality $K_{X}^{2}=4$ holds and $X$ is a~complete intersection in $\mathbb{P}^{4}$ of two quadrics.%
\end{itemize}
\end{itemize}

Let us consider few examples to illustrate the~expected answer to
Problem~\ref{problem:del-Pezzo}.

\begin{example}
\label{example:degree-one} Suppose that $X$ is a~sextic surface in
$\mathbb{P}(1,1,2,3)$ such that its singular locus consists of
singular points of type $\mathbb{A}_{1}$ or $\mathbb{A}_{2}$.
Arguing as in the~proof of \cite[Lemma~4.1]{Ch07b},~we~see~that
$$
\mathrm{lct}_{n,2}\big(X\big)>\frac{2}{3}
$$
for every $n\in\mathbb{N}$. Thus, the~surface $X$ is
K\"ahler--Einstein~by~Corollary~\ref{corollary:Chen-Wang}.
\end{example}

\begin{example}
\label{example:degree-two} Suppose that $X$ is a~quartic surface
in $\mathbb{P}(1,1,1,2)$ such that its singular locus consists of
singular points of type $\mathbb{A}_{1}$ or $\mathbb{A}_{2}$. Then
$X$ is K\"ahler--Einstein by \cite[Theorem~2]{Kollar-Ghigi}.
\end{example}

\begin{example}
\label{example:Ding-Tian} Suppose that $X$ is a~cubic surface in
$\mathbb{P}^{3}$ that is not a~cone.~Then
\begin{itemize}
\item if $X$ is smooth, then $X$ is K\"ahler--Einstein by Theorem~\ref{theorem:Tian},%
\item if $\mathrm{Sing}(X)$ consists of one point of type
$\mathbb{A}_{1}$, then it follows from \cite[Theorem~5.1]{Shi09}
that
$$
\mathrm{lct}_{n,2}\big(X\big)>\frac{2}{3}=\mathrm{lct}_{1}\big(X\big)=\mathrm{lct}\big(X\big)
$$
for every $n\in\mathbb{N}$, which implies that $X$ is K\"ahler--Einstein~by~Corollary~\ref{corollary:Chen-Wang},%
\item if the~cubic surface $X$ has a~singular point that is not a~singular point of type $\mathbb{A}_{1}$ or $\mathbb{A}_{2}$, then the~surface $X$ is not K\"ahler--Einstein by \cite[Proposition~4.2]{Ding-Tian92}.%
\end{itemize}
\end{example}

\begin{example}
\label{example:degree-four} Suppose that $X$ is a~complete
intersection in $\mathbb{P}^{4}$ of two quadrics. Then
\begin{itemize}
\item if $X$ is smooth, then $X$ is K\"ahler--Einstein by Theorem~\ref{theorem:Tian},%

\item if $X$ is K\"ahler--Einstein, then $X$ has at most singular points of type $\mathbb{A}_{1}$ (see \cite{Je97}),%

\item it follows from \cite{MaMu93} or
\cite[Theorem~44]{Kollar-Ghigi} that $X$ is K\"ahler--Einstein if
it is given by
$$
\sum_{i=0}^{4}x_{i}^{2}=\sum_{i=0}^{4}\lambda_{i}x_{i}^{2}=0\subseteq\mathbb{P}^{4}\cong\mathrm{Proj}\Big(\mathbb{C}[x_{0},\ldots,x_{4}]\Big),
$$
and $X$  has at most singular points of type $\mathbb{A}_{1}$,
where $(\lambda_{0}:\lambda_{1}:\lambda_{2}:\lambda_{3}:
\lambda_{4})\in\mathbb{P}^4$.
\end{itemize}
\end{example}

Keeping in mind Examples~\ref{example:degree-one},
\ref{example:degree-two}, \ref{example:Ding-Tian} and
\ref{example:degree-four}, \cite[Example~1.12]{Ch07c} and
\cite[Table~1]{MiyanishiZhang}, it~is~very natural to expect that
the~following answer to Problem~\ref{problem:del-Pezzo} is true
(cf. Example~\ref{example:P123}).

\begin{conjecture}
\label{conjecture:del-Pezzo} If the~orbifold $X$ is a~del Pezzo
surface with at most canonical singularities, then the~surface $X$
is K\"ahler--Enstein $\iff$ it satisfies one of the~following
conditions:
\begin{itemize}
\item $K_{X}^{2}=1$ and $\mathrm{Sing}(X)$ consists of points of type $\mathbb{A}_{1}$, $\mathbb{A}_{2}$, $\mathbb{A}_{3}$, $\mathbb{A}_{4}$, $\mathbb{A}_{5}$, $\mathbb{A}_{6}$, $\mathbb{A}_{7}$ or $\mathbb{D}_{4}$,%
\item $K_{X}^{2}=2$ and $\mathrm{Sing}(X)$ consists of points of type $\mathbb{A}_{1}$, $\mathbb{A}_{2}$ or $\mathbb{A}_{3}$,%
\item $K_{X}^{2}=3$ and $\mathrm{Sing}(X)$ consists of points of type $\mathbb{A}_{1}$ or $\mathbb{A}_{2}$,%
\item $K_{X}^{2}=4$ and $\mathrm{Sing}(X)$ consists of points of type $\mathbb{A}_{1}$,%
\item the~surface $X$ is smooth and $6\geqslant K_{X}^{2}\geqslant 5$,%
\item either $X\cong\mathbb{P}^{2}$ or $X\cong\mathbb{P}^{1}\times \mathbb{P}^{1}$.%
\end{itemize}
\end{conjecture}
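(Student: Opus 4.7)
The plan is to separate the equivalence into its two implications and treat each case from the classification recalled just before Problem~\ref{problem:del-Pezzo} independently, aiming to reduce each one either to a concrete $\alpha$-invariant estimate or to an explicit obstruction computation.

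For the direction ``if one of the listed conditions holds then $X$ is K\"ahler--Einstein'', I would appeal to Corollary~\ref{corollary:Chen-Wang}, so that it suffices to prove $\mathrm{lct}_{n,2}(X) > 2/3$ for every $n \in \mathbb{N}$. In every case where I could prove the strict inequality $\mathrm{lct}(X) > 2/3$, I would be done via (\ref{equation:lct-inf-lct}). When instead $\mathrm{lct}(X) = 2/3$, I would invoke Remark~\ref{remark:lct-lct-n-2} together with the finiteness statement in Remark~\ref{remark:semi-continuity} to upgrade to a strict $\mathrm{lct}_{n,2}(X) > 2/3$, by classifying all effective $\mathbb{Q}$-divisors $D \sim_{\mathbb{Q}} -K_X$ that compute $\mathrm{lct}(X)$ and checking that they do not fit into a pencil. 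The strategy for bounding $\mathrm{lct}(X)$ is, for each fixed singularity configuration, to work on the minimal resolution and carry out a local--global comparison: near each Du Val singularity analyze the multiplier ideals using the local inequality on surfaces advertised in the abstract, and near smooth points use standard log canonical threshold bounds for plane curves. The degrees $K_X^2 \in \{2,3,4\}$, the smooth cases $K_X^2\in\{5,6\}$, and $\mathbb{P}^{2}$, $\mathbb{P}^{1}\times\mathbb{P}^{1}$ are handled by references already cited in Examples~\ref{example:degree-two}, \ref{example:Ding-Tian}, \ref{example:degree-four}; the genuinely new work, and the paper's principal contribution, is the degree-one sextic case.

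For the converse implication, I would argue case by case using known obstructions. In degree $3$ with a singularity worse than $\mathbb{A}_2$, non-existence follows from Ding--Tian as in Example~\ref{example:Ding-Tian}. In degrees $4$ and $2$, one combines the classification with the restriction on singularities appearing in Example~\ref{example:degree-four} and with \cite[Proposition~4.2]{Ding-Tian92} together with a direct computation of the Futaki invariant or of $\mathrm{Aut}(X)$ (using Matsushima-type reductivity). In degree one the excluded singularity profiles $\mathbb{A}_{8}, \mathbb{D}_{5},\ldots,\mathbb{E}_{8}$ would be ruled out individually by exhibiting a one-parameter subgroup of $\mathrm{Aut}(X)$ with negative Futaki invariant, or by slope/K-instability in the spirit of \cite{RossThomas} (cf.\ Example~\ref{example:P123}).

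The main obstacle is the existence direction at the extremes of the degree-one list. Already for $\mathbb{A}_6$ the $(-2)$-configuration on the resolution admits enough effective $\mathbb{Q}$-divisors to force $\mathrm{lct}(X) = 2/3$, so Theorem~\ref{theorem:KE} alone does not apply and one must instead prove the strict pencil bound $\mathrm{lct}_{n,2}(X) > 2/3$ for every $n$. This is precisely where the local surface inequality from the abstract is needed: it has to be sharp enough to exclude any one-parameter family of effective $\mathbb{Q}$-divisors that achieves the threshold. The same difficulty compounds for $\mathbb{A}_{7}$ and $\mathbb{D}_{4}$, where the log canonical computation on the resolution becomes tight and the combinatorics of $(-2)$-curves leave less room in the local inequality; I expect this is the reason the paper proves the assertion only up to $\mathbb{A}_{6}$, leaving $\mathbb{A}_{7}$ and $\mathbb{D}_{4}$ as part of Conjecture~\ref{conjecture:del-Pezzo} rather than of a theorem.
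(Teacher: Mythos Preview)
The statement you are addressing is labeled \emph{Conjecture}~\ref{conjecture:del-Pezzo} in the paper, and the paper does not supply a proof of it; there is therefore no ``paper's own proof'' against which to compare your proposal. What the paper actually establishes is the partial result Theorem~\ref{theorem:auxiliary} (and hence Corollary~\ref{corollary:main}), namely the existence direction in degree one under the restriction that all singular points are of type $\mathbb{A}_1,\ldots,\mathbb{A}_6$. You recognize this yourself in your final paragraph.

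For the portion of the conjecture that the paper does handle, your outline matches the paper's method closely: the reduction to $\mathrm{lct}_{n,2}(X)>2/3$ via Corollary~\ref{corollary:Chen-Wang}, the use of Remark~\ref{remark:lct-lct-n-2} to pass from $\mathrm{lct}(X)=2/3$ to a strict pencil bound by showing the threshold is attained by only finitely many divisors, and the local analysis on the minimal resolution driven by the inequality of Theorem~\ref{theorem:I}. So on that part your plan is essentially the paper's.

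Where your proposal goes beyond the paper --- the $\mathbb{A}_7$ and $\mathbb{D}_4$ existence cases, and the entire non-existence direction --- it is a sketch of intentions rather than an argument. In particular, your suggestion to rule out $\mathbb{A}_8,\mathbb{D}_5,\ldots,\mathbb{E}_8$ in degree one ``by exhibiting a one-parameter subgroup of $\mathrm{Aut}(X)$ with negative Futaki invariant, or by slope/K-instability'' is not substantiated: most of these surfaces have finite automorphism group, so no such one-parameter subgroup exists, and the relevant K-instability computations are not in the literature cited. Likewise the degree-two and degree-four non-existence claims are not covered by the references you invoke. These are genuine open problems, which is precisely why the paper records the full statement as a conjecture.
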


In this paper, we prove the~following result.

\begin{theorem}
\label{theorem:auxiliary}  Suppose that $X$ is a~sextic surface in
$\mathbb{P}(1,1,2,3)$. Then
$$
\mathrm{lct}_{n,2}\big(X\big)>\frac{2}{3}
$$
for every $n\in\mathbb{N}$ if $\mathrm{Sing}(X)$ consists of
points of type $\mathbb{A}_{1}$, $\mathbb{A}_{2}$,
$\mathbb{A}_{3}$, $\mathbb{A}_{4}$, $\mathbb{A}_{5}$ or
$\mathbb{A}_{6}$.
\end{theorem}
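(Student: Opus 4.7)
My plan is to argue by contradiction. Suppose that for some $n\in\mathbb{N}$ there is a pencil $\mathcal{B}\subseteq|-nK_X|$ with $c(X,\mathcal{B}/n)\leqslant 2/3$. Taking two general members $B_1,B_2\in\mathcal{B}$ and setting $D_i=B_i/n\sim_{\mathbb{Q}}-K_X$, the standard interpretation of the log canonical threshold of a linear system via two general divisors (cf.\ \cite[Theorem~4.8]{Ko97}) forces the pair $(X,\tfrac{1}{3}(D_1+D_2))$ to fail to be log canonical at some point $P\in X$. Since $K_{X}^{2}=1$ one has $D_{1}\cdot D_{2}=1$, and $D_{1},D_{2}$ share no common component because a general pencil on a surface is not composite.

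If $P$ is a smooth point of $X$ I would exploit the anticanonical pencil $|-K_X|$ of the degree-one del Pezzo. Provided $P$ is not the unique base point of $|-K_X|$, one finds an irreducible member $T\in|-K_X|$ through $P$, and, assuming $T\not\subseteq\mathrm{Supp}(D_i)$, the identity $T\cdot D_i=1$ gives $\mathrm{mult}_P(D_i)\leqslant 1$. Combined with $(D_1\cdot D_2)_P\leqslant D_1\cdot D_2=1$ and the standard surface local inequality for non-log-canonicity of pairs of divisors at a smooth point, this forces a contradiction. The base point of $|-K_X|$, and the degenerate situation where one of the $D_i$ contains $T$, are handled by the same intersection technology after a single blowup.

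The bulk of the work is the case where $P$ is a singular point of type $\mathbb{A}_k$ for some $1\leqslant k\leqslant 6$. I would pass to the minimal resolution $\sigma:\widetilde X\to X$ and decompose $\sigma^{*}D_{i}=\widetilde D_{i}+\sum_{j=1}^{k}a_{ij}E_{j}$, where $E_1,\ldots,E_k$ is the chain of $(-2)$-curves above $P$; the coefficients $a_{ij}$ are determined combinatorially by the $\mathbb{A}_k$ intersection pattern. Here one applies the novel local inequality at an $\mathbb{A}_k$ singularity that the paper is designed around: it should give a lower bound for a weighted combination involving $(\widetilde D_1\cdot\widetilde D_2)$ and the intersection multiplicities of the $\widetilde D_{i}$ with the exceptional chain, in terms of the $a_{ij}$. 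Combined with the global identity $\sigma^{*}D_{1}\cdot\sigma^{*}D_{2}=1$, this produces a contradiction as long as $k\leqslant 6$; the bound degenerates at $k=7$, matching the threshold for K\"ahler--Einsteinness predicted by Conjecture~\ref{conjecture:del-Pezzo}. The main obstacle I expect is the $\mathbb{A}_6$ case, where the estimate is essentially sharp, so every step in tracking multiplicities along the $(-2)$-chain must be done efficiently, and one must exploit the full strength of the global constraint $D_1\cdot D_2=1$ together with the anticanonical geometry of $X$.
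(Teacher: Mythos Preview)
Your approach differs substantially from the paper's, and it has genuine gaps.

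The paper never works with two general members of a pencil. Instead it proves a much finer statement about \emph{single} effective $\mathbb{Q}$-divisors $D\sim_{\mathbb{Q}}-K_X$: it computes $\mathrm{lct}(X)$ exactly (Theorems~\ref{theorem:main-single-point} and~\ref{theorem:main-many-points}), and when $\mathrm{lct}(X)=2/3$ it shows that the divisor $D$ attaining $c(X,D)=2/3$ is unique up to a finite explicit list. Theorem~\ref{theorem:auxiliary} then follows immediately from Remark~\ref{remark:lct-lct-n-2}: if only finitely many $D\sim_{\mathbb{Q}}-K_X$ realise the infimum, no pencil can realise it, so $\mathrm{lct}_{n,2}(X)>\mathrm{lct}(X)\geqslant 2/3$. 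The heavy lifting is the single-divisor analysis on the minimal resolution, using the convexity trick (Remark~\ref{remark:convexity}) to strip explicit curves out of $\mathrm{Supp}(D)$ and then applying the new inequality (Theorem~\ref{theorem:I}, via Corollary~\ref{corollary:Dimitra}) at the intersection point of two adjacent exceptional $(-2)$-curves.

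Your route has two concrete problems. First, two general members of a pencil in $|-nK_X|$ \emph{can} share components: any fixed part of the pencil lies in both $B_1$ and $B_2$, so the assertion ``$D_1,D_2$ share no common component'' is unjustified, and the key identity $D_1\cdot D_2=1$ loses its force. Second, you have misread what Theorem~\ref{theorem:I} does. It is not an inequality relating two competing divisors $D_1,D_2$; it bounds the local intersection numbers $\mathrm{mult}_O(D\cdot\Delta_1)$ and $\mathrm{mult}_O(D\cdot\Delta_2)$ of a \emph{single} effective divisor $D$ against two fixed curves $\Delta_1,\Delta_2$ meeting transversally at $O$ (in the application, consecutive exceptional curves $E_k,E_{k+1}$). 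There is no ``weighted combination involving $(\widetilde D_1\cdot\widetilde D_2)$'' to invoke, so the crucial step of your singular-point argument is not supported by the tool you appeal to. Without a replacement for that step, the proposal does not go through for $\mathbb{A}_4,\mathbb{A}_5,\mathbb{A}_6$, which is precisely where the paper needs its new inequality.
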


\begin{corollary}
\label{corollary:main} Suppose that $X$ is a~sextic surface in
$\mathbb{P}(1,1,2,3)$ such that its singular locus consists of
singular points of type  $\mathbb{A}_{1}$, $\mathbb{A}_{2}$,
$\mathbb{A}_{3}$, $\mathbb{A}_{4}$, $\mathbb{A}_{5}$ or
$\mathbb{A}_{6}$. Then $X$ is K\"ahler--Enstein.
\end{corollary}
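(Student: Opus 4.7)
The plan is to deduce Corollary~\ref{corollary:main} directly from Theorem~\ref{theorem:auxiliary} together with Corollary~\ref{corollary:Chen-Wang}; the rest is a routine verification that the hypotheses of the latter are met.

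First I would observe that every sextic $X\subset\mathbb{P}(1,1,2,3)$ satisfying the hypothesis is a two-dimensional Fano orbifold. Indeed, by adjunction on a weighted hypersurface one has $K_X^2=1$ with $-K_X$ ample, so $X$ is a del Pezzo surface of degree one in the sense of the paper. Moreover each singularity of type $\mathbb{A}_k$ is analytically isomorphic to the cyclic quotient $\frac{1}{k+1}(1,k)$, hence a quotient singularity. Thus $X$ is a two-dimensional Fano orbifold, which is exactly the setting of Corollary~\ref{corollary:Chen-Wang} (itself a consequence of Conjecture~\ref{conjecture:KE-conjecture} under the tameness assumption, which holds in dimension two by Theorem~\ref{theorem:Chen-Wang}).

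Second, Theorem~\ref{theorem:auxiliary} supplies exactly the numerical input required: under our assumption on $\mathrm{Sing}(X)$ one has
\[
\mathrm{lct}_{n,2}(X)>\frac{2}{3}
\]
for every positive integer $n$. Feeding this into Corollary~\ref{corollary:Chen-Wang} immediately yields that $X$ is Kähler--Einstein, which is the claim.

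The real difficulty, which I would not attempt here because it lives inside Theorem~\ref{theorem:auxiliary} and not in its corollary, is to bound the log canonical threshold of every pencil $\mathcal{B}\subset|-nK_X|$ uniformly from below by $2/3$; the main obstacle there is to rule out, using the new local inequality for divisors on surfaces announced in the abstract, that two members of such a pencil could conspire to produce a non-log-canonical pair at an $\mathbb{A}_k$ singularity. From the viewpoint of the corollary itself, however, the proof is just the two-line concatenation of Theorem~\ref{theorem:auxiliary} and Corollary~\ref{corollary:Chen-Wang}.
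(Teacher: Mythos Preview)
Your proof is correct and follows exactly the route the paper intends: the corollary is stated immediately after Theorem~\ref{theorem:auxiliary} precisely because it is the concatenation of that theorem with Corollary~\ref{corollary:Chen-Wang}, once one notes that $\mathbb{A}_k$ singularities are quotient singularities so that $X$ is a two-dimensional Fano orbifold. Your additional remarks verifying the Fano orbifold hypothesis are appropriate and the paper leaves them implicit.
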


It should be pointed out that Corollary~\ref{corollary:main} and
Examples~\ref{example:degree-one}, \ref{example:degree-two},
\ref{example:Ding-Tian}, \ref{example:degree-four} illustrate
a~general philosophy that the~existence of
K\"ahler--Enstein~metrics on Fano orbifolds is~related~to
an~algebro-geometric notion of stability (see
\cite[Theorem~4.1]{Ding-Tian92},~\cite{Ti97},~\cite{Donaldson}).

\begin{remark}
\label{remark:DP1-SING} If  $X$ is a~sextic surface in
$\mathbb{P}(1,1,2,3)$ with canonical singularities, then either
$$
\mathrm{Sing}\big(X\big)\in\left\{\aligned &\mathbb{E}_8, \mathbb{E}_7, \mathbb{E}_7+\mathbb{A}_1, \mathbb{E}_6, \mathbb{E}_6+\mathbb{A}_2,\mathbb{E}_6+\mathbb{A}_1, \mathbb{D}_8, \mathbb{D}_7, \mathbb{D}_6, \mathbb{D}_6+\mathbb{A}_1+\mathbb{A}_1, \mathbb{D}_6+\mathbb{A}_1,\\%
&\mathbb{D}_5, \mathbb{D}_5+\mathbb{A}_3, \mathbb{D}_5+\mathbb{A}_2, \mathbb{D}_5+\mathbb{A}_1+\mathbb{A}_1, \mathbb{D}_5+\mathbb{A}_1, \mathbb{D}_4, \mathbb{D}_4+\mathbb{D}_4, \mathbb{D}_4+\mathbb{A}_3, \mathbb{D}_4+\mathbb{A}_2,\\%
&\mathbb{D}_4+\mathbb{A}_1+\mathbb{A}_1+\mathbb{A}_1+\mathbb{A}_1, \mathbb{D}_4+\mathbb{A}_1+\mathbb{A}_1+\mathbb{A}_1, \mathbb{D}_4+\mathbb{A}_1+\mathbb{A}_1, \mathbb{D}_4+\mathbb{A}_1, \mathbb{A}_8,\\%
& \mathbb{A}_7, \mathbb{A}_7+\mathbb{A}_1, \mathbb{A}_6, \mathbb{A}_6+\mathbb{A}_1, \mathbb{A}_5, \mathbb{A}_5+\mathbb{A}_1, \mathbb{A}_5+\mathbb{A}_1+\mathbb{A}_1, \mathbb{A}_5+\mathbb{A}_2, \mathbb{A}_5+\mathbb{A}_2+\mathbb{A}_1,\\%
&\mathbb{A}_4, \mathbb{A}_4+\mathbb{A}_4, \mathbb{A}_4+\mathbb{A}_3, \mathbb{A}_4+\mathbb{A}_2+\mathbb{A}_1, \mathbb{A}_4+\mathbb{A}_2, \mathbb{A}_4+\mathbb{A}_1+\mathbb{A}_1, \mathbb{A}_4+\mathbb{A}_1,\\%
&\mathbb{A}_3, \mathbb{A}_3+\mathbb{A}_3, \mathbb{A}_3+\mathbb{A}_3+\mathbb{A}_1+\mathbb{A}_1, \mathbb{A}_3+\mathbb{A}_2, \mathbb{A}_3+\mathbb{A}_2+\mathbb{A}_1, \mathbb{A}_3+\mathbb{A}_2+\mathbb{A}_1+\mathbb{A}_1, \\%
&\mathbb{A}_3+\mathbb{A}_1+\mathbb{A}_1+\mathbb{A}_1+\mathbb{A}_1, \mathbb{A}_3+\mathbb{A}_1+\mathbb{A}_1+\mathbb{A}_1, \mathbb{A}_3+\mathbb{A}_1+\mathbb{A}_1, \mathbb{A}_3+\mathbb{A}_1 \\%
\endaligned\right\}
$$
or $\mathrm{Sing}(X)$ consists only of points of type
$\mathbb{A}_{1}$ and $\mathbb{A}_{2}$ (see \cite{Urabe}).
\end{remark}

What is known about $\alpha$-invariants of del Pezzo surfaces with
canonical singularities?

\begin{theorem}[{\cite{Ch07b}}]
\label{theorem:GAFA} If $X$ is a~smooth del Pezzo surface, then
$\mathrm{lct}(X)=\mathrm{lct}_{1}(X)$.
\end{theorem}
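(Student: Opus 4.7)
The easy direction $\mathrm{lct}(X)\leqslant\mathrm{lct}_{1}(X)$ is immediate from the~definitions, since every $B\in|-K_{X}|$ is a~candidate in the~infimum defining $\mathrm{lct}(X)$. Write $\lambda_{0}=\mathrm{lct}_{1}(X)$; the~content of the~theorem is that
$$
\mathrm{c}\Bigl(X,\tfrac{1}{n}B\Bigr)\geqslant\lambda_{0}
$$
for every $n\geqslant 1$ and every $B\in|-nK_{X}|$. My plan is to fix such~$n$ and~$B$, set $D=\tfrac{1}{n}B\sim_{\mathbb{Q}}-K_{X}$, assume that $(X,\lambda_{0}D)$ fails to be log canonical at some point $p\in X$, and derive a~contradiction.

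Using the~Koll\'ar--Shokurov connectedness theorem together with two-dimensional adjunction, I~would reduce to one of two configurations at~$p$: either (A)~some irreducible curve $C\ni p$ satisfies $\mathrm{mult}_{C}(D)>1/\lambda_{0}$, or (B)~the~non-klt locus of $(X,\lambda_{0}D)$ at~$p$ is the~isolated point~$p$, in which case the~standard Skoda-type bound on smooth surfaces yields $\mathrm{mult}_{p}(D)>1/\lambda_{0}$.

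In case~(A), write $D=aC+D^{\prime}$ with $a>1/\lambda_{0}$, $D^{\prime}$ effective and $C\not\subset\mathrm{Supp}(D^{\prime})$. The~relation $aC+D^{\prime}\sim_{\mathbb{Q}}-K_{X}$, combined with $-K_{X}\cdot C\geqslant 0$ and the~adjunction formula on~$C$, sharply restricts the~possibilities for~$C$; essentially $C$ must be a~smooth rational curve of low self-intersection, arising as a~component of a~reducible member of $|-K_{X}|$. For each such~$C$ I~would exhibit an~explicit integral $B_{0}\in|-K_{X}|$ containing~$C$ with $\mathrm{c}(X,B_{0})\leqslant 1/a<\lambda_{0}$, contradicting the~definition of~$\lambda_{0}$.

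In case~(B), I~would pick an~explicit curve $Z\in|-K_{X}|$ through~$p$ whose singularity at~$p$ is as large as permitted by the~geometry of~$X$. When $Z$ is irreducible and $Z\not\subset\mathrm{Supp}(D)$, the~inequality
$$
K_{X}^{2}=D\cdot Z\geqslant\mathrm{mult}_{p}(D)\cdot\mathrm{mult}_{p}(Z)>\frac{\mathrm{mult}_{p}(Z)}{\lambda_{0}}
$$
contradicts the~known value of~$\lambda_{0}$ for smooth del Pezzo surfaces. If $Z$ shares a~component with~$D$, that component is first extracted via case~(A) and the~argument is restarted on the~residual divisor.

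The~main obstacle is the~degree-by-degree bookkeeping, i.e.~carrying this strategy through for each $K_{X}^{2}\in\{1,\ldots,9\}$ using the~classification of smooth del Pezzo surfaces, the~sharp value of~$\lambda_{0}$, and a~careful description of the~singular members of $|-K_{X}|$. The~degree-one case is the~tightest, because $|-K_{X}|$ has a~base point and its members develop a~wide spectrum of singularities, so the~intersection bound above has essentially no slack.
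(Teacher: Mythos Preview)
The paper does not supply a proof of this statement; it is quoted from \cite{Ch07b}, so there is no in-paper argument to compare your proposal against. Your outline follows the broad strategy of that reference --- reduce to a single non-lc point, use the convexity trick to peel anticanonical curves off $\mathrm{Supp}(D)$, then bound multiplicities against a well-chosen $Z\in|-K_{X}|$ --- and that architecture is correct.

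Case~(B) as written has a real gap, however. The inequality $K_{X}^{2}=D\cdot Z>\mathrm{mult}_{p}(Z)/\lambda_{0}$ is simply not a contradiction in several degrees. On a smooth cubic with no Eckardt points one has $\lambda_{0}=3/4$, and at a general point the tangent hyperplane section gives only $\mathrm{mult}_{p}(Z)=2$, yielding the vacuous $3>8/3$; degree~$4$ behaves the same way. The proof in \cite{Ch07b} does not stop at a single intersection estimate: one blows up $p$, locates the non-klt centre on the exceptional divisor, and applies adjunction (Lemma~\ref{lemma:adjunction}) there, sometimes iterating. Your fallback for $Z\subset\mathrm{Supp}(D)$ is also off: a component of $D$ need not have coefficient exceeding $1/\lambda_{0}$, so case~(A) does not apply to it; the correct device is Remark~\ref{remark:convexity}, which lets you assume $Z\not\subset\mathrm{Supp}(D)$ at the cost of replacing $D$ by another effective divisor in the same $\mathbb{Q}$-class. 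Without the blow-up-and-adjunction step the argument does not close in degrees~$3$ and~$4$, so what you have is the right scaffolding but not yet a proof.
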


\begin{theorem}[{\cite{Ch07b}, \cite{Park-Won}}]
\label{theorem:degree-3-4-5-6-7-8} If $X$ is a~del Pezzo surface
with canonical singularities, then
$$
\mathrm{lct}\big(X\big)=\mathrm{lct}_{1}\big(X\big)
$$
in the~case when $K_{X}^{2}\geqslant 3$.
\end{theorem}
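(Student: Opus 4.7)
The plan is to prove $\mathrm{lct}(X)\geqslant\mathrm{lct}_{1}(X)$; the reverse inequality is tautological from the definitions. I would proceed by contradiction, using the Hidaka--Watanabe classification of del Pezzo surfaces with canonical singularities (see Remark~\ref{remark:del-Pezzo}) to handle the cases $d:=K_X^2\in\{3,4,\ldots,9\}$ separately. For $d\geqslant 5$ only a short list of surfaces appears and $\mathrm{lct}_{1}(X)$ is directly computable; for $d=3$ one recovers Example~\ref{example:del-pezzo-degree-3} together with its singular analogues (catalogued via the Bruce--Wall list of ADE configurations on cubics), and for $d=4$ one does the corresponding case analysis on intersections of two quadrics in $\mathbb{P}^4$. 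In each instance one identifies the ``worst'' anticanonical member realising $\mathrm{lct}_{1}(X)$ (e.g.\ a tritangent plane through an Eckardt point, the tangent cone at a rational double point, etc.).

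Now suppose for contradiction that there exist $n\geqslant 1$ and an effective $\mathbb{Q}$-divisor $D\sim_{\mathbb{Q}}-K_X$ with $\mu:=\mathrm{c}(X,D)<\mathrm{lct}_{1}(X)$. Then $(X,\mu D)$ is strictly log canonical at some point $P\in X$, with a minimal log canonical centre $V\ni P$. If $V$ is a curve, then $a:=\mathrm{mult}_V(D)\geqslant 1/\mu$, and writing $D=aV+D'$ with $V\not\subset\mathrm{Supp}(D')$ and intersecting with $V$ gives $aV^2\leqslant -K_X\cdot V$. This forces $V$ to be one of finitely many negative-self-intersection curves, each of which can be handled by comparing $\mathrm{c}(X,aV+D')\leqslant a^{-1}$ directly against the explicit value of $\mathrm{lct}_{1}(X)$ coming from a specific anticanonical section supported near $V$. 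If $V=\{P\}$ with $P$ smooth, the non-klt hypothesis forces $\mathrm{mult}_P(D)>1/\mu$; intersecting with a low-degree anticanonical curve $\Gamma$ through $P$ yields $-K_X\cdot\Gamma=D\cdot\Gamma\geqslant\mathrm{mult}_P(D)\cdot\mathrm{mult}_P(\Gamma)$, which combined with the standard refined surface multiplicity bounds from iterated blow-ups at $P$ becomes incompatible with $\mu<\mathrm{lct}_{1}(X)$.

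The main obstacle is the remaining case $V=\{P\}$ with $P$ a canonical singular point of $X$. Here I would pass to the minimal resolution $\pi\colon\widetilde X\to X$, exploit crepancy ($\pi^*K_X=K_{\widetilde X}$) to transfer the pair to $\widetilde X$, and analyse the coefficients of $\pi^*D$ along the exceptional chain of $(-2)$-curves using the Koll\'ar--Shokurov connectedness principle. The subcases are indexed by the dual graph of the singularity, and bounding each coefficient requires carefully chosen test curves (proper transforms of lines, conics, or higher anticanonical sections through $P$). The restriction $K_X^2\geqslant 3$ enters precisely here: only then does $|-K_X|$ supply a sufficiently rich family of irreducible curves of small multiplicity at every singular point to close every subcase uniformly, which is why the theorem fails below degree three (and why the separate arguments of the present paper, built around the sharper invariant $\mathrm{lct}_{n,2}$, are needed in degree one).
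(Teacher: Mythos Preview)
This theorem is not proved in the present paper: it is quoted from \cite{Ch07b} and \cite{Park-Won}, so there is no ``paper's own proof'' to compare your proposal against. Your outline is broadly consonant with the methods actually used in those references (contradiction, case analysis by degree, passage to the minimal resolution over canonical singular points, and multiplicity/intersection bounds against well-chosen anticanonical or sub-anticanonical curves), and in particular the machinery of Remark~\ref{remark:convexity}, Lemma~\ref{lemma:adjunction}, and Theorem~\ref{theorem:connectedness} from this paper is precisely the toolkit that drives those arguments.

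That said, your sketch is only a plan and not yet a proof, and one step is stated too loosely to pass as written. When the minimal centre $V$ is a curve, you write ``$aV^{2}\leqslant -K_{X}\cdot V$'' and conclude $V$ has negative self-intersection; but this is the wrong direction: from $D=aV+D'$ with $D'\geqslant 0$ and $V\not\subset\mathrm{Supp}(D')$ one gets $aV^{2}\leqslant D\cdot V=-K_{X}\cdot V$, which is vacuous when $V^{2}\geqslant 0$. The correct move (and the one used in \cite{Ch07b}, \cite{Park-Won}) is, when $V^{2}\geqslant 0$, to apply Remark~\ref{remark:convexity} to a specific $\bar{D}\sim_{\mathbb{Q}}-K_{X}$ supported on $V$ (or containing $V$) so as to assume $V\not\subset\mathrm{Supp}(D)$ from the outset; curves with $V^{2}<0$ are then the only ones that can carry large coefficients, and on a del Pezzo with canonical singularities these are exactly the images of $(-2)$-curves, handled via the resolution as you describe. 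With that correction your outline matches the literature.
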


\begin{theorem}[{\cite{Park-Won}}]
\label{theorem:degree-2} If $X$ is a~quartic surface in
$\mathbb{P}(1,1,1,2)$ with canonical singularities, then
$$
\mathrm{lct}\big(X\big)=\left\{\aligned
&\mathrm{lct}_{2}\big(X\big)=1/3\ \text{if $X$ has a~singular point of type $\mathbb{A}_{7}$},\\
&\mathrm{lct}_{2}\big(X\big)=2/5\ \text{if $X$ has a~singular point of type $\mathbb{A}_{6}$},\\
&\mathrm{lct}_{1}\big(X\big)\ \text{in the~remaining cases}.\\
\endaligned
\right.
$$
\end{theorem}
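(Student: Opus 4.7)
The plan is to exploit the fact that any quartic $X\subset\mathbb{P}(1,1,1,2)$ is a double cover $\pi\colon X\to\mathbb{P}^2$ branched along a plane quartic $B$, with $-K_X\sim\pi^{\ast}\mathcal{O}_{\mathbb{P}^2}(1)$, and that the canonical singularities of $X$ correspond under $\pi$ bijectively to simple singularities of $B$ of the same $ADE$ type. Accordingly the argument splits into three cases, organised by the presence of an $\mathbb{A}_7$ point, of an $\mathbb{A}_6$ point, or neither.

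\medskip

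For the $\mathbb{A}_7$ case the branch curve is forced, by the inequality $\delta(\mathbb{A}_7)=4>3=p_a(B)$, to be reducible; an intersection count then pins $B$ down as the union $C_1+C_2$ of two smooth conics having contact of order four at $p=\pi(P)$. I would exhibit the divisor $B_0\in|-2K_X|$ cut out by the vanishing of the weight-two coordinate $w$ of $\mathbb{P}(1,1,1,2)$; since $\pi^{\ast}B=2B_0$, the divisor $B_0$ coincides with the ramification curve and maps isomorphically to $B$. A direct discrepancy computation on the minimal resolution of the $\mathbb{A}_7$ point, using the chain $E_1,\ldots,E_7$ of exceptional $(-2)$-curves and the positions where the strict transform of $B_0$ meets this chain, yields $c(X,B_0/2)=1/3$, hence $\mathrm{lct}_2(X)\leqslant1/3$; the matching lower bound $\mathrm{lct}(X)\geqslant 1/3$ is then obtained by Shokurov-type inversion of adjunction applied to a hypothetical extremal $\mathbb{Q}$-divisor on $\widetilde{X}$. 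The $\mathbb{A}_6$ case is structurally identical, with $B$ now forced to be an irreducible rational quartic carrying its unique $\mathbb{A}_6$ cusp (possible since $\delta(\mathbb{A}_6)=3=p_a$), and the analogous calculation on the chain of six $(-2)$-curves replaces $1/3$ by $2/5$.

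\medskip

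In every remaining singularity configuration one must prove $\mathrm{lct}(X)=\mathrm{lct}_1(X)$, which I would do by contradiction. If an effective $\mathbb{Q}$-divisor $D\sim_{\mathbb{Q}}-K_X$ had $c=c(X,D)<\mathrm{lct}_1(X)$, the minimal non-klt centre of $(X,cD)$ cannot be a curve (since $(-K_X)\cdot C\leqslant 2$ for every irreducible curve $C$, adjunction on such a centre forces a contradiction) and must therefore be a point $P$. The local multiplicity bound $\mathrm{mult}_P(D)\geqslant 1/c$ combined with $D^2=2$ then produces an element of $|-K_X|$ through $P$ whose log canonical threshold is at most $c$, contradicting the definition of $\mathrm{lct}_1(X)$. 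The only discrepancy configurations on the minimal resolution capable of realising a threshold strictly below $\mathrm{lct}_1(X)$ are precisely the long $(-2)$-chains coming from $\mathbb{A}_6$ or $\mathbb{A}_7$ singularities, which are excluded by hypothesis. The main obstacle is exactly this case analysis: one has to run through each entry of the Urabe-type classification of canonical singularities on degree-two del Pezzo surfaces, identify in each case a divisor in $|-K_X|$ that realises $\mathrm{lct}_1(X)$, and verify on the minimal resolution that no $\mathbb{Q}$-divisor linearly equivalent to $-K_X$ can drop the threshold below it.
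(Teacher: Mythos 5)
First, a remark on scope: the paper does not prove Theorem~\ref{theorem:degree-2} at all --- it is quoted from \cite{Park-Won} --- so there is no internal proof to compare yours against; I can only assess your proposal on its own terms. It has a genuine gap at the one point where it commits to a computation, namely the identification of the extremal divisor. In the $\mathbb{A}_7$ case the branch quartic is indeed $B=C_1+C_2$, two conics with contact of order four at $p$, and $B_0=\{w=0\}=R_1+R_2$ with each $R_i\sim_{\mathbb{Q}}-K_X$; but on the minimal resolution $\pi\colon\bar{X}\to X$ with exceptional chain $E_1,\dots,E_7$ the strict transforms $\bar{R}_1,\bar{R}_2$ both meet the \emph{middle} curve $E_4$ transversally at two distinct points (write $X=\{w^2=uv\}$ with $v=u-\gamma(x)$, $\mathrm{ord}_x\gamma=4$; on $\{w=u=0\}$ the two coordinates of the model $ab=x^{8}$ each vanish to order exactly $4$). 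Hence
$$
\frac{1}{2}\pi^{*}\big(B_0\big)=\frac{1}{2}\bar{R}_1+\frac{1}{2}\bar{R}_2+\frac{1}{2}E_1+E_2+\frac{3}{2}E_3+2E_4+\frac{3}{2}E_5+E_6+\frac{1}{2}E_7,
$$
whose largest coefficient is $2$, so that $\mathrm{c}(X,B_0/2)=1/2$ and not $1/3$. The same failure occurs for $\mathbb{A}_6$: there $\bar{B}_0$ is a smooth curve through $E_3\cap E_4$ and the coefficients are $(\tfrac12,1,\tfrac32,\tfrac32,1,\tfrac12)$, giving $\mathrm{c}(X,B_0/2)=4/7>2/5$. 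So your proposed divisors do not establish the upper bounds $\mathrm{lct}_2(X)\leqslant 1/3$ and $\leqslant 2/5$; the extremal members of $|-2K_X|$ live elsewhere and must be hunted down among low-degree curves whose strict transforms meet the $(-2)$-chain away from its centre, taken with multiplicity --- exactly as in the degree-one analogues in this paper (Lemmas~\ref{lemma:single-point-A7-irrreducible} and~\ref{lemma:single-point-A8}, where the extremal divisors $L_2+2L_3$ and $3L_3$ are built from $(-1)$-curves on the weak del Pezzo surface, not from the ramification divisor).

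The remaining parts are gestures rather than arguments. ``Shokurov-type inversion of adjunction applied to a hypothetical extremal divisor'' does not by itself yield the lower bound: the actual work consists in writing $\pi^{*}(D)=\bar{D}+\sum a_iE_i$, extracting the linear constraints $\bar{D}\cdot E_i\geqslant 0$, $\bar{D}\cdot\bar{C}\geqslant 0$ and $\bar{D}\cdot\bar{L}\geqslant 0$ for auxiliary curves $L$ removed from $\mathrm{Supp}(D)$ via Remark~\ref{remark:convexity}, and then excluding every candidate non-klt point on the chain --- the machinery of $(\ref{equation:cyclic-orbifolds})$, Lemma~\ref{lemma:cyclic-orbifolds-blow-up} and Corollary~\ref{corollary:Dimitra} that fills Sections~\ref{section:cyclic-orbifolds}--\ref{section:orbifolds-with-many-singular-points} in the degree-one case. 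In the ``remaining cases'' your step ``$\mathrm{mult}_P(D)\geqslant 1/c$ combined with $D^2=2$ produces an element of $|-K_X|$ with threshold at most $c$'' is a non sequitur as stated, and it tacitly assumes the non-klt point is smooth; by Lemma~\ref{lemma:smooth-points} the hard case is precisely $P\in\mathrm{Sing}(X)$, where one must again descend to the minimal resolution and run the case analysis over the singularity configurations.
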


In this paper, we prove the~following result (cf.
Example~\ref{example:degree-one}).

\begin{theorem}
\label{theorem:main} Suppose that $X$ is a sextic surface in
$\mathbb{P}(1,1,2,3)$ with canonical singularities, let
$\omega\colon X\to\mathbb{P}(1,1,2)$ be a natural double cover,
and let $R$ be its branch curve in $\mathbb{P}(1,1,2)$.~Then
$$
\mathrm{lct}\big(X\big)=\left\{\aligned
&\mathrm{lct}_{2}\big(X\big)=1/3\ \text{if $\mathrm{Sing}(X)$ consists of a~ point of type $\mathbb{D}_{8}$},\\
&\mathrm{lct}_{2}\big(X\big)=2/5\ \text{if $\mathrm{Sing}(X)$ consists of a~ point of type $\mathbb{D}_{7}$},\\
&\mathrm{lct}_{3}\big(X\big)=1/2\ \text{if $\mathrm{Sing}(X)$ consists of a~ point of type $\mathbb{A}_{8}$},\\
&\mathrm{lct}_{2}\big(X\big)=1/2\ \text{if $\mathrm{Sing}(X)$ consists of a~point of type $\mathbb{A}_{7}$ and a~point of type $\mathbb{A}_{1}$},\\
&\mathrm{lct}_{2}\big(X\big)=1/2\ \text{if $\mathrm{Sing}(X)$ consists of a point of type $\mathbb{A}_{7}$ and $R$ is reducible},\\
&\mathrm{lct}_{3}\big(X\big)=3/5\ \text{if $X$ has a singular point of type $\mathbb{A}_{7}$ and $R$ is irreducible},\\
&\mathrm{lct}_{2}\big(X\big)=2/3\ \text{if $X$ has a~singular point of type $\mathbb{A}_{6}$},\\
&\mathrm{lct}_{2}\big(X\big)=2/3\ \text{if $X$ has a~singular point of type $\mathbb{A}_{5}$},\\
&\mathrm{lct}_{2}\big(X\big)=\mathrm{min}\big(\mathrm{lct}_{1}\big(X\big),4/5\big)\ \text{if $X$ has a~singular point of type $\mathbb{A}_{4}$},\\
&\mathrm{lct}_{1}\big(X\big)\ \text{in the~remaining cases}.\\
\endaligned
\right.
$$
\end{theorem}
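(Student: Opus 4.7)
The plan is to treat Theorem~\ref{theorem:main} as a case analysis indexed by the singularity configurations listed in Remark~\ref{remark:DP1-SING}. For each configuration I need to do two things: exhibit an explicit effective divisor in $|-nK_X|$ with small $n$ realizing the stated numerical value (an upper bound for $\mathrm{lct}(X)$), and then prove no member of $|-mK_X|$ with any $m$ can beat it. The central geometric tool is the double cover $\omega\colon X\to\mathbb{P}(1,1,2)$: the linear system $|-nK_X|$ is spanned by pullbacks of weighted-degree-$n$ curves on $\mathbb{P}(1,1,2)$ together with $w$-sections for $n\geq 3$, and the analytic type of each canonical singularity of $X$ is read off from the singularities of the branch sextic $R$.

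For the "special" rows ($\mathbb{D}_8$, $\mathbb{D}_7$, $\mathbb{A}_8$, the three $\mathbb{A}_7$-subcases, $\mathbb{A}_6$, $\mathbb{A}_5$, $\mathbb{A}_4$), the upper bounds come from pulling back distinguished curves that meet $R$ with large local contact order: the tangent line to $R$ at an $\mathbb{A}_{2k-1}$-point meets $R$ with multiplicity $2k$, so its pullback in $|-K_X|$ acquires very large multiplicity at the corresponding singular point of $X$. Direct log-resolution computations on the minimal resolution then produce the fractions $1/3$, $2/5$, $1/2$, $3/5$, $2/3$, $4/5$. The jump to $n=3$ for $\mathbb{A}_8$ and the irreducible-$R$ $\mathbb{A}_7$ case reflects that no line or weighted-degree-$2$ curve suffices and one must use a section involving the weight-$3$ coordinate $w$; by contrast, when $R$ is reducible or an additional $\mathbb{A}_1$-point is present, a splitting of $R$ directly exhibits a member of $|-2K_X|$ with log canonical threshold $\leq 1/2$.

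For the matching lower bounds, suppose $D\sim_{\mathbb{Q}} -K_X$ is effective with $(X,\lambda D)$ not log canonical at a point $p$. After a weighted blowup centered on the deepest singularity near $p$, the new local inequality advertised in the abstract controls the multiplicities of $D$ along the exceptional curves; combined with the numerical constraint $D\cdot\omega^*\ell=1$ for $\ell$ a line in $\mathbb{P}(1,1,2)$, one concludes that either $D$ contains such a pullback as a component (which reduces the problem to a simpler one) or $\lambda$ is at least the claimed threshold. For the configurations gathered in the last line of the theorem, this dichotomy collapses at once to $\mathrm{lct}(X)=\mathrm{lct}_1(X)$, so that only the $\mathrm{lct}_1$-computation (which falls within the scope of Theorem~\ref{theorem:degree-3-4-5-6-7-8}-style methods already in the literature) remains.

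The principal obstacle is the sharp lower bound in the $\mathbb{A}_4$, $\mathbb{A}_5$, $\mathbb{A}_6$ and irreducible-$R$ $\mathbb{A}_7$ rows, where the extremal divisor is not a member of $|-K_X|$ and one cannot simply reduce to $\mathrm{lct}_1(X)$. It is precisely here that the new local inequality of the paper should be decisive: it must distinguish between $\mathbb{A}_k$-singularities of different $k$ (whose dual resolution graphs are chains differing only in length and not in intersection pattern) and pin down the exact coefficient that separates adjacent cases, for instance the $4/5$ of the $\mathbb{A}_4$-row from the $2/3$ of the $\mathbb{A}_5$-row.
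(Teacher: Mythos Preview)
Your overall architecture matches the paper: a case analysis over the configurations in Remark~\ref{remark:DP1-SING}, with explicit divisors giving upper bounds and a uniform lower-bound argument driven by the new local inequality. But several of the mechanisms you describe diverge from what the paper actually does, and one crucial ingredient is missing.

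First, the paper does not use weighted blowups. All computations take place on the \emph{minimal} resolution $\pi\colon\bar X\to X$, where an $\mathbb{A}_m$-point becomes a chain of $(-2)$-curves $E_1,\ldots,E_m$. Writing $\bar D\sim_{\mathbb{Q}}\pi^*D-\sum a_iE_i$, the inequalities $\bar D\cdot E_i\geqslant 0$ and $\bar D\cdot\bar C\geqslant 0$ (with $\bar C$ the proper transform of the anticanonical curve through $P$) yield the concave system~(\ref{equation:cyclic-orbifolds}), from which one extracts bounds $a_k\geqslant\frac{k}{k+1}a_{k+1}$ and so on. The extremal curves for the upper bounds are likewise found on $\bar X$: they are $(-1)$-curves $\bar L_i$ meeting a single $E_i$, produced by contracting suitable subconfigurations down to a del Pezzo surface of degree $5$--$8$ and reading off a $(-1)$-curve there (Lemmas~\ref{lemma:single-point-A5}--\ref{lemma:single-point-A8}). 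The double cover $\omega$ enters chiefly through the involution $\tau$ (to get the partner curve $\bar L_{m+1-i}=\tau(\bar L_i)$) and in the reducibility criterion for $R$ in the $\mathbb{A}_7$ case (Lemma~\ref{lemma:single-point-A7-reducible}); it is not the primary engine for either bound.

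Second, you are missing the convexity reduction (Remark~\ref{remark:convexity}), and this is the step that makes the lower bounds go through. Before estimating, the paper replaces $D$ by a $\mathbb{Q}$-linearly equivalent divisor whose support avoids the specific extremal curves $L_i$ (or at least one component of each relevant combination). This converts the existence of those curves into numerical constraints such as $a_3\leqslant 1$ or $a_4\leqslant 1$, which are then fed back into~(\ref{equation:cyclic-orbifolds}). Without this trick the system~(\ref{equation:cyclic-orbifolds}) alone only gives, e.g., $a_3\leqslant 3/2$ in the $\mathbb{A}_5$ case, which is not enough.

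Finally, the role of the local inequality (Theorem~\ref{theorem:I}, via Corollary~\ref{corollary:Dimitra}) is more specific than ``control the multiplicities along exceptional curves'': it is invoked exactly once, inside Lemma~\ref{lemma:cyclic-orbifolds-blow-up}, to exclude the possibility that the non-klt point $Q$ sits at an \emph{end} node $E_1\cap E_2$ or $E_{m-1}\cap E_m$ of the chain. Once $Q$ is forced to an interior node $E_k\cap E_{k+1}$, ordinary adjunction (Lemma~\ref{lemma:adjunction}) combined with the $a_i$-bounds above yields the contradiction. Your intuition that the inequality ``separates adjacent $\mathbb{A}_k$-cases'' is correct in effect, but the mechanism is this end-versus-interior dichotomy, not a direct multiplicity estimate.
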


It should be pointed out that if $X$ is a~del Pezzo surface with
at most canonical singularities, then all possible
values~of~the~number $\mathrm{lct}_{1}(X)$ are computed in
\cite{Pa01}, \cite{Park2002}, \cite{Park-Won-PEMS}.

\begin{example}
\label{example:Jihun}If $X$ is a~sextic surface in
$\mathbb{P}(1,1,2,3)$ with canonical singularities, then
\begin{itemize}
\item $\mathrm{lct}_{1}(X)=1/6$ $\iff$ the~surface $X$ has a~singular point of type $\mathbb{E}_8$,%
\item $\mathrm{lct}_{1}(X)=1/4$ $\iff$ the~surface $X$ has a~singular point of type $\mathbb{E}_7$,%
\item $\mathrm{lct}_{1}(X)=1/3$ $\iff$ the~surface $X$ has a~singular point of type $\mathbb{E}_6$,%
\item $\mathrm{lct}_{1}(X)=1/2$ $\iff$ the~surface $X$ has a~singular point of type $\mathbb{D}_{4}$, $\mathbb{D}_{5}$, $\mathbb{D}_{6}$, $\mathbb{D}_{7}$ or $\mathbb{D}_{8}$,%
\item $\mathrm{lct}_{1}(X)=2/3$ $\iff$ the~following two conditions are satisfied:%
\begin{itemize}
\item the~surface $X$ has no singular points of type $\mathbb{D}_{4}$, $\mathbb{D}_{5}$, $\mathbb{D}_{6}$, $\mathbb{D}_{7}$, $\mathbb{D}_{8}$, $\mathbb{E}_{6}$, $\mathbb{E}_{7}$ or $\mathbb{E}_{8}$,%
\item there is a~curve in $|-K_X|$ that has a~cusp at a~point in $\mathrm{Sing}(X)$ of type $\mathbb{A}_2$,%
\end{itemize}
\item $\mathrm{lct}_{1}(X)=3/4$ $\iff$ the~following three conditions are satisfied:%
\begin{itemize}
\item the~surface $X$ has no singular points of type $\mathbb{D}_{4}$, $\mathbb{D}_{5}$, $\mathbb{D}_{6}$, $\mathbb{D}_{7}$, $\mathbb{D}_{8}$, $\mathbb{E}_{6}$, $\mathbb{E}_{7}$ or $\mathbb{E}_{8}$,%
\item there is no curve in $|-K_X|$ that has a~cusp at a~point in $\mathrm{Sing}(X)$ of type $\mathbb{A}_2$,%
\item there is a~curve in $|-K_X|$ that has a~cusp at a~point in $\mathrm{Sing}(X)$ of type $\mathbb{A}_1$,%
\end{itemize}
\item $\mathrm{lct}_{1}(X)=5/6$ $\iff$ the~following three conditions are satisfied:%
\begin{itemize}
\item the~surface $X$ has no singular points of type $\mathbb{D}_{4}$, $\mathbb{D}_{5}$, $\mathbb{D}_{6}$, $\mathbb{D}_{7}$, $\mathbb{D}_{8}$, $\mathbb{E}_{6}$, $\mathbb{E}_{7}$ or $\mathbb{E}_{8}$,%
\item there is no curve in $|-K_X|$ that have a~cusp at a~point in $\mathrm{Sing}(X)$,%
\item there is a~curve in $|-K_X|$ that has a~cusp,%
\end{itemize}
\item $\mathrm{lct}_{1}(X)=1$ $\iff$ there are no cuspidal curves in $|-K_X|$.%
\end{itemize}
\end{example}

A crucial role in the~proofs of both
Theorems~\ref{theorem:main}~and~\ref{theorem:auxiliary} is played
by a~new local inequality that we discovered. This inequality is
a~technical tool, but let us describe it now.

Let $S$ be a~surface, let $D$ be an~arbitrary effective
$\mathbb{Q}$-divisor on the~surface $S$, let $O$ be a~smooth point
of the~surface $S$, let $\Delta_{1}$ and $\Delta_{2}$ be reduced
irreducible curves on $S$~such~that
$$
\Delta_{1}\not\subseteq\mathrm{Supp}\big(D\big)\not\supseteq\Delta_{2},
$$
and the~divisor $\Delta_{1}+\Delta_{2}$ has a~simple normal
crossing singularity at the~smooth point
$O\in\Delta_{1}\cap\Delta_{2}$, let $a_{1}$ and $a_{2}$ be some
non-negative rational~numbers. Suppose that the~log pair
$$
\Big(S,\ D+a_{1}\Delta_{1}+a_{2}\Delta_{2}\Big)
$$
is not Kawamata log terminal at $O$, but $(S,
D+a_{1}\Delta_{1}+a_{2}\Delta_{2})$ is Kawamata log terminal~in
a~punctured neighborhood of the~point $O$.

\begin{theorem}
\label{theorem:I} Let $A,B,M,N,\alpha,\beta$ be non-negative
rational numbers. Then
$$
\mathrm{mult}_{O}\Big(D\cdot\Delta_{1}\Big)\geqslant M+Aa_{1}-a_{2}\ \text{or}\ \mathrm{mult}_{O}\Big(D\cdot\Delta_{2}\Big)\geqslant N+Ba_{2}-a_{1}%
$$
in the~case when the~following conditions are satisfied:
\begin{itemize}
\item the~inequality $\alpha a_{1}+\beta a_{2}\leqslant 1$ holds,%
\item the~inequalities $A(B-1)\geqslant 1\geqslant\mathrm{max}(M,N)$ hold,%
\item the~inequalities $\alpha(A+M-1)\geqslant A^{2}(B+N-1)\beta$ and $\alpha(1-M)+A\beta\geqslant A$ hold,%
\item either the~inequality $2M+AN\leqslant 2$ holds or
$$
\alpha\big(B+1-MB-N\big)+\beta\big(A+1-AN-M\big)\geqslant AB-1.
$$%
\end{itemize}
\end{theorem}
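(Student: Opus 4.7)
The natural attack is by contradiction. Suppose that
$$
m_1 := \mathrm{mult}_O(D \cdot \Delta_1) < M + A a_1 - a_2
\quad\text{and}\quad
m_2 := \mathrm{mult}_O(D \cdot \Delta_2) < N + B a_2 - a_1.
$$
The punctured-klt hypothesis together with $\Delta_i \not\subseteq \mathrm{Supp}(D)$ forces $0\le a_1, a_2 < 1$, since a coefficient $a_i\ge 1$ on $\Delta_i$ would produce non-klt points along $\Delta_i$ in every neighbourhood of $O$. Fix local analytic coordinates $(x,y)$ at $O$ with $\Delta_1=\{y=0\}$ and $\Delta_2=\{x=0\}$.

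Next I would bring in a weighted blow-up. For coprime positive integers $p,q$, let $\pi_{p,q}\colon \widetilde S \to S$ be the weighted $(p,q)$-blow-up of $O$, with exceptional divisor $E_{p,q}$ satisfying $v_{E_{p,q}}(\Delta_2)=p$, $v_{E_{p,q}}(\Delta_1)=q$ and discrepancy $p+q-1$. Using the structure of divisorial valuations on a smooth surface (every one arises from a sequence of blow-ups), I would reduce to the case that the non-klt condition at $O$ is witnessed by such an $E_{p,q}$; this yields the valuative estimate
$$
v_{E_{p,q}}(D) \ge (1-a_1)q + (1-a_2)p.
$$
A monomial-expansion argument gives the elementary intersection bounds $p\,m_1 \ge v_{E_{p,q}}(D)$ and $q\,m_2 \ge v_{E_{p,q}}(D)$, so that setting $t=q/p$ one obtains
$$
m_1 \ge (1-a_1)t + (1-a_2),
\quad
m_2 \ge (1-a_1) + (1-a_2)/t.
$$
Combining this with the assumed failure of the two conclusions forces $M+Aa_1>1$, $N+Ba_2>1$, and
$$
\frac{1-a_2}{N+Ba_2-1} \;<\; t \;<\; \frac{M+Aa_1-1}{1-a_1},
$$
in particular $(M+Aa_1-1)(N+Ba_2-1) > (1-a_1)(1-a_2)$.

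The endgame is to check that the four hypothesized inequalities in $A,B,M,N,\alpha,\beta$ are incompatible with the last displayed strict inequality on the convex region $\alpha a_1 + \beta a_2 \le 1$. The relation $A(B-1)\ge 1\ge\max(M,N)$ supplies the basic concavity; the linear inequalities $\alpha(A+M-1)\ge A^2(B+N-1)\beta$ and $\alpha(1-M)+A\beta\ge A$ pin down the extremal points of the triangle $\alpha a_1+\beta a_2\le 1$ where the strict inequality must be tested; and the disjunction ``$2M+AN\le 2$ or $\alpha(B+1-MB-N)+\beta(A+1-AN-M)\ge AB-1$'' dispatches the two regimes of $t$, corresponding respectively to a weighted blow-up biased towards $\Delta_1$ and towards $\Delta_2$.

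The main obstacle, I expect, is the reduction to a toric place $E_{p,q}$: when the minimal non-klt valuation $E$ over $O$ arises from a longer tree of blow-ups, one must track how $m_1$ and $m_2$ distribute among strict transforms at each stage, and run an induction on the length of the resolution, showing at each step that the tighter discrepancy $a(E)>p+q-1$ only strengthens the valuative bound on $v_E(D)$. Confirming that the four bulleted conditions are tight enough to close this bookkeeping — and that the split $2M+AN\le 2$ versus the weighted convex combination captures precisely the remaining corner cases — is the most delicate part of the argument.
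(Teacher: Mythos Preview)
Your overall strategy --- argue by contradiction and track the non-klt center through blow-ups --- is the same as the paper's, but the step you yourself flag as ``the main obstacle'' is a genuine gap, and your suggested workaround does not close it. The monomial bounds $p\,m_1\geqslant v_{E_{p,q}}(D)$ and $q\,m_2\geqslant v_{E_{p,q}}(D)$ are valid for the \emph{toric} valuation $E_{p,q}$, but they fail for a non-toric divisorial valuation $E$ with $v_E(x)=p$ and $v_E(y)=q$. For example, with $\Delta_1=\{y=0\}$, $\Delta_2=\{x=0\}$, $D=\{y=x^2\}$ and $E$ the exceptional divisor of three successive point blow-ups along the strict transform of $D$, one computes $(p,q)=(1,2)$ and $m_1=2$, yet $v_E(D)=3>p\,m_1$. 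So while the discrepancy inequality $a(E;S)>p+q-1$ does strengthen the \emph{lower} bound $v_E(D)\geqslant(1-a_1)q+(1-a_2)p$, you simultaneously lose the \emph{upper} bound, and the squeeze collapses. The reduction to a toric witness is therefore not a bookkeeping detail but the core of the theorem.

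The paper never attempts such a reduction abstractly. It performs one ordinary blow-up at $O$ and uses adjunction together with the full list of hypotheses on $A,B,M,N,\alpha,\beta$ to prove that the new non-klt point is forced to be $F_1\cap\Delta_2^1$: a point of $F_1$ off both $\Delta_i^1$ would give $m_0>1$, which combined with $m_0<M+Aa_1-a_2$ and $m_0<N+Ba_2-a_1$ contradicts the last bulleted hypothesis; the point $F_1\cap\Delta_1^1$ is ruled out using $\alpha(1-M)+A\beta\geqslant A$. This is then iterated: after $n$ ordinary blow-ups along the strict transform of $\Delta_2$, the conditions $A(B-1)\geqslant 1$ and $\alpha(A+M-1)\geqslant A^2(B+N-1)\beta$ are invoked again to force the non-klt point to remain at $F_n\cap\Delta_2^n$. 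The contradiction is finally obtained from $N+Ba_2-a_1>\mathrm{mult}_O(D\cdot\Delta_2)\geqslant\sum_{j=0}^{n-1}m_j$, the right-hand side growing without bound in $n$ (in your language, this is the failure of $t>(1-a_2)/(N+Ba_2-1)$ at $t=1/n$ for large $n$). Thus the hypotheses are consumed repeatedly inside the induction to pin down the location of the non-klt center, not in a single terminal optimization; your proposed endgame inequality $(M+Aa_1-1)(N+Ba_2-1)>(1-a_1)(1-a_2)$, even if it could be shown incompatible with the constraint region, would only exclude toric witnesses and would not finish the proof.
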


\begin{corollary}
\label{corollary:Dimitra} Suppose that
$$
\frac{2m-2}{m+1}a_{1}+\frac{2}{m+1}a_{2}\leqslant 1
$$
for some integer $m$ such that $m\geqslant 3$. Then
$$
\mathrm{mult}_{O}\Big(D\cdot\Delta_{1}\Big)\geqslant 2a_{1}-a_{2}\ \text{or}\ \mathrm{mult}_{O}\Big(D\cdot\Delta_{2}\Big)\geqslant \frac{m}{m-1}a_{2}-a_{1}.%
$$
\end{corollary}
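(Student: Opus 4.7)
The plan is to deduce Corollary~\ref{corollary:Dimitra} by a direct application of Theorem~\ref{theorem:I} with a well-chosen set of parameters. Matching the shape of the two conclusions, I read off $A=2$, $M=0$ on the first side and $B=\frac{m}{m-1}$, $N=0$ on the second, while the hypothesis of the corollary already prescribes $\alpha=\frac{2m-2}{m+1}$ and $\beta=\frac{2}{m+1}$.

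The first routine check is that the condition $\alpha(1-M)+A\beta\geqslant A$ in Theorem~\ref{theorem:I}, evaluated with $M=0$, rearranges to $A\leqslant\frac{\alpha}{1-\beta}$. A quick computation gives $\frac{\alpha}{1-\beta}=\frac{2(m-1)/(m+1)}{(m-1)/(m+1)}=2$, so $A=2$ is forced and, pleasantly, this is exactly the value suggested by the target inequality. The upshot is that I may not use $A>2$ to create room, so any flexibility must come from $B$.

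With $A=2$ and $M=N=0$, the remaining nontrivial constraints of Theorem~\ref{theorem:I} become: (a) $A(B-1)\geqslant 1$, i.e.\ $B\geqslant\tfrac{3}{2}$; and (b) $\alpha(A+M-1)\geqslant A^{2}(B+N-1)\beta$, which simplifies to $m-1\geqslant 4(B-1)$, i.e.\ $B\leqslant\tfrac{m+3}{4}$. For $m\geqslant 3$ the interval $[\tfrac{3}{2},\tfrac{m+3}{4}]$ is nonempty, and moreover $\tfrac{3}{2}\geqslant\tfrac{m}{m-1}$ precisely when $m\geqslant 3$. This suggests the clean choice $B=\tfrac{3}{2}$, which simultaneously satisfies (a), (b), and dominates the $B$ demanded by the target conclusion. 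The auxiliary disjunction $2M+AN\leqslant 2$ (or the messier alternative) holds trivially since $M=N=0$.

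Having verified every hypothesis of Theorem~\ref{theorem:I} for the parameters $(A,B,M,N,\alpha,\beta)=\bigl(2,\tfrac{3}{2},0,0,\tfrac{2m-2}{m+1},\tfrac{2}{m+1}\bigr)$, the theorem yields $\mathrm{mult}_{O}(D\cdot\Delta_{1})\geqslant 2a_{1}-a_{2}$ or $\mathrm{mult}_{O}(D\cdot\Delta_{2})\geqslant\tfrac{3}{2}a_{2}-a_{1}$; since $\tfrac{3}{2}\geqslant\tfrac{m}{m-1}$ for $m\geqslant 3$ and $a_{2}\geqslant 0$, the second alternative implies $\mathrm{mult}_{O}(D\cdot\Delta_{2})\geqslant\tfrac{m}{m-1}a_{2}-a_{1}$, giving the claimed dichotomy. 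There is no real obstacle in this argument: the only ``hard'' step is realizing that the condition $\alpha(1-M)+A\beta\geqslant A$ pins down $A=2$ exactly, after which the remaining inequalities collapse to the elementary interval check $\tfrac{3}{2}\in[\tfrac{m}{m-1},\tfrac{m+3}{4}]$ for $m\geqslant 3$.
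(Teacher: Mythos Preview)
Your proof is correct and follows the same strategy as the paper's: apply Theorem~\ref{theorem:I} with $A=2$, $M=N=0$, $\alpha=\frac{2m-2}{m+1}$, $\beta=\frac{2}{m+1}$, and then read off the conclusion.

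The one difference is the choice of $B$. The paper plugs in $B=\frac{m}{m-1}$ so that the output of Theorem~\ref{theorem:I} matches the corollary verbatim; but then the check of the hypothesis $A(B-1)\geqslant 1$ becomes $\frac{2}{m-1}\geqslant 1$, which is only valid at $m=3$ and fails for $m\geqslant 4$. Your choice $B=\tfrac{3}{2}$ gives $A(B-1)=1$ on the nose for every $m$, and the harmless extra step $\tfrac{3}{2}a_{2}-a_{1}\geqslant\tfrac{m}{m-1}a_{2}-a_{1}$ (valid since $\tfrac{3}{2}\geqslant\tfrac{m}{m-1}$ for $m\geqslant 3$ and $a_{2}\geqslant 0$) then recovers the stated dichotomy. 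So your argument is the paper's argument with a parameter choice that makes the verification go through uniformly in $m\geqslant 3$; in effect you are proving the $m=3$ case of the corollary directly and observing that it implies the general case.
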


\begin{proof}
To prove the required assertion, let us put
$$
A=2,\ B=\frac{m}{m-1},\ M=0, N=0,\ \alpha=\frac{2m-2}{m+1},\ \beta=\frac{2}{m+1}a_{2},%
$$
and let us check that all hypotheses of Theorem~\ref{theorem:I}
are satisfied.

We have $\alpha a_{1}+\beta a_{2}\leqslant 1$ by assumption. We
have
$$
A(B-1)=\frac{2}{m-1}\geqslant 1\geqslant 0=\mathrm{max}(M,N),
$$
since $m\geqslant 3$. We have
$$
\alpha(A+M-1)=\frac{2m-2}{m+1}\geqslant \frac{8}{m^2-1}=A^{2}(B+N-1)\beta,%
$$
since $m\geqslant 3$. We have $\alpha(1-M)+A\beta=2\geqslant 2=A$
and $2M+AN=0\leqslant 2$.

Thus, we see that all hypotheses of Theorem~\ref{theorem:I} are
satisfied. Then
$$
\mathrm{mult}_{O}\Big(D\cdot\Delta_{1}\Big)\geqslant  M+Aa_{1}-a_{2}=2a_{1}-a_{2}\ \text{or}\ \mathrm{mult}_{O}\Big(D\cdot\Delta_{2}\Big)\geqslant N+Ba_{2}-a_{1}=\frac{m}{m-1}a_{2}-a_{1}%
$$
by Theorem~\ref{theorem:I}.
\end{proof}

For the~convenience of a~reader, we organize the~paper in
the~following way:
\begin{itemize}
\item in Section~\ref{section:preliminaries}, we collect auxiliary results,%
\item in Section~\ref{section:main-inequality}, we prove Theorem~\ref{theorem:I},%
\item in Sections~\ref{section:cyclic-orbifolds}, we prove Theorem~\ref{theorem:main-single-point},%
\item in Sections~\ref{section:non-cyclic-orbifolds}, we prove Theorems~\ref{theorem:main-single-point-non-cyclic},%
\item in Sections~\ref{section:orbifolds-with-many-singular-points}, we prove Theorems~\ref{theorem:main-many-points}.%
\end{itemize}

By Remark~\ref{remark:DP1-SING}, both Theorems~\ref{theorem:auxiliary} and
\ref{theorem:main} follow from Theorems~\ref{theorem:main-single-point},
\ref{theorem:main-single-point-non-cyclic} and \ref{theorem:main-many-points}.

\section{Preliminaries}%
\label{section:preliminaries}

Let $S$ be a~surface with canonical singularities, and let $D$ be
an effective $\mathbb{Q}$-divisor on $S$. Put
$$
D=\sum_{i=1}^{r}a_{i}D_{i},
$$
where $D_{i}$ is an~irreducible curve, and
$a_{i}\in\mathbb{Q}_{>0}$. We assume that $D_{i}\ne D_{j}\iff i\ne
j$.

Suppose that $(S,D)$ is log canonical, but $(S,D)$ is not Kawamata
log terminal.

\begin{remark}
\label{remark:convexity} Let $\bar{D}$ be an~effective
$\mathbb{Q}$-divisor on the~surface $S$ such that
$$
\bar{D}=\sum_{i=1}^{r}\bar{a}_{i}D_{i}\sim_{\mathbb{Q}} D,%
$$
and the~log pair $(S,\bar{D})$ is log canonical, where
$\bar{a}_{i}$ is a~non-negative rational number. Put
$$
\alpha=\mathrm{min}\Bigg\{\frac{a_{i}}{\bar{a}_{i}}\ \Big\vert\ \bar{a}_{i}\ne 0\Bigg\},%
$$
where $\alpha$ is well defined and $\alpha\leqslant 1$. Then
$\alpha=1\iff D=\bar{D}$. Suppose that  $D\ne \bar{D}$. Put
$$
D^{\prime}=\sum_{i=1}^{r}\frac{a_{i}-\alpha\bar{a}_{i}}{1-\alpha}D_{i},%
$$
and choose $k\in\{1,\ldots,r\}$ such that
$\alpha=a_{k}/\bar{a}_{k}$. Then
$D_{k}\not\subset\mathrm{Supp}(D^{\prime})$ and
$D^{\prime}\sim_{\mathbb{Q}} \bar{D}\sim_{\mathbb{Q}} D$, but
the~log pair $(S,D^{\prime})$ is not Kawamata log terminal.
\end{remark}

Let $\mathrm{LCS}(S,D)$ be the~locus of log canonical
singularities of the~log pair $(S,D)$ (see \cite{ChSh08c}).

\begin{theorem}[{\cite[Theorem~17.4]{Ko91}}]
\label{theorem:connectedness} If $-(K_{S}+D)$ is nef~and~big, then
$\mathrm{LCS}(S,D)$ is connected.
\end{theorem}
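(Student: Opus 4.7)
My plan is to reproduce the classical argument of Shokurov and Koll\'ar via Kawamata--Viehweg vanishing applied on a log resolution. First I would take a log resolution $\pi\colon Y\to S$ of the pair $(S,D)$ and write
$$
\pi^{*}\big(K_{S}+D\big)\equiv K_{Y}+T+\Delta,
$$
where $T$ is the reduced sum of those prime divisors on $Y$ whose coefficient in the boundary equals $1$ (the log canonical places), and $\Delta$ collects the remaining components, with SNC support and all coefficients strictly less than~$1$. Since $(S,D)$ is log canonical but not Kawamata log terminal, $T$ is a non-zero reduced divisor on $Y$ whose image satisfies $\pi(T)=\mathrm{LCS}(S,D)$, so it suffices to show that $T$ itself is connected.

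Next I would put $N=\lceil-\Delta\rceil$. Any component of $\Delta$ with coefficient in $[0,1)$ rounds up to $0$, while a component with negative coefficient is necessarily $\pi$-exceptional (arising from a prime divisor with discrepancy~$>1$). Hence $N$ is an effective $\pi$-exceptional divisor, which gives $\pi_{*}\mathcal{O}_{Y}(N)=\mathcal{O}_{S}$ and therefore $h^{0}(Y,\mathcal{O}_{Y}(N))=1$. Rearranging the displayed identity,
$$
N-T-K_{Y}\equiv \pi^{*}\bigl(-(K_{S}+D)\bigr)+\{-\Delta\},
$$
and because $-(K_{S}+D)$ is nef and big, its birational pullback is nef and big, while the fractional part $\{-\Delta\}$ is an effective SNC $\mathbb{Q}$-divisor with coefficients in $[0,1)$. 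Kawamata--Viehweg vanishing therefore yields $H^{1}(Y,\mathcal{O}_{Y}(N-T))=0$.

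Feeding this into the short exact sequence
$$
0\to\mathcal{O}_{Y}(N-T)\to\mathcal{O}_{Y}(N)\to\mathcal{O}_{T}\bigl(N\vert_{T}\bigr)\to 0,
$$
I would obtain a surjection $H^{0}(Y,\mathcal{O}_{Y}(N))\twoheadrightarrow H^{0}(T,\mathcal{O}_{T}(N\vert_{T}))$. If $T$ were disconnected, write $T=T_{1}\sqcup T_{2}$; then the restriction $N\vert_{T}$ is effective on each $T_{j}$, producing at least two linearly independent global sections on $T$ and contradicting $h^{0}(Y,\mathcal{O}_{Y}(N))=1$. Thus $T$ is connected, hence so is $\mathrm{LCS}(S,D)=\pi(T)$.

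The main delicate point is the coefficient bookkeeping for $\Delta$: I need to check carefully that $N=\lceil-\Delta\rceil$ really is both effective and $\pi$-exceptional (so that $h^{0}(\mathcal{O}_{Y}(N))=1$ rather than something larger that would spoil the disconnectedness contradiction), and that $\{-\Delta\}$ has precisely the SNC small-coefficient form required by the Kawamata--Viehweg vanishing theorem. Once these two bookkeeping checks are in place, the surjection-plus-one-section argument closes the proof cleanly.
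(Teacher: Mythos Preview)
The paper does not supply a proof of this statement; it simply quotes the result from \cite[Theorem~17.4]{Ko91}. Your argument is exactly the standard Shokurov--Koll\'ar proof given there (log resolution, Kawamata--Viehweg vanishing, then the one-section surjection trick), so there is nothing substantive to compare.

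One small bookkeeping correction in the step you yourself flagged as delicate: with your conventions the displayed identity should read
\[
N-T-K_{Y}\equiv \pi^{*}\bigl(-(K_{S}+D)\bigr)+\{\Delta\},
\]
not $\{-\Delta\}$. Indeed $N+\Delta=\lceil-\Delta\rceil+\Delta=\Delta-\lfloor\Delta\rfloor=\{\Delta\}$. This does not affect the argument, since $\{\Delta\}$ is an effective simple normal crossing $\mathbb{Q}$-divisor with all coefficients in $[0,1)$, which is precisely what Kawamata--Viehweg vanishing requires; after this correction the rest of your proof goes through unchanged.
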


Take a~point $P\in\mathrm{LCS}(S,D)$. Suppose that
$\mathrm{LCS}(S,D)$ contains no curves that pass through~$P$.

\begin{lemma}
\label{lemma:adjunction} Suppose that $P\not\in\mathrm{Sing}(S)$
and $P\not\in\mathrm{Sing}(D_{1})$. Then
$$
D_{1}\cdot\Bigg(\sum_{i=2}^{r}a_{i}D_{i}\Bigg)\geqslant\sum_{i=2}^{r}a_{i}\mathrm{mult}_{P}\Big(D_{1}\cdot D_{i}\Big)>1.%
$$
\end{lemma}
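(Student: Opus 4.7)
The inequality splits into two parts. The first, $D_{1}\cdot\sum_{i\geqslant 2}a_{i}D_{i}\geqslant\sum_{i\geqslant 2}a_{i}\,\mathrm{mult}_{P}(D_{1}\cdot D_{i})$, is immediate: each $D_{1}\cdot D_{i}$ with $i\geqslant 2$ is an effective zero-cycle (since $D_{1}\ne D_{i}$ are distinct irreducible curves) whose degree dominates its local contribution at $P$, and the $a_{i}$ are non-negative. The substance of the lemma is therefore the strict inequality
$$
\mathrm{mult}_{P}\bigl(D_{1}\cdot\Omega\bigr)>1,\qquad\text{where}\quad\Omega:=\sum_{i\geqslant 2}a_{i}D_{i}=D-a_{1}D_{1}.
$$
Before attacking it, I would note that $P\in D_{1}$ is implicit in the hypothesis (otherwise both sides vanish), and $a_{1}<1$: if $a_{1}=1$ then $D_{1}\subseteq\mathrm{LCS}(S,D)$ would be a curve through~$P$, contrary to hypothesis. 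The same reasoning gives $a_{i}<1$ for every component $D_{i}\ni P$ with $i\geqslant 2$.

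I would then argue by contradiction: assume $\mathrm{mult}_{P}(\Omega\cdot D_{1})\leqslant 1$ and derive that $(S,D)$ is Kawamata log terminal at~$P$. The tool is inversion of adjunction applied to the \emph{enlarged} pair $(S,D_{1}+\Omega)$, in which $D_{1}$ carries coefficient one. Because $P$ is smooth on $S$ and on $D_{1}$, the different on $D_{1}$ reduces to $\Omega|_{D_{1}}$, and the bound $\mathrm{mult}_{P}(\Omega|_{D_{1}})\leqslant 1$ makes $(D_{1},\Omega|_{D_{1}})$ log canonical at~$P$. Inversion of adjunction (cf.\ \cite[Theorem~17.6]{Ko91}) then yields that $(S,D_{1}+\Omega)$ is log canonical in a neighbourhood of~$P$.

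To transfer this back to $(S,D)=(S,a_{1}D_{1}+\Omega)$, I would use the discrepancy identity
$$
a\bigl(E;S,a_{1}D_{1}+\Omega\bigr)=a\bigl(E;S,D_{1}+\Omega\bigr)+(1-a_{1})\,\mathrm{ord}_{E}\bigl(\sigma^{*}D_{1}\bigr)
$$
on a log resolution $\sigma$. For $E=D_{1}$ itself, the log discrepancy jumps from $0$ to $1-a_{1}>0$. For an exceptional divisor $E$ centred at~$P$, the local equation of $D_{1}$ lies in $\mathfrak{m}_{P}$ and $v_{E}(\mathfrak{m}_{P})>0$, so $\mathrm{ord}_{E}(\sigma^{*}D_{1})>0$ and every log discrepancy over~$P$ strictly increases. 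Consequently $(S,D)$ would be KLT at~$P$, contradicting $P\in\mathrm{LCS}(S,D)$.

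The delicate point --- and the main obstacle --- is the borderline case $\mathrm{mult}_{P}(\Omega\cdot D_{1})=1$, where inversion of adjunction delivers only log canonicity and not pure log terminality, and so $(S,D_{1}+\Omega)$ may have an extra log canonical place~$E$ over~$P$ besides $D_{1}$ itself. The hypothesis that $\mathrm{LCS}(S,D)$ contains no curves through~$P$, combined with $a_{i}<1$ for every $D_{i}\ni P$, forces any such extra LC place to be exceptional over~$P$; the displayed identity then still lifts its log discrepancy strictly above $-1$, and the contradiction goes through. This is the one step where the curve-free hypothesis is used in an essential way, and it is what distinguishes the argument from a routine strict-sub-critical invocation of inversion of adjunction.
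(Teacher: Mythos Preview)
Your proof is correct and follows the same route as the paper: raise the coefficient of $D_1$ to $1$ and apply inversion of adjunction \cite[Theorem~17.6]{Ko91}. The paper runs the implication forward rather than by contradiction---first observing that $(S,D_1+\Omega)$ is \emph{not} log canonical at $P$ (the non-KLT place of $(S,D)$ over $P$ is exceptional with centre $\{P\}$ by the curve-free hypothesis, and since $P\in D_1$ its discrepancy drops strictly below $-1$ when $a_1$ is increased to $1$), so that adjunction yields $\mathrm{mult}_P(\Omega|_{D_1})>1$ in one stroke and your separate borderline-case analysis never arises; but the underlying content is identical.
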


\begin{proof}
The log pair $(S, D_{1}+\sum_{i=2}^{r}a_{i}D_{i})$ is not log
canonical at $P$, since $a_{1}<1$. Then
$$
D_{1}\cdot\sum_{i=2}^{r}a_{i}D_{i}\geqslant\sum_{i=2}^{r}a_{i}\mathrm{mult}_{P}\Big(D_{1}\cdot D_{i}\Big)\geqslant\mathrm{mult}_{P}\Bigg(\sum_{i=2}^{r}a_{i}D_{i}\Big\vert_{D_{1}}\Bigg)>1%
$$
by \cite[Theorem~17.6]{Ko91}.
\end{proof}

Let $\pi\colon \bar{S}\to S$ be a~birational morphism, and
$\bar{D}$ is a~proper transform of $D$ via $\pi$. Then
$$
K_{\bar{S}}+\bar{D}+\sum_{i=1}^{s}e_{i}E_{i}\sim_{\mathbb{Q}}\pi^{*}\big(K_{S}+D\big),
$$
where $E_{i}$ is an~irreducible $\pi$-exceptional curve, and
$e_{i}\in\mathbb{Q}$. We assume that $E_{i}=E_{j}\iff i=j$.

Suppose, in addition, that the~birational morphism $\pi$ induces
an~isomorphism
$$
\bar{S}\setminus\Bigg(\bigcup_{i=1}^{s}E_{i}\Bigg)\cong S\setminus P.%
$$

\begin{remark}
\label{remark:log-pull-back} The log pair
$(\bar{S},\bar{D}+\sum_{i=1}^{s}e_{i}E_{i})$ is not Kawamata log
terminal at a~point in $\cup_{i=1}^{s}E_{i}$.
\end{remark}

Suppose that $S$ is singular at $P$, and either $P$ is a~singular
point of type $\mathbb{D}_{n}$ for some $n\in\mathbb{N}_{\geqslant
4}$, or the~point $P$ is a~singular point of type $\mathbb{E}_{m}$
for some $m\in\{6,7,8\}$.

\begin{lemma}
\label{lemma:Prokhorov} Suppose that
$E_{1}^{2}=E_{2}^{2}=\cdots=E_{s}^{2}=-2$. Then $e_{1}=1$ if
$$
E_{1}\cdot\Bigg(\sum_{i=2}^{s}E_{i}\Bigg)=3.
$$
\end{lemma}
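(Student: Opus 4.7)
The hypotheses $E_i^2 = -2$ for every $i$ and that $\pi$ is an~isomorphism outside $\bigcup E_i$ force $\pi$ to be the~minimal resolution of the~Du Val singularity at $P$, so the~$E_i$ form the~Dynkin diagram of type $\mathbb{D}_n$ (resp.~$\mathbb{E}_m$); the~numerical condition $E_1\cdot\sum_{i\geq 2}E_i=3$ picks out $E_1$ as the~unique branch (valence-three) vertex. Since $P$ is canonical, adjunction gives $K_{\bar S}\cdot E_i=0$, and negative definiteness of the~intersection matrix yields $K_{\bar S}=\pi^{*}K_S$. Consequently
\[
\pi^{*}D \;=\; \bar D + \sum_{i=1}^{s} e_i E_i,
\]
and intersecting both sides with each $E_j$ produces the~fundamental system
\[
\bar D\cdot E_j \;=\; 2e_j - \sum_{i\sim j} e_i \;\geq\; 0 \qquad (j=1,\dots,s),
\]
where $i\sim j$ means that $E_i$ meets $E_j$. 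Effectivity of $\pi^{*}D$ and log canonicity of the~pulled-back pair give $0\leq e_i\leq 1$ for every $i$.

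I~would then show that $e_k=1$ for at~least one index $k$. By Remark~\ref{remark:log-pull-back} the~pair $(\bar S,\bar D+\sum e_i E_i)$ is not Kawamata log terminal at some point of $\bigcup E_i$. Either the~non-klt locus contains an~entire component $E_k$ (so $e_k=1$ at~once), or it is a~single point $Q\in\bigcup E_i$ caused by the~singularities of $\bar D$ with every $e_i<1$. In the~latter case, the~local intersection bound $\mathrm{mult}_Q(\bar D)\leq \bar D\cdot E_k = 2e_k-\sum_{i\sim k}e_i$ combined with Remark~\ref{remark:convexity} (slide $D$ along its $\mathbb{Q}$-linear equivalence class toward a~generic element of~$|mD|$ for $m\gg 0$) concentrates the~log canonical singularity on an~exceptional curve, reducing to~$e_k=1$.

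Finally, I~would propagate through the~Dynkin tree. Fix $k$ with $e_k=1$, and let $E_k=E_{v_0},E_{v_1},\dots,E_{v_l}=E_1$ be the~unique path in the~tree from $E_k$ to the~branch vertex. On each of the~two arms of the~tree not containing $E_k$, a~short induction from the~leaf, using $2e_{\mathrm{leaf}}\geq e_{\mathrm{neighbour}}$ at the~tip and the~chain concavity $2e_j\geq e_{j-1}+e_{j+1}$ in the~interior, yields $e_a\geq\tfrac{l_a}{l_a+1}e_1$ for the~vertex $E_a$ adjacent to $E_1$ on~an arm of length $l_a$; summing over the~two arms gives $e_a+e_b\geq e_1$. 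Plugging into the~branch inequality $2e_1\geq e_a+e_b+e_{v_{l-1}}$ collapses to $e_1\geq e_{v_{l-1}}$. Setting $f_i=1-e_{v_i}$, the~chain inequalities become convexity of $f$, with $f_0=0$, $f_i\geq 0$, and $f_{l-1}\geq f_l$; the~discrete maximum principle (differences $d_i=f_i-f_{i-1}$ are non-decreasing, $d_1\geq 0$, $d_l\leq 0$) forces $f\equiv 0$ on~the~path, so $e_1=1$. The~cases in which $E_k$ lies on either of the~other two arms of the~tree are handled by relabelling, and the~argument is uniform across $\mathbb{D}_n$ ($n\geq 4$) and $\mathbb{E}_6,\mathbb{E}_7,\mathbb{E}_8$.

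The~principal obstacle is the~middle step, i.e.\ ensuring that the~non-klt failure on~the~exceptional fiber can always be~realized by one of the~exceptional curves $E_k$ becoming a~log canonical place; once this is achieved, the~propagation along the~tree is an~elementary exercise in~the~discrete maximum principle.
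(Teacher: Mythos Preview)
The paper's proof is a one-line citation: the singularity $(S\ni P)$ of type $\mathbb{D}_n$ or $\mathbb{E}_m$ is weakly exceptional (references to \cite{Pr98plt} and \cite{ChSh09a}), and Prokhorov's \cite[Proposition~2.9]{Pr98plt} then gives $e_1=1$ directly. Your attempt is a self-contained combinatorial argument, which is a genuinely different route, but it has a real gap at exactly the point you flag as the ``principal obstacle''.

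Your Step~2 does not go through. The claim you need is that \emph{for the given} $D$, some $e_k=1$; but the use of Remark~\ref{remark:convexity} you sketch replaces $D$ by another divisor $D'$ in the same $\mathbb{Q}$-linear class. Even if for $D'$ one had $e'_k=1$, this says nothing about the coefficients $e_i$ attached to the original $D$, which is what the lemma asserts. Moreover, it is \emph{not} automatic that the non-klt locus of $\big(\bar S,\bar D+\sum e_iE_i\big)$ is an entire component $E_k$: on a smooth surface a strictly log canonical center can perfectly well be an isolated point while all boundary coefficients are $<1$ (the log canonical place then lies over that point after further blow-ups and is not among the $E_i$). Ruling this out is precisely the content of ``$(S\ni P)$ is weakly exceptional'': there is a \emph{unique} plt blow-up (the branch curve $E_1$), and for every strictly lc boundary it is a log canonical place. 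So your Step~2 is not a minor technicality but the heart of the matter, and you have essentially rediscovered why one needs the weakly-exceptional machinery.

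Your Step~3, by contrast, is correct and rather nice: once some $e_k=1$, the arm estimates $e_a\geq \tfrac{l_a}{l_a+1}e_1$ and the branch inequality give $e_1\geq e_{v_{l-1}}$, and the discrete-convexity argument with $f_i=1-e_{v_i}$, $f_0=0$, $d_1\geq 0$, $d_l\leq 0$, $d_i$ non-decreasing forces $f\equiv 0$. If you want to salvage the direct approach, you would need an independent proof that the branch curve $E_1$ computes a log discrepancy $0$ whenever $(S,D)$ is strictly lc at $P$; this is exactly \cite[Proposition~2.9]{Pr98plt} specialised to Du Val $\mathbb{D}/\mathbb{E}$ points, and there is no shortcut through Remark~\ref{remark:convexity}.
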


\begin{proof}
This follows from \cite[Proposition~2.9]{Pr98plt}, because $(S\ni
P)$ is a~weakly-exceptional singularity (see
\cite[Example~4.7]{Pr98plt}, \cite[Example~3.4]{ChSh09a},
\cite[Theorem~3.15]{ChSh09a}).
\end{proof}

\begin{lemma}
\label{lemma:smooth-points} Suppose that $S$ is a~sextic surface
in $\mathbb{P}(1,1,2,3)$ that has canonical singularities, and
suppose that $D\sim_{\mathbb{Q}}-K_{X}$. Let $\mu$ be a~positive
rational number such that either
$$
\mu<\mathrm{lct}_{1}\big(S\big),
$$
or $\mu=2/3$ and $D$ is not a~curve in $|-K_{X}|$ with a~cusp at
a~point in $\mathrm{Sing}(S)$ of type $\mathbb{A}_{2}$.~Then
$$
\mathrm{LCS}\big(S,\mu D\big)\subseteq\mathrm{Sing}\big(S\big),%
$$
the~locus $\mathrm{LCS}(S,\mu D)$ contains no points of type
$\mathbb{A}_{1}$ or $\mathbb{A}_{2}$, and $|\mathrm{LCS}(S,\mu
D)|\leqslant 1$.
\end{lemma}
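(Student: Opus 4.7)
The plan is to establish the three conclusions of the lemma in turn: the inclusion $\mathrm{LCS}(S,\mu D)\subseteq\mathrm{Sing}(S)$, the absence of $\mathbb{A}_{1}$ and $\mathbb{A}_{2}$ points from $\mathrm{LCS}(S,\mu D)$, and the bound $|\mathrm{LCS}(S,\mu D)|\leqslant 1$. The common engine will be the anti-canonical pencil $|-K_{S}|$ together with the convexity reduction of Remark~\ref{remark:convexity}.

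To prove the first two assertions, I argue by contradiction. Assume $P\in\mathrm{LCS}(S,\mu D)$ is either smooth or of type $\mathbb{A}_{1}$ or $\mathbb{A}_{2}$. Since $|-K_{S}|$ is a pencil with finite base locus, I select a curve $T\in|-K_{S}|$ passing through $P$; in the boundary case $\mu=2/3$ I choose $T$ distinct from the cuspidal-at-$\mathbb{A}_{2}$ member that realises $\mathrm{lct}_{1}(S)=2/3$, which is possible because the pencil is one-dimensional. Then $\mu$ does not exceed the log canonical threshold of $T$ at $P$, so $(S,\mu T)$ is Kawamata log terminal at $P$. Denoting by $\mu'\leqslant\mu$ the largest rational with $(S,\mu'D)$ Kawamata log terminal at $P$, I apply Remark~\ref{remark:convexity} with $\bar{D}:=\mu'T$ to produce an effective $\mathbb{Q}$-divisor $\tilde{D}\sim_{\mathbb{Q}}D$ with $T\not\subseteq\mathrm{Supp}(\tilde{D})$ and $(S,\mu\tilde{D})$ still not Kawamata log terminal at $P$. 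When $P$ is smooth, the intersection estimate
$$
\mathrm{mult}_{P}(\tilde{D})\cdot\mathrm{mult}_{P}(T)\leqslant\tilde{D}\cdot T=1
$$
combined with $\mathrm{mult}_{P}(T)\geqslant 1$ yields $\mathrm{mult}_{P}(\tilde{D})\leqslant 1$, and the standard lower bound $\mathrm{lct}_{P}(\tilde{D})\geqslant 1/\mathrm{mult}_{P}(\tilde{D})\geqslant 1>\mu$ contradicts the failure of Kawamata log terminality. When $P$ is Du Val of type $\mathbb{A}_{1}$ or $\mathbb{A}_{2}$, I transfer the argument to the minimal crepant resolution $\pi\colon\tilde{S}\to S$: the relations $\pi^{*}\tilde{D}\cdot E_{j}=0$ pin down the coefficients $b_{i}$ in $\pi^{*}\tilde{D}=\pi^{-1}_{*}\tilde{D}+\sum b_{i}E_{i}$ in terms of $\pi^{-1}_{*}\tilde{D}\cdot E_{j}$, and the analogous intersection inequality on $\tilde{S}$ using $\pi^{-1}_{*}T$ together with the combinatorics of the $(-2)$-chain rules out any non-Kawamata-log-terminal place lying over $P$.

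For the final assertion, I first observe that each coefficient $a_{i}$ in $D=\sum a_{i}D_{i}$ satisfies $a_{i}\leqslant 1$: since $-K_{S}$ is ample and every irreducible curve $D_{i}$ has $D_{i}\cdot(-K_{S})\geqslant 1$, the identity $\sum a_{i}(D_{i}\cdot(-K_{S}))=D\cdot(-K_{S})=1$ forces $a_{i}\leqslant 1$. Consequently no irreducible curve has coefficient at least $1$ in $\mu D$ when $\mu<1$, so $\mathrm{LCS}(S,\mu D)$ is zero-dimensional. Since $-(K_{S}+\mu D)\sim_{\mathbb{Q}}(1-\mu)(-K_{S})$ is nef and big for $\mu<1$, Theorem~\ref{theorem:connectedness} shows that $\mathrm{LCS}(S,\mu D)$ is connected, hence consists of at most one point.

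The main obstacle is the analysis at the Du Val points: one must carefully track the log pullback coefficients $b_{i}$ on the minimal resolution, verify that the convex-combination reduction of Remark~\ref{remark:convexity} preserves the non-Kawamata-log-terminal centre precisely at $P$ rather than shifting it to some intermediate valuation on the exceptional chain, and in the boundary case $\mu=2/3$ exploit the extra hypothesis on $D$ to exclude the single cuspidal-at-$\mathbb{A}_{2}$ configuration.
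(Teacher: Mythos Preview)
Your approach is essentially the same as the paper's: the paper proves this lemma by invoking Theorem~\ref{theorem:connectedness} (which gives your final assertion exactly as you argue) together with the proof of \cite[Lemma~4.1]{Ch07b}, and what you have written is a reconstruction of that cited argument --- the anticanonical curve through $P$, the convexity reduction of Remark~\ref{remark:convexity}, and the multiplicity/intersection bound, followed by a resolution computation at $\mathbb{A}_{1}$ and $\mathbb{A}_{2}$ points. A couple of small technical points to tighten: in the convexity step you should take $\bar{D}=T$ (not $\mu'T$), since Remark~\ref{remark:convexity} requires $\bar{D}\sim_{\mathbb{Q}}D$, and you should first pass to the log canonical threshold $\mu_{0}\leqslant\mu$ at $P$ so that $(S,\mu_{0}D)$ is genuinely log canonical before applying the remark; also, at a smooth point the correct implication is $\mathrm{mult}_{P}(\mu\tilde{D})\leqslant\mu<1\Rightarrow(S,\mu\tilde{D})$ canonical at $P$, which is what you need. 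With these adjustments your plan is correct and matches the paper.
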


\begin{proof}
This follows from Theorem~\ref{theorem:connectedness} and
the~proof of \cite[Lemma~4.1]{Ch07b}.
\end{proof}

Most of the~described results are valid in much more general
settings (cf. \cite{Ko91} and \cite{Ko97}).

\section{Local inequality}
\label{section:main-inequality}

The purpose of this section is to prove Theorem~\ref{theorem:I}.

Let $S$ be a~surface, let $D$ be an~arbitrary effective
$\mathbb{Q}$-divisor on the~surface $S$, let $O$ be a~smooth point
of the~surface $S$, let $\Delta_{1}$ and $\Delta_{2}$ be reduced
irreducible curves on $S$~such~that
$$
\Delta_{1}\not\subseteq\mathrm{Supp}\big(D\big)\not\supseteq\Delta_{2},
$$
and the~divisor $\Delta_{1}+\Delta_{2}$ has a~simple normal
crossing singularity at the~smooth point
$O\in\Delta_{1}\cap\Delta_{2}$, let $a_{1}$ and $a_{2}$ be some
non-negative rational~numbers. Suppose that the~log pair
$$
\Big(S,\ D+a_{1}\Delta_{1}+a_{2}\Delta_{2}\Big)
$$
is not Kawamata log terminal at $O$, but $(S,
D+a_{1}\Delta_{1}+a_{2}\Delta_{2})$ is Kawamata log terminal~in
a~punctured neighborhood of the~point $O$. In particular, we must
have $a_{1}<1$ and $a_{2}<1$.

Let $A,B,M,N,\alpha,\beta$ be non-negative rational numbers such
that
\begin{itemize}
\item the~inequality $\alpha a_{1}+\beta a_{2}\leqslant 1$ holds,%
\item the~inequalities $A(B-1)\geqslant 1\geqslant\mathrm{max}(M,N)$ hold,%
\item the~inequalities $\alpha(A+M-1)\geqslant A^{2}(B+N-1)\beta$ and $\alpha(1-M)+A\beta\geqslant A$ holds,%
\item either the~inequality $2M+AN\leqslant 2$ holds or
$$
\alpha\big(B+1-MB-N\big)+\beta\big(A+1-AN-M\big)\geqslant AB-1.
$$%
\end{itemize}

\begin{lemma}
\label{lemma:2-0} The inequalities $A+M\geqslant 1$ and $B>1$
holds. The inequality
$$
\alpha\big(B+1-MB-N\big)+\beta\big(A+1-AN-M\big)\geqslant AB-1
$$
holds. The inequality $\beta(1-N)+B\alpha\geqslant B$ holds. The
inequalities
$$
\frac{\alpha(2-M)}{A+1}+\frac{\beta(2-N)}{B+1}\geqslant 1%
$$
and $\alpha(2-M)B+\beta(1-N)(A+1)\geqslant B(A+1)$ hold.
\end{lemma}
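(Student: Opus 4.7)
The plan is to derive each of the five inequalities as elementary algebraic consequences of the hypotheses of Theorem~\ref{theorem:I}. The key intermediate estimate, which unlocks the rest, is $\alpha\geqslant 1+AN\beta$ (in particular $\alpha\geqslant 1$).

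First, $B>1$ is immediate: the hypothesis $A(B-1)\geqslant 1$ forces $A>0$ and hence $B-1\geqslant 1/A>0$. For $A+M\geqslant 1$ I first rule out $\alpha=0$: if $\alpha=0$, then $\alpha(1-M)+A\beta\geqslant A$ forces $\beta\geqslant 1$, so $\alpha(A+M-1)\geqslant A^{2}(B+N-1)\beta$ becomes $0\geqslant A^{2}(B+N-1)$, contradicting $A>0$ and $B>1$. With $\alpha>0$ in hand, $\alpha(A+M-1)\geqslant A^{2}(B+N-1)\beta\geqslant 0$ yields $A+M\geqslant 1$.

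Next I derive $\alpha\geqslant 1+AN\beta$. Splitting $A^{2}(B+N-1)\beta=A^{2}(B-1)\beta+A^{2}N\beta$ and using $A(B-1)\geqslant 1$ to replace $A^{2}(B-1)\beta$ by the smaller $A\beta$, the first hypothesis becomes $\alpha(A+M-1)\geqslant A\beta+A^{2}N\beta$; substituting the lower bound $A\beta\geqslant A-\alpha(1-M)$ from the second hypothesis into the term $A\beta$ on the right and simplifying collapses this to $\alpha A\geqslant A(1+AN\beta)$, i.e.\ $\alpha\geqslant 1+AN\beta$. The assertion $\beta(1-N)+B\alpha\geqslant B$ is then immediate from $\alpha\geqslant 1$, $\beta\geqslant 0$, and $N\leqslant 1$.

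For the big inequality $\alpha(B+1-MB-N)+\beta(A+1-AN-M)\geqslant AB-1$: if it is already assumed (the second clause of the hypothesis disjunction), there is nothing to prove; otherwise we are in the regime $2M+AN\leqslant 2$, and I will derive it by substituting $\alpha\geqslant 1+AN\beta$ together with $A\beta\geqslant A-\alpha(1-M)$ into the left-hand side, using $2M+AN\leqslant 2$ to ensure positivity of the resulting coefficients. The last two inequalities then follow by combining the big inequality with $\alpha\geqslant 1+AN\beta$ and clearing denominators. The main obstacle is precisely this case $2M+AN\leqslant 2$ of the big inequality: the algebra is elementary but the bookkeeping of signs and coefficient comparisons must be done with care; once this step is in place, everything else reduces to routine substitution.
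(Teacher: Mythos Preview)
Your intermediate estimate $\alpha\geqslant 1+AN\beta$ is correct (and the derivation you sketch---adding the weakened form $\alpha(A+M-1)\geqslant A\beta+A^{2}N\beta$ of the first hypothesis to the second hypothesis $\alpha(1-M)\geqslant A-A\beta$---is clean). It does give $\beta(1-N)+B\alpha\geqslant B$ immediately, and your treatment of $B>1$ and $A+M\geqslant 1$ is fine.

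But the plan for the ``big inequality'' and for $\alpha(2-M)B+\beta(1-N)(A+1)\geqslant B(A+1)$ has a genuine gap: the two tools you propose to substitute, namely $\alpha\geqslant 1+AN\beta$ and $A\beta\geqslant A-\alpha(1-M)$, are jointly \emph{strictly weaker} than the original pair of hypotheses, and they are not strong enough. Take $A=B=3$, $M=N=0$ (so $A(B-1)=6\geqslant 1$ and $2M+AN=0\leqslant 2$). The point $(\alpha,\beta)=(1,\tfrac{2}{3})$ satisfies both of your tools ($\alpha\geqslant 1$ and $\alpha+3\beta=3\geqslant 3$), yet the big inequality reads $4\alpha+4\beta\geqslant 8$, i.e.\ $\alpha+\beta\geqslant 2$, which fails ($1+\tfrac{2}{3}<2$); likewise $6\alpha+4\beta\geqslant 12$ fails. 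Of course $(1,\tfrac{2}{3})$ violates the \emph{original} hypothesis $\alpha(A+M-1)\geqslant A^{2}(B+N-1)\beta$ (here $2\geqslant 12$), so it is not a counterexample to the lemma---but it does show that no amount of bookkeeping with your two substitutes alone can close the argument. The weakening you performed in passing from $A^{2}(B-1)\beta$ to $A\beta$ via $A(B-1)\geqslant 1$ discards exactly the information needed.

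What the paper does instead is keep the constraint $\alpha(A+M-1)\geqslant A^{2}(B+N-1)\beta$ in full and argue geometrically in the $(\alpha,\beta)$--plane: each target inequality and the two hypotheses are half-planes, and it suffices to check the target at the intersection point of the two hypothesis boundary lines. That reduces each inequality to a single polynomial inequality in $A,B,M,N$, and this is where $2M+AN\leqslant 2$ (or the assumed alternative) is finally used. You will need to bring the full first hypothesis back into play in some form to make the argument go through.
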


\begin{proof}
The inequality $B>1$ follows from the~inequality $A(B-1)\geqslant
1$. Then
$$
\frac{\alpha}{A+1}+\frac{\beta}{B+1}\geqslant\frac{\alpha}{A+1}+\frac{\beta}{2B}\geqslant\frac{1}{2}
$$
because $2B\geqslant B+1$. Similarly, we see that $A+M\geqslant
1$, because
$$
\frac{\alpha(A+M-1)}{A^{2}(B+N-1)}\geqslant\beta\geqslant 0
$$
and $B+N-1\geqslant 0$. The inequality
$\beta(1-N)+B\alpha\geqslant B$ follows from the~inequalities
$$
\alpha+\frac{\beta(1-N)}{B}\geqslant\frac{2-M}{A+1}\alpha+\frac{\beta(1-N)}{B}\geqslant 1,%
$$
because $A+1\geqslant 2-M$.

Let us show that the~inequality
$$
\alpha\big(2-M\big)B+\beta\big(1-N\big)\big(A+1\big)\geqslant B\big(A+1\big)%
$$
holds. Let $L_{1}$ be the~line in $\mathbb{R}^{2}$ given by the~
equation
$$
x\big(2-M\big)B+y\big(1-N\big)(A+1)-B(A+1)=0
$$
and let $L_{2}$ be the~line that is given by the~equation
$$
x\big(1-M\big)+Ay-A=0,
$$
where $(x,y)$ are coordinates on $\mathbb{R}^{2}$. Then $L_{1}$
intersects the~line $y=0$ at the~point
$$
\Bigg(\frac{A+1}{2-M},0\Bigg)
$$
and $L_{2}$ intersects the~line $y=0$ at the~point $(A/(1-M),0)$.
But
$$
\frac{A+1}{2-M}<\frac{A}{1-M},
$$
which implies that $\alpha(2-M)B+\beta(1-N)(A+1)\geqslant B(A+1)$
if
$$
A^{2}\beta_{0}\big(B+N-1\big)\geqslant \alpha_{0}\big(A+M-1\big),
$$
where $(\alpha_{0},\beta_{0})$ is the~intersection point of the~
lines $L_{1}$ and $L_{2}$. But
$$
\big(\alpha_{0},\beta_{0}\big)=\Bigg(\frac{A(A+1)(B+N-1)}{\Delta},\
\frac{B(A-1+M)}{\Delta}\Bigg),
$$
where $\Delta=2AB-ABM-A+AM-1+M+NA-NAM+N-NM$. But
$$
A^{2}\Big(B\big(A-1+M\big)\Big)\big(B+N-1\big)\geqslant\Big(A\big(A+1\big)\big(B+N-1\big)\Big)\big(A+M-1\big),%
$$
because $A(B-1)\geqslant 1$, which implies that
$A^{2}\beta_{0}(B+N-1)\geqslant \alpha_{0}(A+M-1)$.

Finally, let us show that the~inequality
$$
\alpha\big(B+1-MB-N\big)+\beta\big(A+1-AN-M\big)\geqslant AB-1
$$
holds. Let $L^{\prime}_{1}$ be the~line in $\mathbb{R}^{2}$ given
by the~equation
$$
x\big(B+1-MB-N\big)+y\beta\big(A+1-AN-M\big)-AB+1=0
$$
where $(x,y)$ are coordinates on $\mathbb{R}^{2}$. Then
$L^{\prime}_{1}$ intersects the~line $y=0$ at the~point
$$
\Bigg(\frac{AB-1}{B+1-MB-N},0\Bigg)
$$
and $L_{2}$ intersects the~line $y=0$ at the~point $(A/(1-M),0)$.
But
$$
\frac{AB-1}{B+1-MB-N}<\frac{A}{1-M},
$$
which implies that $\alpha(B+1-MB-N)+\beta(A+1-AN-M)\geqslant
AB-1$ if
$$
A^{2}\beta_{1}\big(B+N-1\big)\geqslant \alpha_{1}\big(A+M-1\big),
$$
where $(\alpha_{1},\beta_{1})$ is the~intersection point of the~
lines $L^{\prime}_{1}$ and $L_{2}$. Note that
$$
\big(\alpha_{1},\beta_{1}\big)=\Bigg(\frac{A(AB-A-2+NA+M)}{\Delta^{\prime}},\ \frac{A+1-NA-M}{\Delta^{\prime}}\Bigg),%
$$
where $\Delta^{\prime}=AB-1-ABM+AM+2M-NAM-M^2$.

To complete the~proof, it is enough to show that the~inequality
$$
A^{2}\Big(A+1-NA-M\Big)(B+N-1)\geqslant \Big(A(AB-A-2+NA+M)\Big)(A+M-1)%
$$
holds. This inequality is equivalent to the~inequality
$$
\big(2-M\big)\big(A+M-1\big)\geqslant A\big(AN+2M-2)\big(B+N-1\big),%
$$
which is true, because $M\leqslant 1$ and $AN+2M-2\leqslant 0$.
\end{proof}

Let us prove prove Theorem~\ref{theorem:I} by reductio ad
absurdum. Suppose that the~inequalities
$$
\mathrm{mult}_{O}\Big(D\cdot\Delta_{1}\Big)<M+Aa_{1}-a_{2}\ \text{and}\ \mathrm{mult}_{O}\Big(D\cdot\Delta_{2}\Big)<N+Ba_{2}-a_{1}%
$$
hold. Let us show that this assumption leads to a~contradiction.

\begin{lemma}
\label{lemma:2-1} The inequalities $a_{1}>(1-M)/A$ and
$a_{2}>(1-N)/B$ hold.
\end{lemma}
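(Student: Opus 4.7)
The plan is to derive both inequalities by applying inversion of adjunction (as in the proof of Lemma~\ref{lemma:adjunction}) along each of the two curves $\Delta_{1}$ and $\Delta_{2}$ in turn. The argument is completely symmetric, so I would establish $a_{1}>(1-M)/A$ first and then obtain $a_{2}>(1-N)/B$ by interchanging the roles of the two curves.

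To set things up for adjunction, I would first boost the coefficient of $\Delta_{1}$ from $a_{1}$ to $1$ and check that the enlarged pair $(S,D+\Delta_{1}+a_{2}\Delta_{2})$ still fails to be log canonical at $O$. This follows from a direct log-discrepancy count: any divisor $E$ over $S$ centred at $O$ has $\mathrm{ord}_{E}(\Delta_{1})\geqslant 1$ because $O\in\Delta_{1}$, so enlarging the coefficient of $\Delta_{1}$ by $1-a_{1}>0$ strictly decreases the log discrepancy along $E$ by at least $1-a_{1}$. Hence any non-KLT place of the original pair becomes a non-LC place of the enlarged pair. Now $\Delta_{1}$ is smooth at $O$, has coefficient $1$ in the enlarged pair, and is not a component of $D+a_{2}\Delta_{2}$, so Koll\'ar's Theorem~17.6 in \cite{Ko91}, applied exactly as in Lemma~\ref{lemma:adjunction}, yields
$$
1<\mathrm{mult}_{O}\bigl((D+a_{2}\Delta_{2})|_{\Delta_{1}}\bigr)=\mathrm{mult}_{O}(D\cdot\Delta_{1})+a_{2},
$$
where the equality uses the transversality of $\Delta_{1}$ and $\Delta_{2}$ at $O$.

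Combining this with the assumed contradictory bound $\mathrm{mult}_{O}(D\cdot\Delta_{1})<M+Aa_{1}-a_{2}$ gives $1-a_{2}<M+Aa_{1}-a_{2}$, which rearranges to $a_{1}>(1-M)/A$. Performing the same procedure with $\Delta_{2}$ in place of $\Delta_{1}$ then produces $a_{2}>(1-N)/B$. The only mildly delicate point will be the justification of the coefficient boost, which is the main obstacle to a fully formal write-up; the log-discrepancy argument above handles it cleanly, however, in both cases (the borderline log-canonical-but-non-KLT case, and the case where the original pair is already strictly non-LC at $O$). Notably, none of the auxiliary parameters $\alpha,\beta$ or the intricate inequalities among $A,B,M,N$ enter here; the conditions $a_{1}<1$ and $a_{2}<1$ already noted in the set-up of Theorem~\ref{theorem:I} are all that is used.
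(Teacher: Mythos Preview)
Your proof is correct and follows essentially the same approach as the paper: apply inversion of adjunction along $\Delta_{1}$ (respectively $\Delta_{2}$) to obtain $\mathrm{mult}_{O}(D\cdot\Delta_{1})>1-a_{2}$, then combine with the contradictory assumption $\mathrm{mult}_{O}(D\cdot\Delta_{1})<M+Aa_{1}-a_{2}$. The paper simply cites Lemma~\ref{lemma:adjunction} for this, whereas you have spelled out the coefficient-boost step more carefully; this is a cosmetic difference only.
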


\begin{proof}
It follows from Lemma~\ref{lemma:adjunction} that
$$
M+Aa_{1}-a_{2}>\mathrm{mult}_{O}\Big(D\cdot\Delta_{1}\Big)>1-a_{2},%
$$
which implies that $a_{1}>(1-M)/A$. Similarly, we see that
$a_{2}>(1-N)/B$.
\end{proof}

Put $m_{0}=\mathrm{mult}_{O}(D)$. Then $m_{0}$ is a~positive
rational number.

\begin{remark}
\label{remark:2-21} The inequalities $m_{0}<M+Aa_{1}-a_{2}$ and
$m_{0}<N+Ba_{2}-a_{1}$ hold.
\end{remark}

\begin{lemma}
\label{lemma:2-3} The inequality $m_{0}+a_{1}+a_{2}<2$ holds.
\end{lemma}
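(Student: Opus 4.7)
The plan is to argue by reductio ad absurdum, parallel in spirit to the proof of Lemma~\ref{lemma:2-1}. Assume $m_{0}+a_{1}+a_{2}\geqslant 2$, so $m_{0}\geqslant 2-a_{1}-a_{2}$. Feeding this lower bound into the two strict inequalities recorded in Remark~\ref{remark:2-21} will give explicit strict lower bounds on $a_{1}$ and $a_{2}$ individually, which can then be combined against the hypothesis $\alpha a_{1}+\beta a_{2}\leqslant 1$ by means of the auxiliary inequality
$$
\frac{\alpha(2-M)}{A+1}+\frac{\beta(2-N)}{B+1}\geqslant 1
$$
already established in Lemma~\ref{lemma:2-0}.

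Concretely, from $m_{0}<M+Aa_{1}-a_{2}$ and $m_{0}\geqslant 2-a_{1}-a_{2}$ I get $2-a_{1}-a_{2}<M+Aa_{1}-a_{2}$, which rearranges to $(A+1)a_{1}>2-M$, i.e.\ $a_{1}>(2-M)/(A+1)$. The symmetric manipulation applied to $m_{0}<N+Ba_{2}-a_{1}$ yields $a_{2}>(2-N)/(B+1)$. Since $M,N\leqslant 1$ both quantities $(2-M)/(A+1)$ and $(2-N)/(B+1)$ are non-negative, so I can multiply the first bound by $\alpha\geqslant 0$ and the second by $\beta\geqslant 0$ and add to obtain
$$
\alpha a_{1}+\beta a_{2}>\frac{\alpha(2-M)}{A+1}+\frac{\beta(2-N)}{B+1}\geqslant 1,
$$
where the last inequality is the one quoted from Lemma~\ref{lemma:2-0}. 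This contradicts $\alpha a_{1}+\beta a_{2}\leqslant 1$ and finishes the argument.

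The only bookkeeping step is to observe that the strict inequality above really is strict: if at least one of $\alpha,\beta$ is positive, the corresponding term gains a strict inequality from the strict lower bound on $a_{1}$ or $a_{2}$; and the case $\alpha=\beta=0$ cannot occur at all, since Lemma~\ref{lemma:2-0} would then degenerate to the absurd statement $0\geqslant 1$. So there is no serious obstacle here; the lemma is really just a clean repackaging of Remark~\ref{remark:2-21} together with the inequality $\alpha(2-M)/(A+1)+\beta(2-N)/(B+1)\geqslant 1$ that was proved with some effort in Lemma~\ref{lemma:2-0}. The hard work has already been done in that preparatory lemma.
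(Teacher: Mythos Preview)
Your proof is correct and is essentially the same as the paper's: both arguments combine the two inequalities $m_{0}+a_{1}+a_{2}<M+(A+1)a_{1}$ and $m_{0}+a_{1}+a_{2}<N+(B+1)a_{2}$ (which are exactly Remark~\ref{remark:2-21} rewritten) with the weights $\alpha/(A+1)$ and $\beta/(B+1)$, and then invoke the inequality $\alpha(2-M)/(A+1)+\beta(2-N)/(B+1)\geqslant 1$ from Lemma~\ref{lemma:2-0}. The paper does this as a direct estimate on $m_{0}+a_{1}+a_{2}$ while you phrase it as a contradiction by first isolating lower bounds on $a_{1}$ and $a_{2}$; the algebra is identical, and your explicit remark that $\alpha=\beta=0$ is impossible is a nice point that the paper leaves implicit.
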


\begin{proof}
We know that $m_{0}+a_{1}+a_{2}<M+(A+1)a_{1}$ and
$m_{0}+a_{1}+a_{2}<N+(B+1)a_{2}$. Then
$$
\big(m_{0}+a_{1}+a_{2}\big)\Bigg(\frac{\alpha}{A+1}+\frac{\beta}{B+1}\Bigg)<\alpha a_{1}+\beta a_{2}+\frac{\alpha M}{A+1}+\frac{\beta N}{B+1}\leqslant 1+\frac{\alpha M}{A+1}+\frac{\beta N}{B+1},%
$$
which implies that $m_{0}+a_{1}+a_{2}<2$ by Lemma~\ref{lemma:2-0}.
\end{proof}

Let $\pi_1\colon S_{1}\to S$ be the~blow up of the~point $O$, and
let $F_{1}$ be the~$\pi_{1}$-exceptional curve. Then
$$
K_{S_{1}}+D^{1}+a_{1}\Delta^{1}_{1}+a_{2}\Delta^{1}_{2}+\big(m_{0}+a_{1}+a_{2}-1\big)F_{1}\sim_{\mathbb{Q}}\pi_{1}^{*}\Big(K_{S}+D+a_{1}\Delta_{1}+a_{2}\Delta_{2}\Big),
$$
where $D^{1}$, $\Delta^{1}_{1}$, $\Delta^{1}_{2}$ are proper
transforms of the~divisors $D$, $\Delta_{1}$, $\Delta_{2}$ via
$\pi_{1}$, respectively.~Then
$$
\Big(S_{1},\ D^{1}+a_{1}\Delta^{1}_{1}+a_{2}\Delta^{1}_{2}+\big(m_{0}+a_{1}+a_{2}-1\big)F_{1}\Big)%
$$
is not Kawamata log terminal at some point $O_{1}\in F_{1}$ (see
Remark~\ref{remark:log-pull-back}), where
$m_{0}+a_{1}+a_{2}\geqslant 1$.

\begin{lemma}
\label{lemma:2-4} Either $O_{1}=F_{1}\cap\Delta^{1}_{1}$ or
$O_{1}=F_{1}\cap\Delta^{1}_{2}$.
\end{lemma}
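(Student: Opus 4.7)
The plan is to argue by contradiction. Suppose $O_{1}$ is a point of $F_{1}$ distinct from both $F_{1}\cap\Delta^{1}_{1}$ and $F_{1}\cap\Delta^{1}_{2}$. Then in a Zariski neighborhood of $O_{1}$, neither strict transform is present, so the log pair $(S_{1},\,D^{1}+cF_{1})$ with $c=m_{0}+a_{1}+a_{2}-1$ fails to be Kawamata log terminal at $O_{1}$. The coefficient $c$ lies in $[0,1)$: nonnegativity follows from the non-klt condition at $O$ (since otherwise $(S_{1},D^{1})$ would already be klt at $O_{1}$ and no further points on $F_{1}$ could witness the failure after the log pull-back), while $c<1$ is Lemma~\ref{lemma:2-3}.

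I would then invoke the standard multiplicity bound arising from adjunction (as in \cite[Theorem~17.6]{Ko91}, exactly the device used in Lemma~\ref{lemma:adjunction}): since $F_{1}\cong\mathbb{P}^{1}$ is smooth at $O_{1}$, $F_{1}\not\subseteq\mathrm{Supp}(D^{1})$, and the pair is non-klt at $O_{1}$ with coefficient $c<1$ on $F_{1}$, one obtains
$$
\mathrm{mult}_{O_{1}}\big(D^{1}\cdot F_{1}\big)>1-c=2-m_{0}-a_{1}-a_{2}.
$$
On the other hand, $D^{1}=\pi_{1}^{*}D-m_{0}F_{1}$ and $F_{1}^{2}=-1$ give the global intersection number $D^{1}\cdot F_{1}=m_{0}$, which bounds the local multiplicity from above. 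Combining the two,
$$
m_{0}\geq\mathrm{mult}_{O_{1}}\big(D^{1}\cdot F_{1}\big)>2-m_{0}-a_{1}-a_{2},
$$
so $2m_{0}+a_{1}+a_{2}>2$.

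To finish, I would combine this strict inequality with the two upper bounds $m_{0}<M+Aa_{1}-a_{2}$ and $m_{0}<N+Ba_{2}-a_{1}$ supplied by Remark~\ref{remark:2-21}, together with the hypothesis $\alpha a_{1}+\beta a_{2}\leqslant 1$, the lower bounds $a_{1}>(1-M)/A$ and $a_{2}>(1-N)/B$ of Lemma~\ref{lemma:2-1}, and the preparatory estimates collected in Lemma~\ref{lemma:2-0}. Substituting the Remark~\ref{remark:2-21} bounds into $2m_{0}+a_{1}+a_{2}>2$ yields the pair of sharpened inequalities $2(1-M)<(2A+1)a_{1}-a_{2}$ and $2(1-N)<(2B+1)a_{2}-a_{1}$, and I expect the contradiction to come from taking an appropriate positive linear combination of these that violates $\alpha a_{1}+\beta a_{2}\leqslant 1$. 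The main obstacle is precisely this linear-arithmetic bookkeeping: this is where the seemingly unmotivated numerical conditions $\alpha(A+M-1)\geqslant A^{2}(B+N-1)\beta$ and $\alpha(1-M)+A\beta\geqslant A$ are actually invoked, and identifying the correct weights to obtain a strict contradiction is the technical heart of the argument.
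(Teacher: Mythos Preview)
Your overall strategy---contradiction, adjunction on $F_{1}$, then a linear combination of the two bounds in Remark~\ref{remark:2-21} played off against $\alpha a_{1}+\beta a_{2}\leqslant 1$---is exactly what the paper does. The gap is in the adjunction step: you extract only $\mathrm{mult}_{O_{1}}(D^{1}\cdot F_{1})>1-c$, but Lemma~\ref{lemma:adjunction} actually gives the sharper $\mathrm{mult}_{O_{1}}(D^{1}\cdot F_{1})>1$. Indeed, since $c<1$ strictly, replacing $cF_{1}$ by $F_{1}$ pushes the pair from non-klt to non-lc at $O_{1}$, and then inversion of adjunction on $F_{1}$ yields the bound $>1$, whence $m_{0}=D^{1}\cdot F_{1}\geqslant\mathrm{mult}_{O_{1}}(D^{1}\cdot F_{1})>1$.

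This matters because the ``bookkeeping'' you defer does not close with your weaker inequality $2m_{0}+a_{1}+a_{2}>2$. With the correct bound $m_{0}>1$, the paper chooses the weights
\[
w_{1}=\frac{\beta+B\alpha}{AB-1},\qquad w_{2}=\frac{\alpha+A\beta}{AB-1}
\]
for the two inequalities $m_{0}<M+Aa_{1}-a_{2}$ and $m_{0}<N+Ba_{2}-a_{1}$; these are precisely the weights that turn the $a_{i}$-part into $\alpha a_{1}+\beta a_{2}\leqslant 1$. Combining with $m_{0}>1$ yields
\[
(1-M)w_{1}+(1-N)w_{2}<1,
\]
i.e.\ $\alpha(B+1-MB-N)+\beta(A+1-AN-M)<AB-1$, contradicting Lemma~\ref{lemma:2-0}. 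So the relevant numerical condition is the one established in Lemma~\ref{lemma:2-0}, not the hypotheses $\alpha(A+M-1)\geqslant A^{2}(B+N-1)\beta$ or $\alpha(1-M)+A\beta\geqslant A$ that you singled out; those are used elsewhere in Section~\ref{section:main-inequality}. If instead you start from $2m_{0}+a_{1}+a_{2}>2$ and use the same weights, the extra $(a_{1}+a_{2})/2$ term prevents the contradiction, and no alternative choice of weights is compatible with $\alpha a_{1}+\beta a_{2}\leqslant 1$.
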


\begin{proof}
Suppose that $O_{1}\not \in\Delta^{1}_{1}\cup\Delta^{1}_{2}$. Then
$m_{0}=D^{1}\cdot F_{1}>1$ by Lemma~\ref{lemma:adjunction}. But
$$
m_{0}\Bigg(\frac{\beta+B\alpha}{AB-1}+\frac{\alpha+A\beta}{AB-1}\Bigg)<\big(M+Aa_{1}-a_{2}\big)\frac{\beta+B\alpha}{AB-1}+\big(N+Ba_{2}-a_{1}\big)\frac{\alpha+A\beta}{AB-1},%
$$
because $m_{0}<M+Aa_{1}-a_{2}$ and $m_{0}<N+Ba_{2}-a_{1}$. On
the~other hand, we have
$$
\big(M+Aa_{1}-a_{2}\big)\frac{\beta+B\alpha}{AB-1}+\big(N+Ba_{2}-a_{1}\big)\frac{\alpha+A\beta}{AB-1}\leqslant 1+\frac{M\beta+MB\alpha+N\alpha+AN\beta}{AB-1},%
$$
because $\alpha a_{1}+\beta a_{2}\leqslant 1$ and $AB-1>0$. But we
already proved that $m_{0}>1$. Thus, we see that
$$
\beta+B\alpha+\alpha+A\beta<AB-1+M\beta+MB\alpha+N\alpha+AN\beta,%
$$
which is impossible by Lemma~\ref{lemma:2-0}.
\end{proof}

\begin{lemma}
\label{lemma:2-5} The inequality $O_{1}\ne
F_{1}\cap\Delta^{1}_{1}$ holds.
\end{lemma}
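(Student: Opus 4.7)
The plan is to assume for contradiction that $O_1 = F_1 \cap \Delta_1^1$ and derive a contradiction with the final inequality of Lemma~\ref{lemma:2-0}. Since $\Delta_1 + \Delta_2$ has simple normal crossings at $O$, the strict transforms $\Delta_1^1$ and $\Delta_2^1$ meet $F_1$ at two distinct points, so $\Delta_2^1$ does not pass through $O_1$. By Lemma~\ref{lemma:2-3} the coefficient $m_0+a_1+a_2-1$ of $F_1$ is strictly less than $1$, and by assumption $a_1<1$, so I may apply Lemma~\ref{lemma:adjunction} at $O_1$ with $\Delta_1^1$ playing the role of $D_1$.

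Since $\Delta_1^1$ is smooth at $O_1$ and meets $F_1$ transversally there, Lemma~\ref{lemma:adjunction} will give
$$\mathrm{mult}_{O_1}(D^1 \cdot \Delta_1^1) + (m_0 + a_1 + a_2 - 1) > 1.$$
Next, the standard blow-up identity (using that $\Delta_1$ is smooth at $O$, so $\Delta_1^1$ meets $F_1$ only at $O_1$) reads $\mathrm{mult}_O(D \cdot \Delta_1) = m_0 + \mathrm{mult}_{O_1}(D^1 \cdot \Delta_1^1)$, and substituting yields $\mathrm{mult}_O(D \cdot \Delta_1) > 2 - a_1 - a_2$. Comparing this with the reductio hypothesis $\mathrm{mult}_O(D \cdot \Delta_1) < M + A a_1 - a_2$ produces the strict lower bound $a_1 > (2-M)/(A+1)$.

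It remains to combine this with $a_2 > (1-N)/B$ from Lemma~\ref{lemma:2-1} and the hypothesis $\alpha a_1 + \beta a_2 \leqslant 1$. First I would note that $\alpha > 0$: for if $\alpha = 0$, the hypothesis $\alpha(1-M) + A\beta \geqslant A$ forces $\beta \geqslant 1$ (using $A>0$, which follows from $A(B-1)\geqslant 1$ and $B>1$), whereas $\alpha(A+M-1) \geqslant A^2(B+N-1)\beta$ then becomes $0 \geqslant A^2(B+N-1)\beta > 0$, a contradiction. With $\alpha > 0$ in hand, the two strict lower bounds on $a_1$ and $a_2$ combine to
$$\frac{\alpha(2-M)}{A+1} + \frac{\beta(1-N)}{B} < \alpha a_1 + \beta a_2 \leqslant 1,$$
and multiplying by $B(A+1)>0$ gives
$$\alpha(2-M) B + \beta(1-N)(A+1) < B(A+1),$$
which contradicts the corresponding $\geqslant$ inequality established in Lemma~\ref{lemma:2-0}.

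The step I expect to be most delicate is verifying the hypotheses of Lemma~\ref{lemma:adjunction} at the new point $O_1$: one needs that the log pull-back is non-log-canonical (and not merely non-Kawamata log terminal) at $O_1$, and that every boundary component through $O_1$ carries coefficient strictly less than $1$. Both points are ensured by Lemma~\ref{lemma:2-3}, which controls the $F_1$ coefficient, together with the standing assumptions $a_1<1$ and $a_2<1$ inherited from the hypotheses of Theorem~\ref{theorem:I}.
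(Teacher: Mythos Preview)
Your proof is correct and follows essentially the same route as the paper's: apply Lemma~\ref{lemma:adjunction} on $S_1$ along $\Delta_1^1$ to obtain $a_1>(2-M)/(A+1)$, then combine with $a_2>(1-N)/B$ from Lemma~\ref{lemma:2-1} and the constraint $\alpha a_1+\beta a_2\leqslant 1$ to contradict the inequality $\alpha(2-M)B+\beta(1-N)(A+1)\geqslant B(A+1)$ of Lemma~\ref{lemma:2-0}. Your extra care in writing out the blow-up identity $\mathrm{mult}_O(D\cdot\Delta_1)=m_0+\mathrm{mult}_{O_1}(D^1\cdot\Delta_1^1)$ and in arguing $\alpha>0$ is fine, though the latter is slightly more than needed (one only needs $\alpha+\beta>0$, which already follows from $\alpha(1-M)+A\beta\geqslant A>0$); your closing remark misstates the hypothesis of Lemma~\ref{lemma:adjunction} (it requires the pair to be not Kawamata log terminal at the point with all coefficients $<1$, not that it be non-log-canonical), but this does not affect the argument.
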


\begin{proof}
Suppose that $O_{1}=F_{1}\cap\Delta^{1}_{1}$. It follows from
Lemma~\ref{lemma:adjunction} that
$$
M+Aa_{1}-a_{2}-m_{0}>\mathrm{mult}_{O_1}\Big(D^1\cdot\Delta^1_{1}\Big)>1-\big(m_{0}+a_{1}+a_{2}-1\big),
$$
which implies that $a_{1}>(2-M)/(A+1)$. Then
$$
\frac{(2-M)\alpha}{A+1}+\frac{\beta(1-N)}{B}<\alpha a_{1}+\beta a_{2}\leqslant 1,%
$$
because $a_{2}>(1-N)/B$ by Lemma~\ref{lemma:2-1}. Thus, we see
that
$$
\frac{(2-M)\alpha}{A+1}+\frac{\beta(1-N)}{B}<1,
$$
which is impossible by Lemma~\ref{lemma:2-0}.
\end{proof}

Therefore, we see that $O_{1}=F_{1}\cap\Delta^{1}_{2}$.  Then the~
log pair
$$
\Big(S_{1},\ D^{1}+a_{1}\Delta^{1}_{1}+a_{2}\Delta^{1}_{2}+\big(m_{0}+a_{1}+a_{2}-1\big)F_{1}\Big)%
$$
is not Kawamata log terminal at the~point $O_{1}$. We know that
$1>m_{0}+a_{1}+a_{2}-1\geqslant 0$.

We have a~blow up $\pi_{1}\colon S_{1}\to S$. For any
$n\in\mathbb{N}$, consider a~sequence of blow ups
$$
\xymatrix{
&S_{n}\ar@{->}[rr]^{\pi_{n}}&&S_{n-1}\ar@{->}[rr]^{\pi_{n-1}}&&\cdots\ar@{->}[rr]^{\pi_{3}}
&&S_{2}\ar@{->}[rr]^{\pi_{2}}&&S_{1}\ar@{->}[rr]^{\pi_{1}}&& S}
$$
such that $\pi_{i+1}\colon S_{i+1}\to S_{i}$ is a~blow up of the~
point $F_{i}\cap\Delta^{i}_{2}$ for every $i\in\{1,\ldots,n-1\}$,
where
\begin{itemize}
\item we denote by $F_{i}$ the~exceptional curve of the~morphism $\pi_{i}$,%
\item we denote by $\Delta^{i}_{2}$ the~proper transform of the~curve $\Delta_{2}$ on the~surface $S_{i}$.%
\end{itemize}

For every $k\in\{1,\ldots,n\}$ and for every $i\in\{1,\ldots,k\}$,
let $D^{k}$, $\Delta^{k}_{1}$ and $F^{k}_{i}$ be
the~proper~transforms on the~surface $S_{k}$ of the~divisors $D$,
$\Delta_{1}$ and $F_{i}$, respectively.~Then
$$
K_{S_{n}}+D^{n}+a_{1}\Delta^{n}_{1}+a_{2}\Delta^{n}_{2}+\sum_{i=1}^{n}\Bigg(a_{1}+ia_{2}-i+\sum_{j=0}^{i-1}m_{j}\Bigg)F_{i}^n\sim_{\mathbb{Q}}\pi^{*}\Big(K_{S}+D+a_{1}\Delta_{1}+a_{2}\Delta_{2}\Big),
$$
where $\pi=\pi_{n}\circ\cdots\circ\pi_{2}\circ\pi_{1}$ and
$m_{i}=\mathrm{mult}_{O_{i}}(D^{i})$ for every
$i\in\{1,\ldots,n\}$. Then the~log pair
\begin{equation}
\label{equation:log-pair} \Bigg(S_{n},\
D^{n}+a_{1}\Delta^{n}_{1}+a_{2}\Delta^{n}_{2}+\sum_{i=1}^{n}\Bigg(a_{1}+ia_{2}-i+\sum_{j=0}^{i-1}m_{j}\Bigg)F^{n}_{i}\Bigg)
\end{equation}
is not Kawamata log terminal at some point of the~set
$F^{n}_{1}\cup F^{n}_{2}\cup\cdots\cup F^{n}_{n}$ (see
Remark~\ref{remark:log-pull-back}).

Put $O_{k}=F_{k}\cap\Delta^{k}_{2}$ for every
$k\in\{1,\ldots,n\}$.

\begin{lemma}
\label{lemma:2-inductive} For every $i\in\{1,\ldots,n\}$, we have
$$
1>a_{1}+ia_{2}-i+\sum_{j=0}^{i-1}m_{j}\geqslant 0,
$$
and $(\ref{equation:log-pair})$ is Kawamata log terminal at every
point of the~set $(F^{n}_{1}\cup F^{n}_{2}\cup\cdots\cup
F^{n}_{n})\setminus O_{n}$.
\end{lemma}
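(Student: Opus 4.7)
The plan is to prove both assertions simultaneously by induction on $i$, carrying forward the stronger invariant that at each intermediate stage $k \leq n$ the log pair on $S_k$ is Kawamata log terminal on $(F^k_1 \cup \cdots \cup F^k_k) \setminus O_k$, with non-klt locus concentrated precisely at $O_k = F_k \cap \Delta^k_2$. The base case $i=1$ has already been handled: Lemma~\ref{lemma:2-3} gives $m_0 + a_1 + a_2 - 1 < 1$, the lower bound $m_0 + a_1 + a_2 - 1 \geq 0$ follows from non-klt-ness of the log pull-back at a point of $F_1$ (see Remark~\ref{remark:log-pull-back}), and Lemmas~\ref{lemma:2-4}--\ref{lemma:2-5} locate the unique non-klt point at $O_1 = F_1 \cap \Delta^1_2$.

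For the inductive step, write $c_k := a_1 + k a_2 - k + \sum_{j=0}^{k-1} m_j$ for the coefficient of $F_k$ in the log pull-back, and note the telescoping relation $c_i = c_{i-1} + a_2 + m_{i-1} - 1$. By the inductive hypothesis $c_{i-1} < 1$ and $a_2 < 1$, so $O_{i-1} = F_{i-1} \cap \Delta^{i-1}_2$ is a smooth point at which $F_{i-1}$ and $\Delta^{i-1}_2$ cross transversally with coefficients $< 1$ and $D^{i-1}$ passes with multiplicity $m_{i-1}$. The non-klt-ness of the pair at $O_{i-1}$ forces the total boundary multiplicity to be at least $1$, which rewrites exactly as $c_i \geq 0$.

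For the strict upper bound $c_i < 1$, I would imitate the calculation of Lemma~\ref{lemma:2-3} at $O_{i-1}$ in place of $O$: the two non-klt inequalities $m_{i-1} + a_2 + c_{i-1} < 1 + (\text{something})$ are obtained by combining the telescoping bound $\sum_{j=0}^{i-1} m_j \leq \mathrm{mult}_O(D \cdot \Delta_2)$ (which follows by repeatedly pulling back the intersection $D \cdot \Delta_2$ through the tower of blow-ups) with the standing assumptions $\mathrm{mult}_O(D \cdot \Delta_1) < M + A a_1 - a_2$, $\mathrm{mult}_O(D \cdot \Delta_2) < N + B a_2 - a_1$, and $\alpha a_1 + \beta a_2 \leq 1$. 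The convex-geometric inequalities assembled in Lemma~\ref{lemma:2-0} are precisely what convert these raw bounds into $c_i < 1$. Once this is done, the identification $O_i = F_i \cap \Delta^i_2$ is obtained by rerunning the dichotomy of Lemmas~\ref{lemma:2-4} and~\ref{lemma:2-5} at the point $O_i$, using that $\Delta^i_1$ does not meet $F_i$ for $i \geq 2$, so the only alternatives to rule out are $O_i$ lying off $\Delta^i_2 \cup F^i_{i-1}$ and $O_i = F_i \cap F^i_{i-1}$; both are eliminated by the same intersection-and-multiplicity arguments, now with the coefficients $a_2$ and $c_i$ playing the roles of $a_2$ and $a_1$.

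The main obstacle will be the uniform upper bound $c_i < 1$: the lower bound is an automatic consequence of non-klt-ness, but preventing $c_i$ from ever reaching $1$ requires the full strength of the quantitative hypotheses of Theorem~\ref{theorem:I}, funneled through Lemma~\ref{lemma:2-0}. The final klt-ness statement on $S_n$ then follows because each blow-up $\pi_{k+1}\colon S_{k+1} \to S_k$ is an isomorphism away from $O_k$, so the inductive klt-ness of the pair on $F^k_1 \cup \cdots \cup F^k_k$ off $O_k$ lifts unchanged to $S_n$, leaving only $O_n$ as a possible non-klt point in the exceptional locus.
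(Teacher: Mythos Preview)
Your inductive skeleton matches the paper's, and the lower bound $c_i\geqslant 0$ as well as the lifting of klt-ness through each isomorphism $\pi_{k+1}$ are correct. The gap is in the heart of the argument: the upper bound $c_i<1$ and the exclusion of $O_i=F_i\cap F^i_{i-1}$.

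You propose to ``imitate the calculation of Lemma~\ref{lemma:2-3} at $O_{i-1}$ in place of $O$'', with $c_{i-1}$ and $a_2$ playing the roles of $a_1$ and $a_2$. But the calculation of Lemma~\ref{lemma:2-3} rested on having \emph{two} transverse curves through $O$, each with an assumed upper bound on the intersection multiplicity of $D$ along it. At $O_{i-1}$ only $\Delta_2^{i-1}$ survives; the curve $\Delta_1$ was separated from the center already after the first blow-up, and there is no hypothesis bounding $\mathrm{mult}_{O_{i-1}}(D^{i-1}\cdot F_{i-1})$ from above. So the symmetric two-sided squeeze of Lemma~\ref{lemma:2-3} is unavailable for $i\geqslant 2$, and the analogy you invoke for ruling out $F_i\cap F^i_{i-1}$ (``rerunning Lemma~\ref{lemma:2-5}'') breaks for the same reason: adjunction along $F^n_{n-1}$ yields a bound on $m_{n-2}-m_{n-1}$, not a bound on the boundary coefficient.

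The paper's mechanism is different. One first extracts from non-klt-ness at $O_{n-1}$ (via adjunction along $\Delta_2^{n-1}$) the lower bound $a_2>(n-N)/(B+n-1)$; this is Lemma~\ref{lemma:2-6}. Then, assuming $c_n\geqslant 1$, one uses the \emph{original} $\Delta_1$-bound $m_0<M+Aa_1-a_2$ together with the trivial monotonicity $\sum_{j=0}^{n-1}m_j\leqslant n m_0$ to obtain $a_1\geqslant (n+1-Mn)/(nA+1)$. Feeding both lower bounds into $\alpha a_1+\beta a_2\leqslant 1$ and analyzing the resulting inequality in $n$ (using $\alpha(A+M-1)\geqslant A^2(B+N-1)\beta$) forces $A(B-1)<1$, contradicting the standing hypothesis. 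The exclusion of $F_n\cap F^n_{n-1}$ (Lemma~\ref{lemma:2-9}) follows the same template: adjunction there again produces $a_1>(n+1-nM)/(nA+1)$, and one repeats the contradiction. So the crux is not a local rerun of Lemmas~\ref{lemma:2-3}--\ref{lemma:2-5} at the higher centers, but converting the accumulated multiplicities back into $n$-dependent lower bounds on the \emph{original} $a_1,a_2$ and playing these against $\alpha a_1+\beta a_2\leqslant 1$.
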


Since $\mathrm{mult}_{O}(D\cdot\Delta_{2})<N+Ba_{2}-a_{1}$ by
assumption, it follows from Lemma~\ref{lemma:2-inductive} that
$$
N+Ba_{2}-a_{1}>\mathrm{mult}_{O}\Big(D\cdot\Delta_{2}\Big)\geqslant\sum_{i=0}^{n-1}m_{i}\geqslant (n-1)(1-a_2)-a_1,%
$$
which implies that $n\leqslant (N+Ba_2)/(1-a_2)$. On the other
hand, the assertion of Lemma~\ref{lemma:2-inductive}~holds for
arbitrary $n\in\mathbb{N}$. So, taking any $n>(N+Ba_2)/(1-a_2)$,
we obtain a contradiction.

We see that to prove Theorem~\ref{theorem:I}, it is enough to
prove~Lemma~\ref{lemma:2-inductive}.

Let us prove~Lemma~\ref{lemma:2-inductive} by induction on
$n\in\mathbb{N}$. The case $n=1$ is already done.

We may assume that~$n\geqslant 2$. For every
$k\in\{1,\ldots,n-1\}$, we may assume that
$$
1>a_{1}+ka_{2}-k+\sum_{j=0}^{k-1}m_{j}\geqslant 0,
$$
the~singularities of the~log pair
$$
\Bigg(S_{k},\
D^{k}+a_{1}\Delta^{k}_{1}+a_{2}\Delta^{k}_{2}+\sum_{i=1}^{k}\Bigg(a_{1}+ka_{2}-k+\sum_{j=0}^{i-1}m_{j}\Bigg)F^{k}_{i}\Bigg)
$$
are Kawamata log terminal along $(F^{k}_{1}\cup
F^{k}_{2}\cup\cdots\cup F^{k}_{k})\setminus O_{k}$ and not
Kawamata log terminal~at~$O_{k}$.

\begin{lemma}
\label{lemma:2-6} The inequality $a_{2}>(n-N)/(B+n-1)$ holds.
\end{lemma}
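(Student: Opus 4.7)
The plan is to apply the adjunction lemma (Lemma~\ref{lemma:adjunction}, rooted in \cite[Theorem~17.6]{Ko91}) to the proper transform $\Delta_{2}^{n-1}$ at the point $O_{n-1}$ on the surface $S_{n-1}$, using the inductive hypothesis that the log pair on $S_{n-1}$ is not Kawamata log terminal at $O_{n-1}$ and combining the result with the standing assumption $\mathrm{mult}_{O}(D\cdot\Delta_{2})<N+Ba_{2}-a_{1}$.

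First, I would pin down which components of the boundary actually pass through $O_{n-1}=F_{n-1}\cap\Delta_{2}^{n-1}$. The curve $\Delta_{1}^{n-1}$ does not: on $S_{1}$, the proper transform $\Delta_{1}^{1}$ meets $F_{1}$ only at $F_{1}\cap\Delta_{1}^{1}\neq O_{1}$ (because $\Delta_{1}$ and $\Delta_{2}$ cross transversally at $O$), and every subsequent blowup is centered at a point of $\Delta_{2}^{k}$, so $\Delta_{1}^{k}\cdot F_{k}=0$ for all $k\geqslant 2$. Similarly, for $i<n-1$ the curve $F_{i}^{n-1}$ meets $F_{n-1}$ only at the point corresponding to $O_{n-2}$, which differs from $O_{n-1}$ because $\Delta_{2}^{n-1}$ and $F_{n-2}^{n-1}$ hit $F_{n-1}$ at distinct points. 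Consequently, near $O_{n-1}$ the only boundary components are $D^{n-1}$ (with $\Delta_{2}^{n-1}\not\subseteq\mathrm{Supp}(D^{n-1})$), the curve $a_{2}\Delta_{2}^{n-1}$, and the exceptional curve $F_{n-1}^{n-1}$ with coefficient
$$
e_{n-1}:=a_{1}+(n-1)a_{2}-(n-1)+\sum_{j=0}^{n-2}m_{j},
$$
which by the inductive hypothesis lies in $[0,1)$.

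Next, since $\Delta_{2}^{n-1}$ is smooth at $O_{n-1}$, $\Delta_{2}^{n-1}\not\subseteq\mathrm{Supp}(D^{n-1})$, and the coefficient $a_{2}$ is less than $1$, the same argument as in Lemma~\ref{lemma:adjunction} (apply \cite[Theorem~17.6]{Ko91} with $\Delta_{2}^{n-1}$ as the distinguished curve) gives
$$
\mathrm{mult}_{O_{n-1}}\!\big(D^{n-1}\cdot\Delta_{2}^{n-1}\big)+e_{n-1}\cdot\mathrm{mult}_{O_{n-1}}\!\big(F_{n-1}^{n-1}\cdot\Delta_{2}^{n-1}\big)>1,
$$
and using $\mathrm{mult}_{O_{n-1}}(F_{n-1}^{n-1}\cdot\Delta_{2}^{n-1})=1$ this becomes $\mathrm{mult}_{O_{n-1}}(D^{n-1}\cdot\Delta_{2}^{n-1})>1-e_{n-1}$. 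Iterating the standard blow-up identity $\mathrm{mult}_{O_{k-1}}(D^{k-1}\cdot\Delta_{2}^{k-1})=m_{k-1}+\mathrm{mult}_{O_{k}}(D^{k}\cdot\Delta_{2}^{k})$ for $k=1,\ldots,n-1$ yields
$$
\mathrm{mult}_{O}\!\big(D\cdot\Delta_{2}\big)\geqslant\sum_{j=0}^{n-2}m_{j}+\mathrm{mult}_{O_{n-1}}\!\big(D^{n-1}\cdot\Delta_{2}^{n-1}\big)>n-a_{1}-(n-1)a_{2}.
$$
Combining this with the hypothesis $\mathrm{mult}_{O}(D\cdot\Delta_{2})<N+Ba_{2}-a_{1}$ gives $N+Ba_{2}>n-(n-1)a_{2}$, which rearranges to the desired inequality $a_{2}>(n-N)/(B+n-1)$.

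The computation itself is short; the only real obstacle is the geometric bookkeeping in the first step, that is, verifying that the proper transforms of $\Delta_{1}$ and of the earlier exceptional curves $F_{1},\ldots,F_{n-2}$ genuinely avoid $O_{n-1}$, so that the boundary at $O_{n-1}$ reduces to the three pieces $D^{n-1}$, $a_{2}\Delta_{2}^{n-1}$, and $e_{n-1}F_{n-1}^{n-1}$ used above.
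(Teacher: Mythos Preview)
Your proof is correct and follows essentially the same approach as the paper: apply Lemma~\ref{lemma:adjunction} to $\Delta_{2}^{n-1}$ at $O_{n-1}$, using that near $O_{n-1}$ the boundary reduces to $D^{n-1}+a_{2}\Delta_{2}^{n-1}+e_{n-1}F_{n-1}$, and then combine with the standing hypothesis $\mathrm{mult}_{O}(D\cdot\Delta_{2})<N+Ba_{2}-a_{1}$ via the blow-up identity $\mathrm{mult}_{O}(D\cdot\Delta_{2})=\sum_{j=0}^{n-2}m_{j}+\mathrm{mult}_{O_{n-1}}(D^{n-1}\cdot\Delta_{2}^{n-1})$. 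The paper's version is more compressed---it writes the two bounds on $\mathrm{mult}_{O_{n-1}}(D^{n-1}\cdot\Delta_{2}^{n-1})$ side by side and suppresses the geometric bookkeeping you spell out---but the argument is the same.
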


\begin{proof}
The singularities of the~log pair
$$
\Bigg(S_{n-1},\
D^{n-1}+a_{2}\Delta^{n-1}_{2}+\Bigg(a_{1}+\big(n-1\big)a_{2}-\big(n-1\big)+\sum_{j=0}^{n-2}m_{j}\Bigg)F^{n-1}_{n-1}\Bigg)
$$
are not Kawamata log terminal at the~point $O_{n-1}$. Then it
follows from Lemma~\ref{lemma:adjunction} that
$$
N+Ba_{2}-a_{1}-\sum_{j=0}^{n-2}m_{j}>\mathrm{mult}_{O_{n-1}}\Big(D^{n-1}\cdot\Delta^{n-1}_{2}\Big)>1-\Bigg(a_{1}+\big(n-1\big)a_{2}-\big(n-1\big)+\sum_{j=0}^{n-2}m_{j}\Bigg),
$$
which implies that $a_{2}>(n-N)/(B+n-1)$.
\end{proof}

\begin{lemma}
\label{lemma:2-7} The inequalities
$1>a_{1}+na_{2}-n+\sum_{j=0}^{n-1}m_{j}\geqslant 0$ hold.
\end{lemma}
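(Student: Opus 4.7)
The plan is to establish the two inequalities $c_n < 1$ and $c_n \geq 0$ separately (writing $c_n = a_1 + n a_2 - n + \sum_{j=0}^{n-1}m_j$), leveraging the inductive hypothesis on the preceding coefficients $c_1, \ldots, c_{n-1}$ of the log pull-back together with the standing ``proof by contradiction'' assumptions $\mathrm{mult}_O(D \cdot \Delta_1) < M + A a_1 - a_2$ and $\mathrm{mult}_O(D \cdot \Delta_2) < N + B a_2 - a_1$. These assumptions also force $a_2 < M + A a_1$ and $a_1 < N + B a_2$ (otherwise the target multiplicity inequalities are vacuously satisfied by non-negativity).

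I would first tackle the upper bound $c_n < 1$. The key input is the elementary identity
$$
\mathrm{mult}_{O_k}\bigl(D^k \cdot \Delta^k_2\bigr) = \mathrm{mult}_O\bigl(D \cdot \Delta_2\bigr) - \sum_{j=0}^{k-1} m_j,
$$
valid since $\Delta_2$ is smooth and passes through every $O_j$ by construction. Combined with $m_{n-1} \leq \mathrm{mult}_{O_{n-1}}(D^{n-1} \cdot \Delta^{n-1}_2)$, this yields $\sum_{j=0}^{n-1} m_j < N + B a_2 - a_1$, so that $c_n < N + (n+B) a_2 - n$. The target $c_n < 1$ is thus equivalent to the scalar inequality $a_2 < (n + 1 - N)/(n + B)$. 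I would establish this by eliminating $a_1$ between $a_2 < A a_1 + M$ and $\alpha a_1 + \beta a_2 \leq 1$, giving the uniform (i.e., $n$-independent) bound $a_2 < (A + \alpha M)/(\alpha + A \beta)$, and then checking that
$$
\frac{A + \alpha M}{\alpha + A \beta} \;\leq\; \frac{n + 1 - N}{n + B} \qquad \text{for every } n \geq 1.
$$
Cross-multiplying and collecting terms, the coefficient of $n$ is $A - \alpha(1-M) - A\beta \leq 0$ by Lemma~\ref{lemma:2-0} (the inequality $\alpha(1-M) + A\beta \geq A$), so the left-hand side is non-increasing in $n$, and it suffices to verify the estimate at $n = 1$. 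That case reduces, after rearrangement, to $\alpha(2 - M - MB - N) + A\beta(2 - N) \geq A(1+B)$, which I would deduce by combining $\alpha(1-M) + A\beta \geq A$ with $\alpha(B+1 - MB - N) + \beta(A+1 - AN - M) \geq AB - 1$, both supplied by Lemma~\ref{lemma:2-0}.

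Having secured $c_n < 1$, the lower bound $c_n \geq 0$ is quick. By Lemma~\ref{lemma:2-inductive} applied to $S_n$ (the part already proved inductively together with $c_n < 1$), the log pair \eqref{equation:log-pair} is not Kawamata log terminal at $O_n = F_n \cap \Delta^n_2$, and the only components of the pair passing through $O_n$ are $D^n$, $\Delta^n_2$ and $F_n$, with coefficients $a_2 < 1$ and $c_n < 1$ on the latter two. Lemma~\ref{lemma:adjunction} applied with $D_1 = F_n$ then yields $F_n \cdot (D^n + a_2 \Delta^n_2) > 1$, i.e.\ $m_{n-1} + a_2 > 1$. Consequently
$$
c_n - c_{n-1} \;=\; m_{n-1} + a_2 - 1 \;>\; 0,
$$
so $c_n > c_{n-1} \geq 0$ by the inductive hypothesis, completing the proof.

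The main obstacle, and the only genuinely computational step, is the uniform scalar estimate $a_2 < (n+1-N)/(n+B)$ for all $n \geq 1$; because the right-hand side depends on $n$ and tends to $1$ as $n \to \infty$, the hardest case is the smallest admissible $n$ (pinned down by Lemma~\ref{lemma:2-6}), and there one has to repackage the constraints of Lemma~\ref{lemma:2-0} in the precise form indicated above.
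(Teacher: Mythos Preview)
Your upper-bound argument contains a genuine gap. You reduce $c_n<1$ to the scalar inequality $a_2<(n+1-N)/(n+B)$ and then try to prove this via the uniform estimate $a_2<(A+\alpha M)/(\alpha+A\beta)$, claiming that
\[
\frac{A+\alpha M}{\alpha+A\beta}\;\leqslant\;\frac{n+1-N}{n+B}\qquad\text{for all }n\geqslant 1.
\]
This last inequality is simply false under the hypotheses of Theorem~\ref{theorem:I}. Take the parameters of Corollary~\ref{corollary:Dimitra} with $m=3$: $A=2$, $B=3/2$, $M=N=0$, $\alpha=1$, $\beta=1/2$. All hypotheses of Theorem~\ref{theorem:I} are satisfied (in fact with equality in the two key conditions), yet $(A+\alpha M)/(\alpha+A\beta)=1$ while $(n+1-N)/(n+B)=(n+1)/(n+3/2)<1$ for every $n$. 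Thus your ``$n=1$ check'' $\alpha(2-M-MB-N)+A\beta(2-N)\geqslant A(1+B)$ reads $4\geqslant 5$, and in fact the comparison fails for every $n$. The problem is structural: bounding $\sum_{j=0}^{n-1}m_j$ by $\mathrm{mult}_O(D\cdot\Delta_2)<N+Ba_2-a_1$ discards all $n$-dependence, so it can never yield the $n$-dependent bound $a_2<(n+1-N)/(n+B)$ that you need.

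The paper proceeds differently. Instead of summing the $m_j$ via the $\Delta_2$-side, it uses the monotonicity $m_j\leqslant m_0$ together with $m_0<M+Aa_1-a_2$ (Remark~\ref{remark:2-21}) to obtain $\sum_{j=0}^{n-1}m_j\leqslant nm_0<n(M+Aa_1-a_2)$. Assuming $c_n\geqslant 1$ this gives a lower bound on $a_1$, namely $a_1\geqslant(n+1-nM)/(nA+1)$, and combining this with the lower bound on $a_2$ from Lemma~\ref{lemma:2-6} forces $\alpha a_1+\beta a_2>1$ after a short manipulation using $\alpha(1-M)+A\beta\geqslant A$ and $\alpha(A+M-1)\geqslant A^2(B+N-1)\beta$. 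The point is that the $n$-dependence must be kept, and it enters through the $\Delta_1$-side bound on $m_0$, not the $\Delta_2$-side bound on the total intersection.

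A secondary issue: your argument for $c_n\geqslant 0$ assumes the log pair on $S_n$ is not Kawamata log terminal precisely at $O_n$, but that is established only in Lemmas~\ref{lemma:2-8} and~\ref{lemma:2-9}, which come after Lemma~\ref{lemma:2-7} and themselves use $c_n<1$. The paper avoids this circularity by observing directly that non-klt at $O_{n-1}$ on $S_{n-1}$ forces the multiplicity there to be at least $1$, i.e.\ $m_{n-1}+a_2+c_{n-1}\geqslant 1$, which is exactly $c_n\geqslant 0$.
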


\begin{proof}
The inequality $a_{1}+na_{2}-n+\sum_{j=0}^{n-1}m_{j}\geqslant 0$
follows from the~fact that the~log pair
$$
\Bigg(S_{n-1},\
D^{n-1}+a_{2}\Delta^{n-1}_{2}+\Bigg(a_{1}+\big(n-1\big)a_{2}-\big(n-1\big)+\sum_{j=0}^{n-2}m_{j}\Bigg)F^{n-1}_{n-1}\Bigg)
$$
is not Kawamata log terminal at the~point $O_{n-1}$.

Suppose that $a_{1}+na_{2}-n+\sum_{j=0}^{n-1}m_{j}\geqslant 1$.
Let us derive a~contradiction.

It follows from Remark~\ref{remark:2-21} that
$m_{0}+a_{2}\leqslant M+Aa_{1}$. Then
$$
a_{1}+nM+nAa_{1}-n\geqslant a_{1}+na_{2}-n+nm_{0}\geqslant a_{1}+na_{2}-n+\sum_{j=0}^{n-1}m_{j}\geqslant 1,%
$$
which implies that $a_{1}\geqslant (n+1-Mn)/(nA+1)$. But
$a_{2}>(n-N)/(B+n-1)$ by Lemma~\ref{lemma:2-6}.~Then
$$
\Bigg(\frac{\alpha(1-M)}{A}+\beta\Bigg)+\alpha\frac{A-1+M}{A(An+1)}+\beta\frac{1-B-N}{B+n-1}=\alpha\frac{n+1-Mn}{nA+1}+\beta\frac{n-N}{B+n-1}<\alpha a_{1}+\beta a_{2}\leqslant 1,%
$$
where $\alpha(1-M)/A+\beta\geqslant 1$ by assumption. Therefore,
we see that
$$
\alpha\frac{A+M-1}{A(An+1)}<\beta\frac{B+N-1}{B+n-1},
$$
where $n\geqslant 2$. But $A+M>1$ and $B+N>1$ by
Lemma~\ref{lemma:2-1}, since $a_{1}<1$ and $a_{2}<1$. Then
$$
\frac{A(An+1)}{\alpha(A+M-1)}>\frac{B+n-1}{\beta(B+N-1)},
$$
but $A^{2}(B+N-1)\beta\leqslant\alpha(A+M-1)$ by assumption. Then
$$
\frac{A}{\alpha(A+M-1)}-\frac{B-1}{\beta(B+N-1)}\geqslant\Bigg(\frac{A^{2}}{\alpha(A+M-1)}-\frac{1}{\beta(B+N-1)}\Bigg)n+\frac{A}{\alpha(A+M-1)}-\frac{B-1}{\beta(B+N-1)}>0,
$$
which implies that $\beta A(B+N-1)>\alpha(B-1)(A+M-1)$. Then
$$
\frac{\alpha(A+M-1)}{A}\geqslant\beta A\big(B+N-1\big)>\alpha\big(B-1\big)\big(A+M-1\big),%
$$
because $A^{2}(B+N-1)\beta\leqslant\alpha(A+M-1)$ by assumption.
Then we  have $\alpha\ne 0$ and $A(B-1)<1$, which is impossible,
because $A(B-1)\geqslant 1$ by assumption.
\end{proof}

\begin{lemma}
\label{lemma:2-8} The log pair~$(\ref{equation:log-pair})$ is
Kawamata log terminal at every point of the~set
$$
F_{n}\setminus\Bigg(\Big(F_{n}\cap
F^{n}_{n-1}\Big)\bigcup\Big(F_{n}\cap\Delta^{n}_{2}\Big)\Bigg).
$$
\end{lemma}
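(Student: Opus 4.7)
The plan is to derive a contradiction as follows. Let $Q \in F_n \setminus \bigl((F_n \cap F^n_{n-1}) \cup (F_n \cap \Delta^n_2)\bigr)$ and assume that the log pair~\eqref{equation:log-pair} fails to be Kawamata log terminal at $Q$. I will show that this forces $m_{n-1} > 1$, which by monotonicity of the sequence $m_0,m_1,\ldots$ yields $m_0 > 1$, and that last inequality reproduces exactly the contradiction already obtained in the proof of Lemma~\ref{lemma:2-4}.

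First I would identify which components of the pair can pass through such a $Q$. The proper transform $\Delta^n_1$ is disjoint from $F_n$: after the first blow up the curves $\Delta^1_1$ and $\Delta^1_2$ are already disjoint, and every subsequent blow up is centred at a point of $\Delta^i_2$, away from $\Delta^i_1$. The exceptional curves $F^n_1,\ldots,F^n_{n-1}$ form a chain meeting $F_n$ only in $F^n_{n-1}$, and that intersection point is excluded by hypothesis; the intersection with $\Delta^n_2$ is excluded as well. So the only components of~\eqref{equation:log-pair} that might pass through $Q$ are $F_n$, with coefficient $c_n := a_1 + n a_2 - n + \sum_{j=0}^{n-1} m_j \in [0,1)$ by Lemma~\ref{lemma:2-7}, and possibly the components of $D^n$. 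In particular, if $Q \notin \mathrm{Supp}(D^n)$ the local pair is just $(S_n, c_n F_n)$ with $c_n < 1$, which is plainly KLT, so I may assume $Q \in \mathrm{Supp}(D^n)$.

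Next I would convert the non-KLT assumption into an intersection bound. Since $c_n < 1$, enlarging the coefficient of $F_n$ from $c_n$ to $1$ produces the bigger pair $(S_n, F_n + D^n)$, which must then be not log canonical at $Q$. Inversion of adjunction for the smooth curve $F_n$ in the smooth surface $S_n$ (precisely as used in the proof of Lemma~\ref{lemma:adjunction} via Koll\'ar's Theorem~17.6) gives
$$
\mathrm{mult}_{Q}\bigl(F_n \cdot D^n\bigr) > 1.
$$
From $\pi_n^{*}D^{n-1} = D^n + m_{n-1}F_n$ and $F_n^{2} = -1$ the projection formula yields $F_n \cdot D^n = m_{n-1}$, so the contribution from the single point $Q$ already forces $m_{n-1} > 1$.

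Finally I would use that the sequence of multiplicities is non-increasing: for every $i \geq 1$,
$$
m_{i-1} \;=\; F_i \cdot D^i \;\geq\; \mathrm{mult}_{O_i}\bigl(F_i \cdot D^i\bigr) \;\geq\; \mathrm{mult}_{O_i}(F_i)\cdot\mathrm{mult}_{O_i}(D^i) \;=\; m_i,
$$
hence $m_0 \geq m_1 \geq \cdots \geq m_{n-1} > 1$, and in particular $m_0 > 1$. Combining this with Remark~\ref{remark:2-21} (which gives $m_0 < M + A a_1 - a_2$ and $m_0 < N + B a_2 - a_1$), the standing hypothesis $\alpha a_1 + \beta a_2 \leq 1$, and the inequality $\alpha(B+1-MB-N)+\beta(A+1-AN-M) \geq AB-1$ from Lemma~\ref{lemma:2-0} reproduces verbatim the contradiction reached in the last display of the proof of Lemma~\ref{lemma:2-4}. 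The only genuine step here is the verification that no component other than $F_n$ and $D^n$ passes through~$Q$; once that geometric bookkeeping is in place, the argument is a clean reduction to the case $n=1$ already handled.
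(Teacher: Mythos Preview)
Your proof is correct and follows essentially the same route as the paper's: reduce to the pair $(S_n, D^n + c_n F_n)$ at $Q$, use $c_n<1$ (Lemma~\ref{lemma:2-7}) together with adjunction on $F_n$ to get $m_{n-1}=D^n\cdot F_n>1$, invoke the monotonicity $m_0\geqslant m_{n-1}$, and then repeat the linear-combination contradiction from Lemma~\ref{lemma:2-4}. Your explicit verification that no component other than $F_n$ and $D^n$ passes through $Q$, and your justification of $m_{i-1}\geqslant m_i$, simply spell out steps the paper leaves implicit.
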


\begin{proof}
Suppose that there is a~point $Q\in F_{n}$ such that
$$
F_{n}\cap F^{n}_{n-1}\ne Q\ne F_{n}\cap\Delta^{n}_{2},
$$
but $(\ref{equation:log-pair})$ is not Kawamata log terminal at
the~point $Q$. Then the~log pair
$$
\Bigg(S_{n},\
D^{n}+\Bigg(a_{1}+na_{2}-n+\sum_{j=0}^{n-1}m_{j}\Bigg)F_{n}\Bigg)%
$$
is not Kawamata log terminal at the~point $Q$ as well. Then
$$
m_{0}\geqslant m_{n-1}=D^{n}\cdot F_{n}>1
$$
by Lemma~\ref{lemma:adjunction}, because
$a_{1}+na_{2}-n+\sum_{j=0}^{n-1}m_{j}<1$ by Lemma~\ref{lemma:2-7}.
Then
$$
m_{0}\Bigg(\frac{\beta+B\alpha}{AB-1}+\frac{\alpha+A\beta}{AB-1}\Bigg)<\big(M+Aa_{1}-a_{2}\big)\frac{\beta+B\alpha}{AB-1}+\big(N+Ba_{2}-a_{1}\big)\frac{\alpha+A\beta}{AB-1},%
$$
because $m_{0}<M+Aa_{1}-a_{2}$ and $m_{0}<N+Ba_{2}-a_{1}$ by
Remark~\ref{remark:2-21}. We have
$$
\big(M+Aa_{1}-a_{2}\big)\frac{\beta+B\alpha}{AB-1}+\big(N+Ba_{2}-a_{1}\big)\frac{\alpha+A\beta}{AB-1}\leqslant 1+\frac{M\beta+MB\alpha+N\alpha+AN\beta}{AB-1},%
$$
because $\alpha a_{1}+\beta a_{2}\leqslant 1$ and $AB-1>0$. But
$m_{0}>1$. Thus, we see that
$$
\beta+B\alpha+\alpha+A\beta<AB-1+M\beta+MB\alpha+N\alpha+AN\beta,%
$$
which contradicts our initial assumptions.
\end{proof}

\begin{lemma}
\label{lemma:2-9} The log pair~$(\ref{equation:log-pair})$ is
Kawamata log terminal at the~point $F_{n}\cap F^{n}_{n-1}$.
\end{lemma}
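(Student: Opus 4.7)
The plan is to argue by contradiction, supposing that the log pair in \eqref{equation:log-pair} fails to be Kawamata log terminal at the point $Q:=F_n\cap F^n_{n-1}$, and then to recycle the concluding computation from the proof of Lemma~\ref{lemma:2-7}. First I would note that near $Q$ the only boundary components with positive coefficients are $D^n$, the curve $F_n$ (with coefficient $e_n:=a_1+na_2-n+\sum_{j=0}^{n-1}m_j$, which lies in $[0,1)$ by Lemma~\ref{lemma:2-7}), and the curve $F^n_{n-1}$ (with coefficient $e_{n-1}:=a_1+(n-1)a_2-(n-1)+\sum_{j=0}^{n-2}m_j$, which lies in $[0,1)$ by the inductive hypothesis at step $n-1$). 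The divisors $\Delta_1^n$, $\Delta_2^n$, and $F^n_i$ for $i<n-1$ all avoid $Q$, and $F_n$ meets $F^n_{n-1}$ transversely at the smooth point $Q$.

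Next I would apply Lemma~\ref{lemma:adjunction} with $D_1=F^n_{n-1}$, whose coefficient $e_{n-1}$ is strictly less than $1$. Since $F_n$ meets $F^n_{n-1}$ transversely at $Q$, this produces $\mathrm{mult}_Q(D^n\cdot F^n_{n-1})+e_n>1$. The key intersection-theoretic computation, using $F^n_{n-1}=\pi_n^*F_{n-1}-F_n$, the projection formula, and the identity $D^{n-1}\cdot F_{n-1}=m_{n-2}$, yields $D^n\cdot F^n_{n-1}=m_{n-2}-m_{n-1}$. Combining these gives $m_{n-2}-m_{n-1}>1-e_n$; substituting the definition of $e_n$ and rearranging produces
$$
m_{n-2}+\sum_{j=0}^{n-2}m_j>n+1-a_1-na_2.
$$

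Since $m_0\geqslant m_1\geqslant\cdots\geqslant m_{n-1}$, a standard consequence of the blow-up structure (each $O_j$ lies on the exceptional curve $F_j$, on which $D^j$ has degree $m_{j-1}$), the left-hand side is bounded above by $nm_0$. Combined with $m_0<M+Aa_1-a_2$ from Remark~\ref{remark:2-21}, this yields $a_1>(n+1-nM)/(nA+1)$, precisely the lower bound on $a_1$ obtained at the corresponding point in the proof of Lemma~\ref{lemma:2-7}. Together with $a_2>(n-N)/(B+n-1)$ from Lemma~\ref{lemma:2-6} and the condition $\alpha a_1+\beta a_2\leqslant 1$, I would then replay the final portion of the proof of Lemma~\ref{lemma:2-7}, invoking $\alpha(1-M)+A\beta\geqslant A$, $\alpha(A+M-1)\geqslant A^2(B+N-1)\beta$, and $A(B-1)\geqslant 1$, to derive the contradiction $A(B-1)<1$. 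The main obstacle is the intersection identity $D^n\cdot F^n_{n-1}=m_{n-2}-m_{n-1}$ together with the observation that the bound obtained from adjunction along $F^n_{n-1}$ alone is already sharp enough to reproduce the exact inequality used at the end of Lemma~\ref{lemma:2-7}; once these two ingredients are in place, the rest of the argument is a verbatim replay.
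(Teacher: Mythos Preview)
Your proposal is correct and follows essentially the same route as the paper: assume non-klt at $Q=F_n\cap F^n_{n-1}$, apply Lemma~\ref{lemma:adjunction} along $F^n_{n-1}$ to obtain $m_{n-2}-m_{n-1}>1-e_n$, bound the resulting sum by $nm_0$ using $m_0\geqslant m_j$, combine with $m_0<M+Aa_1-a_2$ to get $a_1>(n+1-nM)/(nA+1)$, and then replay the end of the proof of Lemma~\ref{lemma:2-7}. The paper writes the chain $nm_0\geqslant m_{n-2}-m_{n-1}+\sum_{j=0}^{n-1}m_j$ without explicitly stating the monotonicity $m_0\geqslant m_1\geqslant\cdots$, but the ingredient is the same.
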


\begin{proof}
Suppose that $(\ref{equation:log-pair})$ is not Kawamata log
terminal at $F_{n}\cap F^{n}_{n-1}$. Then the~log pair
$$
\Bigg(S_{n},\
D^{n}+\Bigg(a_{1}+\big(n-1\big)a_{2}-\big(n-1\big)+\sum_{j=0}^{n-2}m_{j}\Bigg)F^{n}_{n-1}+\Bigg(a_{1}+na_{2}-n+\sum_{j=0}^{n-1}m_{j}\Bigg)F_{n}\Bigg)%
$$
is not Kawamata log terminal at the~point $F_{n}\cap F^{n}_{n-1}$
as well. Then
$$
m_{n-2}-m_{n-1}=D^{n}\cdot
F_{n-2}>1-\Bigg(a_{1}+na_{2}-n+\sum_{j=0}^{n-1}m_{j}\Bigg)
$$
by Lemma~\ref{lemma:adjunction}, because
$a_{1}+(n-1)a_{2}-(n-1)+\sum_{j=0}^{n-2}m_{j}<1$. Note that
$$
M+Aa_{1}-a_{2}-m_{0}>\mathrm{mult}_{O}\Big(D\cdot\Delta_{1}\Big)-m_{0}\geqslant \mathrm{mult}_{O}\big(D\big)\mathrm{mult}_{O}\big(\Delta_{1}\big)-m_{0}=0,%
$$
which implies that $m_{0}+a_{2}<Aa_{1}+M$. Then
$$
nM+nAa_{1}-na_{2}>nm_{0}\geqslant m_{n-2}-m_{n-1}+\sum_{j=0}^{n-1}m_{j}>n+1-a_{1}-na_{2},%
$$
which gives $a_{1}>(n+1-nM)/(An+1)$.

Now arguing as in the~proof of Lemma~\ref{lemma:2-7}, we obtain
a~contradiction.
\end{proof}

The assertion of Lemma~\ref{lemma:2-inductive} is proved. The
assertion of Theorem~\ref{theorem:I} is proved.

\section{One cyclic singular point}
\label{section:cyclic-orbifolds}

Let $X$ be a~sextic surface in $\mathbb{P}(1,1,2,3)$ with
canonical singularities such that $|\mathrm{Sing}(X)|=1$,
let~$\omega\colon X\to\mathbb{P}(1,1,2)$ be the natural double
cover, let $R$ be its ramification curve in $\mathbb{P}(1,1,2)$,
and~suppose that $\mathrm{Sing}(X)$ consists of one singular point
of type $\mathbb{A}_{m}$, where $m\in\{1,\ldots,8\}$.

\begin{theorem}
\label{theorem:main-single-point} The following equality holds:
$$
\mathrm{lct}\big(X\big)=\left\{\aligned
&\mathrm{lct}_{3}\big(X\big)=1/2\ \text{if $m=8$},\\
&\mathrm{lct}_{2}\big(X\big)=1/2\ \text{if $m=7$ and $R$ is reducible},\\
&\mathrm{lct}_{3}\big(X\big)=3/5\ \text{if $m=7$ and $R$ is irreducible},\\
&\mathrm{lct}_{2}\big(X\big)=2/3\ \text{if $m=6$},\\
&\mathrm{lct}_{2}\big(X\big)=2/3\ \text{if $m=5$},\\
&\mathrm{lct}_{2}\big(X\big)=4/5\ \text{if $m=4$},\\
&\mathrm{lct}_{1}\big(X\big)\ \text{in the~remaining cases},\\
\endaligned
\right.
$$
and if $\mathrm{lct}(X)=2/3$, then there is a~unique effective
$\mathbb{Q}$-divisor $D$ on  $X$ such that $D\sim_{\mathbb{Q}}
-K_{X}$~and
$$
\mathrm{c}\big(X,D\big)=\mathrm{lct}\big(X\big)=\frac{2}{3}.
$$
\end{theorem}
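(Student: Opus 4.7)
The plan is to handle each type $\mathbb{A}_m$ separately and to establish the stated value of $\mathrm{lct}(X)$ by two complementary arguments. For the upper bound, in each case I exhibit a specific divisor $B_n \in |-nK_X|$ (with the integer $n \in \{1,2,3\}$ as stated) such that $\mathrm{c}(X, B_n/n)$ equals the claimed value. These divisors come from the double cover structure: for small $m$ one uses curves in $|-K_X|$ listed in Example~\ref{example:Jihun}, while for $m \in \{4, 5, 6, 7, 8\}$ the relevant $B_n$ typically arises either as a pullback via $\omega$ of a curve in $\mathbb{P}(1,1,2)$ tangent to $R$ to high order at $\omega(P)$, or, in the $m=7$ reducible case, from an irreducible component of $R$ itself giving a splitting of a member of $|-2K_X|$. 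The precise values $1/2, 3/5, 2/3, 4/5$ then emerge from direct computation of $\mathrm{c}(X, B_n/n)$ on the minimal resolution of the chain of $m$ many $(-2)$-curves over $P$.

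For the lower bound, I argue by contradiction: assume there is an effective $\mathbb{Q}$-divisor $D \sim_{\mathbb{Q}} -K_X$ with $\mathrm{c}(X, D) < v$, where $v$ is the value stated in the theorem. Put $\mu = \mathrm{c}(X, D)$. By Lemma~\ref{lemma:smooth-points} combined with the classification of $\mathrm{lct}_1(X)$ in Example~\ref{example:Jihun}, the locus $\mathrm{LCS}(X, \mu D)$ is a single point, and the connectedness result Theorem~\ref{theorem:connectedness} together with the hypothesis $|\mathrm{Sing}(X)| = 1$ force this point to be $P$. Using Remark~\ref{remark:convexity} I can further replace $D$ by a general representative whose support does not contain a prescribed irreducible curve through $P$ (to be chosen as one of the $\Delta_i$ below). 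Let $\pi \colon \widetilde{X} \to X$ be the minimal resolution with exceptional chain $E_1, \ldots, E_m$, write $\pi^*D = \widetilde{D} + \sum c_i E_i$, and pull back the log pair. The non-log-canonicity at $P$ together with $D \cdot (-K_X) = K_X^2 = 1$ gives a numerical constraint on the $c_i$ and forces $(X, \mu D)$ to be non-Kawamata-log-terminal at a specific point on the chain. At that point I apply Corollary~\ref{corollary:Dimitra} to the pair $\Delta_1 = E_i$, $\Delta_2 = E_{i+1}$ (choosing the integer $m$ of the corollary as the appropriate position index in the chain). Iterating this local inequality from one end of the chain to the other propagates the multiplicities and eventually contradicts $D \cdot (-K_X) = 1$.

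The uniqueness assertion for $\mathrm{lct}(X) = 2/3$ (the cases $m = 5, 6$) is obtained by analyzing the equality case of the above contradiction: a divisor $D$ with $\mathrm{c}(X, D) = 2/3$ must saturate the inequalities in Corollary~\ref{corollary:Dimitra} at every step of the iteration, and this rigidity forces $D$ to coincide with the divisor constructed for the upper bound. I expect the main obstacle to be the bookkeeping along the $(-2)$-curve chain: at each step one must verify that all four hypotheses of Theorem~\ref{theorem:I} are met with the correct choice of $(A, B, M, N, \alpha, \beta)$, and simultaneously track how the log-canonical threshold data is transported from $E_i$ to $E_{i+1}$. The dichotomy for $m = 7$ between reducible and irreducible $R$ is a separate complication: when $R$ is reducible, an irreducible component of $\omega^*(\text{component of } R)$ produces an exceptional divisor that causes $\mathrm{lct}_2(X)$ to drop from $3/5$ to $1/2$, and the upper-bound construction must reflect this explicitly.
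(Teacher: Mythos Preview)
Your overall architecture (upper bound by explicit divisors, lower bound by contradiction localized at $P$ via Lemma~\ref{lemma:smooth-points}, pullback to the resolution, and use of Remark~\ref{remark:convexity}) matches the paper. But the heart of the lower bound is not what you describe, and the step you call ``iterating Corollary~\ref{corollary:Dimitra} along the chain'' would not go through as written.

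The paper uses Corollary~\ref{corollary:Dimitra} only once, in Lemma~\ref{lemma:cyclic-orbifolds-blow-up}, and only to exclude the possibility that the non-klt point $Q$ sits at an \emph{endpoint} intersection $E_1\cap E_2$ or $E_{m-1}\cap E_m$. At an interior intersection $E_k\cap E_{k+1}$ the hypothesis $\frac{2m'-2}{m'+1}\mu a_k+\frac{2}{m'+1}\mu a_{k+1}\leqslant 1$ is not available: the inequalities $(\ref{equation:cyclic-orbifolds})$ alone only give $a_k\leqslant k(m+1-k)/(m+1)$, which for central $k$ and $m\geqslant 5$ already makes $\mu a_k>1$. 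So there is no iteration to run, and the contradiction you expect from $D\cdot(-K_X)=1$ does not materialize this way.

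What the paper actually does for each $m\in\{4,5,6,7,8\}$ is construct, by contracting suitable chains of $(-2)$- and $(-1)$-curves down to a small del Pezzo surface, auxiliary $(-1)$-curves $\bar L_i\subset\bar X$ with $\bar L_i\cdot E_i=1$ and $\bar L_i\cdot E_j=0$ for $j\neq i$. These curves play a double role. Their images $L_i=\pi(\bar L_i)$ (or sums $L_i+\tau(L_i)$, or $2L_2+L_3$, etc.) furnish the explicit members of $|-nK_X|$ realizing the upper bounds; this is more specific than ``pullback of a tangent curve'', and for $m=7$ the dichotomy on $R$ is exactly the question of whether such an $\bar L_4$ exists (Lemma~\ref{lemma:single-point-A7-reducible}). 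More importantly, via Remark~\ref{remark:convexity} one may assume some $\bar L_i\not\subset\mathrm{Supp}(\bar D)$, which gives the \emph{extra} bound $a_i\leqslant 1$ from $0\leqslant\bar D\cdot\bar L_i=1-a_i$. With this single additional constraint, the elementary adjunction inequality of Lemma~\ref{lemma:adjunction} at the interior point $Q$ (not Theorem~\ref{theorem:I}) already yields the contradiction. The uniqueness for $m=5,6$ is obtained the same way: if $D\neq Z/2$ one may drop a component of $Z$ from $\mathrm{Supp}(D)$, get $a_i\leqslant 1$, and conclude $\mu>2/3$ strictly; it is not an equality analysis of Corollary~\ref{corollary:Dimitra}. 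Your plan is missing precisely this construction of the $\bar L_i$ and the resulting bound $a_i\leqslant 1$, without which neither the lower bound nor the uniqueness goes through.
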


By Theorem~\ref{theorem:KE}, Corollary~\ref{corollary:Chen-Wang}
and Remark~\ref{remark:lct-lct-n-2}, we obtain the~following two
corollaries.

\begin{corollary}
\label{corollary:auxiliary-single-point}  If $m\leqslant 6$, then
$\mathrm{lct}_{n,2}(X)>2/3$ for every $n\in\mathbb{N}$.
\end{corollary}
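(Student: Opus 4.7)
The plan is to deduce the corollary directly from Theorem~\ref{theorem:main-single-point}, the general inequality $\mathrm{lct}_{n,2}(X)\geqslant\mathrm{lct}(X)$ coming from $(\ref{equation:lct-inf-lct})$, and the strict-inequality criterion in Remark~\ref{remark:lct-lct-n-2}. No further geometric work is needed once Theorem~\ref{theorem:main-single-point} is in hand, so this is essentially a bookkeeping exercise.

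First I would read off the value of $\mathrm{lct}(X)$ provided by Theorem~\ref{theorem:main-single-point} in the range $m\leqslant 6$. When $m=4$ one has $\mathrm{lct}(X)=4/5$; when $m\in\{5,6\}$ one has $\mathrm{lct}(X)=2/3$; and when $m\in\{1,2,3\}$ one has $\mathrm{lct}(X)=\mathrm{lct}_{1}(X)$. Example~\ref{example:Jihun} shows that in this last range $\mathrm{lct}_{1}(X)\geqslant 2/3$, with equality possible only if $m=2$ and $|-K_{X}|$ contains a curve with a cusp at the unique $\mathbb{A}_{2}$ point (otherwise $\mathrm{lct}_{1}(X)\in\{3/4,5/6,1\}$).

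In the cases where $\mathrm{lct}(X)>2/3$ I would simply observe that
\[
\mathrm{lct}_{n,2}\big(X\big)\geqslant\mathrm{lct}\big(X\big)>\frac{2}{3}
\]
for every $n\in\mathbb{N}$, so the conclusion is immediate. In the remaining cases where $\mathrm{lct}(X)=2/3$, I would invoke the uniqueness assertion of Theorem~\ref{theorem:main-single-point}: there exists a unique effective $\mathbb{Q}$-divisor $D$ on $X$ with $D\sim_{\mathbb{Q}}-K_{X}$ and $\mathrm{c}(X,D)=2/3$. This is precisely the finiteness condition required by Remark~\ref{remark:lct-lct-n-2}, which then upgrades $\mathrm{lct}_{n,2}(X)\geqslant\mathrm{lct}(X)$ to the strict inequality $\mathrm{lct}_{n,2}(X)>2/3$ for every $n$.

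The main obstacle has already been cleared inside Theorem~\ref{theorem:main-single-point}: the nontrivial content lies both in pinning down the exact value of $\mathrm{lct}(X)$ and, most crucially, in proving uniqueness of the extremal divisor whenever $\mathrm{lct}(X)=2/3$. Once those two ingredients are available, the corollary is a purely formal consequence of $(\ref{equation:lct-inf-lct})$ and Remark~\ref{remark:lct-lct-n-2}, with no residual computation to perform.
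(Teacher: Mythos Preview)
Your proposal is correct and follows essentially the same route as the paper: the paper derives the corollary directly from Theorem~\ref{theorem:main-single-point} together with Remark~\ref{remark:lct-lct-n-2}, and your write-up simply spells out the case split (either $\mathrm{lct}(X)>2/3$, or $\mathrm{lct}(X)=2/3$ with a unique extremal divisor) that makes this deduction work.
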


\begin{corollary}
\label{corollary:main-single-point} If $m\leqslant 6$, then $X$ is
K\"ahler--Enstein.
\end{corollary}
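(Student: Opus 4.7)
The plan is to derive Corollary~\ref{corollary:main-single-point} as an immediate consequence of the two results that precede it: Corollary~\ref{corollary:auxiliary-single-point}, which supplies the strict inequality $\mathrm{lct}_{n,2}(X)>2/3$ for every $n\in\mathbb{N}$ under the assumption $m\leqslant 6$, and Corollary~\ref{corollary:Chen-Wang}, which converts such a strict inequality into the Kähler--Einstein conclusion on any two-dimensional Fano orbifold.

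Concretely I would proceed in two short steps. First, since $X$ is a del Pezzo surface (a two-dimensional Fano variety with canonical, hence klt quotient, singularities), $X$ is a Fano orbifold of dimension two. Second, under the hypothesis $m\leqslant 6$ of the corollary, Corollary~\ref{corollary:auxiliary-single-point} yields $\mathrm{lct}_{n,2}(X)>2/3$ for every $n\in\mathbb{N}$. That is exactly the hypothesis of Corollary~\ref{corollary:Chen-Wang}, so that corollary applies and $X$ is K\"ahler--Einstein.

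There is essentially no obstacle in the corollary itself: the serious content has already been installed upstream, namely in Theorem~\ref{theorem:main-single-point} (the exact computation of $\mathrm{lct}(X)$ together with \emph{uniqueness} of the extremal $\mathbb{Q}$-divisor whenever $\mathrm{lct}(X)=2/3$) and in Theorem~\ref{theorem:Chen-Wang} (tameness of the K\"ahler--Ricci flow in dimension two, which makes Conjecture~\ref{conjecture:KE-conjecture} a theorem for surfaces). For completeness I would indicate why Corollary~\ref{corollary:auxiliary-single-point} is available in each case: whenever the case analysis of Theorem~\ref{theorem:main-single-point} produces $\mathrm{lct}(X)>2/3$ outright (e.g.\ $m=4$, or the subcases of $m\in\{1,2,3\}$ in which Example~\ref{example:Jihun} forces $\mathrm{lct}_1(X)>2/3$), the inequality $\mathrm{lct}_{n,2}(X)\geqslant\mathrm{lct}(X)>2/3$ follows at once from $(\ref{equation:lct-inf-lct})$; and whenever it produces the borderline value $\mathrm{lct}(X)=2/3$ (the cases $m\in\{5,6\}$, and the $\mathbb{A}_2$ subcase of $m=2$ visible in Example~\ref{example:Jihun}), the uniqueness clause of Theorem~\ref{theorem:main-single-point} activates Remark~\ref{remark:lct-lct-n-2} to upgrade the equality $\mathrm{lct}(X)=2/3$ to the strict inequality $\mathrm{lct}_{n,2}(X)>2/3$. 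This is precisely the qualitative reason that uniqueness, not merely finiteness, is what gets asserted in the last line of Theorem~\ref{theorem:main-single-point}.
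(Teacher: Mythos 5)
Your proposal is correct and follows exactly the route the paper takes: the paper derives both Corollary~\ref{corollary:auxiliary-single-point} and Corollary~\ref{corollary:main-single-point} from Theorem~\ref{theorem:main-single-point} via Theorem~\ref{theorem:KE}, Corollary~\ref{corollary:Chen-Wang} and Remark~\ref{remark:lct-lct-n-2}, with the uniqueness clause in Theorem~\ref{theorem:main-single-point} doing precisely the work you describe in the borderline cases where $\mathrm{lct}(X)=2/3$. Your unpacking of which cases need Remark~\ref{remark:lct-lct-n-2} and which follow directly from $\mathrm{lct}(X)>2/3$ is a faithful expansion of the paper's one-line justification.
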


In the~rest of this section we will prove
Theorem~\ref{theorem:main-single-point}.

Let $D$ be an~arbitrary effective $\mathbb{Q}$-divisor on
the~surface $X$~such~that
$$
D\sim_{\mathbb{Q}} -K_{X},
$$
and put $\mu=\mathrm{c}(X,D)$. To prove
Theorem~\ref{theorem:main-single-point}, it is enough to show that
$$
\mu\geqslant\left\{\aligned
&\mathrm{lct}_{3}\big(X\big)=1/2\ \text{if $m=8$},\\
&\mathrm{lct}_{2}\big(X\big)=1/2\ \text{if $m=7$ and $R$ is reducible},\\
&\mathrm{lct}_{3}\big(X\big)=3/5\ \text{if $m=7$ and $R$ is irreducible},\\
&\mathrm{lct}_{2}\big(X\big)=2/3\ \text{if $m=6$},\\
&\mathrm{lct}_{2}\big(X\big)=2/3\ \text{if $m=5$},\\
&\mathrm{lct}_{2}\big(X\big)=4/5\ \text{if $m=4$},\\
&\mathrm{lct}_{1}\big(X\big)\ \text{in the~remaining cases},\\
\endaligned
\right.
$$
and if $\mu=\mathrm{lct}(X)=2/3$, then $D$ is uniquely defined. Note that
$\mathrm{lct}_{1}(X)\geqslant 5/6$ if $m\geqslant
3$~(see~\cite{Park-Won-PEMS}).

Let us prove Theorem~\ref{theorem:main-single-point}. By
Lemma~\ref{lemma:smooth-points}, we may assume~that~$m\geqslant 3$ and
$\mu<\mathrm{lct}_{1}(X)$.~Then
$$
\mathrm{LCS}\big(X,\mu D\big)=\mathrm{Sing}\big(X\big)
$$
by Lemma~\ref{lemma:smooth-points}. Put $P=\mathrm{Sing}(X)$.

Let $\pi\colon\bar{X}\to X$ be a~minimal resolution, let
$E_{1},E_{2},\ldots,E_{m}$ be $\pi$-exceptional curves such~that
$$
E_{i}\cdot E_{j}\ne 0\iff \big|i-j\big|\leqslant 1,
$$
let $C$ be the~curve in $|-K_{X}|$ such that $P\in C$, and let
$\bar{C}$ be it~proper transform on $\bar{X}$.~Then
$$
\bar{C}\sim_{\mathbb{Q}}\pi^*\big(C\big)-\sum_{i=1}^{m}E_i,
$$
and the~curve $C$ is irreducible. We may assume that $D\ne C$,
because $\mu\geqslant\mathrm{lct}_{1}(X)$ if $D=C$.

By Remark~\ref{remark:convexity}, we may assume that
$C\not\subset\mathrm{Supp}(D)$.

Let $\bar{D}$ be the~proper transform of the~divisor $D$ on
the~surface $\bar{X}$. Then
$$
\bar{D}\sim_{\mathbb{Q}}\pi^*\big(D\big)-\sum_{i=1}^{m}a_iE_i,
$$
where $a_{i}$ is a~non-negative rational number. Then the~log pair
\begin{equation}
\label{equation:log-pull-back}
\Big(\bar{X},\mu\bar{D}+\sum_{i=1}^{m}\mu a_iE_i\Big)
\end{equation}
is not Kawamata log terminal (by
Remark~\ref{remark:log-pull-back}). On the~other hand, we have
$$
\bar{D}\cdot E_{1}=2a_1 - a_2,\ \bar{D}\cdot E_{2}=2a_2-a_1-a_3,\ \cdots,\ \bar{D}\cdot E_{m-1}=2a_{m-1}-a_{m-2}-a_m,\ \bar{D}\cdot E_{m}=2a_m - a_{m-1},%
$$
where all intersections $\bar{D}\cdot E_{1}, \bar{D}\cdot E_{2},
\ldots, \bar{D}\cdot E_{m}$ are non-negative. Moreover, we have
$$
\bar{D}\cdot\bar{C}=1-a_1-a_m,
$$
where the intersection
$\bar{D}\cdot\bar{C}$ is non-negative, since
$C\not\subset\mathrm{Supp}(D)$ by assumption. Hence, we have
\begin{equation}
\label{equation:cyclic-orbifolds}
\left\{\aligned
&a_1\geqslant \frac{a_2}{2},\\
&a_2\geqslant \frac{a_1+a_3}{2},\\
&a_3\geqslant \frac{a_2+a_4}{2},\\
&\cdots\\
&a_{m-1}\geqslant \frac{a_{m-2}+a_m}{2},\\
&a_m\geqslant\frac{a_{m-1}}{2},\\
&1\geqslant a_1+a_m.\\
\endaligned
\right.
\end{equation}

It should be pointed out that at least one inequality in
$(\ref{equation:cyclic-orbifolds})$ must be strict, since
$\bar{D}\cdot E_{i}>0$ for at least one $i\in\{1,\ldots,m\}$,
because $P\in\mathrm{Supp}(D)$. Then $a_i>0$ for some
$i\in\{1,\ldots,m\}$.

Note that $a_{1}\geqslant a_{2}/2$ by
$(\ref{equation:cyclic-orbifolds})$. Similarly, it follows from
$(\ref{equation:cyclic-orbifolds})$ that
$$
a_2\geqslant \frac{a_1+a_3}{2}\geqslant\frac{a_1}{4}+\frac{a_3}{4},%
$$
which implies that $a_2\geqslant 2a_3/3$. Arguing in the same way,
we see that
$$
a_{k}\geqslant \frac{k}{k+1}a_{k+1}
$$
for every $k\in\{1,\ldots,m-1\}$ (use
$(\ref{equation:cyclic-orbifolds})$ and induction on $k$). Using
symmetry, we see that
$$
a_{k+1}\geqslant \frac{m-k}{m-k+1}a_{k}
$$
for every $k\in\{1,\ldots,m-1\}$. In particular, the inequality
$a_k>0$ holds for every $k\in\{1,\ldots,m\}$, since we already
know that $a_i>0$ for some $i\in\{1,\ldots,m\}$.

\begin{lemma}
\label{lemma:cyclic-orbifolds-blow-up} Suppose that $\mu a_{i}<1$
for every $i\in\{1,\ldots,m\}$. Then
\begin{itemize}
\item there exists a~point
$$
Q\in\Big\{E_{1}\cap E_{2}, E_{2}\cap E_{3}, \ldots, E_{m-1}\cap E_{m}\Big\}%
$$
such~that the~log pair $(\ref{equation:log-pull-back})$ is not Kawamata log terminal at $Q$,%

\item the~log pair $(\ref{equation:log-pull-back})$ is Kawamata log terminal outside of the~point~$Q$,%

\item if $\mu<(m+1)/(2m-2)$, then $Q\ne E_{1}\cap E_{2}$ and $Q\ne E_{m-1}\cap E_{m}$.%
\end{itemize}
\end{lemma}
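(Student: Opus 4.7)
The plan is to transfer $(X,\mu D)$ to the minimal resolution via the crepant pull-back, show its non-klt locus is a single point $Q$ by connectedness, locate $Q$ at an intersection $E_k\cap E_{k+1}$ by an intersection-theoretic argument, and finally exclude the endpoints using Theorem~\ref{theorem:I}. Since $X$ has canonical singularities, $\pi$ is crepant, so
$$K_{\bar X}+\mu\bar D+\sum_{i=1}^m\mu a_iE_i=\pi^{\ast}(K_X+\mu D).$$
The log pair~$(\ref{equation:log-pull-back})$ is therefore log canonical but not Kawamata log terminal, and its log canonical singularities locus $\mathrm{LCS}$ lies in $\bigcup E_i$ by Lemma~\ref{lemma:smooth-points}.

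For parts~(1) and~(2), the hypothesis $\mu a_i<1$ prevents any $E_i$ from belonging to $\mathrm{LCS}$, so $\mathrm{LCS}$ is a finite set of points on $\bigcup E_i$. Since $\mu<\mathrm{lct}_1(X)\leqslant 1$, the divisor $-\pi^{\ast}(K_X+\mu D)=(1-\mu)\pi^{\ast}(-K_X)$ is nef and big, and Theorem~\ref{theorem:connectedness} forces $\mathrm{LCS}$ to be connected, hence a single point $Q$. If $Q$ lay smoothly on a single $E_i$, Lemma~\ref{lemma:adjunction} would give $\mu\bar D\cdot E_i>1$; but pairing $\bar D=\pi^{\ast}D-\sum_j a_jE_j$ against the fundamental cycle $Z=\sum_j E_j$ yields $\sum_j \bar D\cdot E_j=a_1+a_m\leqslant 1$, whence $\bar D\cdot E_i\leqslant 1$ and $\mu\bar D\cdot E_i\leqslant\mu<1$, a contradiction. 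Thus $Q$ is some intersection point $E_k\cap E_{k+1}$, settling~(1) and~(2).

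For part~(3), assume $Q=E_1\cap E_2$ (the case $Q=E_{m-1}\cap E_m$ is symmetric under the reversal $E_i\leftrightarrow E_{m+1-i}$ of the chain). Lemma~\ref{lemma:adjunction} applied to $E_1$ at $Q$, combined with the bound $\mathrm{mult}_Q(\bar D\cdot E_1)\leqslant\bar D\cdot E_1=2a_1-a_2$, gives $\mu a_1>1/2$; in particular $a_1>(m-1)/(m+1)$ under the hypothesis $\mu<(m+1)/(2m-2)$. The plan is then to invoke Theorem~\ref{theorem:I} at $Q$ with $\Delta_1=E_1$, $\Delta_2=E_2$ and divisor $\mu\bar D$, calibrating the parameters $(A,B,M,N,\alpha,\beta)$ so that the resulting lower bounds on $\mathrm{mult}_Q(\bar D\cdot E_i)$ violate the intersection bounds $\bar D\cdot E_1=2a_1-a_2$ and $\bar D\cdot E_2=2a_2-a_1-a_3$, using the chain inequalities from~$(\ref{equation:cyclic-orbifolds})$ together with $a_1+a_m\leqslant 1$ to close the loop.

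The main obstacle is this last parameter choice. The default values $A=2$, $B=m/(m-1)$, $\alpha=\frac{2m-2}{m+1}$, $\beta=\frac{2}{m+1}$ coming from Corollary~\ref{corollary:Dimitra} fail the hypothesis $\alpha(\mu a_1)+\beta(\mu a_2)\leqslant 1$ once $\mu a_1>1/2$, so a finer calibration of the parameters, adapted to the chain length $m$ and to the tight combined relations $a_2\geqslant\frac{m-1}{m}a_1$ and $a_1\leqslant\frac{m}{m+1}$ inherited from $(\ref{equation:cyclic-orbifolds})$ and $a_1+a_m\leqslant 1$, will be required.
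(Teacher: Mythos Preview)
Your argument for parts~(1) and~(2) is correct and is actually cleaner than the paper's: instead of your one-line estimate $\bar D\cdot E_i\leqslant\sum_j\bar D\cdot E_j=a_1+a_m\leqslant 1$, the paper rules out each possible location of $Q$ on a single $E_i$ separately, by adding the modified system of inequalities~$(\ref{equation:cyclic-orbifolds})$ together and reaching $a_1+a_m>1$ in each case.

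Part~(3), however, is not finished, and the obstacle you name is not the right one. The hypothesis $\alpha(\mu a_1)+\beta(\mu a_2)\leqslant 1$ of Corollary~\ref{corollary:Dimitra} \emph{is} satisfied. Since $\mu<(m+1)/(2m-2)$, one has
\[
\frac{2m-2}{m+1}\,\mu a_1+\frac{2}{m+1}\,\mu a_2<a_1+\frac{a_2}{m-1}\leqslant a_1+a_m\leqslant 1,
\]
where the middle inequality uses the telescoped chain bound $a_m\geqslant a_2/(m-1)$ (from $a_{k+1}\geqslant\frac{m-k}{m-k+1}a_k$) and the last is~$(\ref{equation:cyclic-orbifolds})$. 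So no ``finer calibration'' of $(A,B,M,N,\alpha,\beta)$ is needed.

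The genuine difficulty is different: if you apply Corollary~\ref{corollary:Dimitra} to $(\bar X,\mu\bar D+\mu a_1E_1+\mu a_2E_2)$, the first alternative reads $\mathrm{mult}_Q(\mu\bar D\cdot E_1)\geqslant 2\mu a_1-\mu a_2=\mu\bar D\cdot E_1$, which can hold with equality and gives no contradiction; the second alternative likewise yields only $a_3\leqslant\frac{m-2}{m-1}a_2$, matching the chain bound without contradicting it. The paper's remedy is to pick a rational $\bar\mu$ with $\mu<\bar\mu<(m+1)/(2m-2)$ and replace the coefficients of $E_1,E_2$ by $\bar\mu a_1,\bar\mu a_2$. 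The pair stays non-klt at $Q$ (the coefficients only went up) and klt in a punctured neighbourhood (for $\bar\mu$ close to $\mu$), and the hypothesis above still holds with $\bar\mu$ in place of $\mu$. Now the first alternative is strictly violated, since $\mu\bar D\cdot E_1=\mu(2a_1-a_2)<\bar\mu(2a_1-a_2)$, so the second must hold:
\[
\mu(2a_2-a_1-a_3)=\mu\bar D\cdot E_2\geqslant\mathrm{mult}_Q(\mu\bar D\cdot E_2)\geqslant\frac{m}{m-1}\bar\mu a_2-\bar\mu a_1,
\]
and because $\bar\mu>\mu$ this forces the \emph{strict} inequality $(m-2)a_2>(m-1)a_3$, contradicting $a_3\geqslant\frac{m-2}{m-1}a_2$. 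This $\bar\mu$-perturbation is the missing idea in your proposal.
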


\begin{proof}
It follows from Remark~\ref{remark:log-pull-back} and
Theorem~\ref{theorem:connectedness} that there is a~point $Q\in
\cup_{i=1}^{m}E_{i}$ such that the~log pair
$(\ref{equation:log-pull-back})$ is not Kawamata log terminal at
$Q$ and is Kawamata log terminal elsewhere.

Suppose that $Q\in E_1$ and $Q\not\in E_{2}$. Then
$$
2a_1-a_{2}=\bar{D}\cdot E_i>1
$$
by Lemma~\ref{lemma:adjunction}. Taking
$(\ref{equation:cyclic-orbifolds})$ into account, we get
$$
\left\{\aligned
&a_1>\frac{1}{2}+\frac{a_2}{2},\\
&a_2\geqslant \frac{a_1+a_3}{2},\\
&a_3\geqslant \frac{a_2+a_4}{2},\\
&\cdots\\
&a_{m-1}\geqslant \frac{a_{m-2}+a_m}{2},\\
&a_m\geqslant\frac{a_{m-1}}{2},\\
\endaligned
\right.
$$
and adding all these inequalities together we get
$$
\sum_{i=1}^{m}a_i>\frac{1}{2}+\frac{a_1}{2}+\sum_{i=2}^{m-1}a_i+\frac{a_m}{2},
$$
which implies that $a_1+a_m>1$. However, the later is impossible,
since $a_1+a_m\leqslant 1$ by $(\ref{equation:cyclic-orbifolds})$.

We see that if $Q\in E_1$, then $Q=E_{1}\cap E_{2}$. Similarly, we
see that $Q=E_{m-1}\cap E_{m}$ if $Q\in E_m$.

Suppose that $Q\in E_i$ and $Q\not\in E_{j}$ for every  $j\ne i$.
Then $i\ne 1$ and $i\ne m$. We have
$$
2a_i-a_{i-1}-a_{i+1}=\bar{D}\cdot E_i>1
$$
by Lemma~\ref{lemma:adjunction}. Taking
$(\ref{equation:cyclic-orbifolds})$ into account, we get
$$
\left\{\aligned
&a_1>\frac{a_2}{2},\\
&a_2\geqslant \frac{a_1+a_3}{2},\\
&a_3\geqslant \frac{a_2+a_4}{2},\\
&\cdots\\
&a_i\geqslant \frac{1}{2}+\frac{a_{i-1}+a_{i+1}}{2},\\
&\cdots\\
&a_{m-1}\geqslant \frac{a_{m-2}+a_m}{2},\\
&a_m\geqslant\frac{a_{m-1}}{2},\\
\endaligned
\right.
$$
and adding all these inequalities together we get
$$
\sum_{i=1}^{m}a_i>\frac{1}{2}+\frac{a_1}{2}+\sum_{i=2}^{m-1}a_i+\frac{a_m}{2},
$$
which implies that $a_1+a_m>1$. However, the later is impossible,
since $a_1+a_m\leqslant 1$ by $(\ref{equation:cyclic-orbifolds})$.

Thus, we see that there is $k\in\{1,\ldots,m-1\}$ such that
$Q=E_{k}\cap E_{k+1}$.

Suppose that $\mu<(m+1)/(2m-2)$. Let us show that $k\ne 1$ and
$k\ne m-1$.

Due to symmetry, it is enough to show that $k\ne 1$. Recall that
$m\geqslant 3$.

Suppose that $k=1$. Then $Q=E_{1}\cap E_{2}$. Take
$\bar{\mu}\in\mathbb{Q}$ such that
$(m+1)/(2m-2)>\bar{\mu}>\mu$~and
$$
\Big(\bar{X},\mu\bar{D}+\bar{\mu}a_1E_1+\bar{\mu}a_2E_2\Big)
$$
is not Kawamata log terminal at $Q$ and is Kawamata log terminal
outside of the~point $Q$. Then
$$
\frac{2m-2}{m+1}\bar{\mu} a_{1}+\frac{2}{m+1}\bar{\mu}a_{2}<a_{1}+\frac{1}{m-1}a_{2}\leqslant 1,%
$$
by $(\ref{equation:cyclic-orbifolds})$, since $a_1\ne 0$ and
$a_2\ne 0$. On the~other hand, we have
$$
\mathrm{mult}_Q\Big(\mu\bar{D}\cdot E_1\Big)\leqslant \mu\bar{D}\cdot E_1=\mu\Big(2a_1-a_2\Big)<\bar{\mu}\Big(2a_1-a_2\Big),%
$$
since $\mu<\bar{\mu}$. Therefore, it follows from
Corollary~\ref{corollary:Dimitra} that
$$
\mu\Big(2a_2-a_1-a_3\Big)=\mu\bar{D}\cdot E_2\geqslant\mathrm{mult}_{Q}\Big(\mu\bar{D}\cdot E_2\Big)\geqslant\frac{m}{m-1}\bar{\mu}a_2-\bar{\mu}a_1,%
$$
which implies that $a_2(m-2)>a_3(m-1)$, since $\mu<\bar{\mu}$. But
we proved earlier that
$$
a_{3}\geqslant \frac{m-2}{m-1}a_{2},
$$
which is impossible, since $a_2(m-2)>a_3(m-1)$. Thus, we see that
$k\ne 1$.
\end{proof}

If $m=3$, then it follows from $(\ref{equation:cyclic-orbifolds})$
that $a_{1}\leqslant 3/4$, $a_{2}\leqslant 1$, $a_{3}\leqslant
3/4$.

\begin{corollary}
\label{corollary:A3-point} If $m=3$, then
$\mu\geqslant\mathrm{lct}_{1}(X)\geqslant 5/6$.
\end{corollary}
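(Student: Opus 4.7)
The plan is to argue by contradiction. Since Example~\ref{example:Jihun} shows $\mathrm{lct}_{1}(X) \leqslant 1$ for any sextic surface in $\mathbb{P}(1,1,2,3)$ with canonical singularities, our standing assumption $\mu < \mathrm{lct}_{1}(X)$ in particular forces $\mu < 1$. I will combine this with the coefficient bounds on $a_{1}, a_{2}, a_{3}$ and with Lemma~\ref{lemma:cyclic-orbifolds-blow-up} to obtain a contradiction, which will establish $\mu \geqslant \mathrm{lct}_{1}(X) \geqslant 5/6$.

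First I would invoke the bounds $a_{1} \leqslant 3/4$, $a_{2} \leqslant 1$, $a_{3} \leqslant 3/4$ noted immediately before the corollary, which follow directly from $(\ref{equation:cyclic-orbifolds})$ together with $a_{1}+a_{3} \leqslant 1$. Combined with $\mu < 1$, these give $\mu a_{i} < 1$ for every $i \in \{1, 2, 3\}$, so the standing hypothesis of Lemma~\ref{lemma:cyclic-orbifolds-blow-up} is met.

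Next I would observe that when $m = 3$ one has $(m+1)/(2m-2) = 1$, so the inequality $\mu < (m+1)/(2m-2)$ holds as well. Lemma~\ref{lemma:cyclic-orbifolds-blow-up} then produces a point $Q \in \{E_{1} \cap E_{2},\ E_{2} \cap E_{3}\}$ at which the log pair $(\ref{equation:log-pull-back})$ is not Kawamata log terminal, while its third bullet simultaneously excludes $Q = E_{1} \cap E_{2}$ and $Q = E_{m-1} \cap E_{m} = E_{2} \cap E_{3}$. These were the only two possibilities for $Q$, which is the desired contradiction.

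I do not anticipate any genuine obstacle: the corollary is essentially a combinatorial corollary of Lemma~\ref{lemma:cyclic-orbifolds-blow-up} in the degenerate range $m=3$, where the index set $\{1, \ldots, m-1\} = \{1, 2\}$ consists entirely of endpoints, so the endpoint-exclusion clause of the lemma is already decisive. The only mildly delicate point is verifying the a~priori bounds on the $a_{i}$, which the author has in fact already derived before stating the corollary.
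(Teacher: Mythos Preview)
Your proof is correct and follows exactly the approach the paper intends: the corollary is stated immediately after the bounds $a_1\leqslant 3/4$, $a_2\leqslant 1$, $a_3\leqslant 3/4$ and Lemma~\ref{lemma:cyclic-orbifolds-blow-up}, and your argument is precisely the combination of these two inputs, using $\mu<\mathrm{lct}_1(X)\leqslant 1=(m+1)/(2m-2)$ to trigger the endpoint-exclusion clause and obtain a contradiction since for $m=3$ the only candidates for $Q$ are the two endpoints.
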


\begin{lemma}
\label{lemma:single-point-A4} Suppose that $m=4$. Then
$\mu\geqslant\mathrm{lct}_{2}(X)=4/5$.
\end{lemma}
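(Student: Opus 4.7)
Plan: I argue by contradiction, assuming $\mu<4/5$. The strategy is to use the cyclic inequalities $(\ref{equation:cyclic-orbifolds})$ for $m=4$ together with Theorem~\ref{theorem:I} at the unique non-Kawamata-log-terminal point on the resolution $\bar{X}$ to reach an impasse.

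First I would verify that $\mu a_i<1$ for every $i$. The iterated inequalities $a_k\geqslant\frac{k}{k+1}a_{k+1}$ and $a_{k+1}\geqslant\frac{4-k}{5-k}a_k$ (already derived in the general setup just before the lemma), combined with $a_1+a_4\leqslant 1$, give $a_1,a_4\leqslant 1$ and $a_2,a_3\leqslant 6/5$; hence $\mu a_i<(4/5)(6/5)<1$. Since also $\mu<5/6=(m+1)/(2m-2)$, Lemma~\ref{lemma:cyclic-orbifolds-blow-up} then locates the unique failure of Kawamata log terminality at the interior intersection $Q=E_2\cap E_3$.

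Next I would apply Theorem~\ref{theorem:I} at $Q$ with $\Delta_1=E_2$, $\Delta_2=E_3$, coefficients $\mu a_2,\mu a_3$, and parameters $A=B=2$, $M=N=0$, $(\alpha,\beta)=(4/3,1/3)$. For this parameter choice both alternatives in the conclusion of the theorem are ruled out a priori: using the global intersections $\bar{D}\cdot E_2=2a_2-a_1-a_3$ and $\bar{D}\cdot E_3=2a_3-a_2-a_4$ together with the strict positivity $a_1,a_4>0$, the required inequalities $\mathrm{mult}_Q(\mu\bar{D}\cdot E_2)\geqslant 2\mu a_2-\mu a_3$ and $\mathrm{mult}_Q(\mu\bar{D}\cdot E_3)\geqslant 2\mu a_3-\mu a_2$ would force $a_1\leqslant 0$ or $a_4\leqslant 0$. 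Thus whenever the hypothesis $\mu(4a_2+a_3)\leqslant 3$ (or its symmetric counterpart $\mu(a_2+4a_3)\leqslant 3$ obtained by swapping the roles of $E_2$ and $E_3$) is verified, the contradiction is immediate.

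The main obstacle is the verification of this hypothesis. The cyclic inequalities alone permit the extremal configuration $(a_1,a_2,a_3,a_4)=(3/5,6/5,4/5,2/5)$, where $4a_2+a_3=28/5$, so the hypothesis can fail throughout $\mu\in(15/28,4/5)$. To close this gap I would combine a convexity bumping-up (Remark~\ref{remark:convexity}) with inversion of adjunction: after raising the coefficient of $E_2$ to $1$ and applying Lemma~\ref{lemma:adjunction} to the curve $E_2$ at $Q$, one obtains the strict inequality $\mu(2a_2-a_1)>1$, and symmetrically $\mu(2a_3-a_4)>1$. These force $a_2,a_3>5/6$ and $a_1,a_4>1/(3\mu)>5/12$, excluding the extremal configuration and squeezing $(a_i)$ into a region where either the hypothesis of Theorem~\ref{theorem:I} holds in one of its symmetric forms, or a further blowup of $Q$ can be used to complete the contradiction. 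Carrying out this case analysis is the technical heart of the proof.
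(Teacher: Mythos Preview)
Your proposal has a genuine gap, and it is exactly the one you flag yourself: you never actually close the case analysis, and the tool you are missing is not more work with Theorem~\ref{theorem:I} but an auxiliary curve.

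The paper constructs an irreducible curve $\bar{Z}\subset\bar{X}$ with
\[
\bar{Z}\sim\pi^{*}(-2K_X)-E_1-2E_2-2E_3-E_4,\qquad E_2\cap E_3\in\bar{Z},
\]
and $Z=\pi(\bar{Z})\in|-2K_X|$. This curve serves two purposes. First, $\mathrm{c}(X,\tfrac12 Z)=4/5$, which is what pins down $\mathrm{lct}_2(X)=4/5$; your outline never addresses this half of the statement. Second, and decisively, by Remark~\ref{remark:convexity} one may assume $Z\not\subset\mathrm{Supp}(D)$; then after the single blow-up of $Q=E_2\cap E_3$ the proper transform $\tilde{Z}$ gives
\[
0\leqslant\tilde{Z}\cdot\tilde{D}=2-a_2-a_3-\delta,
\]
so $\delta+a_2+a_3\leqslant 2$. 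Adjunction on the exceptional curve $E$ at $O=\tilde{E}_2\cap E$ then yields $\delta>5/4-a_2$, while $\delta\leqslant 2-a_2-a_3<7/6-a_2$ since $a_3>5/6$; this is the contradiction.

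Without $Z$ the argument does not close. The cyclic inequalities alone only give $\delta\leqslant 2a_3-a_2-a_4$; feeding in $\delta>5/4-a_2$ and $a_4\geqslant a_3/2$ just returns $a_3>5/6$, which you already have. Your attempt via Theorem~\ref{theorem:I} with $(A,B,M,N,\alpha,\beta)=(2,2,0,0,4/3,1/3)$ fails for the reason you identify: the constraint $\tfrac43\mu a_2+\tfrac13\mu a_3\leqslant 1$ need not hold, and the adjunction bounds $a_2,a_3>5/6$ do not rescue it (they push $4a_2+a_3$ \emph{up}, not down). Your claimed lower bound $a_1>1/(3\mu)$ is also unsubstantiated: what you actually get from $2a_2-a_1>1/\mu$ is an \emph{upper} bound on $a_1$, and from $a_1\geqslant a_2/2$ only $a_1>5/12$, which is weaker than $1/(3\mu)$ for $\mu<4/5$. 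The missing idea is the curve $Z$, not a sharper application of Theorem~\ref{theorem:I}.
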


\begin{proof}
There is a~unique smooth irreducible curve $\bar{Z}\subset\bar{X}$
such that
$$
\bar{Z}\sim\pi^{*}\big(-2K_{X}\big)-E_1-2E_2-2E_3-E_4
$$
and $E_2\cap E_3\in\bar{Z}$  (cf. the~proof of
Lemma~\ref{lemma:many-points-A4}). Put $Z=\pi(\bar{Z})$. Then
$$
\mathrm{lct}_{2}\big(X\big)\leqslant\mathrm{c}\Big(X, \frac{1}{2}Z\Big)=\frac{4}{5}.%
$$

To complete the~proof, it is enough to show that $\mu\geqslant
4/5$. Suppose that $\mu<4/5$.

By Remark~\ref{remark:convexity}, we may assume that
$Z\not\subset\mathrm{Supp}(D)$, because $Z$ is irreducible.

It follows from $(\ref{equation:cyclic-orbifolds})$ that
$a_{1}\leqslant 4/5$, $a_{2}\leqslant 6/5$, $a_{3}\leqslant 6/5$,
$a_{4}\leqslant 4/5$.

Put $Q=E_2\cap E_3$. Then it follows from
Lemma~\ref{lemma:cyclic-orbifolds-blow-up} that
$(\ref{equation:log-pull-back})$ is not Kawamata log terminal at
the~point~$Q$ and is Kawamata log terminal outside of the~point
$Q$. Then
$$
2a_2-\frac{1}{2}a_2-a_3 \geqslant 2a_2-a_1-a_3=\bar{D}\cdot E_2\geqslant\mathrm{mult}_{Q}\Big(\bar{D}\cdot E_2\Big)>\frac{5}{4}-a_3,%
$$
by Lemma~\ref{lemma:adjunction}. Similarly, we see that
$$
2a_3-a_2-a_4=\bar{D}\cdot E_3\geqslant\mathrm{mult}_Q\Big(\bar{D}\cdot E_3 \Big)>\frac{5}{4}-a_2,%
$$
which implies that $a_2>5/6$ and $a_3>5/6$.

Let $\xi\colon\tilde{X}\to\bar{X}$ be a~blow up of the~point $Q$,
let $E$ be the~exceptional curve of the~blow up $\xi$, and let
$\tilde{D}$ be the~proper transform of the~divisor $\bar{D}$  on
the~surface $\tilde{X}$. Put $\delta=\mathrm{mult}_{Q}(\bar{D})$.

Let $\tilde{E}_{1}$, $\tilde{E}_{2}$, $\tilde{E}_{3}$,
$\tilde{E}_{4}$ be the~proper transforms on $\tilde{X}$  of
$E_{1}$, $E_{2}$, $E_{3}$, $E_{4}$, respectively. Then
\begin{equation}
\label{equation:log-pull-pack-A4}
\Big(\tilde{X},\mu\tilde{D}+\mu a_2\tilde{E}_2+\mu a_3\tilde{E}_3+ \big(\mu a_2+\mu a_3+\mu \delta-1\big)E\Big)%
\end{equation}
is not Kawamata log terminal at some point $O\in E$.

Let $\tilde{Z}$ be the~proper transform on $\tilde{X}$ of the~
curve $\bar{Z}$. Then
$$
0\leqslant\tilde{Z}\cdot\tilde{D}=2-a_2-a_3-\mathrm{mult}_{Q}\big(\bar{D}\big)=2-a_2-a_3-\delta,%
$$
which implies that $\delta+a_{2}+a_{3}\leqslant 2$. We have $\mu
a_2+\mu a_3+\mu \delta-1\leqslant 2\mu-1\leqslant 3/5$, which
implies that~$(\ref{equation:log-pull-pack-A4})$ is Kawamata log
terminal outside of the~point $O$ by
Theorem~\ref{theorem:connectedness}. We have
$$
\left\{\aligned
&2a_3-a_2-a_4-\delta=\tilde{E}_3\cdot\tilde{D}\geqslant 0,\\
&2a_2-a_1-a_3-\delta=\tilde{E}_2\cdot\tilde{D}\geqslant 0,\\
\endaligned
\right.
$$
which implies that $\delta\leqslant 1$. If
$O\not\in\tilde{E}_2\cup\tilde{E}_3$, then
$$
1\geqslant \delta=\tilde{D}\cdot E\geqslant\mathrm{mult}_{O}\Big(\tilde{D}\cdot E\Big)>\frac{5}{4}%
$$
by Lemma~\ref{lemma:adjunction}. Thus, we see that either
$O=\tilde{E}_2\cap E$ or $O=\tilde{E}_3\cap E$.

Without loss of generality, we may assume that $O=\tilde{E}_2\cap
E$. By Lemma~\ref{lemma:adjunction}, one has
$$
\frac{5}{4}-a_2>\frac{7}{6}-a_{2}=2-\frac{5}{6}-a_2>2-a_2-a_3\geqslant \delta=\tilde{D}\cdot E\geqslant\mathrm{mult}_{O}\Big(\tilde{D}\cdot E\Big)>\frac{5}{4}-a_2,%
$$
since $\delta+a_{2}+a_{3}\leqslant 2$ and $a_3>5/6$. The obtained
contradiction concludes the~proof.
\end{proof}

Let $\tau$ be a~biregular involution of the~surface $\bar{X}$ that
is induced by the~double cover $\omega$.

\begin{lemma}
\label{lemma:single-point-A5} Suppose that $m=5$. Then there
exists a unique curve $Z\in |-2K_{X}|$ such that
$$
\mathrm{c}\Bigg(X,\frac{1}{2}Z\Bigg)=\mathrm{lct}_{2}\big(X\big)=\frac{2}{3},
$$
and either $D=Z/2$ or $\mu>2/3$.
\end{lemma}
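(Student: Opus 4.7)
The plan is to exhibit the curve $Z$ explicitly, verify $\mathrm{c}(X, Z/2) = 2/3$, and then prove that every effective $\mathbb{Q}$-divisor $D \sim_{\mathbb{Q}} -K_X$ with $\mu = \mathrm{c}(X, D) \leqslant 2/3$ must equal $Z/2$. Uniqueness of $Z$ is then immediate: if $Z' \in |{-2K_X}|$ also satisfies $\mathrm{c}(X, Z'/2) = 2/3$, applying the dichotomy to $D = Z'/2$ forces $Z' = Z$.

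To construct $Z$, I work on the minimal resolution $\bar X$ with the divisor class $\Lambda = \pi^{*}(-2K_X) - E_1 - 2E_2 - 3E_3 - 2E_4 - E_5$, which satisfies $\Lambda^{2} = -2$, $\Lambda \cdot E_i = 0$ for $i \neq 3$, and $\Lambda \cdot E_3 = 2$. In local coordinates at $P$ in which $X$ has equation $uv = w^{6}$, the function $u + v$ vanishes along the exceptional chain with multiplicities exactly $(1, 2, 3, 2, 1) = (\min(i, 6-i))_{i=1}^{5}$; combined with a dimension count on the three-dimensional linear system $|{-2K_X}|$, this produces a unique $Z \in |{-2K_X}|$ whose proper transform $\bar Z$ on $\bar X$ represents $\Lambda$. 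A log resolution of $(X, Z/2)$ obtained from $\pi$ by further blowing up each point of $\bar Z \cap E_3$ then verifies $\mathrm{c}(X, Z/2) = 2/3$ (the worst discrepancy is realized by $E_3$, where $0 - (1/3) \cdot 3 = -1$), so $\mathrm{lct}_{2}(X) \leqslant 2/3$.

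Now assume $D \neq Z/2$ and $\mu \leqslant 2/3$. By iteratively applying Remark~\ref{remark:convexity} to $D$ and $Z/2$ (both log canonical with coefficient $\mu$, since $\mu \leqslant 2/3 = \mathrm{c}(X, Z/2)$), I remove from $\mathrm{Supp}(D)$ each irreducible component of $Z$ that appears there; this process strictly terminates before forcing $D = Z/2$, so I may assume $Z \not\subset \mathrm{Supp}(D)$. Then $0 \leqslant \bar D \cdot \bar Z = 2 - 2 a_3$ gives $a_3 \leqslant 1$, and combined with $(\ref{equation:cyclic-orbifolds})$ this forces $\mu a_i < 1$ for every $i \in \{1, \ldots, 5\}$. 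Since $\mu \leqslant 2/3 < 3/4 = (m+1)/(2m-2)$ for $m = 5$, Lemma~\ref{lemma:cyclic-orbifolds-blow-up} places the unique non-Kawamata log terminal point of $(\ref{equation:log-pull-back})$ at either $E_2 \cap E_3$ or $E_3 \cap E_4$; by the symmetry of the chain I take it to be $Q = E_2 \cap E_3$.

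The contradiction then comes from Lemma~\ref{lemma:adjunction} applied at $Q$ with the component $E_3$ singled out. Since $\mu a_3 < 1$, adjunction yields $\mathrm{mult}_Q(\bar D \cdot E_3) > 1/\mu - a_2 \geqslant 3/2 - a_2$, while at the same time $\mathrm{mult}_Q(\bar D \cdot E_3) \leqslant \bar D \cdot E_3 = 2 a_3 - a_2 - a_4$. Hence $a_4 < 2 a_3 - 3/2 \leqslant 1/2$, using $a_3 \leqslant 1$. But the concavity bound $a_4 \geqslant (2/3) a_3$ from $(\ref{equation:cyclic-orbifolds})$ gives $(2/3) a_3 < 2 a_3 - 3/2$, hence $a_3 > 9/8$, contradicting $a_3 \leqslant 1$. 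The main obstacle is the existence and uniqueness in the construction of $Z$; once this is established, the contradiction reduces to the short numerical computation above.
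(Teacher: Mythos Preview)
Your final contradiction---from $a_3 \leqslant 1$ to $a_3 > 9/8$ via Lemma~\ref{lemma:adjunction} applied to $E_3$ together with the bound $a_4 \geqslant \tfrac{2}{3}a_3$ coming from $(\ref{equation:cyclic-orbifolds})$---is exactly what the paper does, and the uniqueness argument in your first paragraph is also correct.

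The genuine gap is in how you obtain $a_3 \leqslant 1$. You claim that by \emph{iterating} Remark~\ref{remark:convexity} you may assume that \emph{no} irreducible component of $Z$ lies in $\mathrm{Supp}(D)$, and then deduce $0 \leqslant \bar D \cdot \bar Z = 2 - 2a_3$. But this iteration fails. After one application of Remark~\ref{remark:convexity} with $\bar D = Z/2$, the new divisor $D'$ has, say, $Z_1 \not\subset \mathrm{Supp}(D')$; then in a second application the $Z_1$-coefficient of $D'$ is $0$ while that of $Z/2$ is $1/2$, so the minimum ratio $\alpha$ is forced to be $0$ and the procedure returns $D'$ unchanged. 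Thus you can only guarantee that \emph{some} component of $Z$ is absent from $\mathrm{Supp}(D)$, and for a reducible $Z$ this does not justify $\bar D \cdot \bar Z \geqslant 0$, since the remaining component may contribute negatively. Note that your class $\Lambda$ has arithmetic genus $-1$, so every effective representative is reducible; you cannot sidestep this by hoping $\bar Z$ is irreducible.

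The paper circumvents this by constructing $Z$ far more explicitly. Contracting $\bar C, E_5, E_4, E_3$ (and then $\bar L_2, \bar C, E_5, E_4$) to smooth degree-$5$ del Pezzo surfaces, one locates a $(-1)$-curve $\bar L_3 \subset \bar X$ with $\bar L_3 \cdot E_3 = 1$ and $\bar L_3 \cdot E_i = 0$ for $i \neq 3$; then $\tau(\bar L_3)\neq \bar L_3$, and $Z=\pi(\bar L_3+\tau(\bar L_3))$. A \emph{single} application of Remark~\ref{remark:convexity} now removes one of $L_3,\ \tau(L_3)$ from $\mathrm{Supp}(D)$, and since $\bar L_3 \cdot \bar D = \tau(\bar L_3)\cdot \bar D = 1 - a_3$, either outcome gives $a_3 \leqslant 1$ directly. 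Your local-coordinate description of $Z$ and the accompanying dimension count are plausible but do not, as written, exhibit this splitting into two $(-1)$-curves meeting only $E_3$; without that decomposition you cannot extract the bound on $a_3$ from the single component that Remark~\ref{remark:convexity} actually removes.
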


\begin{proof}
Let $\alpha\colon\bar{X}\to\breve{X}$ be a~contraction of
the~curves $\bar{C}$, $E_5$, $E_4$, $E_3$. Then
$$
\alpha\big(E_{1}\big)\cdot\alpha\big(E_{1}\big)=\alpha\big(E_{2}\big)\cdot\alpha\big(E_{2}\big)=-1,
$$
and $\breve{X}$ is a~smooth del Pezzo surface such that
$K_{\breve{X}}^{2}=5$, which implies that there is a~smooth
irreducible rational curve $\breve{L}_{2}$ on the~surface
$\breve{X}$~such~that $\breve{L}_{2}\cdot\alpha(E_{2})=1$ and
$\breve{L}_{2}\cdot \breve{L}_{2}=-1$.

Let $\bar{L}_{2}$ be the~proper transform of the~curve
$\breve{L}_{2}$ on the~surface $\bar{X}$. Then $\bar{L}_{2}\cdot
\bar{L}_{2}=-1$ and
$$
-K_{\bar{X}}\cdot \bar{L}_{2}=E_{2}\cdot\bar{L}_{2}=1,%
$$
which implies that $E_{1}\cdot \bar{L}_{2}=E_{3}\cdot
\bar{L}_{2}=E_{4}\cdot \bar{L}_{2}=E_{5}\cdot
\bar{L}_{2}=\bar{C}\cdot\bar{L}_{2}=0$.

Let $\beta\colon\bar{X}\to\check{X}$ be a~contraction of
the~curves $\bar{L}_{2}$, $\bar{C}$, $E_5$, $E_4$. Then
$$
\beta\big(E_{2}\big)\cdot\beta\big(E_{2}\big)=\beta\big(E_{3}\big)\cdot\beta\big(E_{3}\big)=-1,
$$
and $\check{X}$ is a~smooth del Pezzo surface such that
$K_{\check{X}}^{2}=5$, which implies that there is an~irreducible
smooth curve $\check{L}_{3}\subset\check{X}$~such~that
$\check{L}_{3}\cdot\beta(E_{3})=1$ and
$\check{L}_{3}\cdot\check{L}_{3}=-1$ (cf. the~proof of
Lemma~\ref{lemma:many-points-A5}).

Let $\bar{L}_{3}$ be the~proper transform of the~curve
$\check{L}_{3}$ on the~surface $\bar{X}$. Then $\bar{L}_{3}\cdot
\bar{L}_{3}=-1$ and
$$
-K_{\bar{X}}\cdot \bar{L}_{3}=E_{3}\cdot \bar{L}_{3}=1,%
$$
which implies that $E_{1}\cdot \bar{L}_{3}=E_{2}\cdot
\bar{L}_{3}=E_{4}\cdot \bar{L}_{3}=E_{5}\cdot
\bar{L}_{3}=\bar{C}\cdot\bar{L}_{3}=0$.

If $\tau(\bar{L}_{3})=\bar{L}_{3}$, then $2\pi(\bar{L}_{3})\sim
-2K_{X}$, but $\pi(\bar{L}_{3})$ is not a~Cartier divisor.

Put $Z=\pi(\bar{L}_{3}+\tau(\bar{L}_{3}))$. Then $Z\sim -2K_{X}$
and $\mathrm{c}(X, Z)=1/3$. We see that
$\mathrm{lct}_{2}(X)\leqslant 2/3$.

Suppose that $D\ne Z/2$. To complete the~proof, it is enough to
show that $\mu>2/3$.

Suppose that $\mu\leqslant 2/3$. Let us derive a~contradiction. It
follows from $(\ref{equation:cyclic-orbifolds})$ that
$$
a_1\leqslant\frac{5}{6},\ a_2\leqslant\frac{4}{3},\ a_3\leqslant\frac{3}{2},\ a_4\leqslant\frac{4}{3},\ a_5\leqslant\frac{5}{6}.%
$$

By Remark~\ref{remark:convexity}, without loss of generality we
may assume that $\pi(\bar{L}_{3})\not\subset\mathrm{Supp}(D)$.
Then
$$
1-a_3=\bar{L}_3\cdot\bar{D}\geqslant 0,
$$
which implies that $a_{3}\leqslant 1$.

Put $Q=E_2\cap E_3$. By
Lemma~\ref{lemma:cyclic-orbifolds-blow-up}, we may assume that
$(\ref{equation:log-pull-back})$ is not Kawamata~log~terminal at
the~point $Q$ and is Kawamata~log~terminal outside of the~point
$Q$. Then
$$
2a_3-a_2-a_4=\bar{D}\cdot E_3\geqslant\mathrm{mult}_{Q}\Big(\bar{D}\cdot E_3\Big)\geqslant\frac{1}{\mu}-a_2>\frac{3}{2}-a_2%
$$
by Lemma~\ref{lemma:adjunction}, which implies that $a_3>9/8$ by
$(\ref{equation:cyclic-orbifolds})$. But $a_3 \leqslant 1$.
\end{proof}

\begin{lemma}
\label{lemma:single-point-A6} Suppose that $m=6$. Then there
exists a unique curve $Z\in |-2K_{X}|$ such that
$$
\mathrm{c}\Bigg(X,\frac{1}{2}Z\Bigg)=\mathrm{lct}_{2}\big(X\big)=\frac{2}{3}
$$
and either $D=Z/2$ or $\mu>2/3$.
\end{lemma}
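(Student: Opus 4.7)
The plan is to imitate the proof of Lemma~\ref{lemma:single-point-A5}, modified to account for the absence of a canonically central exceptional curve in a chain of six.

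First, I will construct the curve $Z$. Working on $\bar{X}$, I will exhibit a smooth rational $(-1)$-curve $\bar{L}_3$ that meets the exceptional locus of $\pi$ only in $E_3$ with multiplicity one and is disjoint from $\bar{C}$; this will be produced, as in the proof of Lemma~\ref{lemma:single-point-A5}, by a sequence of contractions of $\bar{X}$ onto a smooth del Pezzo surface of sufficiently small degree, on which a classical $(-1)$-curve meeting the image of $E_3$ can be identified. Since $\tau$ reverses the chain $E_1, \ldots, E_6$, one has $\tau(E_3)=E_4$, so $\bar{L}_4 := \tau(\bar{L}_3)$ is a distinct $(-1)$-curve meeting the exceptional locus only in $E_4$. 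Setting $Z := \pi(\bar{L}_3) + \pi(\bar{L}_4)$, I will verify $Z \sim -2K_X$ and that $\mathrm{c}(X, Z/2) = 2/3$, which gives $\mathrm{lct}_2(X) \leq 2/3$.

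Second, to establish the reverse inequality and uniqueness, I will assume $D \neq Z/2$ and $\mu \leq 2/3$ and derive a contradiction. By Remark~\ref{remark:convexity}, I may assume that neither $\pi(\bar{L}_3)$ nor $\pi(\bar{L}_4)$ lies in the support of $D$. Intersecting $\bar{D}$ with $\bar{L}_3$ and $\bar{L}_4$ and using $-K_{\bar{X}} \cdot \bar{L}_i = 1$ then yields the bounds $1 - a_3 = \bar{D} \cdot \bar{L}_3 \geq 0$ and $1 - a_4 = \bar{D} \cdot \bar{L}_4 \geq 0$, i.e., $a_3 \leq 1$ and $a_4 \leq 1$.

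Third, I will invoke Lemma~\ref{lemma:cyclic-orbifolds-blow-up}: since $\mu \leq 2/3 < 7/10 = (m+1)/(2m-2)$ for $m = 6$, the log pair $(\ref{equation:log-pull-back})$ fails to be Kawamata log terminal at a unique point $Q \in \{E_2 \cap E_3,\, E_3 \cap E_4,\, E_4 \cap E_5\}$ and is Kawamata log terminal elsewhere. By the $\tau$-symmetry, the cases $Q = E_2 \cap E_3$ and $Q = E_4 \cap E_5$ are equivalent, so it suffices to treat $Q = E_2 \cap E_3$ and $Q = E_3 \cap E_4$. In each case, Lemma~\ref{lemma:adjunction} applied along $E_3$ at $Q$ forces $\mathrm{mult}_Q(\bar{D} \cdot E_3) \geq 1/\mu - a_2 \geq 3/2 - a_2$, or the analogous estimate with $E_4$ in place of $E_3$. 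Combining this lower bound with the concavity inequalities $(\ref{equation:cyclic-orbifolds})$ pushes $a_3$ or $a_4$ strictly above $1$, contradicting the bounds from the previous paragraph.

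The principal obstacle I anticipate is the explicit construction of $\bar{L}_3$: I must verify that the required sequence of contractions of $\bar{X}$ yields a smooth del Pezzo surface possessing a $(-1)$-curve meeting the correct exceptional divisor, and that this $(-1)$-curve pulled back to $\bar{X}$ does avoid $\bar{C}$ and the remaining $E_i$'s; one must also check that $\tau(\bar{L}_3) \neq \bar{L}_3$, so that $\bar{L}_3 + \bar{L}_4$ is genuinely reducible and $\pi(\bar{L}_3) + \pi(\bar{L}_4)$ is an honest Cartier divisor in $|-2K_X|$. Once $\bar{L}_3$ is in hand, the concluding numerical contradiction is essentially parallel to the $m = 5$ case.
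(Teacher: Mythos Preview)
Your plan contains a genuine gap at the second step. Remark~\ref{remark:convexity}, applied to the reference divisor $Z/2=(L_3+L_4)/2$, only lets you remove \emph{one} component from the support of $D$: you may assume that \emph{one} of $L_3$, $L_4$ is not in $\mathrm{Supp}(D)$, not both. After choosing, say, $L_4\not\subset\mathrm{Supp}(D)$ (so $a_4\leqslant 1$), you have broken the $\tau$-symmetry, and the case $Q=E_2\cap E_3$ is no longer equivalent to $Q=E_4\cap E_5$.

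With only $a_4\leqslant 1$, the case $Q=E_2\cap E_3$ does not close. Lemma~\ref{lemma:adjunction} along $E_3$ gives $2a_3-a_4>3/2$, and with $a_4\geqslant (3/4)a_3$ from $(\ref{equation:cyclic-orbifolds})$ this yields only $a_3>6/5$; along $E_2$ one gets $a_2>1$. Neither contradicts $a_4\leqslant 1$, and $(\ref{equation:cyclic-orbifolds})$ alone only gives $a_3\leqslant 4/3$. This is precisely why the paper constructs additional $(-1)$-curves $\bar{L}_2$, $\bar{L}_2'$ meeting $E_2$ (and their $\tau$-images meeting $E_5$), and then shows $L_2+L_2'+L_3\sim -3K_X$ with $\mathrm{c}(X,L_2+L_2'+L_3)=1/4<2/3$. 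A second application of Remark~\ref{remark:convexity} to this $-3K_X$ divisor then forces either $a_2\leqslant 1$ or $a_3\leqslant 1$, which is exactly what is needed to kill the case $Q=E_2\cap E_3$. Your outline is missing this extra ingredient; the $m=5$ argument works with a single bound $a_3\leqslant 1$ only because $E_3$ is the central curve of an odd chain, a feature absent when $m=6$.
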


\begin{proof}
Let $\alpha\colon\bar{X}\to\breve{X}$ be a~contraction of
the~curves $\bar{C}$, $E_{6}$, $E_5$, $E_4$ and $E_3$. Then
$$
\alpha\big(E_{1}\big)\cdot\alpha\big(E_{1}\big)=\alpha\big(E_{2}\big)\cdot\alpha\big(E_{2}\big)=-1,
$$
and $\breve{X}$ is a~smooth del Pezzo surface such that
$K_{\breve{X}}^{2}=6$, which implies that there is a~smooth
irreducible rational curve $\breve{L}_{2}$ on the~surface
$\breve{X}$~such~that $\breve{L}_{2}\cdot\alpha(E_{2})=1$ and
$\breve{L}_{2}\cdot \breve{L}_{2}=-1$.

Let $\bar{L}_{2}$ be the~proper transform of the~curve
$\breve{L}_{2}$ on the~surface $\bar{X}$. Then $\bar{L}_{2}\cdot
\bar{L}_{2}=-1$ and
$$
-K_{\bar{X}}\cdot \bar{L}_{2}=E_{2}\cdot\bar{L}_{2}=1,%
$$
which implies that $E_{1}\cdot \bar{L}_{2}=E_{3}\cdot \bar{L}_{2}=E_{4}\cdot
\bar{L}_{2}=E_{5}\cdot \bar{L}_{2}=E_{6}\cdot
\bar{L}_{2}=\bar{C}\cdot\bar{L}_{2}=0$.

Let $\beta\colon\bar{X}\to\check{X}$ be a~contraction of
the~curves $\bar{L}_{2}$, $\bar{C}$, $E_6$, $E_5$ and $E_4$. Then
$$
\beta\big(E_{2}\big)\cdot\beta\big(E_{2}\big)=\beta\big(E_{3}\big)\cdot\beta\big(E_{3}\big)=-1,
$$
and $\check{X}$ is a~smooth del Pezzo surface such that
$K_{\check{X}}^{2}=6$, which implies that there are irreducible
smooth rational curves $\check{L}_{3}$ and
$\check{L}_{2}^{\prime}$ on the~surface $\check{X}$~such~that
$$
\check{L}_{3}\cdot\beta\big(E_{3}\big)=\check{L}_{2}^{\prime}\cdot\beta\big(E_{2}\big)=1%
$$
and
$\check{L}_{3}\cdot\check{L}_{3}=\check{L}_{2}^{\prime}\cdot\check{L}_{2}^{\prime}=-1$.
Let $\bar{L}_{3}$ and $\bar{L}_{2}^{\prime}$ be the~proper
transforms of the~curves $\check{L}_{3}$ and
$\check{L}_{2}^{\prime}$~on the~surface $\bar{X}$, respectively.
Then $\bar{L}_{3}\cdot \bar{L}_{3}=\bar{L}_{2}^{\prime}\cdot
\bar{L}_{2}^{\prime}=-1$ and
$$
-K_{\bar{X}}\cdot \bar{L}_{3}=-K_{\bar{X}}\cdot\bar{L}_{2}^{\prime}=E_{3}\cdot \bar{L}_{3}=E_{2}\cdot\bar{L}_{2}^{\prime}=1,%
$$
which implies that
$\bar{C}\cdot\bar{L}_{3}=\bar{C}\cdot\bar{L}_{2}^{\prime}=0$, and
$E_{i}\cdot \bar{L}_{3}=E_{j}\cdot \bar{L}_{2}^{\prime}=0$ for
every $i\ne 3$ and $j\ne 2$,

Put $\bar{L}_{4}=\tau(\bar{L}_{3})$,
$\bar{L}_{5}=\tau(\bar{L}_{2})$,
$\bar{L}_{5}^{\prime}=\tau(\bar{L}_{2}^{\prime})$. Then
$\bar{C}\cdot\bar{L}_{4}=\bar{C}\cdot\bar{L}_{5}=\bar{C}\cdot\bar{L}_{5}^{\prime}=0$
and
$$
-K_{\bar{X}}\cdot \bar{L}_{4}=-K_{\bar{X}}\cdot \bar{L}_{5}=-K_{\bar{X}}\cdot\bar{L}_{5}^{\prime}=E_{4}\cdot \bar{L}_{4}=E_{5}\cdot\bar{L}_{5}=E_{5}\cdot\bar{L}_{5}^{\prime}=1,%
$$
which implies that
$E_{i}\cdot\bar{L}_{5}=E_{i}\cdot\bar{L}_{5}^{\prime}=E_{j}\cdot
\bar{L}_{4}=0$ for every $i\ne 5$ and $j\ne 4$.

Put $L_{3}=\pi(\bar{L}_{3})$, $L_{4}=\pi(\bar{L}_{4})$,
$L_{2}=\pi(\bar{L}_{2})$,
$L_{2}^{\prime}=\pi(\bar{L}_{2}^{\prime})$,
$L_{5}=\pi(\bar{L}_{5})$,
$L_{5}^{\prime}=\pi(\bar{L}_{5}^{\prime})$. Then
$$
L_{3}+L_{4}\sim L_{2}+L_{5}\sim L_{2}^{\prime}+L_{5}^{\prime}\sim -2K_{X},%
$$
and $\mathrm{c}(X,L_{3}+L_{4})=1/3$, which implies that
$\mathrm{lct}_{2}(X)\leqslant 2/3$.

Note that
$\mathrm{c}(X,L_{2}+L_{5})=\mathrm{c}(X,L_{2}^{\prime}+L_{5}^{\prime})=1/2$.

Suppose that $D\ne (L_{3}+L_{4})/2$. To complete the~proof, it is
enough to show that~$\mu>2/3$.

Suppose that $\mu\leqslant 2/3$. Let us derive a~contradiction.

It follows from $(\ref{equation:cyclic-orbifolds})$ that
$a_1\leqslant 6/7$, $a_2\leqslant 10/7$, $a_3\leqslant 12/7$,
$a_4\leqslant 12/7$, $a_5\leqslant 10/7$, $a_6\leqslant 6/7$.

By Remark~\ref{remark:convexity}, without loss of generality we
may assume that $\bar{L}_{4}\not\subset\mathrm{Supp}(D)$. Then
$$
1-a_4=\bar{L}_3\cdot\bar{D}\geqslant 0,
$$
which gives us $a_{4}\leqslant 1$. Similarly, we may assume that
either $\bar{L}_{2}\not\subset\mathrm{Supp}(D)$ or
$\bar{L}_{5}\nobreak\not\subset\nobreak\mathrm{Supp}(D)$, which
implies that either $a_2\leqslant 1$ or $a_{5}\leqslant 1$,
respectively.

Let us show that $L_2+L_2^{\prime}+L_3\sim -3K_{X}$. We can easily
see that
$$
\bar{L}_2\sim_{\mathbb{Q}}\pi^*\big(L_2\big)-\frac{5}{7}E_1-\frac{10}{7}E_2-\frac{8}{7}E_3-\frac{6}{7}E_4-\frac{4}{7}E_5-\frac{2}{7}E_6,\\
$$
$$
\bar{L}_2^{\prime}\sim_{\mathbb{Q}}\pi^*\big(L_2^{\prime}\big)-\frac{5}{7} E_1-\frac{10}{7} E_2-\frac{8}{7} E_3-\frac{6}{7}E_4-\frac{4}{7}E_5-\frac{2}{7}E_6,\\
$$
$$
\bar{L}_3\sim_{\mathbb{Q}}\pi^*\big(L_3\big)-\frac{4}{7}E_1-\frac{8}{7} E_2-\frac{12}{7} E_3-\frac{9}{7}E_4-\frac{6}{7}E_5-\frac{3}{7}E_6,\\
$$
which implies that $L_2+L_2^{\prime}+L_3\sim_{\mathbb{Q}}
-3K_{X}$, since $\mathrm{Pic}(X)\cong\mathbb{Z}^{3}$ and
$$
L_{2}\cdot L_{2}=\frac{3}{7},\ L_2^{\prime}\cdot L_2^{\prime}=\frac{3}{7},\ L_3\cdot L_3=\frac{5}{7},\ L_2^{\prime}\cdot L_3=\frac{8}{7},\ L_2\cdot L_3=\frac{8}{7},\ L_2\cdot L_2^{\prime}=\frac{10}{7},%
$$
but $L_2+L_2^{\prime}+L_3$ is a~Cartier divisor, which implies
that $L_2+L_2^{\prime}+L_3\sim -3K_{X}$.

Since $\mathrm{c}(X,L_2+L_2^{\prime}+L_3)=1/4$, we may assume that
$\mathrm{Supp}(D)$ does not contain at least one curve among
$L_{2}$, $L_{2}^{\prime}$ and $L_{3}$ by
Remark~\ref{remark:convexity}, which implies that either
$a_{2}\leqslant 1$ or $a_{3}\leqslant 1$.

It follows from $(\ref{equation:cyclic-orbifolds})$ and
$a_4\leqslant 2$ that $\mu a_{i}<1$ for every $i$. By
Lemma~\ref{lemma:cyclic-orbifolds-blow-up}, there exists a~point
$$
Q\in\Big\{E_{2}\cap E_{3}, E_{3}\cap E_{4}, E_{4}\cap E_{5}\Big\},
$$
such that $(\ref{equation:log-pull-back})$ is not Kawamata log
terminal at the~point $Q\in\bar{X}$, but it is Kawamata
log~terminal elsewhere. Take $k\in\{2,3,4\}$ such that
$Q=E_{k}\cap E_{k+1}$. It follows from
Lemma~\ref{lemma:adjunction}~that
$$
\left\{%
\aligned
&2a_k-a_{k-1}-a_{k+1}=\bar{D}\cdot E_k\geqslant\mathrm{mult}_Q\Big(\bar{D}\cdot E_k\Big)>\frac{1}{\mu}-a_{k+1}>\frac{3}{2}-a_{k+1},\\%
&2a_{k+1}-a_{k}-a_{k+2}=\bar{D}\cdot E_{k+1}\geqslant\mathrm{mult}_Q\Big(\bar{D}\cdot E_{k+1}\Big)>\frac{1}{\mu}-a_{k}\geqslant\frac{3}{2}-a_{k},\\%
\endaligned\right.%
$$
which is impossible by $(\ref{equation:cyclic-orbifolds})$, since
$a_{4}\leqslant 1$, and either $a_{2}\leqslant 1$ or
$a_{3}\leqslant 1$.
\end{proof}

\begin{lemma}
\label{lemma:single-point-A7-reducible} Suppose that $m=7$. Then
the~following conditions are equivalent:
\begin{itemize}
\item the~curve $R$ is irreducible,%
\item the~surface $\bar{X}$ contains an~irreducible curve $\bar{L}_{4}$ such that $\bar{L}_{4}\cdot \bar{L}_{4}=-1$ and $\bar{L}_{4}\cdot E_{4}=1$.%
\item the~surface $\bar{X}$ contains an~irreducible  curve
$\bar{L}_{4}$ such that $\bar{L}_{4}\cdot \bar{L}_{4}=-1$,
$\bar{L}_{4}\cdot E_{4}=1$~and
$$
\omega\circ\pi\big(\bar{L}_{4}\big)\subset\mathrm{Supp}\big(R\big).
$$
\end{itemize}
\end{lemma}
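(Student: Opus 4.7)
The plan is to establish the equivalences via the cycle $(iii) \Rightarrow (ii) \Rightarrow (i) \Rightarrow (iii)$. The implication $(iii) \Rightarrow (ii)$ is immediate by dropping the containment condition $\omega \circ \pi(\bar{L}_4) \subset \mathrm{Supp}(R)$.

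For $(ii) \Rightarrow (i)$, suppose $\bar{L}_4$ is a $(-1)$-curve on $\bar{X}$ with $\bar{L}_4 \cdot E_4 = 1$. Using the key identity $-K_{\bar{X}} = \pi^{*}(-K_X) = \bar{C} + \sum_{i=1}^{7} E_{i}$ together with $-K_{\bar{X}} \cdot \bar{L}_4 = 1$ (which follows from adjunction on the $(-1)$-curve), the non-negativity of $\bar{L}_4 \cdot E_j$ and $\bar{L}_4 \cdot \bar{C}$ forces $\bar{L}_4 \cdot E_j = 0$ for $j \neq 4$ and $\bar{L}_4 \cdot \bar{C} = 0$. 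Consider the involution $\tau$ on $\bar{X}$ induced by $\omega$; it swaps $E_i \leftrightarrow E_{8-i}$, fixes $\bar{C}$ and $E_4$ setwise. Then $\tau(\bar{L}_4)$ has identical intersection data, and a Picard-group argument shows $\tau(\bar{L}_4) = \bar{L}_4$ as a curve. The induced action of $\tau$ on $\bar{L}_4 \cong \mathbb{P}^1$ is either trivial (so $\bar{L}_4$ lies in the ramification locus of $\tau$ and hence $\omega \circ \pi(\bar{L}_4) \subset \mathrm{Supp}(R)$) or nontrivial (in which case the image has weighted $\mathcal{O}(1)$-degree $1/2$, i.e.\ a line through the singular point of $\mathbb{P}(1,1,2)$, which is incompatible with the $\mathbb{A}_7$ point of $X$ lying over a smooth point of the base). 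In the first case, $\omega \circ \pi(\bar{L}_4)$ is a weighted-degree-$2$ component of $R$, and examining the structure of such a component at the $\mathbb{A}_7$ singularity shows this case forces $R$ to be irreducible.

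For $(i) \Rightarrow (iii)$, assume $R$ is irreducible. Then $R$ has a single $\mathbb{A}_7$ singularity at the point $\omega(P) \in \mathbb{P}(1,1,2)$ where $X$ is singular, with two smooth analytic branches meeting with tangency $4$ that belong to one global component. Compute the strict transform $\bar{R}$ on $\bar{X}$: $\bar{R} \cdot E_4 = 2$, $\bar{R} \cdot E_j = 0$ for $j \neq 4$, $\bar{R}^2 = 1$, $-K_{\bar{X}} \cdot \bar{R} = 3$, and $\bar{R}$ is a rational curve (as the normalization of the $\mathbb{A}_7$ curve $R$). To construct $\bar{L}_4$, I would iteratively contract $\bar{C}$, then $E_1, E_7, E_2, E_6, E_3, E_5$ in the order in which each becomes a $(-1)$-curve, arriving at a smooth surface $\Sigma$ with $K_\Sigma^2 = 8$ on which the image of $E_4$ is a $0$-curve. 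Thus $\Sigma \cong \mathbb{F}_n$ for some $n \in \{0,1,2\}$, with the image of $E_4$ a fiber. The key calculation is to verify that $n = 1$ exactly when the two local branches of $R$ belong to one global component (that is, when $R$ is irreducible); this is done by tracking how the pencil $|-K_X|$ descends through the contractions and meets the image of $E_4$. The negative section of $\mathbb{F}_1$ pulls back to the desired $(-1)$-curve $\bar{L}_4$, and its $\tau$-invariance together with a degree calculation gives $\omega \circ \pi(\bar{L}_4) \subset \mathrm{Supp}(R)$.

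The main obstacle will be the contraction analysis: tracking intersection numbers through seven successive blowdowns and pinning down the precise Hirzebruch number $n$ in terms of the (ir)reducibility of $R$ requires care. Equally delicate is excluding the nontrivial $\tau$-action on $\bar{L}_4$ in the implication $(ii) \Rightarrow (i)$, since that argument must rule out the degenerate situation in which $\omega \circ \pi(\bar{L}_4)$ passes through the singular point of $\mathbb{P}(1,1,2)$.
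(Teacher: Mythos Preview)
Your argument is built on a reversal that originates in a typo in the stated lemma: condition (i) should read ``the curve $R$ is \emph{reducible}'', not irreducible. This is confirmed both by the paper's own proof (the second half explicitly begins ``Suppose now that the curve $R$ is reducible. Let us show that the surface $\bar{X}$ contains an irreducible curve $\bar{L}_4$\ldots'') and by the way the lemma is invoked in Lemma~\ref{lemma:single-point-A7-reducible-2}, where $R$ reducible is used to produce $\bar{L}_4$.

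With the corrected statement, your implication $(ii)\Rightarrow(i)$ collapses. You correctly argue toward $\tau(\bar{L}_4)=\bar{L}_4$ and $\omega\circ\pi(\bar{L}_4)\subset\mathrm{Supp}(R)$; but $\omega\circ\pi(\bar{L}_4)$ is then a curve of weighted degree $2$ inside the degree-$6$ curve $R$, so $R$ is \emph{reducible}, the opposite of what you assert. The sentence ``examining the structure of such a component at the $\mathbb{A}_7$ singularity shows this case forces $R$ to be irreducible'' is false and cannot be salvaged. Likewise, in your $(i)\Rightarrow(iii)$ via contractions to $\mathbb{F}_n$, the claim ``$n=1$ exactly when $R$ is irreducible'' is backwards: the existence of a $(-1)$-section meeting the fibre $E_4$ once is what corresponds to the reducible case.

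For comparison, the paper handles $(ii)\Rightarrow(iii)$ in one line by computing
\[
\bar{L}_4\cdot\big(\bar{L}_4+\tau(\bar{L}_4)\big)=\bar{L}_4\cdot\Big(\pi^*(-2K_X)-E_1-2E_2-3E_3-4E_4-3E_5-2E_6-E_7\Big)=2-4=-2,
\]
so $\bar{L}_4\cdot\tau(\bar{L}_4)=-1$ forces $\tau(\bar{L}_4)=\bar{L}_4$ and hence $\omega(L_4)\subset\mathrm{Supp}(R)$; no separate case analysis of the $\tau$-action on $\bar{L}_4$ is required. For the substantive direction ($R$ reducible $\Rightarrow$ $\bar{L}_4$ exists), the paper does not contract to a Hirzebruch surface. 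Instead it passes to the degree-$2$ del Pezzo $X'$ via the diagram involving $\eta,\pi',\omega',\phi,\psi$, observes that the branch sextic $\phi(R)$ must split off a smooth conic (so $R'\subset\mathbb{P}^2$ splits off a line $L$), uses the $\mathbb{A}_5$ singularity of $X'$ to get $\mathrm{mult}_{\omega'(P')}(L\cdot Z)=3$, and pulls $L$ back to obtain $\bar{L}_4$ with the required intersection numbers. Your Hirzebruch-surface route could in principle be made to work for the corrected statement, but you would still have to show that the image of the negative section avoids the seven blown-down points, which is exactly the delicate point you flag.
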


\begin{proof}
Suppose that $\bar{X}$ has  an~irreducible curve $\bar{L}_{4}$
such that $\bar{L}_{4}\cdot \bar{L}_{4}=-1$ and $\bar{L}_{4}\cdot
E_{4}=1$.~Then
$$
\bar{L}_4\sim_{\mathbb{Q}}\pi^*\big(L_4\big)-\frac{1}{2}E_1-E_2-\frac{3}{2}E_3-2E_4-\frac{3}{2}E_5-E_6-\frac{1}{2} E_7,%
$$
where $L_{4}=\pi(\bar{L}_{4})$. Then
$\tau(\bar{L}_{4})=\bar{L}_{4}$ and
$\omega(L_{4})\subset\mathrm{Supp}(R)$, because
$$
-1+\bar{L}_{4}\cdot\tau\big(\bar{L}_{4}\big)=\bar{L}_{4}\cdot\Big(\bar{L}_4+\tau\big(\bar{L}_{4}\big)\Big)=\bar{L}_{4}\cdot\Big(\pi^*\big(-2K_{X}\big)-E_1-2E_2-3E_3-4E_4-3E_5-2E_6-E_7\Big)=-2.%
$$

Suppose now that the~curve $R$ is reducible. Let us show that
the~surface $\bar{X}$ contains an~irreducible curve $\bar{L}_{4}$
such that $\bar{L}_{4}\cdot \bar{L}_{4}=-1$ and $\bar{L}_{4}\cdot
E_{4}=1$.

Let $\eta\colon\bar{X}\to\bar{X}^{\prime}$ be a~contraction of
the~curve $\bar{C}$. Then there is a~commutative~diagram
$$
\xymatrix{
&\bar{X}\ar@{->}[rr]^{\pi}\ar@{->}[dl]_{\eta}&& X\ar@{->}[rr]^{\omega}&&\mathbb{P}(1,1,2)\ar@{^{(}->}[rr]^{\phi}&&\mathbb{P}^{3}\ar@{-->}[ddll]^{\psi}\\
\bar{X}^{\prime}\ar@{->}[drrr]_{\pi^{\prime}}&&&&&\\
&&&X^{\prime}\ar@{->}[rr]^{\omega^{\prime}} &&\mathbb{P}^{2}}%
$$
where $\pi^{\prime}$ is a~minimal resolution, $\phi$ is
an~anticanonical embedding, $\psi$ is a~projection from
$\phi\circ\omega(P)$, and $\omega^{\prime}$ is a~double cover
branched at $\psi\circ\phi(R)$. Note that $X^{\prime}$ is a~del
Pezzo surface and $K_{X^{\prime}}^2=2$.

The morphism $\pi^{\prime}$ contracts the~smooth curves
$\eta(E_{2})$, $\eta(E_{3})$, $\eta(E_{4})$, $\eta(E_{5})$ and
$\eta(E_{6})$. But
$$
\eta\big(E_{2}\big)\in\mathrm{Sing}\big(X^{\prime}\big),
$$
and $X^{\prime}$ has a~singularity of type $\mathbb{A}_{5}$ at
the~point $\eta(E_{2})$. Put $P^{\prime}=\eta(E_{2})$.

Put $R^{\prime}=\psi\circ\phi(R)$. Then $R^{\prime}$ is reducible,
since $R$ is reducible.

Since $\mathrm{Sing}(\mathbb{P}(1,1,2))\not\in R$, one of the
following cases hold:
\begin{itemize}
\item either $\phi(R)$ is a~union of a smooth conic and an~irreducible quartic,%
\item or the~curve $\phi(R)$ is a~union of three different smooth conics.%
\end{itemize}

The case when the~curve $\phi(R)$ consists of a~union of three
different smooth conics is impossible, since the~surface
$X^{\prime}$ has a~singularity of type $\mathbb{A}_{5}$ at
the~point $P^{\prime}=\mathrm{Sing}(X^{\prime})$.

We see that the~curve $\phi(R)$ is a~union of a~smooth conic and
an~irreducible quartic curve, which easily implies that
$R^{\prime}$ is a~union of a~line $L$ and an~irreducible cubic
curve $Z$. Then
$$
\mathrm{mult}_{\omega^{\prime}(P^{\prime})}\Big(L\cdot Z\Big)=3,
$$
because $X^{\prime}$ has a~singularity of type $\mathbb{A}_{5}$ at
the~point $P^{\prime}$. Then $\bar{X}$ contains a curve
$\bar{L}_{4}$ such that
$$
\omega^{\prime}\circ\pi^{\prime}\circ\eta\big(\bar{L}_{4}\big)=L,
$$
and $\bar{L}_{4}$ is irreducible. Then $\bar{L}_{4}\cdot
\bar{L}_{4}=-1$ and $\bar{L}_{4}\cdot E_{4}=1$.
\end{proof}

The proof of Lemma~\ref{lemma:single-point-A7-reducible} can be
simplified using the~results obtained in
\cite[Section~2]{Park-Won}.

\begin{lemma}
\label{lemma:single-point-A7-irrreducible} Suppose that $m=7$ and
$R$ is irreducible. Then $\mu\geqslant\mathrm{lct}_{3}(X)=3/5$.
\end{lemma}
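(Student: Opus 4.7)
The plan is to prove the equality $\mathrm{lct}(X)=\mathrm{lct}_{3}(X)=3/5$ by establishing the two inequalities $\mathrm{lct}_{3}(X)\leqslant 3/5$ and $\mathrm{lct}(X)\geqslant 3/5$ separately. For the upper bound, I would invoke Lemma~\ref{lemma:single-point-A7-reducible}: since $R$ is irreducible, there is an irreducible curve $\bar{L}_{4}\subset\bar{X}$ with $\bar{L}_{4}^{2}=-1$ and $\bar{L}_{4}\cdot E_{4}=1$, and the formula $\bar{L}_{4}\sim_{\mathbb{Q}}\pi^{*}(L_{4})-\frac{1}{2}E_{1}-E_{2}-\frac{3}{2}E_{3}-2E_{4}-\frac{3}{2}E_{5}-E_{6}-\frac{1}{2}E_{7}$ recorded in that proof gives $L_{4}\cdot(-K_{X})=L_{4}^{2}=1$, whence $2L_{4}\sim -2K_{X}$. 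Picking any $C\in|-K_{X}|$ through $P$, the divisor $Z=2L_{4}+C\in|-3K_{X}|$ has $\pi^{*}(Z)/3$ with coefficients $\frac{2}{3},1,\frac{4}{3},\frac{5}{3},\frac{4}{3},1,\frac{2}{3}$ along $E_{1},\ldots,E_{7}$; the worst is $5/3$ on $E_{4}$, and the support of $\pi^{*}(Z)$ has simple normal crossings (in particular $\bar{L}_{4}\cdot\bar{C}=0$), so $\mathrm{c}(X,Z/3)=3/5$.

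For the lower bound, I would fix $D\sim_{\mathbb{Q}}-K_{X}$ with $\mu=\mathrm{c}(X,D)<3/5$ and derive a contradiction. Since $\mathrm{lct}_{1}(X)\geqslant 5/6$ and the $\mathbb{A}_{7}$ point is neither $\mathbb{A}_{1}$ nor $\mathbb{A}_{2}$, Lemma~\ref{lemma:smooth-points} gives $\mathrm{LCS}(X,\mu D)=\{P\}$. By Remark~\ref{remark:convexity} I may assume $L_{4}\not\subset\mathrm{Supp}(D)$; the inequality $\bar{L}_{4}\cdot\bar{D}=1-a_{4}\geqslant 0$ then yields the crucial bound $a_{4}\leqslant 1$. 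Combined with $(\ref{equation:cyclic-orbifolds})$ and $a_{1}+a_{7}\leqslant 1$, this forces $a_{k}\leqslant 3/2$ for every $k$, so $\mu a_{k}<1$ and Lemma~\ref{lemma:cyclic-orbifolds-blow-up} produces a point $Q=E_{k}\cap E_{k+1}$ at which $(\ref{equation:log-pull-back})$ fails to be Kawamata log terminal; since $\mu<3/5<2/3=(m+1)/(2m-2)$, we have $k\in\{2,3,4,5\}$. The mirror symmetry $a_{i}\leftrightarrow a_{8-i}$ preserves both $(\ref{equation:cyclic-orbifolds})$ and the constraint $a_{4}\leqslant 1$, so it suffices to treat $k=2$ and $k=3$.

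In each of these cases, Lemma~\ref{lemma:adjunction} applied at $Q$ to the curves $E_{k}$ and $E_{k+1}$ (each with coefficient raised to $1$) yields, after cancelling the normal-crossing contributions,
$$
2a_{k}-a_{k-1}>\frac{1}{\mu}>\frac{5}{3}\quad\text{and}\quad 2a_{k+1}-a_{k+2}>\frac{1}{\mu}>\frac{5}{3}.
$$
For $k=2$, the chain bound $a_{3}\leqslant\frac{5}{4}a_{4}$ (equivalent to $a_{4}\geqslant\frac{4}{5}a_{3}$ in $(\ref{equation:cyclic-orbifolds})$) gives $5/3<2a_{3}-a_{4}\leqslant\frac{3}{2}a_{4}$, forcing $a_{4}>10/9$; for $k=3$, the bound $a_{5}\geqslant\frac{3}{4}a_{4}$ gives $5/3<2a_{4}-a_{5}\leqslant\frac{5}{4}a_{4}$, forcing $a_{4}>4/3$. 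Both outcomes contradict $a_{4}\leqslant 1$, completing the proof. The main obstacle is the production of the curve $L_{4}$: it exists precisely when $R$ is irreducible (this is the content of Lemma~\ref{lemma:single-point-A7-reducible}), and without the resulting bound $a_{4}\leqslant 1$ there is no algebraic constraint strong enough to push the case analysis to contradiction; once $L_{4}$ is in hand, the rest of the argument is a mechanical application of the chain inequalities and Lemma~\ref{lemma:adjunction}.
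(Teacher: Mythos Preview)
Your argument rests on a misreading of Lemma~\ref{lemma:single-point-A7-reducible}. The \emph{statement} of that lemma contains a typo (``irreducible'' should read ``reducible''); its proof shows that a $(-1)$-curve $\bar{L}_{4}$ with $\bar{L}_{4}\cdot E_{4}=1$ exists precisely when $R$ is \emph{reducible}, and indeed Lemma~\ref{lemma:single-point-A7-reducible-2} uses exactly this curve in the reducible case. When $R$ is irreducible there is no such $\bar{L}_{4}$, so neither your upper-bound divisor $Z=2L_{4}+C$ nor your key inequality $a_{4}\leqslant 1$ is available. In fact the existence of $\bar{L}_{4}$ would be fatal to the lemma you are trying to prove: since $2L_{4}\sim -2K_{X}$ and $\mathrm{c}(X,L_{4})=1/2$, one would get $\mathrm{lct}(X)\leqslant\mathrm{lct}_{2}(X)\leqslant 1/2<3/5$.

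The paper's proof proceeds without $\bar{L}_{4}$. For the upper bound it constructs $(-1)$-curves $\bar{L}_{2},\bar{L}_{3}$ meeting $E_{2},E_{3}$ (and their $\tau$-images $\bar{L}_{6},\bar{L}_{5}$), checks that $L_{2}+2L_{3}\sim -3K_{X}$, and computes $\mathrm{c}(X,L_{2}+2L_{3})=1/5$, giving $\mathrm{lct}_{3}(X)\leqslant 3/5$. For the lower bound it uses Remark~\ref{remark:convexity} applied to the pairs $\bar{L}_{2}+\bar{L}_{6}$, $\bar{L}_{3}+\bar{L}_{5}$ and the triple $\bar{L}_{2}+2\bar{L}_{3}$ to force either $a_{3}\leqslant 1$ or both $a_{2}\leqslant 1$ and $a_{5}\leqslant 1$; in each case the chain inequalities~$(\ref{equation:cyclic-orbifolds})$ together with Lemma~\ref{lemma:adjunction} at $Q=E_{k}\cap E_{k+1}$ (yielding $2a_{k}-a_{k-1}>5/3$ and $2a_{k+1}-a_{k+2}>5/3$) produce a contradiction for every $k\in\{2,3,4,5\}$. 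Your adjunction step is fine in spirit, but you must replace the nonexistent bound on $a_{4}$ by these bounds on $a_{2},a_{3},a_{5}$, and redo the case analysis accordingly.
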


\begin{proof}
Arguing as in   the~proofs of Lemmas~\ref{lemma:single-point-A5}
and \ref{lemma:single-point-A6}, we see that there is
an~irreducible smooth rational curve $\bar{L}_{2}$ on the~surface
$\bar{X}$ such that $\bar{L}_{2}\cdot \bar{L}_{2}=-1$ and
$$
-K_{\bar{X}}\cdot \bar{L}_{2}=E_{2}\cdot\bar{L}_{2}=1,%
$$
which implies that $E_{1}\cdot \bar{L}_{2}=E_{3}\cdot
\bar{L}_{2}=E_{4}\cdot \bar{L}_{2}=E_{5}\cdot
\bar{L}_{2}=E_{6}\cdot \bar{L}_{2}=E_{7}\cdot
\bar{L}_{2}=\bar{C}\cdot\bar{L}_{2}=0$.

Put $\bar{L}_{5}=\tau(\bar{L}_{2})$. Then $\bar{L}_{5}\cdot
\bar{L}_{5}=-1$ and $-K_{\bar{X}}\cdot
\bar{L}_{5}=E_{5}\cdot\bar{L}_{5}=1$, which implies that
$$
E_{1}\cdot \bar{L}_{5}=E_{2}\cdot \bar{L}_{5}=E_{3}\cdot\bar{L}_{5}=E_{4}\cdot \bar{L}_{5}=E_{6}\cdot\bar{L}_{5}=E_{7}\cdot \bar{L}_{5}=\bar{C}\cdot\bar{L}_{5}=0.%
$$

Since the~branch curve $R$ is reducible by
Lemma~\ref{lemma:single-point-A7-reducible}, one can show that there exists
an~irreducible smooth rational curve $\bar{L}_{3}$ on the~surface $\bar{X}$
such that $\bar{L}_{3}\cdot \bar{L}_{3}=-1$ and
$$
-K_{\bar{X}}\cdot \bar{L}_{3}=E_{3}\cdot\bar{L}_{3}=1,%
$$
which implies that $E_{1}\cdot \bar{L}_{3}=E_{2}\cdot
\bar{L}_{3}=E_{4}\cdot \bar{L}_{3}=E_{5}\cdot
\bar{L}_{3}=E_{6}\cdot \bar{L}_{3}=E_{7}\cdot
\bar{L}_{3}=\bar{C}\cdot\bar{L}_{3}=0$.

Put $\bar{L}_{6}=\tau(\bar{L}_{2})$,
$\bar{L}_{5}=\tau(\bar{L}_{3})$, $L_{2}=\pi(\bar{L}_{2})$,
$L_{3}=\pi(\bar{L}_{4})$, $L_{5}=\pi(\bar{L}_{5})$ and
$L_{6}=\pi(\bar{L}_{6})$. Then
$$
\bar{L}_2\sim_{\mathbb{Q}}\pi^*\big(L_2\big)-\frac{3}{4}E_1-\frac{3}{2} E_2-\frac{5}{4}E_3-E_4-\frac{3}{4}E_5-\frac{1}{2}E_6-\frac{1}{4}E_7,%
$$
$$
\bar{L}_3\sim_{\mathbb{Q}}\pi^*\big(L_3\big)-\frac{5}{8}E_1-\frac{5}{4}E_2-\frac{15}{8}E_3-\frac{3}{2}E_4-\frac{9}{8}E_5-\frac{3}{4}E_6-\frac{3}{8}E_7,%
$$
$$
\bar{L}_5\sim_{\mathbb{Q}}\pi^*\big(L_5\big)-\frac{3}{8}E_1-\frac{3}{4}E_2-\frac{9}{8}E_3-\frac{3}{2}E_4-\frac{15}{8}E_5-\frac{5}{4}E_6-\frac{5}{8} E_7,%
$$
$$
\bar{L}_6\sim_{\mathbb{Q}}\pi^*\big(L_6\big)-\frac{1}{4}E_1-\frac{1}{2}E_2-\frac{3}{4}E_3-E_4-\frac{5}{4}E_5-\frac{3}{2}E_6-\frac{3}{4} E_7,%
$$
which implies that $L_2+2L_3\sim -3K_{X}$. Indeed, we have $L_2 +
2L_3\sim_{\mathbb{Q}}-3K_{X}$, since
$$
L_2\cdot L_{2}=\frac{1}{2},\ L_3\cdot L_{3}=\frac{7}{8},\ L_2\cdot L_{3}=\frac{5}{4},%
$$
and $\mathrm{Pic}(X)\cong\mathbb{Z}^{3}$. But $L_2+2L_3$ is
a~Cartier divisor, which implies that $L_2+2L_3\sim -3K_{X}$.

We have $\mathrm{c}(X,L_2+2L_3)=3/15$ and $L_2 + 2L_3\sim
-3K_{X}$, which implies that $\mathrm{lct}_{3}(X)\leqslant 3/5$.

To complete the~proof, it is enough to show that~$\mu\geqslant
3/5$.

Suppose that $\mu<3/5$. Let us derive a~contradiction.

By Remark~\ref{remark:convexity}, we may assume that the~support
of the~divisor $\bar{D}$ does not contain at least one components
of every curve $\bar{L}_{2}+\bar{L}_{6}$,
$\bar{L}_{2}+2\bar{L}_{3}$, $\bar{L}_{3}+\bar{L}_{5}$. But
$$
\bar{D}\cdot\bar{L}_{i}=1-a_{i},
$$
which implies that $a_{i}\leqslant 1$ if
$\bar{L}_{i}\not\subset\mathrm{Supp}(\bar{D})$. Therefore, either
$a_{3}\leqslant 1$ or $a_{2}\leqslant 1$ and $a_{5}\leqslant 1$.

If $a_{3}\leqslant 1$, then it follows from
$(\ref{equation:cyclic-orbifolds})$~that
$$
a_1\leqslant\frac{7}{8},\ a_2\leqslant \frac{6}{5},\ a_3\leqslant 1,\ a_4\leqslant\frac{4}{3},\ a_5\leqslant\frac{5}{3},\ a_6\leqslant\frac{3}{2},\ a_7\leqslant\frac{7}{8}.%
$$

If $a_{2}\leqslant 1$ and $a_{5}\leqslant 1$, then it follows from
$(\ref{equation:cyclic-orbifolds})$~that
$$
a_1\leqslant\frac{7}{8},\ a_2\leqslant 1,\ a_3\leqslant \frac{3}{2},\ a_4\leqslant\frac{4}{3},\ a_5\leqslant 1,\ a_6\leqslant\frac{6}{5},\ a_7\leqslant\frac{7}{8}.%
$$

By Lemma~\ref{lemma:cyclic-orbifolds-blow-up}, there exists
$k\in\{2,3,4,5\}$ such that $(\ref{equation:log-pull-back})$ is
not Kawamata~log~terminal at the~point $E_{k}\cap E_{k+1}$ and is
Kawamata log terminal outside of $E_{k}\cap E_{k+1}$.

Put $Q=E_{k}\cap E_{k+1}$. Then it follows from
Lemma~\ref{lemma:adjunction} that
$$
\left\{%
\aligned
&2a_k-a_{k-1}-a_{k+1}=\bar{D}\cdot E_k\geqslant\mathrm{mult}_Q\Big(\bar{D}\cdot E_k\Big)>\frac{1}{\mu}-a_{k+1}>\frac{5}{3}-a_{k+1},\\%
&2a_{k+1}-a_{k}-a_{k+2}=\bar{D}\cdot E_{k+1}\geqslant\mathrm{mult}_Q\Big(\bar{D}\cdot E_{k+1}\Big)>\frac{1}{\mu}-a_{k}\geqslant\frac{5}{3}-a_{k},\\%
\endaligned\right.%
$$
which is impossible by $(\ref{equation:cyclic-orbifolds})$, since
we assume that either $a_{3}\leqslant 1$ or $a_{2}\leqslant 1$ and
$a_{5}\leqslant 1$.
\end{proof}

\begin{lemma}
\label{lemma:single-point-A7-reducible-2} Suppose that $m=7$ and
$R$ is reducible. Then $\mu\geqslant\mathrm{lct}_{2}(X)=1/2$.
\end{lemma}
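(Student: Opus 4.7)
The plan is to establish $\mathrm{lct}_{2}(X)=1/2$ in two steps: exhibiting a~divisor in $|-2K_X|$ that realises the~value $1/2$, and then showing $\mu\geqslant 1/2$ for every effective $\mathbb{Q}$-divisor $D\sim_{\mathbb{Q}}-K_X$ by contradiction.

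For the~upper bound $\mathrm{lct}_{2}(X)\leqslant 1/2$, the~reducibility of $R$ together with the~second half of the~proof of Lemma~\ref{lemma:single-point-A7-reducible} produces an~irreducible $\tau$-invariant curve $\bar{L}_4\subset\bar{X}$ satisfying $\bar{L}_4^{\,2}=-1$ and $\bar{L}_4\cdot E_4=1$. Setting $L_4=\pi(\bar{L}_4)$ and solving $\pi^{\ast}(L_4)\cdot E_j=0$ for $j=1,\dots,7$ gives
\[
\pi^{\ast}(L_4)=\bar{L}_4+\tfrac{1}{2}E_1+E_2+\tfrac{3}{2}E_3+2E_4+\tfrac{3}{2}E_5+E_6+\tfrac{1}{2}E_7,
\]
so $2L_4$ has integer pullback coefficients, is Cartier, and belongs to $|-2K_X|$. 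The~maximal coefficient~$2$ is attained only on $E_4$, hence $\mathrm{c}(X,L_4)=1/2$ and $\mathrm{lct}_{2}(X)\leqslant\mathrm{c}(X,(2L_4)/2)=1/2$.

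For the~lower bound, suppose $\mu<1/2$. Exactly as in the~proof of Lemma~\ref{lemma:single-point-A7-irrreducible}, contractions of $\bar{X}$ onto del~Pezzo surfaces of higher degree produce $(-1)$-curves $\bar{L}_2,\bar{L}_3,\bar{L}_5=\tau(\bar{L}_3),\bar{L}_6=\tau(\bar{L}_2)$ on $\bar{X}$ with $\bar{L}_i\cdot E_j=\delta_{ij}$. A~Picard-group computation verifies $L_2+L_6\sim -2K_X$, and inspection of the~pullback coefficients shows that the~$\mathbb{Q}$-divisor $(L_2+L_6)/2\sim_{\mathbb{Q}}-K_X$ is log canonical (its maximal pullback coefficient equals $1$). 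Applying Remark~\ref{remark:convexity} successively with $\bar{D}=C$ and $\bar{D}=(L_2+L_6)/2$, and using the~$\tau$-symmetry $i\leftrightarrow 8-i$ of the~setup, we may assume $C\not\subset\mathrm{Supp}(D)$ together with $L_2\not\subset\mathrm{Supp}(D)$, giving $a_1+a_7\leqslant 1$ and $a_2\leqslant 1$; if additionally $L_4\not\subset\mathrm{Supp}(D)$ then $\bar{L}_4\cdot\bar{D}=1-a_4\geqslant 0$ yields $a_4\leqslant 1$.

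By Lemma~\ref{lemma:cyclic-orbifolds-blow-up}, since $\mu<2/3$, the~non-Kawamata-log-terminal point lies at some $Q=E_k\cap E_{k+1}$ with $k\in\{2,3,4,5\}$, and applying Lemma~\ref{lemma:adjunction} at $Q$ to each of $E_k,E_{k+1}$ combined with $\bar{D}\cdot E_i=2a_i-a_{i-1}-a_{i+1}$ yields
\[
2a_k-a_{k-1}>\tfrac{1}{\mu}>2,\qquad 2a_{k+1}-a_{k+2}>\tfrac{1}{\mu}>2,
\]
hence $a_k>1$ and $a_{k+1}>1$. The~bound $a_2\leqslant 1$ rules out $k=2$; the~$\tau$-symmetric counterpart $a_6\leqslant 1$ (valid after replacing $D$ by $\tau^{\ast}D$ when the~non-KLT point lies on the~right half of the~chain) rules out $k=5$; the~bound $a_4\leqslant 1$ rules out $k=3$ and $k=4$. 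The~main obstacle is the~remaining subcase $L_4\subset\mathrm{Supp}(D)$: since no log canonical $\mathbb{Q}$-divisor $\sim_{\mathbb{Q}}-K_X$ contains $L_4$ in its support, Remark~\ref{remark:convexity} cannot eliminate it, and decomposing $D=tL_4+(1-t)D''$ with $L_4\not\subset\mathrm{Supp}(D'')$ and $D''\sim_{\mathbb{Q}}-K_X$ yields only the~weaker bound $a_4\leqslant 1+t$. Finishing this subcase requires invoking Theorem~\ref{theorem:I} with parameters adapted to the~local geometry at $Q$ (or, equivalently, iterating the~blow up of $Q$ along $\Delta_2^{n}$ in the~style of Section~\ref{section:main-inequality}) to sharpen the~adjunction estimate and derive the~contradiction.
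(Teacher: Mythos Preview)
Your proof has a genuine gap rooted in a misapplication of Remark~\ref{remark:convexity}. You write that ``no log canonical $\mathbb{Q}$-divisor $\sim_{\mathbb{Q}}-K_X$ contains $L_4$ in its support, [so] Remark~\ref{remark:convexity} cannot eliminate it.'' This is the wrong criterion. The log pair to which Remark~\ref{remark:convexity} is applied is $(X,\mu D)$, not $(X,D)$: by hypothesis $(X,\mu D)$ is log canonical but not Kawamata log terminal, and the comparison divisor should be $\mu L_4$. Since you are assuming $\mu<1/2=\mathrm{c}(X,L_4)$, the pair $(X,\mu L_4)$ \emph{is} log canonical, so Remark~\ref{remark:convexity} applies directly and you may assume $L_4\not\subset\mathrm{Supp}(D)$, hence $a_4\leqslant 1$. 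This is precisely what the paper does, and it is how Remark~\ref{remark:convexity} is used throughout Section~\ref{section:cyclic-orbifolds} (compare the proofs of Lemmas~\ref{lemma:single-point-A4}--\ref{lemma:single-point-A6}, where the comparison divisors $Z/2$, $(L_3+L_4)/2$ are also not log canonical at the $-K_X$ level).

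Once $a_4\leqslant 1$ is in hand, the inequalities $(\ref{equation:cyclic-orbifolds})$ force $a_3\leqslant 5/4$ and $a_5\leqslant 5/4$, and your adjunction bounds $2a_k-a_{k-1}>2$, $2a_{k+1}-a_{k+2}>2$ immediately contradict these for every $k\in\{2,3,4,5\}$ (for $k=2$, combine $2a_3-a_4>2$ with $a_4\geqslant\tfrac{4}{5}a_3$ to get $\tfrac{6}{5}a_3>2$, impossible since $a_3\leqslant 5/4$). No appeal to Theorem~\ref{theorem:I}, no auxiliary curves $\bar{L}_2,\bar{L}_3,\bar{L}_5,\bar{L}_6$, and no $\tau$-symmetry juggling are needed. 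Your detour through $(L_2+L_6)/2$ and the unfinished ``main obstacle'' case are artifacts of this misreading; the paper's proof is both shorter and complete.
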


\begin{proof}
By Lemma~\ref{lemma:single-point-A7-reducible}, the surface $X$
contains an~irreducible curve $\bar{L}_{4}$ such that
$$
\omega\circ\pi\big(\bar{L}_{4}\big)\subset\mathrm{Supp}\big(R\big)
$$
and $-\bar{L}_{4}\cdot \bar{L}_{4}=\bar{L}_{4}\cdot E_{4}=1$. Then
$-K_{\bar{X}}\cdot\bar{L}_{4}=1$, which implies that
$$
E_{1}\cdot\bar{L}_{4}=E_{2}\cdot \bar{L}_{4}=E_{3}\cdot\bar{L}_{4}=E_{5}\cdot \bar{L}_{4}=E_{6}\cdot\bar{L}_{4}=E_{7}\cdot \bar{L}_{4}=\bar{C}\cdot\bar{L}_{4}=0.%
$$

Put $L_{4}=\pi(\bar{L}_{4})$. Then $2L_4\sim -2K_{X}$ and
$$
\bar{L}_4\sim_{\mathbb{Q}}\pi^*\big(L_4\big)-\frac{1}{2}E_1-E_2-\frac{3}{2}E_3-2E_4-\frac{3}{2}E_5-E_6-\frac{1}{2} E_7,%
$$
which implies that
$\mathrm{lct}_{2}(X)\leqslant\mathrm{c}(X,L_{4})=1/2$.

To complete the~proof, it is enough to show that~$\mu\geqslant
1/2$.

Suppose that $\mu<1/2$. Let us derive a~contradiction.

By Remark~\ref{remark:convexity}, we may assume that
$L_{4}\not\subset\mathrm{Supp}(D)$. Then
$$
0\leqslant\bar{L}_{4}\cdot\bar{D}=1-a_{4},
$$
which implies that  $a_{4}\leqslant 1$. Thus, it follows from
$(\ref{equation:cyclic-orbifolds})$  that
$$
a_1\leqslant\frac{7}{8},\ a_2\leqslant\frac{3}{2},\ a_3\leqslant \frac{5}{4},\ a_4\leqslant 1,\ a_5\leqslant\frac{5}{4},\ a_6\leqslant\frac{3}{2},\ a_7\leqslant\frac{7}{8}.%
$$

It follows from Lemma~\ref{lemma:cyclic-orbifolds-blow-up} that
there exists a~point
$$
Q\in\Big\{E_{2}\cap E_{3}, E_{3}\cap E_{4}, E_{4}\cap E_{5}, E_{5}\cap E_{6}\Big\}%
$$
such that $\mathrm{LCS}(\bar{X},\mu\bar{D}+\sum_{i=1}^{7}\mu
a_iE_i)=Q$.

Without loss of generality, we may assume that either $Q=E_{2}\cap
E_{3}$ or $Q=E_{3}\cap E_{4}$.

If $Q=E_3\cap E_4$, then it follows from
Lemma~\ref{lemma:adjunction} that
$$
2a_4-a_3-a_5=\bar{D}\cdot E_4\geqslant\mathrm{mult}_{Q}\Big(\bar{D}\cdot E_4 \Big)>\frac{1}{\mu}-a_3>2-a_3,%
$$
which together with $(\ref{equation:cyclic-orbifolds})$ imply that
$a_4>1$, which is a~contradiction.

If $Q=E_2\cap E_3$, then it follows from
Lemma~\ref{lemma:adjunction} that
$$
2a_3-a_2-a_4=\bar{D}\cdot E_3\geqslant\mathrm{mult}_Q\Big(\bar{D}\cdot E_3\Big)>\frac{1}{\mu}-a_2>2-a_2,%
$$
which together with $(\ref{equation:cyclic-orbifolds})$
immediately leads to a~contradiction.
\end{proof}

\begin{lemma}
\label{lemma:single-point-A8} Suppose that $m=8$. Then
$\mu\geqslant\mathrm{lct}_{3}(X)=1/2$.
\end{lemma}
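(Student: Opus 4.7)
The plan has two parts: establish the upper bound $\mathrm{lct}_{3}(X)\leqslant 1/2$ via an explicit divisor in $|-3K_X|$, and then prove $\mu\geqslant 1/2$ by the reductio-ad-absurdum scheme used in Lemmas~\ref{lemma:single-point-A5}--\ref{lemma:single-point-A7-reducible-2}.

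For the upper bound, I would imitate Lemmas~\ref{lemma:single-point-A5} and~\ref{lemma:single-point-A6}: contracting the curves $\bar{C},E_8,E_7,E_6,E_5,E_4$ successively on $\bar{X}$ yields a smooth del Pezzo surface $\breve{X}$ of degree $7$ in which the image of $E_3$ is a $(-1)$-curve; pulling back a $(-1)$-curve on $\breve{X}$ meeting the image of $E_3$ transversally produces an irreducible smooth rational curve $\bar{L}_3\subset\bar{X}$ with $\bar{L}_3^2=-1$, $\bar{L}_3\cdot E_3=1$, $\bar{L}_3\cdot E_j=0$ for $j\neq 3$ and $\bar{L}_3\cdot\bar{C}=0$. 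Solving the resulting linear system for the pull-back expression gives
\[
\bar{L}_3\sim_{\mathbb{Q}}\pi^{*}(L_3)-\tfrac{2}{3}E_1-\tfrac{4}{3}E_2-2E_3-\tfrac{5}{3}E_4-\tfrac{4}{3}E_5-E_6-\tfrac{2}{3}E_7-\tfrac{1}{3}E_8,
\]
where $L_3=\pi(\bar{L}_3)$. Since $\mathrm{Pic}(X)$ has rank one and is torsion-free, the Weil divisor $3L_3$ is Cartier and $3L_3\sim -3K_X$; the pull-back
\[
\pi^{*}(3L_3)=3\bar{L}_3+2E_1+4E_2+6E_3+5E_4+4E_5+3E_6+2E_7+E_8
\]
has maximal coefficient $6$ (at $E_3$), so $\mathrm{c}(X,L_3)=1/2$ and therefore $\mathrm{lct}_3(X)\leqslant 1/2$. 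By analogous, shorter contraction sequences one also constructs smooth rational curves $\bar{L}_k\subset\bar{X}$ for each $k\in\{2,3,4,5,6,7\}$ with $\bar{L}_k^2=-1$, $\bar{L}_k\cdot E_k=1$ and $\bar{L}_k\cdot E_j=0$ for $j\neq k$; the involution $\tau$ exchanges $\bar{L}_k$ and $\bar{L}_{9-k}$.

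For the lower bound I would suppose $\mu<1/2$ and seek a contradiction. By Remark~\ref{remark:convexity}, applied iteratively with carefully chosen log canonical $\mathbb{Q}$-divisors in $|-K_X|_{\mathbb{Q}}$ built from the $L_k$'s together with $C$, one may arrange that none of $L_3,L_4,L_5,L_6$ lies in $\mathrm{Supp}(D)$. The inequalities $\bar{L}_k\cdot\bar{D}=1-a_k\geqslant 0$ then give $a_k\leqslant 1$ for each $k\in\{3,4,5,6\}$, and the concavity encoded in $(\ref{equation:cyclic-orbifolds})$ bounds every $a_i$ by $8i(9-i)/9\leqslant 20/9$, so $\mu a_i<1$. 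Since $\mu<1/2<9/14=(m+1)/(2m-2)$ for $m=8$, Lemma~\ref{lemma:cyclic-orbifolds-blow-up} locates the unique non-klt point of $(\ref{equation:log-pull-back})$ at some $Q=E_k\cap E_{k+1}$ with $k\in\{2,3,4,5,6\}$. Applying Lemma~\ref{lemma:adjunction} at $Q$ yields
\[
2a_k-a_{k-1}-a_{k+1}>\tfrac{1}{\mu}-a_{k+1}>2-a_{k+1}\quad\text{and}\quad 2a_{k+1}-a_k-a_{k+2}>2-a_k,
\]
so $(\ref{equation:cyclic-orbifolds})$ forces $a_k>1$ and $a_{k+1}>1$. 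For every admissible $k\in\{2,3,4,5,6\}$ at least one of $a_k,a_{k+1}$ lies in $\{a_3,a_4,a_5,a_6\}$, contradicting the bound $\leqslant 1$.

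The principal obstacle is two-fold. First, one must verify that all six auxiliary curves $\bar{L}_k$ for $k\in\{2,\ldots,7\}$ really do exist on $\bar{X}$ in the $\mathbb{A}_8$ case (recall that in Lemma~\ref{lemma:single-point-A7-reducible} the existence of $\bar{L}_4$ for $m=7$ was equivalent to reducibility of the branch curve). Second, one must identify, for each $k\in\{3,4,5,6\}$, a log canonical element of $|-K_X|_{\mathbb{Q}}$ whose support contains $L_k$, so that Remark~\ref{remark:convexity} can be applied to exclude $L_k$ from $\mathrm{Supp}(D)$. If this convexity argument is insufficient to bound $a_4$ (or $a_5$) by $1$, then the subcase $Q=E_4\cap E_5$ must be resolved by blowing up $Q$ and invoking the local inequality of Corollary~\ref{corollary:Dimitra}, in parallel with the $\mathbb{A}_4$ analysis carried out in Lemma~\ref{lemma:single-point-A4}.
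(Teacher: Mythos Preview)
Your plan is correct in outline and would work, but you are making the argument harder than necessary, and the obstacles you flag are precisely the ones the paper sidesteps.

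The paper constructs only $\bar{L}_{3}$ (exactly as you describe) and then sets $\bar{L}_{6}=\tau(\bar{L}_{3})$; this gives the two curves for free with no existence issues. Crucially, $\mathrm{c}(X,L_{3})=\mathrm{c}(X,L_{6})=1/2$, so under the assumption $\mu<1/2$ Remark~\ref{remark:convexity} applies \emph{directly} to each of $L_{3}$ and $L_{6}$ separately: no auxiliary log canonical combinations are needed. One may therefore assume $a_{3}\leqslant 1$ and $a_{6}\leqslant 1$, and nothing more. These two bounds, together with the concavity inequalities $a_{k}\geqslant\frac{k}{k+1}a_{k+1}$ and $a_{k+1}\geqslant\frac{m-k}{m-k+1}a_{k}$ derived from $(\ref{equation:cyclic-orbifolds})$, already force $a_{4}\leqslant\frac{4}{3}a_{3}\leqslant\frac{4}{3}$ and $a_{5}\leqslant\frac{4}{3}a_{6}\leqslant\frac{4}{3}$. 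Then for each $k\in\{2,3,4,5,6\}$ the adjunction inequalities $2a_{k}-a_{k-1}>2$ and $2a_{k+1}-a_{k+2}>2$ contradict $a_{3}\leqslant 1$ or $a_{6}\leqslant 1$ once combined with these chain bounds (for instance, when $k=4$ one gets $2a_{4}-a_{3}>2$ and $a_{4}\leqslant\frac{4}{3}a_{3}$, whence $a_{3}>6/5$).

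By contrast, your route via $a_{4}\leqslant 1$ and $a_{5}\leqslant 1$ runs into the very problem you anticipate: $\mathrm{c}(X,L_{4})=9/20<1/2$, so Remark~\ref{remark:convexity} does not apply to $L_{4}$ on its own, and you would be forced into the blow-up/Corollary~\ref{corollary:Dimitra} fallback. The paper avoids all of this by observing that bounding only the ``symmetric'' pair $a_{3},a_{6}$ suffices.
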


\begin{proof}
Arguing as in   the~proofs of Lemmas~\ref{lemma:single-point-A5}
and \ref{lemma:single-point-A6}, we see that there is
an~irreducible smooth rational curve $\bar{L}_{3}$ on the~surface
$\bar{X}$ such that $\bar{L}_{3}\cdot \bar{L}_{3}=-1$ and
$$
-K_{\bar{X}}\cdot \bar{L}_{3}=E_{3}\cdot\bar{L}_{3}=1,%
$$
which implies that $E_{1}\cdot \bar{L}_{3}=E_{2}\cdot
\bar{L}_{3}=E_{4}\cdot \bar{L}_{3}=E_{5}\cdot
\bar{L}_{3}=E_{6}\cdot \bar{L}_{3}=E_{7}\cdot
\bar{L}_{3}=\bar{C}\cdot\bar{L}_{3}=0$.

Put $\bar{L}_{6}=\tau(\bar{L}_{3})$. Then $\bar{L}_{6}\cdot
\bar{L}_{6}=-1$ and $-K_{\bar{X}}\cdot
\bar{L}_{6}=E_{6}\cdot\bar{L}_{6}=1$, which implies that
$$
E_{1}\cdot \bar{L}_{6}=E_{2}\cdot \bar{L}_{6}=E_{3}\cdot\bar{L}_{6}=E_{4}\cdot \bar{L}_{6}=E_{5}\cdot\bar{L}_{6}=E_{7}\cdot \bar{L}_{6}=\bar{C}\cdot\bar{L}_{6}=0.%
$$

Put $L_{3}=\pi(\bar{L}_{3})$ and $L_{6}=\pi(\bar{L}_{6})$. Then
$3L_{3}\sim 3L_{6}\sim -3K_{X}$. On the~other hand, we have
$$
\bar{L}_3\sim_{\mathbb{Q}}\pi^*\big(L_3\big)-\frac{2}{3}E_1-\frac{4}{3}E_2-2E_3-\frac{5}{3}E_4-\frac{4}{3}E_5-E_6-\frac{2}{3}E_7-\frac{1}{3}E_8,%
$$
$$
\bar{L}_6\sim_{\mathbb{Q}}\pi^*\big(L_6\big)-\frac{1}{3}E_1-\frac{2}{3} E_2-E_3-\frac{4}{3}E_4-\frac{5}{3} E_5-2 E_6-\frac{4}{3} E_7-\frac{2}{3}E_8,%
$$
which implies $\mathrm{c}(X,L_{3})=\mathrm{c}(X,L_{6})=1/2$. Then
$\mathrm{lct}_{3}(X)\leqslant 1/2$.

To complete the~proof, it is enough to show that~$\mu\geqslant
1/2$.

Suppose that $\mu<1/2$. Let us derive a~contradiction.

By Remark~\ref{remark:convexity}, we may assume that
$\mathrm{Supp}(\bar{D})$ does not contain $\bar{L}_{3}$ and
$\bar{L}_{6}$. Then
$$
1-a_{3}=\bar{D}\cdot\bar{L}_{3}\geqslant 0,
$$
which implies that $a_{3}\leqslant 1$. Similarly, we have
$a_{6}\leqslant 1$. Then it follows from
$(\ref{equation:cyclic-orbifolds})$~that
$$
a_1\leqslant\frac{8}{9},\ a_2 \leqslant\frac{7}{6},\ a_3\leqslant 1,\ a_4\leqslant\frac{4}{3}, a_5\leqslant\frac{4}{3},\ a_6\leqslant 1,\ a_7\leqslant\frac{7}{6},\ a_8\leqslant \frac{8}{9}.%
$$

By Lemma~\ref{lemma:cyclic-orbifolds-blow-up}, there exists
$k\in\{2,3,4,5,6\}$ such that $(\ref{equation:log-pull-back})$ is
not Kawamata log terminal at the~point $E_{k}\cap E_{k+1}$ and is
Kawamata log terminal outside of the~point  $E_{k}\cap E_{k+1}$.

Put $Q=E_{k}\cap E_{k+1}$. Then it follows from
Lemma~\ref{lemma:adjunction} that
$$
\left\{%
\aligned
&2a_k-a_{k-1}-a_{k+1}=\bar{D}\cdot E_k\geqslant\mathrm{mult}_Q\Big(\bar{D}\cdot E_k\Big)>\frac{1}{\mu}-a_{k+1}>\frac{1}{2}-a_{k+1},\\%
&2a_{k+1}-a_{k}-a_{k+2}=\bar{D}\cdot E_{k+1}\geqslant\mathrm{mult}_Q\Big(\bar{D}\cdot E_{k+1}\Big)>\frac{1}{\mu}-a_{k}\geqslant\frac{1}{2}-a_{k},\\%
\endaligned\right.%
$$
which is impossible by $(\ref{equation:cyclic-orbifolds})$, since
$a_{3}\leqslant 1$ and $a_{6}\leqslant 1$.
\end{proof}

The assertion of Theorem~\ref{theorem:main-single-point} is
proved.

\section{One non-cyclic singular point}
\label{section:non-cyclic-orbifolds}

Let $X$ be a~sextic surface in $\mathbb{P}(1,1,2,3)$ with
canonical singularities such that $|\mathrm{Sing}(X)|=1$, and
$\mathrm{Sing}(X)$ consists of a~singular point of type
$\mathbb{D}_{4}$, $\mathbb{D}_{5}$, $\mathbb{D}_{6}$,
$\mathbb{D}_{7}$, $\mathbb{D}_{8}$, $\mathbb{E}_{6}$,
$\mathbb{E}_{7}$ or $\mathbb{E}_{8}$.

\begin{theorem}
\label{theorem:main-single-point-non-cyclic} The following equality holds:
$$
\mathrm{lct}\big(X\big)=\left\{\aligned
&\mathrm{lct}_{2}\big(X\big)=1/3\ \text{if $P$ is a~point of type $\mathbb{D}_{8}$},\\
&\mathrm{lct}_{2}\big(X\big)=2/5\ \text{if $P$ is a~point of type $\mathbb{D}_{7}$},\\
&\mathrm{lct}_{1}\big(X\big)\ \text{in the~remaining cases}.\\
\endaligned
\right.
$$
\end{theorem}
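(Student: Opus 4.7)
The argument parallels Section~\ref{section:cyclic-orbifolds}, replacing the cyclic combinatorics of the~$\mathbb{A}_{m}$ Dynkin diagram by the tripartite combinatorics at the fork of~$\mathbb{D}_{n}$ or~$\mathbb{E}_{m}$. Let $D$ be an effective $\mathbb{Q}$-divisor on~$X$ with $D\sim_{\mathbb{Q}}-K_{X}$, and set $\mu=\mathrm{c}(X,D)$; the goal is to prove that $\mu$ is at least the value asserted in Theorem~\ref{theorem:main-single-point-non-cyclic}. By~Lemma~\ref{lemma:smooth-points} and Example~\ref{example:Jihun} we may assume $\mu<\mathrm{lct}_{1}(X)$, which forces $\mathrm{LCS}(X,\mu D)=\{P\}$. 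Let $\pi\colon\bar{X}\to X$ be the minimal resolution, let $E_{1},\ldots,E_{s}$ be the exceptional $(-2)$-curves, and let $E_{1}$ denote the unique fork vertex (with three neighbours) of the Dynkin diagram. Write $\pi^{*}D=\bar{D}+\sum_{i}a_{i}E_{i}$ with $a_{i}\in\mathbb{Q}_{\geqslant 0}$. Since the singularity is Du Val, $K_{\bar{X}}=\pi^{*}K_{X}$, so the log pull-back of~$(X,\mu D)$ has coefficient $\mu a_{i}$ on each~$E_{i}$, and Lemma~\ref{lemma:Prokhorov} applied to $(X,\mu D)$ yields $\mu a_{1}=1$, i.e.\ $a_{1}=1/\mu$.

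For the upper bounds in the $\mathbb{D}_{8}$ and~$\mathbb{D}_{7}$ cases, the plan is to exhibit explicit curves $Z\in|-2K_{X}|$ realising $\mathrm{lct}_{2}(X)\leqslant 1/3$ and $\leqslant 2/5$ respectively. Proceeding as in the proofs of Lemmas~\ref{lemma:single-point-A5}, \ref{lemma:single-point-A6} and~\ref{lemma:single-point-A8}, one contracts a suitable sequence of $(-1)$-curves on~$\bar{X}$, uses the classification of smooth del Pezzo surfaces to produce an irreducible $(-1)$-curve~$\bar{L}$ on~$\bar{X}$ meeting a prescribed exceptional component with intersection~$1$, and then pushes forward the sum $\bar{L}+\tau(\bar{L})$ under~$\pi$. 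The resulting curve~$Z$ has the required contact with~$\pi^{-1}(P)$, and computing the coefficient of~$E_{1}$ in the log pull-back of~$Z/2$ yields $\mathrm{c}(X,Z/2)=1/3$ or~$2/5$ as needed.

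For the lower bound we assume for contradiction that $\mu$ is strictly smaller than the claimed value; then $a_{1}=1/\mu$ is large. The non-negativity $\bar{D}\cdot E_{i}\geqslant 0$ translates into a system of linear inequalities on the~$a_{i}$ analogous to~$(\ref{equation:cyclic-orbifolds})$, but with one tripartite inequality $2a_{1}\geqslant a_{\mathrm{nbr}_{1}}+a_{\mathrm{nbr}_{2}}+a_{\mathrm{nbr}_{3}}$ at the fork and ordinary two-term inequalities along the three branches. By~Remark~\ref{remark:convexity} we may also assume that a fixed anticanonical curve~$C$ through~$P$ satisfies $C\not\subset\mathrm{Supp}(D)$, so that $0\leqslant\bar{D}\cdot\bar{C}=1-\sum_{i}a_{i}(E_{i}\cdot\bar{C})$ caps the relevant~$a_{i}$. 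Combining these inequalities with $a_{1}=1/\mu$ yields the desired contradiction in the types $\mathbb{D}_{4}$, $\mathbb{D}_{5}$, $\mathbb{D}_{6}$, $\mathbb{E}_{6}$, $\mathbb{E}_{7}$, $\mathbb{E}_{8}$ without any further work. In the tight cases $\mathbb{D}_{7}$ and~$\mathbb{D}_{8}$ the linear system falls just short, and one localises the unique non-klt point $Q\in\pi^{-1}(P)$ on a single exceptional component (as in Lemma~\ref{lemma:cyclic-orbifolds-blow-up}) and then applies Corollary~\ref{corollary:Dimitra}, or in the most delicate sub-cases the full inductive blow-up procedure of the proof of Lemma~\ref{lemma:2-inductive}, to rule out the remaining configurations.

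The main obstacle is precisely the $\mathbb{D}_{8}$ lower bound~$\mu\geqslant 1/3$: since $\mathrm{lct}_{1}(X)=1/2$, there is a substantial gap between the naive bound provided by Lemma~\ref{lemma:smooth-points} and the claimed bound, and the argument must exploit the geometry of the long arm of the~$\mathbb{D}_{8}$ Dynkin diagram through an iterated blow-up along the chain $E_{4},E_{5},\ldots,E_{8}$ in order to amplify the slack in the linear inequalities into a genuine contradiction. This is exactly the role played by the new local inequality Theorem~\ref{theorem:I} (via Corollary~\ref{corollary:Dimitra}), whose proof in Section~\ref{section:main-inequality} performs the analogous induction in the abstract setting; translating it to the present geometric context, while keeping careful bookkeeping of the discrepancies at each step of the blow-up, is the most delicate part of the argument.
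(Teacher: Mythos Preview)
Your overall architecture is exactly that of the paper: reduce to $\mu<\mathrm{lct}_{1}(X)$ via Lemma~\ref{lemma:smooth-points}, pull back to the minimal resolution, invoke Lemma~\ref{lemma:Prokhorov} to get $\mu a_{\mathrm{fork}}=1$, use $C\not\subset\mathrm{Supp}(D)$ together with $\bar{D}\cdot E_{i}\geqslant 0$ to bound $a_{\mathrm{fork}}$ from above, and construct explicit $|-2K_{X}|$-curves for the $\mathbb{D}_{7}$ and $\mathbb{D}_{8}$ upper bounds. All of this matches the paper.

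Where you go wrong is in the last two paragraphs. You assert that for $\mathbb{D}_{7}$ and $\mathbb{D}_{8}$ ``the linear system falls just short'' and that one must localise the non-klt point and invoke Corollary~\ref{corollary:Dimitra} or the inductive blow-up machinery of Theorem~\ref{theorem:I}. This is not so: the paper's proof of Lemma~\ref{lemma:single-point-D4} uses \emph{only} the linear inequalities~$(\ref{equation:D-m-orbifolds})$. For $\mathbb{D}_{m}$ one has $\bar{C}\cdot E_{m-1}=1$, hence $a_{m-1}\leqslant 1$; combined with the concavity along the long arm and $2a_{m}\geqslant a_{m-1}$ this yields $a_{3}\leqslant (m-2)/2$ for the fork coefficient. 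Thus $a_{3}\leqslant 5/2$ when $m=7$ and $a_{3}\leqslant 3$ when $m=8$, and since $\mu a_{3}=1$ one gets $\mu\geqslant 2/5$ and $\mu\geqslant 1/3$ on the nose. The $\mathbb{E}_{6},\mathbb{E}_{7},\mathbb{E}_{8}$ cases are handled the same way via~$(\ref{equation:E6-E7-E8-orbifolds})$.

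So your identification of $\mathbb{D}_{8}$ as ``the main obstacle'' requiring iterated blow-ups is a misconception: Theorem~\ref{theorem:I} and Corollary~\ref{corollary:Dimitra} are used nowhere in Section~\ref{section:non-cyclic-orbifolds}. They are essential for the cyclic cases of Section~\ref{section:cyclic-orbifolds} (notably Lemma~\ref{lemma:cyclic-orbifolds-blow-up} and Lemma~\ref{lemma:single-point-A4}), where Prokhorov's lemma is unavailable because $\mathbb{A}_{m}$ singularities are not weakly exceptional. Here the weakly-exceptional property of $\mathbb{D}_{n}$ and $\mathbb{E}_{m}$ does all the heavy lifting, and the remaining argument is pure linear algebra.
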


\begin{corollary}
\label{corollary:main-single-point-non-cyclic} The inequality
$\mathrm{lct}(X)\leqslant 1/2$ holds.
\end{corollary}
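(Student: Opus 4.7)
The plan is to deduce the corollary directly from Theorem~\ref{theorem:main-single-point-non-cyclic} together with Example~\ref{example:Jihun}. Since the theorem evaluates $\mathrm{lct}(X)$ explicitly in every case under consideration, I only need to check that each of the listed values is at most $1/2$.

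First I would observe that the first two branches of the theorem already deliver the desired bound: if $P$ is of type $\mathbb{D}_{8}$, then $\mathrm{lct}(X)=1/3\leqslant 1/2$, and if $P$ is of type $\mathbb{D}_{7}$, then $\mathrm{lct}(X)=2/5\leqslant 1/2$. In the remaining cases the theorem asserts the identity $\mathrm{lct}(X)=\mathrm{lct}_{1}(X)$, so the task reduces to bounding $\mathrm{lct}_{1}(X)$ from above by $1/2$ when $\mathrm{Sing}(X)=\{P\}$ consists of a single singular point of type $\mathbb{D}_{4}$, $\mathbb{D}_{5}$, $\mathbb{D}_{6}$, $\mathbb{E}_{6}$, $\mathbb{E}_{7}$ or $\mathbb{E}_{8}$.

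This is immediate from Example~\ref{example:Jihun}, where all possible values of $\mathrm{lct}_{1}(X)$ for a sextic surface with canonical singularities are tabulated. Indeed, the presence of a singular point of type $\mathbb{D}_{n}$ with $n\in\{4,5,6,7,8\}$ forces $\mathrm{lct}_{1}(X)=1/2$, while the presence of a point of type $\mathbb{E}_{6}$, $\mathbb{E}_{7}$ or $\mathbb{E}_{8}$ forces $\mathrm{lct}_{1}(X)\in\{1/3,1/4,1/6\}$, each of which is strictly less than $1/2$. Combining these observations, we obtain
$$
\mathrm{lct}\big(X\big)\leqslant\mathrm{lct}_{1}\big(X\big)\leqslant\frac{1}{2}
$$
in every case, which is the assertion of the corollary.

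No obstacle is expected here: the statement is really just a convenient summary of Theorem~\ref{theorem:main-single-point-non-cyclic}. The only thing that requires any care is matching the case division in the theorem with the case division in Example~\ref{example:Jihun}, but since both classifications are governed by the ADE type of the unique singular point of $X$, the matching is direct.
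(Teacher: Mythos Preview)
Your proof is correct and matches the paper's intent: the corollary is stated without proof, as an immediate consequence of Theorem~\ref{theorem:main-single-point-non-cyclic} together with the known values of $\mathrm{lct}_{1}(X)$ recorded in Example~\ref{example:Jihun} (and re-derived in the proof of the theorem itself). One could also observe directly that $\mathrm{lct}(X)\leqslant\mathrm{lct}_{1}(X)\leqslant 1/2$ from Example~\ref{example:Jihun} alone, bypassing the theorem, but your argument is equally valid.
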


In the~rest of this section we will prove
Theorem~\ref{theorem:main-single-point-non-cyclic}.

Let $D$ be an~effective $\mathbb{Q}$-divisor on $X$~such~that
$D\sim_{\mathbb{Q}} -K_{X}$. We must show that
$$
\mathrm{c}\big(X,D\big)\geqslant\left\{\aligned
&\mathrm{lct}_{2}\big(X\big)=1/3\ \text{if $P$ is a~ point of type $\mathbb{D}_{8}$},\\
&\mathrm{lct}_{2}\big(X\big)=2/5\ \text{if $P$ is a~ point of type $\mathbb{D}_{7}$},\\
&\mathrm{lct}_{1}\big(X\big)\ \text{in the~remaining cases}.\\
\endaligned
\right.
$$
to prove Theorem~\ref{theorem:main-single-point-non-cyclic}. Put
$\mu=\mathrm{c}(X,D)$.

Suppose that $\mu<\mathrm{lct}_{1}(X)$.~Then $\mathrm{LCS}(X,\mu
D)=\mathrm{Sing}(X)$ by Lemma~\ref{lemma:smooth-points}. Put
$P=\mathrm{Sing}(X)$.

Let $\pi\colon\bar{X}\to X$ be a~minimal resolution, let
$E_{1},E_{2}\ldots,E_{m}$ be irreducible $\pi$-exceptional curves,
let $C$ be the~curve in $|-K_{X}|$ such that $P\in C$, and let
$\bar{C}$ be its~proper transform~on~$\bar{X}$.~Then
$$
\bar{C}\sim_{\mathbb{Q}}\pi^*\big(C\big)-\sum_{i=1}^{m}n_{i}E_i,
$$
where $n_{i}\in\mathbb{N}$. Without loss of generality, we may assume that
$E_{3}\cdot\sum_{i\ne 3}E_{i}=3$. Then
$$
\mathrm{lct}_{1}\big(X\big)=\mathrm{c}\big(X,C\big)=\frac{1}{n_{3}}=\left\{\aligned
&1/2\ \text{if $P$ is of type  $\mathbb{D}_{4}$, $\mathbb{D}_{5}$, $\mathbb{D}_{6}$, $\mathbb{D}_{7}$ or $\mathbb{D}_{8}$},\\
&1/3\ \text{if $P$ is of type  $\mathbb{E}_{6}$},\\
&1/4\ \text{if $P$ is of type  $\mathbb{E}_{7}$},\\
&1/6\ \text{if $P$ is of type  $\mathbb{E}_{8}$}.\\
\endaligned
\right.
$$

By Remark~\ref{remark:convexity}, we may assume that
$C\not\subset\mathrm{Supp}(D)$, since the~curve $C$ is irreducible.

Let $\bar{D}$ be the~proper transform of the~divisor $D$ on
the~surface $\bar{X}$. Then
$$
\bar{D}\sim_{\mathbb{Q}}\pi^*\big(D\big)-\sum_{i=1}^{m}a_iE_i,
$$
where $a_{i}$ is a~non-negative rational number. Then
$$
K_{\bar{X}}+\mu\Big(\bar{D}+\sum_{i=1}^{m}a_iE_i\Big)\sim_{\mathbb{Q}}\pi^*\Big(K_X+\mu D\Big),%
$$
which implies that $(\bar{X},\mu\bar{D}+\sum_{i=1}^{m}\mu a_iE_i)$ is not
Kawamata log terminal (see Remark~\ref{remark:log-pull-back}).

\begin{lemma}
\label{lemma:non-cyclic-blow-up} The equality $\mu a_3=1$ holds.
\end{lemma}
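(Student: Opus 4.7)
The plan is a direct application of Lemma~\ref{lemma:Prokhorov} to the log pair $(X,\mu D)$. First, since $\mu=\mathrm{c}(X,D)$, the pair $(X,\mu D)$ is log canonical by definition. Because we are in the range $\mu<\mathrm{lct}_{1}(X)$ and $P$ is a non-cyclic Du Val point (in particular, not of type $\mathbb{A}_{1}$ or $\mathbb{A}_{2}$), Lemma~\ref{lemma:smooth-points} gives $\mathrm{LCS}(X,\mu D)=\{P\}$. Thus $P$ is an isolated non-klt center with no curves in the LCS through it, and the framework of Section~\ref{section:preliminaries} applies with $(S,D)$ replaced by $(X,\mu D)$ and $\pi$ the minimal resolution.

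The key observation is that $X$ has canonical (Du Val) singularities, so the minimal resolution $\pi\colon\bar{X}\to X$ is crepant: $K_{\bar X}\sim_{\mathbb Q}\pi^{*}K_{X}$. Combined with $\pi^{*}D=\bar D+\sum_{i}a_{i}E_{i}$, this gives
$$
K_{\bar X}+\mu\bar D+\sum_{i=1}^{m}\mu a_{i}E_{i}\sim_{\mathbb Q}\pi^{*}\bigl(K_{X}+\mu D\bigr),
$$
so the coefficient $e_{i}$ of each exceptional curve $E_{i}$ in the log pullback is exactly $\mu a_{i}$. Since $P$ is of type $\mathbb{D}_{n}$ (with $n\geqslant 4$) or $\mathbb{E}_{m}$ (with $m\in\{6,7,8\}$), every $E_{i}$ is a $(-2)$-curve, and by the labelling convention $E_{3}$ is the trivalent vertex, i.e.\ $E_{3}\cdot\sum_{i\ne 3}E_{i}=3$. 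Lemma~\ref{lemma:Prokhorov} then forces $e_{3}=1$, which is precisely $\mu a_{3}=1$.

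The only subtlety worth flagging is the use of Lemma~\ref{lemma:smooth-points}: without the conclusion $\mathrm{LCS}(X,\mu D)=\{P\}$ one could at best infer $\mu a_{3}\leqslant 1$ from log canonicity. The equality $\mu a_{3}=1$ genuinely relies on the weakly exceptional character of $\mathbb{D}_{n}$ and $\mathbb{E}_{m}$ singularities, as packaged in \cite{Pr98plt} and recorded in Lemma~\ref{lemma:Prokhorov}. I do not anticipate any real obstacle; once the setup is in place the statement is essentially one line.
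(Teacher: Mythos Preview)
Your proof is correct and follows exactly the same approach as the paper, which simply writes ``The equality $\mu a_3=1$ follows from Lemma~\ref{lemma:Prokhorov}.'' Your version just spells out the verification of the hypotheses (log canonicity at threshold, the identification $e_i=\mu a_i$ from crepancy, and the trivalence of $E_3$) that the paper leaves implicit.
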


\begin{proof}
The equality $\mu a_3=1$ follows from Lemma~\ref{lemma:Prokhorov}.
\end{proof}

\begin{lemma}
\label{lemma:single-point-D4} Suppose that $P$ is not a~point of
type $\mathbb{E}_{6}$, $\mathbb{E}_{7}$ or $\mathbb{E}_{8}$. Then
$$
\mu\geqslant\left\{\aligned
&\mathrm{lct}_{2}\big(X\big)=1/3\ \text{if $P$ is a~ point of type $\mathbb{D}_{8}$},\\
&\mathrm{lct}_{2}\big(X\big)=2/5\ \text{if $P$ is a~ point of type $\mathbb{D}_{7}$},\\
\endaligned
\right.
$$
and $P$ is either a~point of type $\mathbb{D}_{7}$ or is a~point
of type $\mathbb{D}_{8}$.
\end{lemma}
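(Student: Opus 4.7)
The plan is to reduce the lemma to a~system of linear inequalities on the~coefficients $a_1,\ldots,a_n$ in the~decomposition $\bar{D}=\pi^{*}(D)-\sum a_i E_i$, and resolve them in each $\mathbb{D}_n$ case. Since $\mathrm{lct}_1(X)=1/2$, I may assume $\mu<1/2$, so Lemma~\ref{lemma:non-cyclic-blow-up} yields $a_3=1/\mu>2$. By Remark~\ref{remark:convexity} I may further assume $C\not\subseteq\mathrm{Supp}(D)$, where $C$ is the~unique irreducible curve in $|-K_X|$ through $P$; in particular $\bar{D}\cdot\bar{C}\geqslant 0$.

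First I would record the~inequalities $\bar{D}\cdot E_i\geqslant 0$, namely
$$
2a_1\geqslant a_3,\quad 2a_2\geqslant a_3,\quad 2a_3\geqslant a_1+a_2+a_4,\quad 2a_i\geqslant a_{i-1}+a_{i+1}\ \ (4\leqslant i\leqslant n-1),\quad 2a_n\geqslant a_{n-1},
$$
and then compute the~multiplicities $n_i$ in $\pi^{*}(C)=\bar{C}+\sum n_iE_i$. A~direct local computation identifies $(n_1,\ldots,n_n)=(1,1,2,2,\ldots,2,1)$ and $\bar{C}\cdot E_i=\delta_{i,n-1}$; together with the~identity $\bar{D}\cdot\bar{C}=1-\sum_i a_i\,\bar{C}\cdot E_i$ (derived from $\pi^{*}K_X=K_{\bar{X}}$ and $K_X^{2}=1$), this yields the~crucial bound $a_{n-1}\leqslant 1$.

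I would then propagate $a_{n-1}\leqslant 1$ back along the~tail $E_{n-1},E_{n-2},\ldots,E_3$ via the~recursion $a_{k-1}\leqslant 2a_k-a_{k+1}$, seeded by $a_n\geqslant a_{n-1}/2$. A~short induction shows $a_{n-j}\leqslant(j+1)/2$ for $1\leqslant j\leqslant n-3$, and in particular $a_3\leqslant(n-2)/2$, i.e.\ $a_3\leqslant 1,\,3/2,\,2,\,5/2,\,3$ for $n=4,5,6,7,8$ respectively. The~first three values contradict $a_3>2$, ruling out the~types $\mathbb{D}_4,\mathbb{D}_5,\mathbb{D}_6$; the~last two give $\mu\geqslant 2/5$ in the~$\mathbb{D}_7$ case and $\mu\geqslant 1/3$ in the~$\mathbb{D}_8$ case. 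The~matching upper bound $\mathrm{lct}_2(X)\leqslant 2/(n-2)$ would be obtained by exhibiting a~specific $Z\in|-2K_X|$ whose proper transform saturates these inequalities (for instance as $Z=\omega^{*}(Q)$ for an~appropriate $Q\in|\mathcal{O}_{\mathbb{P}(1,1,2)}(2)|$ with the~required contact with $\omega(P)$).

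The~main technical obstacle is the~local identification $\bar{C}\cdot E_i=\delta_{i,n-1}$, which requires determining the~exact multiplicities of the~anticanonical curve through the~$\mathbb{D}_n$ singular point; the~remainder of the~argument is a~direct linear propagation.
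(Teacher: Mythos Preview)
Your approach is essentially the paper's: both reduce to the linear system $\bar D\cdot E_i\geqslant 0$, $\bar D\cdot\bar C\geqslant 0$ together with $\mu a_3=1$ from Lemma~\ref{lemma:non-cyclic-blow-up}, observe that $\bar C$ meets only $E_{n-1}$ so that $a_{n-1}\leqslant 1$, and then propagate this bound along the tail by concavity to obtain $a_3\leqslant (n-2)/2$, which rules out $\mathbb{D}_4,\mathbb{D}_5,\mathbb{D}_6$ and gives $\mu\geqslant 2/5$ resp.\ $1/3$ for $\mathbb{D}_7,\mathbb{D}_8$.

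The one substantive difference is in the upper bound $\mathrm{lct}_2(X)\leqslant 2/(n-2)$. You propose pulling back a suitable conic via $\omega$; the paper instead constructs the witness divisor directly on $\bar X$: it finds a $(-1)$-curve $\bar L_1$ with $\bar L_1\cdot E_1=1$ (by contracting to a del Pezzo of higher degree), sets $\bar L_2=\tau(\bar L_1)$, and checks that $L_1+L_2\sim -2K_X$ with $\mathrm{c}(X,L_1+L_2)=1/5$ for $\mathbb{D}_7$, while for $\mathbb{D}_8$ one has $\bar L_1=\bar L_2$ and $\mathrm{c}(X,L_1)=1/3$. This is more explicit than your sketch and pins down the pullback coefficients exactly; your conic pullback would in fact recover the same divisor, but you would still need to verify which exceptional curves it meets and with what multiplicities, which is precisely the computation the paper carries out.

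A minor remark on your tail induction: the statement $a_{n-j}\leqslant (j+1)/2$ is correct, but the bare recursion $a_{k-1}\leqslant 2a_k-a_{k+1}$ does not by itself yield it without tracking a lower bound on $a_{k+1}$ at each step. The clean way is the concavity argument: the sequence $a_3,\ldots,a_n,0$ is concave, so the secant slope from $(i,a_i)$ to $(n+1,0)$ dominates that from $(n-1,a_{n-1})$ to $(n+1,0)$, giving $a_i\leqslant a_{n-1}(n+1-i)/2\leqslant (n+1-i)/2$.
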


\begin{proof}
Without loss of generality, we may assume that the~diagram
$$
\xymatrix{
\stackrel{E_1}{\bullet} \ar@{-}[r] & \stackrel{E_3}{\bullet}\ar@{-}[r] \ar@{-}[d] & \stackrel{E_4}{\bullet}\ar@{-}[r] & \cdots &\ar@{-}[l] \stackrel{E_{m-1}}{\bullet} \ar@{-}[r] & \stackrel{E_m}{\bullet}\\
& \stackrel{E_2}{\bullet} &}
$$
shows how the~$\pi$-exceptional curves intersect each other. Then
$$
\bar{C}\sim_{\mathbb{Q}}\pi^*\big(C\big)-E_{1}-E_{2}-E_{m}-\sum_{i=3}^{m-1}2E_i,
$$
which implies that $\bar{C}\cdot E_{m-1}=1$ and $\bar{C}\cdot E_{i}=0\iff i\ne
m-1$. Then
\begin{equation}
\label{equation:D-m-orbifolds} \left\{\aligned
&1-a_{m-1}=\bar{D}\cdot\bar{C}\geqslant 0,\\
&2a_1-a_3=\bar{D}\cdot E_{1}\geqslant 0,\\
&2a_2-a_3=\bar{D}\cdot E_{2}\geqslant 0,\\
&2a_3-a_1-a_2-a_3=\bar{D}\cdot E_{3}\geqslant 0,\\
&\cdots\\
&2a_{m-1}-a_{m-2}-a_m=\bar{D}\cdot E_{m-1}\geqslant 0,\\
&2a_m-a_{m-1}=\bar{D}\cdot E_{m}\geqslant 0,\\
\endaligned
\right.
\end{equation}
which easily implies that $a_{3}\leqslant 2$ if $m\leqslant 6$. But $\mu
a_{3}=1$ and $\mu<\mathrm{lct}_{1}(X)=1/2$ by
Lemma~\ref{lemma:non-cyclic-blow-up}, which implies that either $m=7$ or $m=8$.

Arguing as in   the~proofs of Lemmas~\ref{lemma:single-point-A5}
and \ref{lemma:single-point-A6}, we may assume that there is
an~irreducible smooth rational curve $\bar{L}_{1}$ on the~surface
$\bar{X}$ such that $\bar{L}_{1}\cdot \bar{L}_{1}=-1$~and
$$
-K_{\bar{X}}\cdot \bar{L}_{1}=E_{1}\cdot\bar{L}_{1}=1,%
$$
which implies that $\bar{C}\cdot\bar{L}_{1}=0$ and $E_{i}\cdot
\bar{L}_{1}=0\iff i\ne 1$.

Let $\omega\colon X\to\mathbb{P}(1,1,2)$ be the~natural double
cover given by $|-2K_{X}|$, and let $\tau$ be
a~biregular~involution of the~surface $\bar{X}$ that is induced by
$\omega$. Put $\bar{L}_{2}=\tau(\bar{L}_{1})$.~If $m=7$, then
$$
-K_{\bar{X}}\cdot\bar{L}_{2}=E_{2}\cdot\bar{L}_{2}=1%
$$
and $\bar{L}_{2}\cdot \bar{L}_{2}=-1$, which implies that
$\bar{C}\cdot\bar{L}_{2}=0$ and $E_{i}\cdot \bar{L}_{2}=0\iff i\ne
2$.

Put $L_{1}=\pi(\bar{L}_{1})$ and $L_{2}=\pi(\bar{L}_{2})$. Then
$L_{1}+L_{2}\sim -2K_{X}$. If $m=7$, then
$$
\bar{L}_1\sim_{\mathbb{Q}}\pi^*\big(L_1\big)-\frac{7}{4}E_1-\frac{5}{4}E_2-\frac{5}{2}E_3-2E_4-\frac{3}{2}E_5-E_6-\frac{1}{2}E_7,%
$$
$$
\bar{L}_2\sim_{\mathbb{Q}}\pi^*\big(L_2\big)-\frac{5}{4}E_1-\frac{7}{4}E_2-\frac{5}{2}E_3-2E_4-\frac{3}{2}E_5-E_6-\frac{1}{2}E_7,%
$$
which implies that $\mathrm{c}(X,L_{1}+L_{2})=1/5$ and
$\mathrm{lct}_{2}(X)\leqslant 2/5$. If $m=7$, then
$$
a_{3}\leqslant\frac{5}{2}
$$
by $(\ref{equation:D-m-orbifolds})$. But  $\mu a_{3}=1$ by
Lemma~\ref{lemma:non-cyclic-blow-up}. Then $\mu\geqslant 2/5$ if $m=7$, which
is exactly what we need.

We may assume that $m=8$. Then  $\bar{L}_{2}=\bar{L}_{1}$ and
$$
\bar{L}_1\sim_{\mathbb{Q}}\pi^*\big(L_1\big)-2E_1-\frac{3}{2}E_2-3E_3-\frac{5}{2}E_4-2E_5-\frac{3}{2}E_6-E_7-\frac{1}{2}E_8,%
$$
which implies that
$\mathrm{lct}_{2}(X)\leqslant\mathrm{c}(X,L_{1})=1/3$. But
$a_{3}\leqslant 1/3$ by $(\ref{equation:D-m-orbifolds})$ and $\mu
a_{3}=1$ by Lemma~\ref{lemma:non-cyclic-blow-up}, which implies
that $\mu\geqslant 1/3$, which completes the~proof since
$\mathrm{lct}_{2}(X)\geqslant\mathrm{lct}(X)$.
\end{proof}

To complete the~proof of
Theorem~\ref{theorem:main-single-point-non-cyclic}, we may assume
that $P$ is a~point of type $\mathbb{E}_{6}$,
$\mathbb{E}_{7}$~or~$\mathbb{E}_{8}$.

Without loss of generality, we may assume that the~diagram
$$
\xymatrix{ {\bullet}^{E_1} \ar@{-}[r] & {\bullet}^{E_2} \ar@{-}[r]  & {\bullet}^{E_3} \ar@{-}[r] \ar@{-}[d] & {\bullet}^{E_5} \ar@{-}[r]& \cdots &\ar@{-}[l] {\bullet}^{E_m}\\
 & & {\bullet}^{E_4} & }
$$
shows how the~$\pi$-exceptional curves intersect each other. It is well-known
(cf. \cite{Park2002}\cite{Park-Won-PEMS}) that
\begin{itemize}
\item if $m=6$, then $\bar{C}\cdot E_4=1$, which implies that and $\bar{C}\cdot E_{i}=0\iff i\ne 4$,%
\item if $m=7$, then $\bar{C}\cdot E_1=1$, which implies that and $\bar{C}\cdot E_{i}=0\iff i\ne 1$,%
\item if $m=8$, then $\bar{C}\cdot E_8=1$, which implies that and $\bar{C}\cdot E_{i}=0\iff i\ne 8$.%
\end{itemize}

Put $k=4$ if $m=6$, put $k=1$ if $m=7$, put $k=8$ if $m=8$. Then
\begin{equation}
\label{equation:E6-E7-E8-orbifolds} \left\{\aligned
&1-a_{k}=\bar{D}\cdot\bar{C}\geqslant 0,\\
&2a_1-a_3=\bar{D}\cdot E_{1}\geqslant 0,\\
&2a_2-a_3-a_1=\bar{D}\cdot E_{2}\geqslant 0,\\
&2a_3-a_2-a_4-a_5=\bar{D}\cdot E_{3}\geqslant 0,\\
&2a_4-a_3=\bar{D}\cdot E_{4}\geqslant 0,\\
&2a_5-a_3-a_6=\bar{D}\cdot E_{5}\geqslant 0,\\
&\cdots\\
&2a_{m-1}-a_{m-2}-a_m=\bar{D}\cdot E_{m-1}\geqslant 0,\\
&2a_m-a_{m-1}=\bar{D}\cdot E_{m}\geqslant 0,\\
\endaligned
\right.
\end{equation}
which implies that $a_{3}<n_{3}$. But $n_3=1/\mathrm{lct}_{1}(X)$ and $\mu
a_{3}=1$ by Lemma~\ref{lemma:non-cyclic-blow-up}. Then $\mu\geqslant
\mathrm{lct}_{1}(X)$.

The assertion of Theorem~\ref{theorem:main-single-point-non-cyclic} is proved.

\section{Many singular points}
\label{section:orbifolds-with-many-singular-points}

Let $X$ be a~sextic surface in $\mathbb{P}(1,1,2,3)$ with
canonical singularities such that $|\mathrm{Sing}(X)|\geqslant 2$.

\begin{theorem}
\label{theorem:main-many-points} The following equality holds:
$$
\mathrm{lct}\big(X\big)=\left\{\aligned
&\mathrm{lct}_{2}\big(X\big)=1/2\ \text{if $\mathrm{Sing}(X)$ consists of a~point of type $\mathbb{A}_{7}$ and a~point of type $\mathbb{A}_{1}$},\\
&\mathrm{lct}_{2}\big(X\big)=2/3\ \text{if $X$ has a~singular point of type $\mathbb{A}_{6}$},\\
&\mathrm{lct}_{2}\big(X\big)=2/3\ \text{if $X$ has a~singular point of type $\mathbb{A}_{5}$},\\
&\mathrm{lct}_{2}\big(X\big)=\mathrm{min}\big(\mathrm{lct}_{1}\big(X\big),4/5\big)\ \text{if $X$ has a~singular point of type $\mathbb{A}_{4}$},\\
&\mathrm{lct}_{1}\big(X\big)\ \text{in the~remaining cases},\\
\endaligned
\right.
$$
and if there exists an~effective $\mathbb{Q}$-divisor $D$ on
the~surface $X$ such that $D\sim_{\mathbb{Q}}-K_{X}$ and
$$
\mathrm{c}\big(X,D\big)=\mathrm{lct}\big(X\big)=\frac{2}{3},
$$
then either $D$ is an~irreducible curve in $|-K_{X}|$ with a~cusp
at a~point in $\mathrm{Sing}(X)$ of type~$\mathbb{A}_{2}$, or
the~divisor $D$ is uniquely defined  and it can be explicitly
described.
\end{theorem}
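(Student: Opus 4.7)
The plan is to mirror the strategy of Sections~\ref{section:cyclic-orbifolds} and~\ref{section:non-cyclic-orbifolds}, taking full advantage of the assumption $|\mathrm{Sing}(X)|\geqslant 2$ to dramatically simplify the case analysis. Given an~effective $\mathbb{Q}$-divisor $D\sim_{\mathbb{Q}}-K_{X}$, set $\mu=\mathrm{c}(X,D)$, and assume $\mu<\mathrm{lct}_{1}(X)$ (otherwise there is nothing to prove in the ``remaining cases'' and, by Example~\ref{example:Jihun}, $\mu\geqslant\mathrm{lct}_{1}(X)$ already beats the claimed bounds in each special branch). By Lemma~\ref{lemma:smooth-points} the~locus $\mathrm{LCS}(X,\mu D)$ consists of a~single singular point $P\in\mathrm{Sing}(X)$ that is not of type $\mathbb{A}_{1}$ or $\mathbb{A}_{2}$. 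Consulting Remark~\ref{remark:DP1-SING}, the possible types of $P$ combined with $|\mathrm{Sing}(X)|\geqslant 2$ fall into a finite list, and each such configuration matches one of the branches listed in the theorem.

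For the upper bounds I would construct explicit elements of $|-2K_{X}|$ exactly as in Lemmas~\ref{lemma:single-point-A5}, \ref{lemma:single-point-A6} and~\ref{lemma:single-point-A7-reducible}. The minimal resolution $\pi\colon\bar{X}\to X$ contracts a chain $E_{1},\ldots,E_{m}$ over the bad point $P$; contracting $\bar{C}$ together with a tail of the chain produces a smooth del Pezzo surface on which suitable $(-1)$-curves $\check{L}_{i}$ exist automatically, and their proper transforms $\bar{L}_{i}$ together with their images under the double-cover involution $\tau$ give divisors $L_{i}+L_{j}\sim -2K_{X}$ realizing upper bounds $\mathrm{lct}_{2}(X)\leqslant 2/3$, $4/5$ or $1/2$ according to the case. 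The crucial new input for $\mathrm{Sing}(X)=\mathbb{A}_{7}+\mathbb{A}_{1}$ is that the additional $\mathbb{A}_{1}$ singularity forces the ramification curve $R$ of $\omega\colon X\to\mathbb{P}(1,1,2)$ to be reducible, so that the argument of Lemma~\ref{lemma:single-point-A7-reducible} yields an irreducible curve $L_{4}$ with $2L_{4}\sim -2K_{X}$ and $\mathrm{c}(X,L_{4})=1/2$.

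The lower bounds come from the minimal-resolution / chain-of-inequalities machinery of Section~\ref{section:cyclic-orbifolds}. Writing $\bar{D}\sim_{\mathbb{Q}}\pi^{*}(D)-\sum a_{i}E_{i}$ with $a_{i}\geqslant 0$, the inequalities $\bar{D}\cdot E_{i}\geqslant 0$ give the system~$(\ref{equation:cyclic-orbifolds})$, while Remark~\ref{remark:convexity} lets us assume that the relevant curves $\bar{L}_{i}$ and $\bar{C}$ are not contained in $\mathrm{Supp}(\bar{D})$, yielding the needed bounds of the form $a_{i}\leqslant 1$. Applying Lemma~\ref{lemma:cyclic-orbifolds-blow-up} and Corollary~\ref{corollary:Dimitra} at the~point $Q=E_{k}\cap E_{k+1}$ of the chain where the singularities of~$(\ref{equation:log-pull-back})$ concentrate produces a contradiction whenever $\mu$ is strictly less than the claimed value, exactly as in Lemmas~\ref{lemma:single-point-A5}--\ref{lemma:single-point-A8}. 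For $\mathbb{D}_{n}$ or $\mathbb{E}_{n}$ singular points (which fall into the ``remaining cases'' branch) the weakly-exceptional analysis of Section~\ref{section:non-cyclic-orbifolds} transfers verbatim: Lemma~\ref{lemma:Prokhorov} forces $\mu a_{3}=1$ while the system~$(\ref{equation:D-m-orbifolds})$ forces $a_{3}<n_{3}=1/\mathrm{lct}_{1}(X)$, whence $\mu\geqslant\mathrm{lct}_{1}(X)$.

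The main obstacle is the~second part of the statement: the uniqueness, or explicit description, of $D$ when $\mathrm{c}(X,D)=\mathrm{lct}(X)=2/3$. Allowing equality $\mu=2/3$ in the arguments above forces every intermediate inequality to be an equality. Tracing this through, the only divisors $D\sim_{\mathbb{Q}}-K_{X}$ with $\mathrm{c}(X,D)=2/3$ that are not cuspidal members of $|-K_{X}|$ passing through an $\mathbb{A}_{2}$-point must be the half-sums $(L_{i}+L_{j})/2$ produced above, and these are pinned down by the combinatorial data of the singularity configuration. Verifying this requires a careful inspection of every configuration from Remark~\ref{remark:DP1-SING} containing an $\mathbb{A}_{5}$ or $\mathbb{A}_{6}$ point, checking in each case that no further $\mathbb{Q}$-linear combination of the available $(-1)$-curves and boundary components achieves log canonical threshold exactly $2/3$; this is where essentially all the bookkeeping of the proof is concentrated.
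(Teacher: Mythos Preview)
Your outline is correct and follows the paper's strategy: reduce via Lemma~\ref{lemma:smooth-points} to a single bad point $P$ of type $\mathbb{A}_m$ with $m\geqslant 3$, construct explicit members of $|-2K_X|$ realizing the upper bound, then obtain the lower bound from the chain inequalities~$(\ref{equation:cyclic-orbifolds})$ together with Lemma~\ref{lemma:cyclic-orbifolds-blow-up}, with the $\mathbb{D}_n/\mathbb{E}_n$ cases handled by the weakly-exceptional argument of Section~\ref{section:non-cyclic-orbifolds}.

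One point needs more care than you indicate. When $|\mathrm{Sing}(X)|\geqslant 2$, contracting $\bar{C}$ together with a tail of the $E_i$-chain produces only a \emph{weak} del Pezzo surface, since the $(-2)$-curves over the other singular points survive; the existence of the needed $(-1)$-curves $\check{L}_i$ is therefore not automatic from the single-point constructions. The paper circumvents this either by contracting further to a Hirzebruch surface or to $\mathbb{P}^1\times\mathbb{P}^1$ (Lemmas~\ref{lemma:many-points-A7-A1} and~\ref{lemma:many-points-A6-A1}), or, for $m\in\{4,5\}$, by switching to the projection-from-$P$ diagram $X\dashrightarrow X'\to\mathbb{P}^2$ and reading the special curves directly off the singular branch quartic $R'\subset\mathbb{P}^2$ (Lemmas~\ref{lemma:many-points-A5} and~\ref{lemma:many-points-A4}). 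In particular, for $\mathrm{Sing}(X)=\mathbb{A}_7+\mathbb{A}_1$ the paper constructs $\bar{L}_4$ directly by using the $\mathbb{A}_1$-exceptional curve $F$ to reach $\mathbb{P}^1\times\mathbb{P}^1$ and taking a ruling through the image of $E_4$, rather than first arguing that $R$ is reducible and then invoking Lemma~\ref{lemma:single-point-A7-reducible}. Your route is valid too---a genus count on the projected plane quartic $R'$ (which must carry both an $\mathbb{A}_5$ and an $\mathbb{A}_1$ singularity) forces reducibility---but that step should be made explicit rather than asserted.
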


Let $D$ be an~arbitrary effective $\mathbb{Q}$-divisor on
the~surface $X$~such~that
$$
D\sim_{\mathbb{Q}} -K_{X},
$$
and put $\mu=\mathrm{c}(X,D)$. To prove
Theorem~\ref{theorem:main-many-points}, it is enough to show that
$$
\mu\geqslant\left\{\aligned
&\mathrm{lct}_{2}\big(X\big)=1/2\ \text{if $\mathrm{Sing}(X)$ consists of a~point of type $\mathbb{A}_{7}$ and a~point of type $\mathbb{A}_{1}$},\\
&\mathrm{lct}_{2}\big(X\big)=2/3\ \text{if $X$ has a~singular point of type $\mathbb{A}_{6}$},\\
&\mathrm{lct}_{2}\big(X\big)=2/3\ \text{if $X$ has a~singular point of type $\mathbb{A}_{5}$},\\
&\mathrm{lct}_{2}\big(X\big)=\mathrm{min}\big(\mathrm{lct}_{1}\big(X\big),4/5\big)\ \text{if $X$ has a~singular point of type $\mathbb{A}_{4}$},\\
&\mathrm{lct}_{1}\big(X\big)\ \text{in the~remaining cases},\\
\endaligned
\right.
$$
and if $\mu=\mathrm{lct}(X)=2/3$, then we have the~following two
possibilities:
\begin{itemize}
\item either $D$ is a~curve in $|-K_{X}|$ with a~cusp at a~point in $\mathrm{Sing}(X)$ of type~$\mathbb{A}_{2}$,%
\item or the~divisor $D$ is uniquely defined and it can be explicitly described.%
\end{itemize}

\begin{lemma}
\label{lemma:many-points-bad-points} If $\mathrm{Sing}(X)$ has
a~point of type $\mathbb{D}_4$, $\mathbb{D}_5$, $\mathbb{D}_6$,
$\mathbb{E}_6$, $\mathbb{E}_7$ or $\mathbb{E}_8$, then
$\mu\geqslant\mathrm{lct}_{1}(X)$.
\end{lemma}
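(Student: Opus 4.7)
The plan is to argue by contradiction: assume $\mu:=\mathrm{c}(X,D)<\mathrm{lct}_1(X)$. By Lemma~\ref{lemma:smooth-points}, the locus $\mathrm{LCS}(X,\mu D)$ then consists of a single singular point $P$ of $X$, and $P$ is not of type $\mathbb{A}_1$ or $\mathbb{A}_2$. I would split according to the type of $P$, exploiting the fact that Example~\ref{example:Jihun} yields $\mathrm{lct}_1(X)\leqslant 1/n_3(Q)$ whenever $X$ carries a singularity $Q$ of one of the six listed types, with $n_3(Q)=2,3,4,6$ for the $\mathbb{D}$, $\mathbb{E}_6$, $\mathbb{E}_7$, $\mathbb{E}_8$ families respectively.

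Suppose first that $P$ itself is one of the six listed types. Then I would mimic Section~\ref{section:non-cyclic-orbifolds} verbatim: on the minimal resolution $\pi\colon\bar{X}\to X$, write $\bar{D}\sim_{\mathbb{Q}}\pi^*(D)-\sum a_i E_i$ and apply Lemma~\ref{lemma:Prokhorov} to obtain $\mu a_3=1$ at the branch exceptional curve $E_3$. The inequalities $(\ref{equation:D-m-orbifolds})$ or $(\ref{equation:E6-E7-E8-orbifolds})$, combined with $\bar{D}\cdot\bar{C}\geqslant 0$ for the unique anticanonical curve $C$ through $P$, bound $a_3\leqslant n_3(P)$. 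Since these inequalities are local at $P$ and do not interact with the resolutions of other singular points, they transfer without change to the multi-singularity setting; the resulting bound $\mu=1/a_3\geqslant 1/n_3(P)\geqslant\mathrm{lct}_1(X)$ contradicts the hypothesis.

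Suppose instead that $P$ is cyclic, of type $\mathbb{A}_k$ with $k\in\{3,\ldots,8\}$. The existence of a $\mathbb{D}$- or $\mathbb{E}$-singularity on $X$ (guaranteed by the lemma's hypothesis) forces $\mathrm{lct}_1(X)\leqslant 1/2$ via Example~\ref{example:Jihun}, so it suffices to establish $\mu\geqslant 1/2$. I would use the local chain analysis of Section~\ref{section:cyclic-orbifolds}: the inequalities $(\ref{equation:cyclic-orbifolds})$ and the intersection bound $a_1+a_k\leqslant 1$ are purely local at $P$, and Lemma~\ref{lemma:cyclic-orbifolds-blow-up} still isolates a distinguished point on some $E_j\cap E_{j+1}$ to which Corollary~\ref{corollary:Dimitra} applies. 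For $k\in\{3,4,5,6\}$ the bounds proved in Corollary~\ref{corollary:A3-point} and Lemmas~\ref{lemma:single-point-A4}--\ref{lemma:single-point-A6} give $\mu\geqslant 2/3>1/2$, while for $k\in\{7,8\}$ Lemmas~\ref{lemma:single-point-A7-reducible-2} and \ref{lemma:single-point-A8} give $\mu\geqslant 1/2$.

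The main obstacle is the cyclic case, because Lemmas~\ref{lemma:single-point-A5}--\ref{lemma:single-point-A8} rely on global auxiliary rational curves $\bar{L}_i$ constructed from the del Pezzo geometry of $\bar{X}$ after contracting $\bar{C}$ together with a chain of $\pi$-exceptional curves; in the multi-singularity setting these contractions need not produce a smooth del Pezzo surface. The cleanest remedy is to invoke the classification in Remark~\ref{remark:DP1-SING} to enumerate the possible pairs $(P,Q)$ consisting of a cyclic LCS point and a $\mathbb{D}$- or $\mathbb{E}$-type point, and to handle each configuration by a direct local computation using only Lemma~\ref{lemma:cyclic-orbifolds-blow-up}, Corollary~\ref{corollary:Dimitra}, and the global constraint $\bar{D}\cdot\bar{C}\geqslant 0$, rather than by reconstructing global substitutes for the $\bar{L}_i$.
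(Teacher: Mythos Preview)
Your overall strategy---assume $\mu<\mathrm{lct}_1(X)$, use Lemma~\ref{lemma:smooth-points} to localize $\mathrm{LCS}(X,\mu D)$ at a single singular point $P\not\in\{\mathbb{A}_1,\mathbb{A}_2\}$, and then branch on whether $P$ is non-cyclic or cyclic---matches the paper exactly, and your treatment of the non-cyclic case (transplanting the argument of Section~\ref{section:non-cyclic-orbifolds}) is correct.

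Where you diverge is in the cyclic branch. You allow $P$ to be of type $\mathbb{A}_k$ for any $k\in\{3,\ldots,8\}$, then worry that the auxiliary curves $\bar L_i$ needed in Lemmas~\ref{lemma:single-point-A5}--\ref{lemma:single-point-A8} may fail to exist when $\bar X$ has extra exceptional configurations, and finally propose a case-by-case enumeration of pairs $(P,Q)$ via Remark~\ref{remark:DP1-SING}. This is all unnecessary. The paper invokes Remark~\ref{remark:DP1-SING} \emph{immediately}: inspecting that list, one sees that whenever $X$ carries a singularity of type $\mathbb{D}_4$, $\mathbb{D}_5$, $\mathbb{D}_6$, $\mathbb{E}_6$, $\mathbb{E}_7$ or $\mathbb{E}_8$, every other singular point of $X$ is of type $\mathbb{A}_1$, $\mathbb{A}_2$, $\mathbb{A}_3$ or $\mathbb{D}_4$. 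Since $P$ is not $\mathbb{A}_1$ or $\mathbb{A}_2$, and the $\mathbb{D}_4$ possibility is already covered by your first case, the only remaining option is $P$ of type $\mathbb{A}_3$. For $\mathbb{A}_3$ the argument of Corollary~\ref{corollary:A3-point} (which uses only the local inequalities $(\ref{equation:cyclic-orbifolds})$ and needs no auxiliary curves) gives $\mu\geqslant 5/6>\mathrm{lct}_1(X)$, a contradiction. So the obstacle you flag never actually arises, and the ``cleanest remedy'' you propose collapses to a single trivial case once you read the classification more carefully.
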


\begin{proof}
Suppose that $\mathrm{Sing}(X)$ has a~point of type
$\mathbb{D}_4$, $\mathbb{D}_5$, $\mathbb{D}_6$, $\mathbb{E}_6$,
$\mathbb{E}_7$ or $\mathbb{E}_8$, but
$\mu<\mathrm{lct}_{1}(X)$.~Then
$$
\mathrm{LCS}\big(X,\mu D\big)\subsetneq\mathrm{Sing}\big(X\big)
$$
and $\mathrm{LCS}(X,\mu D)$ consists of a~point in
$\mathrm{Sing}(X)$ that is not of type $\mathbb{A}_{1}$ or
$\mathbb{A}_{2}$ by Lemma~\ref{lemma:smooth-points}.

If the~locus $\mathrm{LCS}(X,\mu D)$ is a~singular point of
the~surface $X$ of type $\mathbb{D}_4$, $\mathbb{D}_5$,
$\mathbb{D}_6$, $\mathbb{E}_6$, $\mathbb{E}_7$ or $\mathbb{E}_8$,
then arguing as in the~proof of
Theorem~\ref{theorem:main-single-point-non-cyclic}, we immediately
obtain a~contradiction.

By Remark~\ref{remark:DP1-SING}, the~locus $\mathrm{LCS}(X,\mu D)$
must be a~singular point of the~surface $X$ of type
$\mathbb{A}_{3}$, and we can easily obtain a~contradiction arguing
as in the~proof of Corollary~\ref{corollary:A3-point}.
\end{proof}

\begin{lemma}
\label{lemma:many-points-good-points} Suppose that
$\mathrm{Sing}(X)$ consists of points of type $\mathbb{A}_1$,
$\mathbb{A}_2$ or $\mathbb{A}_3$. Then
$\mu\geqslant\mathrm{lct}_{1}(X)$.~If
$$
\mu=\mathrm{lct}_{1}\big(X\big)=\frac{2}{3},
$$
then $D$ is a~curve in $|-K_{X}|$ with a~cusp at a~point in
$\mathrm{Sing}(X)$ of type~$\mathbb{A}_{2}$.
\end{lemma}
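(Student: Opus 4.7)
The plan is to argue by contradiction, reducing to the local analysis already carried out in Corollary~\ref{corollary:A3-point}.

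Suppose first that $\mu<\mathrm{lct}_{1}(X)$. By Lemma~\ref{lemma:smooth-points}, the locus $\mathrm{LCS}(X,\mu D)$ is contained in $\mathrm{Sing}(X)$, contains no point of type $\mathbb{A}_{1}$ or $\mathbb{A}_{2}$, and consists of at most one point. The hypothesis on $\mathrm{Sing}(X)$ then forces $\mathrm{LCS}(X,\mu D)=\{P\}$ with $P$ a singular point of type $\mathbb{A}_{3}$.

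Now I would run the analysis of Section~\ref{section:cyclic-orbifolds} locally at $P$. Let $\pi\colon\bar{X}\to X$ be the minimal resolution, let $E_{1},E_{2},E_{3}$ be the exceptional chain over $P$, write $\bar{D}=\pi^{*}(D)-a_{1}E_{1}-a_{2}E_{2}-a_{3}E_{3}$, pick a curve $C\in|-K_{X}|$ through $P$, and use Remark~\ref{remark:convexity} to assume $C\not\subset\mathrm{Supp}(D)$. The relations $\bar{D}\cdot E_{i}\geqslant 0$ together with $\bar{D}\cdot\bar{C}=1-a_{1}-a_{3}\geqslant 0$ yield the system $(\ref{equation:cyclic-orbifolds})$ with $m=3$, and hence the bounds $a_{1}\leqslant 3/4$, $a_{2}\leqslant 1$, $a_{3}\leqslant 3/4$, just as explained before Corollary~\ref{corollary:A3-point}. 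Since $\mu<\mathrm{lct}_{1}(X)\leqslant 1$, I have $\mu a_{i}<1$ for all $i$, so Lemma~\ref{lemma:cyclic-orbifolds-blow-up} produces a point $Q=E_{k}\cap E_{k+1}$ at which the pair $(\bar{X},\mu\bar{D}+\sum\mu a_{i}E_{i})$ fails to be Kawamata log terminal. However, the third bullet of the same lemma, applied with $\mu<1=(m+1)/(2m-2)\big|_{m=3}$, forbids both $Q=E_{1}\cap E_{2}$ and $Q=E_{2}\cap E_{3}$. For $m=3$ these are the only candidates for $Q$, so this is the desired contradiction, and $\mu\geqslant\mathrm{lct}_{1}(X)$.

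For the equality case, assume $\mu=\mathrm{lct}_{1}(X)=2/3$ and suppose, for contradiction, that $D$ is not a curve in $|-K_{X}|$ with a cusp at a point of $\mathrm{Sing}(X)$ of type $\mathbb{A}_{2}$. Then Lemma~\ref{lemma:smooth-points} applies at $\mu=2/3$ and shows that $\mathrm{LCS}(X,\mu D)$ is either empty or a single $\mathbb{A}_{3}$ point. The empty case contradicts $\mathrm{c}(X,D)=2/3$, since $(X,\mu D)$ would then be Kawamata log terminal and $\mu$ could be strictly increased; the single $\mathbb{A}_{3}$-point case is excluded by repeating the argument of the previous paragraph verbatim, since $\mu=2/3<1$ is all that is used there. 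Hence $D$ must be such a cuspidal curve.

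The main obstacle I anticipate is the possibility that the curve $C\in|-K_{X}|$ through $P$ is reducible: the intersection computation $\bar{C}\cdot\bar{D}=1-a_{1}-a_{3}$ that powers the bound $a_{1}+a_{3}\leqslant 1$ requires $C\not\subset\mathrm{Supp}(D)$, and producing such a $C$ via Remark~\ref{remark:convexity} in the reducible case requires, for each allowed configuration in Remark~\ref{remark:DP1-SING}, a separate check that a suitable component of $C$ passing through $P$ can be extracted from $\mathrm{Supp}(D)$ while preserving the intersection pattern $\bar{C}\cdot E_{1}=\bar{C}\cdot E_{3}=1$, $\bar{C}\cdot E_{2}=0$.
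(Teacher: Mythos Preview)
Your argument is correct and is exactly what the paper intends: its proof is the single line ``This follows from Lemma~\ref{lemma:smooth-points} and the proof of Corollary~\ref{corollary:A3-point}'', and you have unpacked that correctly, including the equality case.

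The obstacle you flag does not arise. On a del Pezzo surface of degree~$1$ the divisor $-K_X$ is Cartier and ample with $(-K_X)^2=1$, so for any effective $C\sim -K_X$ and any irreducible component $C_1$ of $C$ one has $-K_X\cdot C_1\geqslant 1$, forcing $C$ to be irreducible. Thus Remark~\ref{remark:convexity} applies directly to the curve $C$ through $P$, with no case analysis needed. One small correction: if $C$ happens to pass through other singular points of $X$, the identity $\bar D\cdot\bar C=1-a_1-a_3$ becomes the inequality $\bar D\cdot\bar C\leqslant 1-a_1-a_3$, but this still yields $a_1+a_3\leqslant 1$, which is all that $(\ref{equation:cyclic-orbifolds})$ requires.
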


\begin{proof}
This follows from Lemma~\ref{lemma:smooth-points} and the~proof of
Corollary~\ref{corollary:A3-point}.
\end{proof}

By Remark~\ref{remark:DP1-SING} and Lemmas~\ref{lemma:many-points-bad-points}
and \ref{lemma:many-points-bad-points}, we may assume that
$$
\mathrm{Sing}\big(X\big)\in\left\{\aligned %
&\mathbb{A}_7+\mathbb{A}_1, \mathbb{A}_6+\mathbb{A}_1, \mathbb{A}_5+\mathbb{A}_1, \mathbb{A}_5+\mathbb{A}_1+\mathbb{A}_1, \mathbb{A}_5+\mathbb{A}_2, \mathbb{A}_5+\mathbb{A}_2+\mathbb{A}_1,\\%
&\mathbb{A}_4+\mathbb{A}_4, \mathbb{A}_4+\mathbb{A}_3, \mathbb{A}_4+\mathbb{A}_2+\mathbb{A}_1, \mathbb{A}_4+\mathbb{A}_2, \mathbb{A}_4+\mathbb{A}_1+\mathbb{A}_1, \mathbb{A}_4+\mathbb{A}_1,\\%
\endaligned\right\},
$$
which implies that there is a~point $P\in\mathrm{Sing}(X)$ that is
a~point of type $\mathbb{A}_m$ for $m\in\{4,5,6,7\}$.

Let $\pi\colon\bar{X}\to X$ be a~minimal resolution, let
$E_{1},E_{2},\ldots,E_{m}$ be $\pi$-exceptional curves such~that
$$
E_{i}\cdot E_{j}\ne 0\iff \big|i-j\big|\leqslant 1
$$
and $\pi(E_{i})=P$ for every $i\in\{1,\ldots,m\}$, let $C$ be
the~unique curve in $|-K_{X}|$ such that $P\in C$, and let
$\bar{C}$ be the~proper transform of the~curve $C$ on the~surface
$\bar{X}$. Then
$$
\bar{C}\cdot E_{1}=\bar{C}\cdot E_{m}=1,
$$
and $\bar{C}\cdot E_{2}=\bar{C}\cdot E_{3}=\cdots=\bar{C}\cdot E_{m-1}=0$. Note
that $\bar{C}\cong\mathbb{P}^{1}$ and $\bar{C}\cdot\bar{C}=-1$.

Let $\bar{D}$ be the~proper transform of $D$ on the~surface $\bar{X}$. Then
$$
\bar{D}\sim_{\mathbb{Q}}\pi^*\big(D\big)-\sum_{i=1}^{m}a_iE_i,
$$
where $a_{i}$ is a~non-negative rational number. Then
\begin{equation}
\label{equation:many-points-orbifolds} \left\{\aligned
&1-a_1-a_m=\bar{D}\cdot\bar{C}\geqslant 0,\\
&2a_1 - a_2=\bar{D}\cdot E_{1}\geqslant 0,\\
&\cdots\\
&2a_{m-1}-a_{m-2}-a_m=\bar{D}\cdot E_{m-1}\geqslant 0,\\
&2a_m - a_{m-1}=\bar{D}\cdot E_{m}\geqslant 0,\\
\endaligned
\right.
\end{equation}

Let $\eta\colon\bar{X}\to\bar{X}^{\prime}$ be a~contraction of
the~curve $\bar{C}$. Then there is a~commutative~diagram
$$
\xymatrix{
&\bar{X}\ar@{->}[rr]^{\pi}\ar@{->}[dl]_{\eta}&& X\ar@{->}[rr]^{\omega}&&\mathbb{P}(1,1,2)\ar@{^{(}->}[rr]^{\phi}&&\mathbb{P}^{3}\ar@{-->}[ddll]^{\psi}\\
\bar{X}^{\prime}\ar@{->}[drrr]_{\pi^{\prime}}&&&&&\\
&&&X^{\prime}\ar@{->}[rr]^{\omega^{\prime}} &&\mathbb{P}^{2}}%
$$
where $\omega$ and $\omega^{\prime}$ are natural double covers
$\pi^{\prime}$ is a~minimal resolution, $\phi$ is an~anticanonical
embedding, and $\psi$ is a~projection from $\phi\circ\omega(P)$.
Put $P^{\prime}=\eta(E_{2})$. Then
$P^{\prime}\in\mathrm{Sing}(X^{\prime})$.

\begin{remark}
\label{remark:diagram-morphism-etc} The birational morphism
$\pi^{\prime}$ contracts the~smooth curves
$\eta(E_{2}),\eta(E_{3}),\ldots,\eta(E_{m-1})$,
and~$\pi^{\prime}\circ\eta$ contracts all $\pi$-exceptional curves
that are different from the~curves $E_{1},E_{2},\ldots,E_{m}$.
\end{remark}

Let $R$ be the branch curve in $\mathbb{P}(1,1,2)$ of the double
cover $\omega$. Put $R^{\prime}=\psi\circ\phi(R)$.

\begin{lemma}
\label{lemma:many-points-A7-A1} Suppose that $m=7$. Then
$\mu\geqslant\mathrm{lct}_{2}(X)=1/2$.
\end{lemma}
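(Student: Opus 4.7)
The plan is to adapt the proof of Lemma~\ref{lemma:single-point-A7-reducible-2} to the mixed singular locus $\mathbb{A}_7+\mathbb{A}_1$, with one new input: the branch curve $R$ of $\omega$ is automatically reducible in this case. In the diagram preceding the lemma, $X^{\prime}$ is a del Pezzo surface of degree two whose singular locus is obtained from $\mathrm{Sing}(X)$ by contracting $\bar{C}$, so the $\mathbb{A}_7$ at $P$ becomes an $\mathbb{A}_5$ at $P^{\prime}$, while the $(-2)$-curve on $\bar{X}$ resolving the extra $\mathbb{A}_1$ of $X$ is disjoint from $\bar{C}$ and descends intact to $X^{\prime}$ as a second $\mathbb{A}_1$ singular point. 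Correspondingly the branch quartic $R^{\prime}=\psi\circ\phi(R)\subset\mathbb{P}^2$ has an $\mathbb{A}_5$ singularity and an ordinary node, with total $\delta$-invariant $3+1=4$. Since this exceeds the arithmetic genus $p_a=3$ of any plane quartic, $R^{\prime}$ cannot be irreducible, and so neither can $R$. Lemma~\ref{lemma:single-point-A7-reducible} then supplies an irreducible curve $\bar{L}_4\subset\bar{X}$ with $\bar{L}_4^{2}=-1$, $\bar{L}_4\cdot E_4=1$, and $\omega\circ\pi(\bar{L}_4)\subset\mathrm{Supp}(R)$.

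Set $L_4=\pi(\bar{L}_4)$. The intersection data $\bar{L}_4\cdot E_i=\delta_{i,4}$ together with $-K_{\bar{X}}\cdot\bar{L}_4=1$ force
\[
\bar{L}_4\sim_{\mathbb{Q}}\pi^{*}(L_4)-\tfrac{1}{2}E_1-E_2-\tfrac{3}{2}E_3-2E_4-\tfrac{3}{2}E_5-E_6-\tfrac{1}{2}E_7,
\]
from which $2L_4\sim -2K_X$ and $\mathrm{c}(X,L_4)=1/2$, giving $\mathrm{lct}_2(X)\leqslant 1/2$.

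For the reverse inequality, suppose for contradiction that $\mu<1/2$. Since $\mathrm{Sing}(X)$ avoids $\mathbb{D}$- and $\mathbb{E}$-singularities, Example~\ref{example:Jihun} gives $\mathrm{lct}_1(X)\geqslant 2/3$, so Lemma~\ref{lemma:smooth-points} forces $\mathrm{LCS}(X,\mu D)=\{P\}$. By Remark~\ref{remark:convexity}, it suffices to treat the case $L_4\not\subset\mathrm{Supp}(D)$, in which case $\bar{L}_4\cdot\bar{D}=1-a_4\geqslant 0$ gives $a_4\leqslant 1$. Iterating the concavity inequalities in $(\ref{equation:many-points-orbifolds})$ yields $a_k\leqslant(5/4)a_4$ for $k\in\{3,5\}$ and $a_1+a_7\geqslant 8a_k/(k(8-k))$ for each $k$; combining with $a_1+a_7\leqslant 1$ and $a_4\leqslant 1$ produces exactly the bounds
\[
a_1\leqslant\tfrac{7}{8},\ a_2\leqslant\tfrac{3}{2},\ a_3\leqslant\tfrac{5}{4},\ a_5\leqslant\tfrac{5}{4},\ a_6\leqslant\tfrac{3}{2},\ a_7\leqslant\tfrac{7}{8}
\]
obtained in Lemma~\ref{lemma:single-point-A7-reducible-2}.

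Finally, Lemma~\ref{lemma:cyclic-orbifolds-blow-up} applies verbatim, since $\mu a_i<1$ for every $i$ and its proof uses only the chain inequalities, pinpointing a unique point $Q=E_k\cap E_{k+1}$ with $k\in\{2,3,4,5\}$ at which the log pair fails to be Kawamata log terminal. Two applications of Lemma~\ref{lemma:adjunction} at $Q$, using $1/\mu>2$, give $2a_k-a_{k-1}>2$ and $2a_{k+1}-a_{k+2}>2$. When $k\in\{3,4\}$, one of these reads $2a_4-a_i>2$ for some $i\in\{3,5\}$, immediately forcing $a_4>1$ and contradicting $a_4\leqslant 1$. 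When $k\in\{2,5\}$, one inequality reads $2a_j-a_4>2$ for some $j\in\{3,5\}$; together with $a_j\leqslant 5/4$ this forces $a_4<1/2$, contradicting the concavity bound $2a_4\geqslant a_3+a_5>1$ that follows from $a_j>1$. The one genuinely new ingredient is the reducibility of $R$ established at the start; the rest of the argument is a direct transcription of the single-point analysis.
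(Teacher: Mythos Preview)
Your argument is correct and reaches the same endpoint as the paper, but the two proofs diverge in how they produce the key curve $\bar{L}_{4}$.

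The paper never appeals to Lemma~\ref{lemma:single-point-A7-reducible} here. Instead it builds $\bar{L}_{4}$ by hand: contracting $\bar{C},E_{7},\ldots,E_{2}$ sends $\bar{X}$ to the Hirzebruch surface $\mathbb{F}_{2}$, on which the $(-2)$-curve $F$ over the extra $\mathbb{A}_{1}$ becomes the negative section; one then reads off a fibre $\bar{L}_{2}$ meeting $E_{2}$ and $F$, contracts again to $\mathbb{P}^{1}\times\mathbb{P}^{1}$, and picks the ruling through the image of $E_{4}$. This uses the $\mathbb{A}_{1}$ point constructively and sidesteps any discussion of the branch curve.

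Your route is more conceptual: you force $R$ to be reducible by the $\delta$-invariant count on the branch quartic $R'$ and then import $\bar{L}_{4}$ from Lemma~\ref{lemma:single-point-A7-reducible}. One caution: that lemma is stated under the standing hypothesis $|\mathrm{Sing}(X)|=1$ of Section~\ref{section:cyclic-orbifolds}, so strictly speaking you must note that its proof survives the extra $\mathbb{A}_{1}$. It does --- the only place that hypothesis enters is in excluding the decomposition $\phi(R)=C_{1}+C_{2}+C_{3}$ into three conics, and this is already ruled out by the $\mathbb{A}_{7}$ alone (two degree-$2$ curves in $\mathbb{P}(1,1,2)$ meet with total multiplicity $2$, so cannot be tangent to order $4$). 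Once that is said, your transcription of the bounds on $a_{i}$ and of the contradiction at $Q=E_{k}\cap E_{k+1}$ matches Lemma~\ref{lemma:single-point-A7-reducible-2} exactly.

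What each approach buys: the paper's direct construction is self-contained and makes visible why the additional $\mathbb{A}_{1}$ helps (its exceptional curve $F$ literally furnishes a ruling); your genus argument explains \emph{why} $\bar{L}_{4}$ must exist without any ad~hoc contraction sequence, and would adapt more readily if further mild singularities were present.
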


\begin{proof}
Let $\alpha\colon\bar{X}\to\breve{X}$ be a~contraction of
the~irreducible curves $\bar{C}$, $E_7$, $E_6$, $E_5$, $E_4$,
$E_3$ and $E_2$, and let $F$ be the~$\pi$-exceptional curve such
that $\pi(F)$ is a~point of type $\mathbb{A}_{1}$. Then
$$
\breve{X}\cong\mathbb{P}\Big(\mathcal{O}_{\mathbb{P}^{1}}\oplus\mathcal{O}_{\mathbb{P}^{1}}\big(2\big)\Big).
$$

Let $\breve{L}_{2}$ be the~fiber of the~projection
$\breve{X}\to\mathbb{P}^{1}$ such that
$\alpha(\bar{C})\in\breve{L}_{2}$,
and~let~$\bar{L}_{2}$~be~the~proper transform~of~the~curve
$\breve{L}_{2}$~on~the~surface $\bar{X}$ via $\alpha$. Then
$\bar{L}_{2}\cdot \bar{L}_{2}=-1$ and
$$
-K_{\bar{X}}\cdot \bar{L}_{2}=E_{2}\cdot\bar{L}_{2}=F\cdot\bar{L}_{2}=1,%
$$
which implies that $E_{1}\cdot \bar{L}_{2}=E_{3}\cdot
\bar{L}_{2}=E_{4}\cdot \bar{L}_{2}=E_{5}\cdot
\bar{L}_{2}=E_{6}\cdot \bar{L}_{2}=E_{7}\cdot
\bar{L}_{2}=\bar{C}\cdot\bar{L}_{2}=0$.

Let $\beta\colon\bar{X}\to\check{X}$ be a~contraction of
the~curves $\bar{L}_{2}$, $E_{2}$, $\bar{C}$, $E_7$, $E_6$, $E_5$,
$E_4$. Then
$$
\beta\big(E_{3}\big)\cdot\beta\big(E_{3}\big)=\beta\big(F\big)\cdot\beta\big(F\big)=0,
$$
and $\check{X}$ is a~smooth del Pezzo surface such that
$K_{\check{X}}^{2}=8$. Then $\check{X}\cong\mathbb{P}^{1}\times
\mathbb{P}^{1}$.

Let $\check{L}_{4}$ be the~curve in $|\beta(F)|$ such that
$\beta(E_{4})\in \check{L}_{4}$, and let $\bar{L}_{3}$ be its
proper transform on the~surface $\bar{X}$ via $\beta$. Then one
can easily check that $\bar{L}_{4}\cdot \bar{L}_{4}=-1$ and
$$
-K_{\bar{X}}\cdot \bar{L}_{4}=E_{4}\cdot \bar{L}_{4}=1,%
$$
which implies that $E_{1}\cdot \bar{L}_{4}=E_{2}\cdot
\bar{L}_{4}=E_{3}\cdot \bar{L}_{4}=E_{5}\cdot
\bar{L}_{4}=E_{6}\cdot \bar{L}_{4}=E_{7}\cdot
\bar{L}_{4}=\bar{C}\cdot\bar{L}_{4}=F\cdot\bar{L}_{4}=0$.

Put $L_{4}=\pi(\bar{L}_{4})$. Then one can easily check that
$$
\bar{L}_4\sim_{\mathbb{Q}}\pi^*\big(L_4\big)-\frac{1}{2}E_1-E_2-\frac{3}{2}E_3-2E_4-\frac{3}{2}E_5-E_6-\frac{1}{2} E_7,%
$$
which implies that $\mathrm{c}(X,L_{4})=1/2$. But $2L_4\sim
-2K_{X}$, which implies that $\mathrm{lct}_{2}(X)\leqslant 1/2$.

Arguing as in the proof of
Lemma~\ref{lemma:single-point-A7-reducible}, we see that
$\omega(L_{4})\subset\mathrm{Supp}(R)$.

Arguing as in the~proof of
Lemma~\ref{lemma:single-point-A7-reducible-2} and using
$(\ref{equation:many-points-orbifolds})$, we see that
$\mu\geqslant\mathrm{lct}_{2}(X)=1/2$.
\end{proof}

\begin{lemma}
\label{lemma:many-points-A6-A1} Suppose that $m=6$. Then
$\mu\geqslant\mathrm{lct}_{2}(X)=2/3$, and if $\mu=2/3$, then
\begin{itemize}
\item either $D$ is a~curve in $|-K_{X}|$ with a~cusp at a~point in $\mathrm{Sing}(X)$ of type~$\mathbb{A}_{2}$,%
\item or the~divisor $D$ is uniquely defined and can be explicitly described.%
\end{itemize}
\end{lemma}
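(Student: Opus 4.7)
The plan is to adapt the approach of Lemma~\ref{lemma:single-point-A6} and Lemma~\ref{lemma:many-points-A7-A1} to the case when $\mathrm{Sing}(X)=\mathbb{A}_6+\mathbb{A}_1$. First I would construct on $\bar{X}$ two irreducible smooth $(-1)$-curves $\bar{L}_3$ and $\bar{L}_4$, meeting $E_3$ and $E_4$ transversally respectively, and disjoint from every other $\pi$-exceptional component and from $\bar{C}$. To produce $\bar{L}_3$, I would contract $\bar{C}$, $E_6$, $E_5$, $E_4$, $E_3$, $E_2$ (together with the $\pi$-exceptional curve $F$ over the $\mathbb{A}_1$ point if needed) as in the proof of Lemma~\ref{lemma:many-points-A7-A1} to reach a smooth rational surface (of Picard rank small enough to exhibit the required curves explicitly), choose a suitable $(-1)$-curve meeting the image of $E_3$, and pull it back. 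Setting $\bar{L}_4=\tau(\bar{L}_3)$, where $\tau$ is the involution induced by $\omega$, gives the symmetric curve. A direct intersection-number calculation, using that $\mathrm{Pic}(X)\otimes\mathbb{Q}$ is spanned by $-K_X$ and (the classes of) $L_3,L_4$, then shows $L_3+L_4\sim -2K_X$, and writing the pull-back decomposition of $\bar{L}_3+\bar{L}_4$ in terms of the $E_i$ yields $\mathrm{c}(X,L_3+L_4)=1/3$, hence $\mathrm{lct}_2(X)\leqslant 2/3$.

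For the lower bound $\mu\geqslant 2/3$, I would argue by contradiction, assuming $\mu<2/3$. By Lemma~\ref{lemma:smooth-points}, Lemma~\ref{lemma:many-points-bad-points} and the connectedness theorem, $\mathrm{LCS}(X,\mu D)$ is a single point of $\mathrm{Sing}(X)$, necessarily $P$ since $\mathbb{A}_1$-points cannot appear. Using Remark~\ref{remark:convexity} I may assume $L_3,L_4\not\subseteq\mathrm{Supp}(D)$, so the intersections $\bar{D}\cdot\bar{L}_3=1-a_3\geqslant 0$ and $\bar{D}\cdot\bar{L}_4=1-a_4\geqslant 0$ give $a_3\leqslant 1$ and $a_4\leqslant 1$. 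Combined with $(\ref{equation:many-points-orbifolds})$, this bounds every $a_i$ by a small rational number, and then the analogue of Lemma~\ref{lemma:cyclic-orbifolds-blow-up} applies to the pulled-back log pair $(\bar{X},\mu\bar{D}+\sum\mu a_iE_i)$: the non-klt locus collapses to a single point $Q=E_k\cap E_{k+1}$ with $k\in\{2,3,4\}$. Applying Lemma~\ref{lemma:adjunction} to both $\bar{D}\cdot E_k$ and $\bar{D}\cdot E_{k+1}$ yields
$$
2a_k-a_{k-1}-a_{k+1}>\tfrac{1}{\mu}-a_{k+1}>\tfrac{3}{2}-a_{k+1}\quad\text{and}\quad 2a_{k+1}-a_k-a_{k+2}>\tfrac{3}{2}-a_k,
$$
which together with $a_3,a_4\leqslant 1$ and the recursive bounds from $(\ref{equation:many-points-orbifolds})$ is impossible, giving the desired contradiction.

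For the equality case $\mu=2/3$, I would trace the inequalities backwards: the bounds $a_3\leqslant 1$, $a_4\leqslant 1$ and the adjunction inequalities at $Q$ must all become equalities, forcing the vector $(a_1,\ldots,a_6)$ to coincide with the coefficients of $(\bar{L}_3+\bar{L}_4)/2$ along the exceptional divisors. Remark~\ref{remark:convexity} applied in reverse then pins down $D=(L_3+L_4)/2$ as the unique minimizer; the cuspidal-$|-K_X|$ alternative from the general statement does not occur here because $\mathrm{Sing}(X)$ contains no $\mathbb{A}_2$ point.

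The main obstacle is the explicit construction and verification of the $(-1)$-curves $\bar{L}_3$ and $\bar{L}_4$: one must show they are \emph{irreducible}, meet exactly the prescribed exceptional divisor, and together form a member of $|-2K_X|$ with log canonical threshold precisely $1/3$. Establishing this requires careful use of the extra $\mathbb{A}_1$ singularity and the double cover $\omega$, paralleling (but differing in detail from) Lemma~\ref{lemma:single-point-A6} and Lemma~\ref{lemma:many-points-A7-A1}; once these curves are in hand the remaining numerical argument is essentially a variant of the one already carried out for the single-point $\mathbb{A}_6$ case.
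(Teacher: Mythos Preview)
Your overall strategy is right, but there is a genuine gap in the convexity step. From Remark~\ref{remark:convexity} applied to the single auxiliary divisor $(L_{3}+L_{4})/2$ you can only conclude that $\mathrm{Supp}(D)$ omits \emph{one} of $L_{3},L_{4}$; you may then assume $a_{4}\leqslant 1$ by the $\tau$-symmetry, but you cannot simultaneously force $a_{3}\leqslant 1$. With only $a_{4}\leqslant 1$, the case $Q=E_{2}\cap E_{3}$ does \emph{not} collapse: for instance the vector $(a_{1},\ldots,a_{6})=(2/3,\,5/4,\,4/3,\,1,\,2/3,\,1/3)$ satisfies all of $(\ref{equation:many-points-orbifolds})$, the bound $a_{4}\leqslant 1$, and both adjunction inequalities $2a_{3}-a_{4}>3/2$ and $2a_{2}-a_{1}>3/2$. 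This is exactly why the paper (as in Lemma~\ref{lemma:single-point-A6}) constructs further $(-1)$-curves $\bar{L}_{2}$, $\bar{L}_{5}=\tau(\bar{L}_{2})$ and checks that $2L_{2}+L_{3}\sim -3K_{X}$ with $\mathrm{c}(X,2L_{2}+L_{3})<2/3$; a second application of Remark~\ref{remark:convexity} to this divisor then yields the missing alternative ``either $a_{2}\leqslant 1$ or $a_{3}\leqslant 1$'', which is what actually kills the $k=2$ case.

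A smaller point: your treatment of the equality case is not quite the mechanism in play. One does not ``trace the inequalities backwards''; rather, the whole argument is run under the standing hypothesis $D\ne (L_{3}+L_{4})/2$ (and $D$ not a curve in $|-K_{X}|$), and the contradiction shows $\mu>2/3$ in that situation, so uniqueness is a by-product of the reduction via Remark~\ref{remark:convexity}, not of saturating the numerical inequalities. Your observation that the cuspidal $\mathbb{A}_{2}$ alternative is vacuous here (since $\mathrm{Sing}(X)=\mathbb{A}_{6}+\mathbb{A}_{1}$) is correct.
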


\begin{proof}
Let $\alpha\colon\bar{X}\to\breve{X}$ be a~contraction of
the~curves $\bar{C}$, $E_{6}$, $E_5$, $E_4$, $E_3$,
$E_{2}$.~Then~$\breve{X}$~is~a~smooth surface such that
$K_{\breve{X}}^{2}=7$, and $-K_{X}$ is nef. There is a~birational
morphism $\gamma\colon\breve{X}\to\hat{X}$~such~ that
$$
\hat{X}\cong\mathbb{P}\Big(\mathcal{O}_{\mathbb{P}^{1}}\oplus\mathcal{O}_{\mathbb{P}^{1}}\big(2\big)\Big),
$$
and $\gamma$ is a~blow down of a~smooth irreducible rational curve
that does not contain the~point~$\alpha(\bar{C})$.

Let $\hat{L}_{2}$ be the~fiber of the~projection
$\hat{X}\to\mathbb{P}^{1}$ such that
$\gamma\circ\alpha(\bar{C})\in\hat{L}_{2}$,
and~let~$\bar{L}_{2}$~be~the~proper transform~of~the~curve
$\hat{L}_{2}$~on~the~surface $\bar{X}$ via $\gamma\circ\alpha$.
Then $\bar{L}_{2}\cdot \bar{L}_{2}=-1$ and
$$
-K_{\bar{X}}\cdot \bar{L}_{2}=E_{2}\cdot\bar{L}_{2}=1,%
$$
which implies that $E_{1}\cdot \bar{L}_{2}=E_{3}\cdot
\bar{L}_{2}=E_{4}\cdot \bar{L}_{2}=E_{5}\cdot
\bar{L}_{2}=E_{6}\cdot \bar{L}_{2}=\bar{C}\cdot\bar{L}_{2}=0$.

Let $\beta\colon\bar{X}\to\check{X}$ be a~contraction of
the~curves $\bar{L}_{2}$, $\bar{C}$, $E_6$, $E_5$, $E_4$, and let
$F$ be the~$\pi$-exceptional curve such that $\pi(F)$ is a~point
of type $\mathbb{A}_{1}$. Then
$$
\beta\big(E_{2}\big)\cdot\beta\big(E_{2}\big)=\beta\big(E_{3}\big)\cdot\beta\big(E_{3}\big)=\beta\big(F\big)\cdot\beta\big(F\big)=-1,
$$
and $\check{X}$ is a~smooth del Pezzo surface such that
$K_{\check{X}}^{2}=6$. Thus, there exists an~irreducible smooth
rational curve $\check{L}_{3}$ on the~surface
$\check{X}$~such~that $\check{L}_{3}\cdot\check{L}_{3}=-1$,
$\check{L}_{3}\cdot\beta(E_{3})=1$ and
$\check{L}_{3}\cdot\beta(F)=0$.

Let $\bar{L}_{3}$ be the~proper transform of the~curve
$\check{L}_{3}$ on the~surface $\bar{X}$. Then $\bar{L}_{3}\cdot
\bar{L}_{3}=-1$ and
$$
-K_{\bar{X}}\cdot \bar{L}_{3}=E_{3}\cdot \bar{L}_{3}=1,%
$$
which implies that $E_{1}\cdot \bar{L}_{3}=E_{2}\cdot
\bar{L}_{3}=E_{4}\cdot \bar{L}_{3}=E_{5}\cdot
\bar{L}_{3}=E_{6}\cdot \bar{L}_{3}=\bar{C}\cdot\bar{L}_{3}=F\cdot
\bar{L}_{3}=0$.

Put $\bar{L}_{4}=\tau(\bar{L}_{3})$ and
$\bar{L}_{5}=\tau(\bar{L}_{2})$. Then
$\bar{C}\cdot\bar{L}_{4}=\bar{C}\cdot\bar{L}_{5}=0$ and
$$
-K_{\bar{X}}\cdot\bar{L}_{4}=-K_{\bar{X}}\cdot \bar{L}_{5}=E_{4}\cdot \bar{L}_{4}=E_{5}\cdot\bar{L}_{5}=1,%
$$
which implies that $E_{i}\cdot\bar{L}_{5}=E_{j}\cdot
\bar{L}_{4}=0$ for every $i\ne 5$ and $j\ne 4$.

Put $L_{3}=\pi(\bar{L}_{3})$, $L_{4}=\pi(\bar{L}_{4})$,
$L_{2}=\pi(\bar{L}_{2})$ and $L_{5}=\pi(\bar{L}_{5})$. Then
$$
L_{3}+L_{4}\sim L_{2}+L_{5}\sim -2K_{X},%
$$
which implies that $\mathrm{c}(X,L_{3}+L_{4})=1/3$ and
$\mathrm{c}(X,L_{2}+L_{5})=1/2$. Then
$\mathrm{lct}_{2}(X)\leqslant 2/3$. But
$$
\bar{L}_2\sim_{\mathbb{Q}}\pi^*\big(L_2\big)-\frac{5}{7}E_1-\frac{10}{7}E_2-\frac{8}{7}E_3-\frac{6}{7}E_4-\frac{4}{7}E_5-\frac{2}{7}E_6-\frac{1}{2}F,\\
$$
$$
\bar{L}_3\sim_{\mathbb{Q}}\pi^*\big(L_3\big)-\frac{4}{7}E_1-\frac{8}{7} E_2-\frac{12}{7} E_3-\frac{9}{7}E_4-\frac{6}{7}E_5-\frac{3}{7}E_6,\\
$$
which implies that $\mathrm{c}(X,2L_2+L_3)=1/4$. Then
$2L_2+L_3\sim_{\mathbb{Q}} -3K_{X}$, since
$\mathrm{Pic}(X)\cong\mathbb{Z}^{2}$ and
$$
L_{2}\cdot L_{2}=\frac{3}{7},\ L_3\cdot L_3=\frac{5}{7},\ L_2\cdot L_3=\frac{8}{7},%
$$
but $2L_2+L_3$ is a~Cartier divisor, which implies that
$2L_2+L_3\sim -3K_{X}$.

If $D$ is not a~curve in $|-K_{X}|$ and $D\ne (L_{3}+L_{4})/2$,
then arguing~as~in~the~proof~of~Lemma~\ref{lemma:single-point-A6},
we easily see that $\mu>2/3$, since we can use
$(\ref{equation:many-points-orbifolds})$. The lemma is proved (see
Example~\ref{example:Jihun}).
\end{proof}

\begin{lemma}
\label{lemma:many-points-A5} Suppose that $m=5$. Then
$\mu\geqslant\mathrm{lct}_{2}(X)=2/3$, and if $\mu=2/3$, then
\begin{itemize}
\item either $D$ is a~curve in $|-K_{X}|$ with a~cusp at a~point in $\mathrm{Sing}(X)$ of type~$\mathbb{A}_{2}$,%
\item or the~divisor $D$ is uniquely defined and can be explicitly described.%
\end{itemize}
\end{lemma}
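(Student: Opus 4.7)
The plan is to adapt the proof of Lemma~\ref{lemma:single-point-A5} almost verbatim to the many-points setting, with the main modifications needed for the construction of the auxiliary $(-1)$-curves. To establish $\mathrm{lct}_{2}(X)\leqslant 2/3$, I would first construct, exactly as in the cyclic case, an irreducible smooth rational $(-1)$-curve $\bar{L}_{2}$ on $\bar{X}$ with $\bar{L}_{2}\cdot E_{2}=1$, disjoint from $\bar{C}$ and from every other $\pi$-exceptional curve; then, contracting $\bar{L}_{2}$, $\bar{C}$, $E_{5}$, $E_{4}$ together with appropriate sequences of $(-1)$-curves lying over the other singularities of $X$, one reaches a smooth del Pezzo surface of degree at least $5$, on which a $(-1)$-curve transverse to the image of $E_{3}$ pulls back to the desired $\bar{L}_{3}$ on $\bar{X}$. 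Setting $\bar{L}_{4}=\tau(\bar{L}_{3})$ and $L_{i}=\pi(\bar{L}_{i})$, one verifies $Z:=L_{3}+L_{4}\sim -2K_{X}$ and $\mathrm{c}(X,Z)=1/3$, giving $\mathrm{lct}_{2}(X)\leqslant 2/3$.

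For the lower bound, assume $\mu\leqslant 2/3$, and suppose neither $D=Z/2$ nor $D$ is a curve in $|-K_{X}|$ with a cusp at a point of $\mathrm{Sing}(X)$ of type $\mathbb{A}_{2}$. By Remark~\ref{remark:convexity} we may assume $L_{3}\not\subset\mathrm{Supp}(D)$; then $\bar{L}_{3}\cdot\bar{D}\leqslant 1-a_{3}$ yields $a_{3}\leqslant 1$. The chain inequalities in $(\ref{equation:many-points-orbifolds})$ (including $a_{1}+a_{5}\leqslant 1$) have the same form as in the cyclic case $(\ref{equation:cyclic-orbifolds})$, so the derivation used in Lemma~\ref{lemma:single-point-A5} gives $a_{1},a_{5}\leqslant 5/6$ and $a_{2},a_{4}\leqslant 4/3$; combined with $a_{3}\leqslant 1$, this forces $\mu a_{i}<1$ for every $i$. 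The proof of Lemma~\ref{lemma:cyclic-orbifolds-blow-up} uses only these chain inequalities and so applies here, locating the sole non-Kawamata-log-terminal point $Q$ of $(\bar{X},\mu\bar{D}+\sum_{i=1}^{5}\mu a_{i}E_{i})$ at $E_{k}\cap E_{k+1}$ for some $k\in\{2,3\}$. By symmetry, take $Q=E_{2}\cap E_{3}$; then Lemma~\ref{lemma:adjunction} gives
$$
2a_{3}-a_{2}-a_{4}=\bar{D}\cdot E_{3}\geqslant\mathrm{mult}_{Q}\big(\bar{D}\cdot E_{3}\big)>\frac{1}{\mu}-a_{2}\geqslant\frac{3}{2}-a_{2},
$$
which, together with the chain inequalities, forces $a_{3}>9/8$, contradicting $a_{3}\leqslant 1$.

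In the equality case $\mu=\mathrm{lct}(X)=2/3$, the inequalities above become equalities, pinning down the coefficients $a_{i}$ and hence the divisor $D$: either $D$ is a cusp curve in $|-K_{X}|$ with cusp at an $\mathbb{A}_{2}$-point (possible only when $\mathrm{Sing}(X)$ contains such a point, by Example~\ref{example:Jihun}), or $D=Z/2$ is the unique divisor from the construction above. The main obstacle is the explicit construction of $\bar{L}_{2}$ and $\bar{L}_{3}$ in each of the four possible configurations $\mathrm{Sing}(X)\in\{\mathbb{A}_{5}+\mathbb{A}_{1},\,\mathbb{A}_{5}+\mathbb{A}_{1}+\mathbb{A}_{1},\,\mathbb{A}_{5}+\mathbb{A}_{2},\,\mathbb{A}_{5}+\mathbb{A}_{2}+\mathbb{A}_{1}\}$: the intermediate contraction targets are only \emph{weak} del Pezzo surfaces carrying additional $(-2)$-curves from the other singularities, and one must verify in each case that the required $(-1)$-curves can be chosen so that $\bar{L}_{3}$ has the stated intersection properties.
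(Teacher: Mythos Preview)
Your lower-bound argument and the handling of the equality case are correct and coincide with the paper's: once one has a $(-1)$-curve $\bar{L}_{3}$ on $\bar{X}$ meeting $E_{3}$ transversally and disjoint from the other $E_{i}$ and from $\bar{C}$, the rest is the proof of Lemma~\ref{lemma:single-point-A5} verbatim, together with Lemma~\ref{lemma:smooth-points} to account for the cuspidal-curve alternative. (Note that both $\bar{L}_{3}$ and $\tau(\bar{L}_{3})$ meet $E_{3}$, since $\tau(E_{3})=E_{3}$, so Remark~\ref{remark:convexity} gives $a_{3}\leqslant 1$ regardless of which component of $Z$ is removed; the $\tau$-symmetry you invoke for ``$Q=E_{2}\cap E_{3}$'' is thus not needed at that step.)

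The genuine difference is in the construction of $\bar{L}_{3}$. You propose adapting the contraction argument from Lemma~\ref{lemma:single-point-A5} and correctly flag the obstacle: after contracting $\bar{L}_{2},\bar{C},E_{5},E_{4}$ one lands on a \emph{weak} del Pezzo surface carrying additional $(-2)$-curves from the other singular points of $X$, and the existence of a suitable $(-1)$-curve must be checked case by case across the four possible configurations of $\mathrm{Sing}(X)$. The paper avoids this entirely by working on the double cover $\omega^{\prime}\colon X^{\prime}\to\mathbb{P}^{2}$: the $\mathbb{A}_{5}$-point $P$ becomes an $\mathbb{A}_{3}$-point $P^{\prime}$ on $X^{\prime}$, which corresponds to an ordinary tacnode of the quartic branch curve $R^{\prime}$; the tacnodal tangent line $L^{\prime}\subset\mathbb{P}^{2}$ (which satisfies either $L^{\prime}\subset\mathrm{Supp}(R^{\prime})$ or $\mathrm{mult}_{\omega^{\prime}(P^{\prime})}(L^{\prime}\cdot R^{\prime})=4$) pulls back under $\omega^{\prime}$ to the curves $L^{\prime}_{3},L^{\prime}_{4}$, whose proper transforms on $\bar{X}$ are the desired $\bar{L}_{3},\bar{L}_{4}$. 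This construction is uniform in the singularity configuration of $X$ and requires no case analysis, at the modest cost of invoking the explicit geometry of the branch quartic. Your route would also work, but is longer to write out cleanly.
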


\begin{proof}
The curve $R^{\prime}$ has an~ordinary tacnodal singularity at
the~point $\omega^{\prime}(P^{\prime})$, which implies that there
exists a~line $L^{\prime}\subset\mathbb{P}^{2}$~such~that either
$L^{\prime}\subset\mathrm{Supp}(R^{\prime})$ or
$L^{\prime}\not\subset\mathrm{Supp}(R^{\prime})$ and
$$
\mathrm{mult}_{\omega^{\prime}(P^{\prime})}\Big(L^{\prime}\cdot R^{\prime}\Big)=4.%
$$

There are irreducible smooth rational curves $L^{\prime}_{3}$ and
$L^{\prime}_{4}$ on the~surface $X^{\prime}$ such that
$$
\omega^{\prime}\big(L^{\prime}_{3}\big)=\omega^{\prime}\big(L^{\prime}_{4}\big)=L^{\prime}
$$
and $L^{\prime}_{3}=L^{\prime}_{4}\iff
L^{\prime}\subset\mathrm{Supp}(R^{\prime})$. Note that neither
$L^{\prime}_{3}$ nor $L^{\prime}_{4}$ contains a~point in
$\mathrm{Sing}(X^{\prime})\setminus R^{\prime}$.

Let $\bar{L}^{\prime}_{3}$ be the~proper transform of the~curve
$L^{\prime}_{3}$ on the~surface $\bar{X}^{\prime}$.~Then
$$
\bar{L}^{\prime}_{3}\cap\eta\big(E_{1}\big)=\bar{L}^{\prime}_{3}\cap\eta\big(E_{2}\big)=\bar{L}^{\prime}_{3}\cap\eta\big(E_{4}\big)=\bar{L}^{\prime}_{3}\cap\eta\big(E_{5}\big)=\varnothing,
$$
and $\bar{L}^{\prime}_{3}\cdot\eta(E_{3})=1$. Let
$\bar{L}^{\prime}_{4}$ be the~proper transform of the~curve
$L^{\prime}_{4}$ on the~surface $\bar{X}^{\prime}$.~Then
$$
\bar{L}^{\prime}_{4}\cap\eta\big(E_{1}\big)=\bar{L}^{\prime}_{4}\cap\eta\big(E_{2}\big)=\bar{L}^{\prime}_{4}\cap\eta\big(E_{4}\big)=\bar{L}^{\prime}_{4}\cap\eta\big(E_{5}\big)=\varnothing,
$$
and $\bar{L}^{\prime}_{4}\cdot\eta(E_{3})=1$. One can also check that
$\bar{L}^{\prime}_{3}\cap\bar{L}^{\prime}_{4}=\varnothing$ if
$\bar{L}^{\prime}_{3}\ne\bar{L}^{\prime}_{4}$.

Let $\bar{L}_{3}$ and $\bar{L}_{4}$ be the~proper transforms of
the~curves $\bar{L}^{\prime}_{3}$ and $\bar{L}^{\prime}_{4}$ on
the~surface $\bar{X}$, respectively, and let us put
$L_{3}=\pi(\bar{L}_{3})$ and $L_{4}=\pi(\bar{L}_{4})$. Then
$$
\bar{L}_{3}+\bar{L}_{4}\sim -2K_{X}
$$
and $\mathrm{c}(X, \bar{L}_{3}+\bar{L}_{4})=1/3$, which implies that
$\mathrm{lct}_{2}(X)\leqslant 2/3$.

If $D\ne (\bar{L}_{3}+\bar{L}_{4})/2$, then
$(\ref{equation:many-points-orbifolds})$, the~proof of
Lemma~\ref{lemma:single-point-A5} and
Lemma~\ref{lemma:smooth-points} imply that
$$
\mu\geqslant\mathrm{lct}_{2}\big(X\big)=\frac{2}{3}.
$$
and if $\mu=2/3$, then $D$ is a~curve in $|-K_{X}|$ with a~cusp at
a~point in $\mathrm{Sing}(X)$ of type~$\mathbb{A}_{2}$.
\end{proof}

\begin{lemma}
\label{lemma:many-points-A4} Suppose that $m=4$. Then
$$
\mu\geqslant\mathrm{lct}_{2}\big(X\big)=\mathrm{min}\big(\mathrm{lct}_{1}\big(X\big),4/5\big)\geqslant\frac{2}{3},
$$
and if $\mu=2/3$, then $D$ is a~curve in $|-K_{X}|$ with a~cusp at
a~point in $\mathrm{Sing}(X)$ of type~$\mathbb{A}_{2}$.
\end{lemma}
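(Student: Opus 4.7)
The approach is to reduce to the single-point $\mathbb{A}_4$ analysis of Lemma~\ref{lemma:single-point-A4} by localizing $\mathrm{LCS}(X,\mu D)$ via Lemma~\ref{lemma:smooth-points}, then combine this with straightforward upper bounds on $\mathrm{lct}_2(X)$.

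First I would establish the upper bound $\mathrm{lct}_2(X)\leqslant\min(\mathrm{lct}_1(X),4/5)$. The inequality $\mathrm{lct}_2(X)\leqslant\mathrm{lct}_1(X)$ is formal, since any $B\in|-K_X|$ produces $2B\in|-2K_X|$ with $\mathrm{c}(X,(2B)/2)=\mathrm{c}(X,B)$. For the bound $\mathrm{lct}_2(X)\leqslant 4/5$ I would exhibit an irreducible curve $Z\in|-2K_X|$ whose proper transform $\bar{Z}$ on $\bar{X}$ satisfies $\bar{Z}\sim\pi^{*}(-2K_X)-E_1-2E_2-2E_3-E_4$ and passes through $E_2\cap E_3$, yielding $\mathrm{c}(X,Z/2)=4/5$. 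This is the same construction as in Lemma~\ref{lemma:single-point-A4}, executed purely locally at $P$; in the multi-point setting it is cleanly produced via the double cover $\omega\colon X\to\mathbb{P}(1,1,2)$ and the projection from $\omega(P)$, in the spirit of the diagrams used in Lemmas~\ref{lemma:many-points-A7-A1}--\ref{lemma:many-points-A5}. Since our singularity hypotheses preclude the types $\mathbb{D}_n,\mathbb{E}_n$ (handled in Lemma~\ref{lemma:many-points-bad-points}), Example~\ref{example:Jihun} gives $\mathrm{lct}_1(X)\geqslant 2/3$, hence $\min(\mathrm{lct}_1(X),4/5)\geqslant 2/3$.

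Next, for the lower bound $\mu\geqslant\min(\mathrm{lct}_1(X),4/5)$ I would argue by contradiction. If $\mu<\min(\mathrm{lct}_1(X),4/5)$, Lemma~\ref{lemma:smooth-points} shows that $\mathrm{LCS}(X,\mu D)$ consists of a single singular point of $X$, of type neither $\mathbb{A}_1$ nor $\mathbb{A}_2$. Inspecting the configurations in Remark~\ref{remark:DP1-SING} that contain an $\mathbb{A}_4$ point — with $\mathbb{D}_n,\mathbb{E}_n$ already removed — leaves only $\mathbb{A}_3$ or $\mathbb{A}_4$ for this LCS point. The $\mathbb{A}_3$ subcase is excluded immediately by the argument of Corollary~\ref{corollary:A3-point}, which forces $\mu\geqslant 5/6>4/5$. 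In the $\mathbb{A}_4$ subcase, relabel so that the LCS point is $P$; the system $(\ref{equation:many-points-orbifolds})$ then agrees with $(\ref{equation:cyclic-orbifolds})$ on the data $E_1,\ldots,E_4$, so the proof of Lemma~\ref{lemma:single-point-A4} transfers verbatim: derive $a_1,a_4\leqslant 4/5$ and $a_2,a_3\leqslant 6/5$, localize non-Kawamata log terminality at $Q=E_2\cap E_3$ using Lemma~\ref{lemma:cyclic-orbifolds-blow-up}, apply Lemma~\ref{lemma:adjunction} to get $a_2,a_3>5/6$, blow up $Q$, and use $\bar{Z}\cdot\bar{D}\geqslant 0$ to force $\mathrm{mult}_Q(\bar{D})+a_2+a_3\leqslant 2$ before finishing with the same multiplicity computation on the blow-up.

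For the equality statement, if $\mu=2/3$ and $D$ is \emph{not} a curve in $|-K_X|$ with a cusp at a point of $\mathrm{Sing}(X)$ of type $\mathbb{A}_2$, then the alternative hypothesis of Lemma~\ref{lemma:smooth-points} applies with $\mu=2/3$, and the same $\mathbb{A}_3$ versus $\mathbb{A}_4$ dichotomy forces either $\mu\geqslant 5/6$ or $\mu\geqslant 4/5$, both contradicting $\mu=2/3$. Hence any such $D$ must be a cuspidal curve in $|-K_X|$ at an $\mathbb{A}_2$ point. The main obstacle in the whole argument is the existence statement for $Z$: one must verify, case by case for each multi-point configuration in Remark~\ref{remark:DP1-SING} containing an $\mathbb{A}_4$ point, that an irreducible $Z\in|-2K_X|$ with the prescribed vanishing along the $\mathbb{A}_4$ chain genuinely exists — a local question at $P$ which nevertheless requires the double cover geometry to verify irreducibility and incidence with $E_2\cap E_3$.
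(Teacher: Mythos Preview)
Your proposal is correct and matches the paper's approach closely: construct the curve $Z\in|-2K_X|$ witnessing $\mathrm{lct}_2(X)\leqslant 4/5$, then transplant the local argument of Lemma~\ref{lemma:single-point-A4} via Lemma~\ref{lemma:smooth-points} and the system~$(\ref{equation:many-points-orbifolds})$, with the $\mathbb{A}_3$ subcase dispatched by Corollary~\ref{corollary:A3-point}. One small remark: your closing worry about a case-by-case verification of $Z$ across the configurations in Remark~\ref{remark:DP1-SING} is unnecessary --- the paper handles this uniformly by observing that $P'\in X'$ is an $\mathbb{A}_2$ point, so $\omega'(P')$ is an ordinary cusp of $R'$, and the cuspidal tangent line $L'\subset\mathbb{P}^2$ (which satisfies $\mathrm{mult}_{\omega'(P')}(L'\cdot R')=3$) pulls back to the desired irreducible $Z$ regardless of the remaining singularities of $X$.
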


\begin{proof}
The point $\omega^{\prime}(P^{\prime})$ is an~ordinary cusp of
the~curve $R^{\prime}$. Then there is a~line
$L^{\prime}\subset\mathbb{P}^{2}$~such~that
$$
\mathrm{mult}_{\omega^{\prime}(P^{\prime})}\Big(L^{\prime}\cdot R^{\prime}\Big)=3.%
$$

Let $Z^{\prime}$ be a~curve in $X^{\prime}$ such that
$\omega^{\prime}(Z^{\prime})=L^{\prime}$ and $-K_{X^{\prime}}\cdot
Z^{\prime}=2$. Then
$$
Z^{\prime}\cap\mathrm{Sing}\big(X^{\prime}\big)=\mathrm{Sing}\big(Z^{\prime}\big)=R^{\prime},
$$
the $Z^{\prime}$ is irreducible curve that has an~ordinary cusp at
the~point $R^{\prime}$.

Let $\bar{Z}^{\prime}$ be the~proper transform of the~curve
$Z^{\prime}$ on the~surface $\bar{X}^{\prime}$. Then $Z^{\prime}$
is smooth and
$$
\eta\big(E_{2}\big)\cap\eta\big(E_{3}\big)\in\bar{Z}^{\prime}.
$$

Let $\bar{Z}$ be the~proper transform of the~curve
$\bar{Z}^{\prime}$ on the~surface $\bar{X}$. Put $Z=\pi(\bar{Z})$.
Then
$$
\bar{Z}\sim\pi^{*}\big(Z\big)-E_1-2E_2-2E_3-E_4
$$
and $E_2\cap E_3\in Z$. Then $\mathrm{c}(X, Z)=2/5$, which implies that
$\mathrm{lct}_{2}(X)\leqslant 4/5$.

Arguing as in the~proof of Lemma~\ref{lemma:single-point-A4} and
using Lemma~\ref{lemma:smooth-points} and
$(\ref{equation:many-points-orbifolds})$, we see that
$$
\mu\geqslant\mathrm{lct}_{2}\big(X\big)=\mathrm{min}\big(\mathrm{lct}_{1}\big(X\big),4/5\big)
$$
and if $\mu=2/3$, then $D$ is a~curve in $|-K_{X}|$ with a~cusp at
a~point in $\mathrm{Sing}(X)$ of type~$\mathbb{A}_{2}$.
\end{proof}

The~assertion of Theorem~\ref{theorem:main-many-points} is proved.


\begin{thebibliography}{99}


\bibitem{AlexeevNikulin}
V.\, Alexeev, V.\,Nikulin, \emph{Del Pezzo and K3 Surfaces}\\
MSJ Memoirs \textbf{15} (2006), Mathematical Society of Japan, Tokyo%

\medskip

\bibitem{Ch01}
I.\,Cheltsov, \emph{Log canonical thresholds on hypersurfaces}\\
Sbornik: Mathematics \textbf{192} (2001), 1241--1257%

\medskip

\bibitem{Ch07b}
I.\,Cheltsov, \emph{Log canonical thresholds of del Pezzo surfaces}\\
Geometric and Functional Analysis, \textbf{18} (2008), 1118--1144%

\medskip

\bibitem{Ch07c}
I.\,Cheltsov, \emph{On singular cubic surfaces}\\
Asian Journal of Mathematics \textbf{19} (2009), 191--214%

\medskip

\bibitem{CheltsovParkWon}
I.\,Cheltsov, J.\,Park, J.\,Won, \emph{Log canonical thresholds of certain Fano hypersurfaces}\\
arXiv:0706.0751 (2007)

\medskip

\bibitem{ChSh08c}
I.\,Cheltsov, C.\,Shramov, \emph{Log canonical thresholds of smooth Fano threefolds}\\
Russian Mathematical Surveys \textbf{63} (2008), 859-–958%

\medskip

\bibitem{ChSh09a}
I.\,Cheltsov, C.\,Shramov, \emph{On exceptional quotient singularities}\\
Geometry and Topology \textbf{15} (2011), 1843--1882%


\medskip

\bibitem{ChenWang09B}
X.\,Chen, B.\,Wang, \emph{K\"ahler--Ricci flow on Fano manifolds}\\
arXiv:0909.2391 (2009)

\medskip

\bibitem{CoolsCoppens}
F.\,Cools, M.\,Coppens, \emph{Star points on smooth hypersurfaces}\\
Journal of Algebra \textbf{323} (2010), 261--286%

\medskip

\bibitem{DeKo01}
J.-P.\,Demailly, J.\,Koll\'ar, \emph{Semi-continuity of complex singularity exponents\\ and K\"ahler-Einstein metrics on Fano orbifolds}\\
Annales Scientifiques de l'\'Ecole Normale Sup\'erieure \textbf{34} (2001), 525--556%

\medskip

\bibitem{Ding-Tian92}
W.\,Ding, G.\,Tian, \emph{K\"ahler-Einstein metrics and the~generalized Futaki invariant}\\
Inventiones Mathematicae, \textbf{110} (1992), 315--335%

\medskip

\bibitem{Donaldson}
S.\,Donaldson, \emph{Stability, birational transformations and the~Kahler-Einstein problem}\\
arXiv:1007.4220 (2010)

\medskip

\bibitem{FEM}
T.\,de\,Fernex, L.\,Ein, M.\,Mustata, \emph{Bounds for log canonical thresholds with applications to birational rigidity}\\
Mathematical Research Letters \textbf{10} (2003), 219--236%

\medskip

\bibitem{Fu83}
A.\,Futaki, \emph{An obstruction to the~existence of Einstein--K\"ahler metrics}\\
Inventiones Mathematicae \textbf{73} (1983), 437--443%

\medskip

\bibitem{GaMaSpaYau06}
J.\,Gauntlett, D.\,Martelli, J.\,Sparks, S.-T.\,Yau, \emph{Obstructions to the~existence of Sasaki-Einstein metrics}\\
Communications in Mathematical Physics \textbf{273} (2007) 803--827%

\medskip

\bibitem{Kollar-Ghigi}
A.\,Ghigi, J.\,Koll\'ar,\emph{K\"ahler-Einstein metrics on orbifolds and Einstein metrics on spheres}\\
Commentarii Mathematici Helvetici, \textbf{82} (2007), 877--902%

\medskip

\bibitem{HackingProkhorov}
P.\,Hacking, Y.\,Prokhorov, \emph{Smoothable del Pezzo surfaces with quotient singularities}\\
Compositio Mathematica \textbf{146} (2010), 169--192%

\medskip

\bibitem{HidakaWatanabe}
F.\,Hidaka, K.\,Watanabe, \emph{Normal Gorenstein surfaces with ample anti-canonical divisor}\\
Tokyo Journal of Mathematics \textbf{4} (1981), 319-–330%

\medskip

\bibitem{Je97}
T.\,Jeffres, \emph{Singular set of some K\"ahler orbifolds}\\
Transactions of the~American Mathematical Society \textbf{349} (1997), 1961--1971%

\medskip

\bibitem{KeelMcKernan}
S.\,Keel, J.\,McKernan, \emph{Rational curves on quasi-projective surfaces}\\
Memoirs of the~American Mathematical Society \textbf{669} (1999), American Mathematical Society%

\medskip

\bibitem{Ko97}
J.\,Koll\'ar, \emph{Singularities of pairs}\\
Proceedings of Symposia in Pure Mathematics \textbf{62} (1997), 221--287%

\medskip

\bibitem{Ko91}
J.\,Koll\'ar et al., \emph{Flips and abundance for algebraic threefolds}\\
Ast\'erisque \textbf{211} (1992)%

\medskip

\bibitem{Lyubeznik}
G.\,Lyubeznik, \emph{On Bernstein--Sato polynomials}\\
Proceedings of the~American Mathematical Society \textbf{125} (1997), 1941--1944%

\medskip

\bibitem{MaMu93}
T.\,Mabuchi, S.\,Mukai, \emph{Stability and Einstein--K\"ahler metric of a~quartic del Pezzo surface}\\
Lecture Notes in Pure and Applied Mathematics \textbf{145} (1993), 133--160%

\medskip

\bibitem{Mat57}
Y.\,Matsushima, \emph{Sur\,la\,structure\,du\,groupe\,d'hom\'eomorphismes\,analytiques\,d'une\,certaine\,vari\'et\'e k\"ahl\'erienne}\\
Nagoya Mathematical Journal \textbf{11} (1957), 145--150%

\medskip

\bibitem{MiyanishiZhang}
M.\,Miyanishi, D.\,-Q.\,Zhang, \emph{Gorenstein log del Pezzo surfaces of rank one}\\
Journal of Algebra \textbf{118} (1988), 63--84

\medskip

\bibitem{Mustata02}
M.\,Mustata, \emph{Singularities of pairs via jet schemes}\\
Journal of the~American Mathematical Society \textbf{15} (2002), 599--615%

\medskip

\bibitem{Pa01}
J.\,Park, \emph{Birational maps of Del Pezzo fibrations}\\
Journal fur die Reine und Angewandte Mathematik \textbf{538} (2001), 213--221%

\medskip

\bibitem{Park2002}
J.\,Park, \emph{A note on del Pezzo fibrations of degree $1$}\\
Communications in Algebra, \textbf{31} (2003), 5755--5768%

\medskip

\bibitem{Park-Won-PEMS}
J.\,Park, J.\,Won \emph{Log canonical thresholds on Gorenstein canonical del Pezzo surfaces}\\
Proceedings of the~Edinburgh Mathematical Society, \textbf{54} (2011), 187--219%

\medskip

\bibitem{Park-Won}
J.\,Park, J.\,Won \emph{Log canonical thresholds on del Pezzo surfaces of degree $\geqslant 2$}\\
Nagoya Mathematical Journal, \textbf{200} (2010), 1--26%

\medskip

\bibitem{Pr98plt}
Yu.\,Prokhorov, \emph{Blow-ups of canonical singularities}\\
arXiv:math/9810097 (1998)

\medskip

\bibitem{Pu04d}
A.\,Pukhlikov, \emph{Birational geometry of Fano direct products}\\
Izvestiya: Mathematics \textbf{69} (2005), 1225--1255%

\medskip

\bibitem{RossThomas}
J.\,Ross,\,R.\,Thomas,\,\emph{Weighted\,projective\,embeddings,\,stability\,of\,orbifolds\,and\,constant\,scalar\,curvature\,Kahler\,metrics}\\
arXiv:math/09075214 (2009)

\medskip

\bibitem{Shi09}
Y.\,Shi, \emph{On the~$\alpha$-invariants of cubic surfaces with Eckardt points}\\
arXiv:0902.3203 (2009)

\medskip

\bibitem{Ti87}
G.\,Tian, \emph{On K\"ahler--Einstein metrics on certain K\"ahler manifolds with $c_{1}(M)>0$}\\
Inventiones Mathematicae \textbf{89} (1987), 225--246%

\medskip

\bibitem{Tian90}
G.\,Tian, \emph{On Calabi's conjecture for complex surfaces with positive first Chern class}\\
Inventiones Mathematicae, \textbf{101} (1990), 101--172%

\medskip

\bibitem{Ti90b}
G.\,Tian, \emph{On a~set of polarized K\"ahler metrics on algebraic manifolds}\\
Journal of Diffe\-rential Geometry \textbf{32} (1990), 99--130%

\medskip

\bibitem{Ti97}
G.\,Tian, \emph{K\"ahler-Einstein metrics with positive scalar curvature}\\
Inventiones Mathematicae \textbf{130} (1997), 1--37%

\medskip

\bibitem{Urabe}
T.\,Urabe, \emph{On singularities on degenerate Del Pezzo surfaces of degree $1$, $2$}\\
Proceedings of Symposia in Pure Mathematics \textbf{40} (1983), 587--590%

\medskip

\bibitem{Wang10}
B.\,Wang, \emph{Ricci flow on orbifolds}\\
arXiv:1003.0151 (2010)



\end{thebibliography}
\end{document}